\title{ The Circular Unitary Ensemble and the Riemann zeta function: the microscopic landscape and a new approach to ratios}
\author{Reda \textsc{Chhaibi}        \footnote{\texttt{reda.chhaibi@math.uzh.ch}}\;\;\;\;\;\;\;\;\;
        Joseph \textsc{Najnudel}     \footnote{\texttt{joseph.najnudel@math.univ-toulouse.fr}}\\
        Ashkan \textsc{Nikeghbali}   \footnote{\texttt{ashkan.nikeghbali@math.uzh.ch}}
        } 
\DeclareMathOperator{\sgn}{sgn}
\DeclareMathOperator{\Var}{Var}
\begin{document}

\def\half{\frac{1}{2}}

\def\l{\lambda}
\def\t{\theta}
\def\T{\Theta}
\def\m{\mu}
\def\a{\alpha}
\def\b{\beta}
\def\g{\gamma}
\def\o{\omega}
\def\p{\varphi}
\def\D{\Delta}
\def\O{\Omega}

\def\N{{\mathbb N}}
\def\Z{{\mathbb Z}}
\def\Q{{\mathbb Q}}
\def\R{{\mathbb R}}
\def\C{{\mathbb C}}

\def\P{{\mathbb P}}
\def\E{{\mathbb E}}

\def\Ac{{\mathcal A}}
\def\Bc{{\mathcal B}}
\def\Cc{{\mathcal C}}
\def\Dc{{\mathcal D}}
\def\Ec{{\mathcal E}}
\def\Fc{{\mathcal F}}
\def\Gc{{\mathcal G}}
\def\Hc{{\mathcal H}}
\def\Ic{{\mathcal I}}
\def\Kc{{\mathbb K}}
\def\Lc{{\mathcal L}}
\def\Oc{{\mathcal O}}
\def\Pc{{\mathcal P}}
\def\Qc{{\mathcal Q}}
\def\Rc{{\mathcal R}}
\def\Sc{{\mathcal S}}
\def\Tc{{\mathcal T}}
\def\Uc{{\mathcal U}}
\def\Zc{{\mathcal Z}}

\def\afrak{{\mathfrak a}}
\def\bfrak{{\mathfrak b}}
\def\gfrak{{\mathfrak g}}
\def\hfrak{{\mathfrak h}}
\def\kfrak{{\mathfrak k}}
\def\nfrak{{\mathfrak n}}
\def\pfrak{{\mathfrak p}}
\def\slfrak{{\mathfrak sl}}

\maketitle

\setlength{\footskip}{2cm}

\numberwithin{equation}{section}
\newtheorem{thm}{Theorem}[section]
\newtheorem{proposition}[thm]{Proposition}
\newtheorem{corollary}[thm]{Corollary}
\newtheorem{question}[thm]{Question}
\newtheorem{conjecture}[thm]{Conjecture}
\newtheorem{definition}[thm]{Definition}
\newtheorem{example}[thm]{Example}
\newtheorem{lemma}[thm]{Lemma}
\newtheorem{properties}[thm]{Properties}
\newtheorem{property}[thm]{Property}
\newtheorem{rmk}[thm]{Remark}
\newtheorem{hypothesis}[thm]{Hypothesis}

\newcommand{\Card}[1]{\textup{Card($#1$)} \xspace}

\begin{abstract}
We show in this paper that after proper scalings, the characteristic polynomial of a random unitary matrix converges almost surely to a random analytic function whose  zeros, which are on the real line, form a determinantal point process with sine kernel. Our scaling is performed at the so-called "microscopic" level, that is we consider the characteristic polynomial at points which are of order $1/n$ distant. We prove this in the framework of virtual isometries to circumvent the fact that the rescaled characteristic polynomial does  not even have a moment of order one, hence making the classical techniques of random matrix theory difficult to apply.  The strong convergence results in this setup provide us with a new approach to ratios: we are able to solve  open problems about the limiting distribution of ratios of characteristic polynomials evaluated at points of the form $\exp(2 i \pi \alpha/n)$ and related objects (such as the logarithmic derivative). We also explicitly describe the dependence relation 
for the logarithm of the characteristic polynomial evaluated at several points  on the microscopic scale. On the number theory side, inspired by the Keating-Snaith philosophy, we conjecture some new limit theorems for the value distribution of the Riemann zeta function on the critical line at the stochastic process level.
\end{abstract}
\pagebreak
\tableofcontents

\newpage
\section{Introduction}

A major breakthrough in the so-called random matrix approach in number theory is  the seminal paper of Keating and Snaith \cite{bib:KS}, where they conjecture that the characteristic polynomial of a random unitary matrix, restricted to the unit circle, is a good and accurate model to predict the value distribution of the Riemann zeta function on the critical line.  In particular, using this philosophy, they were able to conjecture the exact asymptotics of the moments of the Riemann zeta function, a result which was considered to be out of reach with classical tools from analytic number theory. One simple and naive explanation for the success of the characteristic polynomial as a random model to the Riemann zeta function comes from Montgomery's conjecture that asserts that the zeros of the Riemann zeta function on the critical line (after rescaling) statistically behave like the eigenangles (after rescaling) of large random unitary matrices. 
Moreover the limiting point process obtained from the eigenvalues is the determinantal sine kernel point process. A natural question which then naturally arose in the community was the existence of a random analytic function with zeros which are this sine kernel process and which would be obtained as a limiting object from characteristic polynomials. As we shall see below, the sequence of characteristic polynomials of random unitary matrices of growing dimensions does not converge. We shall nonetheless prove that after a proper rescaling in "time" (the characteristic polynomial can be viewed as a stochastic process with parameter $z\in\C$, and we shall consider the characteristic polynomial at the scale $z/n$) and space (that is we normalize with the value of the characteristic polynomial at $1$), this sequence converges locally uniformly on compact subsets of the complex plane to a random analytic function with the desired property.

 To be more specific, the convergence discussed above  will be proved to occur almost surely, thanks to the use of virtual isometries introduced in \cite{bib:BNN}. The basic idea behind virtual isometries is that of coupling the different dimensions of the unitary groups $U(n)$ together in such a way that marginal distribution on each $U(n)$, for fixed $n$, is the Haar measure. Along with some of the fine estimates on the eigenvalues from \cite{bib:MNN} and some new deep estimates  related to the logarithmic derivative and ratios of our limiting random analytic function, we establish almost sure convergence. This strong convergence will in turn imply the weak convergence of the same objects. But since our rescaled characteristic polynomials do not even have a moment of order one, proving the weak convergence as stated in Theorem \ref{thm:maininlaw} with classical methods does not seem to be an easy task. 
 Moreover, this approach based on almost sure convergence will somehow miraculously provide us with straightforward proofs to some known difficult problems on ratios, logarithms and logarithmic derivatives  of characteristic polynomials of random unitary matrices.  The solutions to these problems always involve the limiting random analytic function discussed above, so that one might think of it as a canonical object in random matrix theory. More generally this approach to ratios of characteristic polynomials brings new insight  not only in random matrix theory but also in number theory where we propose a new point of view using random analytic functions to make predictions for the value distribution of the Riemann zeta function. More precisely  the contributions of this paper can be summarized as follows:
\begin{enumerate}
\item \textbf{Ratios and correlations on the microscopic scale:} ratios of characteristic polynomials  are relevant objects which have been  extensively studied in recent years, for instance  in relation with quantum chaotic systems or analytic number theory (see \cite{bib:FyStr}, \cite{bib:StrFy}, \cite{bib:BoStr}, \cite{bib:CoSn}, \cite{bib:CFZ08}, \cite{bib:BG}), using a wide range of techniques (e.g. classical analysis, representation theory or supersymmetry methods).  It has been an open question to characterize the limiting object   obtained from ratios of characteristic polynomials evaluated at points of the form $\frac{\alpha}{n}$ for $\alpha\in\C$, when $n\to\infty$.\footnote{This  question was asked to A.N. by Alexei Borodin in a private communication.}  It was more or less observed that the expectation of such ratios converges but the  limiting object was not known. In this paper, we shall give an explicit formula for the limit of the ratios and prove that the expectation of the ratios of the 
characteristic polynomial converges (
locally uniformly on compact subsets of $\mathbb C-\mathbb R$) to the expectation of the corresponding limiting object. We shall also derive the limiting object 
for the rescaled logarithmic derivative of the characteristic polynomial at the microscopic scale and give two alternative formulas to compute its joint moments evaluated at several points. 

We shall also give a description of the dependence between the log of the characteristic polynomial evaluated at various points distant of $\frac{\alpha}{n}$. To the best of our knowledge this problem was not solved before. 

\item \textbf{Linear statistics:} we shall see that the logarithmic derivative of our random analytic function is related to linear statistics for the sine kernel point process  for test functions in  $H^{1/2}$ and we shall prove a convergence result, on the mesoscopic scale, to a holomorphic  Gaussian field. It should be noted that such objects, for more general point 
processes, have been recently studied by Aizenmann and Warzel in \cite{bib:AiWa} and our results can be viewed as a complement to the results obtained in there in the special case of the sine kernel determinantal point process.  

Besides we also prove, using a general result from \cite{bib:MNN}, a weak convergence for linear statistics on the microscopic scale for test functions which are only assumed to be integrable and prove that linear statistics on this scale have a natural representation in terms of our limiting random analytic function.  

\item \textbf{Value distribution of the Riemann zeta function on the critical line:} on the number theory side, we shall state some conjectures relating our limiting  random analytic function to the Riemann zeta function. We conjecture that our scaling amounts to eliminating the contribution of prime numbers to keep only those of the Riemann zeros and thus obtain a limiting object whose zeros form a sine kernel determinantal point process, in agreement with the GUE conjecture and the fact that short range statistics conform the GUE predictions (unlike long-range statistics which are better described with primes). We shall also relate the logarithmic derivative of our limiting function  to recent conjectures of Goldston, Gonek and Montgomery \cite{bib:GGM01} on the second moment of the logarithmic derivative of the Riemann zeta function. We shall be able to provide a very general conjecture on the logarithmic derivative of the Riemann zeta function in agreement with the predictions obtained in \cite{bib:GGM01}
 and in \cite{bib:FGLL}. Moreover the general formula for the expectation of the ratios provides simpler corresponding conjectures for the Riemann zeta function. 
The point of view we take is new in the literature on the random matrix approach in number theory: indeed we  suggest to model ratios (and not only their expectations) by some natural  random analytic functions.

\end{enumerate}

In the sequel, we introduce the main objects and notation and state our main theorem.

\subsection{The characteristic polynomial of random unitary matrices and the number theory connections}

It is a well known fact in the theory of random unitary matrices that, when properly rescaled, the eigenvalues converge to a determinantal point process with sine kernel:

\begin{proposition}\label{Laplacefunctionals}
Let $E_n$ denote the set of eigenvalues taken in $(-\pi,\pi]$ and multiplied by $n/2\pi$ of a random unitary matrix of size $n$ following the Haar measure. Let us also define, for $y\neq y^{'}$,
$$K(y,y^{'})=\dfrac{\sin[\pi(y^{'}-y)]}{\pi (y^{'}-y)}$$and $$K(y,y)=1.$$ Let $E_{\infty}$ be a determinantal sine-kernel process, i.e. a point process such that for all $r \in \{1, \dots, n\}$, and for all 
Borel measurable and bounded functions $F$  with compact support from $\R^r$ to $\R$,

$$\mathbb{E} \left( \sum_{x_1 \neq \dots
\neq x_r \in E_{\infty}} F(x_1, \dots, x_r) \right) = \int_{\R^r} 
F(y_1, \dots, y_r) \rho_r(y_1, \dots,
y_r) dy_1 \dots dy_r,$$
where
$$\rho_r(y_1, \dots, y_r) 
= \operatorname{det} ( (K (y_j, y_k))_{1 \leq j, k \leq r} ).$$
Then, the point process $E_n$ converges to $E_{\infty}$ in the following sense: 
for all Borel measurable bounded functions $f$ with compact support from $\R$ to
$\R$, 
$$\sum_{x \in E_n} f(x) 
\underset{n \rightarrow \infty}{\longrightarrow}
\sum_{x \in E_{\infty}} f(x),$$
where the convergence above holds in law. 

\end{proposition}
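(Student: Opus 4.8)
The plan is to exploit the exact determinantal structure of the CUE eigenangles, pass from convergence of the correlation kernels to convergence of the generating (Laplace/characteristic) functional, and then conclude convergence in law by L\'evy's continuity theorem.

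First I would recall that the eigenangles $\theta_1,\dots,\theta_n\in(-\pi,\pi]$ of a Haar-distributed element of $U(n)$ form a determinantal point process on the circle, with respect to Lebesgue measure $d\theta$, whose kernel is the Dyson (Christoffel--Darboux) kernel
$$ K_n(\theta,\theta') \;=\; \frac{1}{2\pi}\sum_{k=0}^{n-1} e^{ik(\theta-\theta')} \;=\; \frac{1}{2\pi}\,\frac{\sin\!\big(n(\theta-\theta')/2\big)}{\sin\!\big((\theta-\theta')/2\big)}, $$
up to an irrelevant unimodular gauge factor that does not affect correlation functions. The dilation $\theta\mapsto y=n\theta/2\pi$ sending each $\theta_j$ to a point of $E_n$ turns $E_n$ into a determinantal process on $\R$ with kernel
$$ K_n^{\mathrm{resc}}(y,y') \;=\; \frac{2\pi}{n}\,K_n\!\Big(\tfrac{2\pi y}{n},\tfrac{2\pi y'}{n}\Big) \;=\; \frac1n\,\frac{\sin\!\big(\pi(y-y')\big)}{\sin\!\big(\pi(y-y')/n\big)} . $$
Since $\sin(\pi(y-y')/n)\sim\pi(y-y')/n$, one has $K_n^{\mathrm{resc}}(y,y')\to K(y,y')$ as $n\to\infty$, uniformly for $(y,y')$ in compact subsets of $\R^2$, with diagonal value $1$ for every $n$. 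In particular, for each fixed $r$, the rescaled $r$-point correlation function $\rho_r^{(n)}=\det\big(K_n^{\mathrm{resc}}(y_i,y_j)\big)$ converges locally uniformly to $\rho_r=\det\big(K(y_i,y_j)\big)$.

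Next, fix a bounded measurable $f\colon\R\to\R$ supported in some $[-M,M]$ and $t\in\R$, and set $\phi=e^{itf}-1$, which is bounded by $2$ and supported in $[-M,M]$. I would invoke the generating-functional identity
$$ \E\Big[e^{\,it\sum_{x\in E_n} f(x)}\Big] \;=\; \E\Big[\prod_{x\in E_n}\big(1+\phi(x)\big)\Big] \;=\; \sum_{r\geq 0}\frac{1}{r!}\int_{[-M,M]^r}\!\!\phi(y_1)\cdots\phi(y_r)\,\rho_r^{(n)}(y_1,\dots,y_r)\,dy_1\cdots dy_r , $$
together with its counterpart for $E_\infty$ with $\rho_r$ in place of $\rho_r^{(n)}$. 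Because $K_n^{\mathrm{resc}}$ is the kernel of an orthogonal projection (and $K$ is a projection kernel), one has $|K_n^{\mathrm{resc}}(y,y')|\leq 1$, so Hadamard's inequality yields $0\leq\rho_r^{(n)}\leq r^{r/2}$ uniformly in $n$, and likewise $0\leq\rho_r\leq r^{r/2}$. Hence the $r$-th term of either series is dominated by $\frac{1}{r!}(4M)^r r^{r/2}$, whose sum is finite; this both validates the expansions and licenses passing to the limit by dominated convergence --- first inside each $r$-fold integral, using the local uniform convergence $\rho_r^{(n)}\to\rho_r$ on $[-M,M]^r$, then in the series, using the summable uniform bound. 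This gives $\E[e^{it\sum_{E_n}f}]\to\E[e^{it\sum_{E_\infty}f}]$ for every $t\in\R$. Since the right-hand side is the characteristic function of the real random variable $\sum_{x\in E_\infty}f(x)$, which is finite because $E_\infty$ almost surely has finitely many points in $[-M,M]$, L\'evy's continuity theorem yields the claimed convergence in law.

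The genuinely delicate point is the last one of substance: the uniform-in-$n$ domination of the correlation functions that allows interchanging the limit with the integral and with the sum (equivalently, a Fredholm-determinant continuity argument); everything else --- the determinantal structure and the pointwise kernel convergence --- is elementary. A small amount of care is also required to note that the process $E_\infty$ prescribed by its correlation functions does exist and that its generating functional admits the above expansion, but this is standard for locally trace-class projection kernels such as the sine kernel.
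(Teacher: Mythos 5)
Your proposal is correct, but note that the paper itself offers no proof of this proposition: it is stated as a classical background fact about the CUE (going back to Dyson's computation of the correlation kernel), so there is no "paper proof" to compare against. Your argument is the standard and complete one: the Weyl/Dyson determinantal structure with the projection kernel $\frac{1}{2\pi}\sum_{k=0}^{n-1}e^{ik(\theta-\theta')}$, the rescaling $y=n\theta/2\pi$ giving $K_n^{\mathrm{resc}}(y,y')=\frac{1}{n}\sin(\pi(y-y'))/\sin(\pi(y-y')/n)\to K(y,y')$ uniformly on compacts, the expansion of $\E\bigl[e^{it\sum_{E_n}f}\bigr]$ as a series in the correlation functions, and Lévy's continuity theorem. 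You correctly identified and handled the two points that actually require care: the bound $|K_n^{\mathrm{resc}}|\le 1$ coming from the Cauchy--Schwarz/reproducing property of projection kernels (uniform in $n$), which via Hadamard gives the summable majorant $\frac{1}{r!}(4M)^r r^{r/2}$ legitimizing both the expansion and the double passage to the limit; and the existence of the sine-kernel process together with the validity of the same expansion for $E_\infty$, which is the Macchi--Soshnikov theory for locally trace-class projection kernels. The only cosmetic remark is that the gauge factor $e^{i(n-1)(\theta-\theta')/2}$ you discard is of conjugation type $g(\theta)/g(\theta')$, so it indeed leaves all correlation determinants (and the modulus bound) unchanged --- worth one explicit sentence, but not a gap.
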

We now recall basic facts about the Riemann zeta function (the reader can find more details in classical textbooks such as \cite{bib:Titchmarsh}).  The Riemann zeta function is defined, for $\Re(s)>1$,  by
$$\zeta(s)=\sum_{n=1}^\infty \dfrac{1}{n^s}.$$It has a meromorphic continuation to the whole complex plane with a single pole at $1$. It also satisfies a functional equation which we can be stated as follows: 
$$\pi^{-s/2}\Gamma(s/2)\zeta(s)=\pi^{(s-1)/2}\Gamma((1-s)/2)\zeta(1-s),$$and $$\zeta(s)=\chi(s)\zeta(1-s),$$where
$$\chi(1-s)=\chi(s)^{-1}=2(2\pi)^{-s}\Gamma(s)\cos(\pi s/2).$$The non-trivial zeros of the zeta function are denoted by $\rho=\sigma+i t$, where $0<\sigma<1$. The Riemann hypothesis is the assertion that all non trivial zeros satisfy $\sigma=1/2$ and hence all non trivial zeros are of the form $\rho=1/2+it$, with $t\in\R$. If we assume the Riemann hypothesis, then the zeros come in conjugate pairs and  we note the zeros in the upper half-plane as $1/2+i \gamma_j$, where $0<\gamma_1\leq\gamma_2\leq\cdots$. One can count the number of such zeros up to some height $T$:
$$N(T):=\#\{j;\;0\leq\gamma_j\leq T\}=\dfrac{T}{2\pi}\log \dfrac{T}{2\pi e}+O(\log T).$$

The connection to random matrix theory was conjectured by Montgomery in \cite{bib:Montgomery}: it is conjectured that the rescaled zeros of the zeta function $\tilde\gamma:=\gamma/(2\pi)\log\gamma$ (this rescaling is done in order to obtain an average spacing of order 1) satisfy the same limit theorem as the one given in Proposition \ref{Laplacefunctionals} for the rescaled eigenvalues of random unitary matrices (in fact the conjecture was initially stated for the pair correlation and then extended to all correlations by Rudnik and Sarnak in \cite{bib:RudSar}; see the recent paper of Conrey and Snaith \cite{bib:CoSn14} for a detailed account and new methods).

Another major insight came with the work of Keating and Snaith (\cite{bib:KS}) where they use the characteristic polynomial of random unitary matrices to model the value distribution of the Riemann zeta function on the critical line (i.e. the family $\{\zeta(1/2+it),\;t\geq0\}$) to make spectacular predictions on the moments of the Riemann zeta function.  In particular, in \cite{bib:KS} they computed the moments of the characteristic polynomial of a random unitary 
matrix following the Haar measure. They deduced that the characteristic polynomial asymptotically behaves like a 
log-normal distributed random variable when the dimension $n$ goes to infinity: more precisely, 
its logarithm, divided by $\sqrt{\log n}$, tends to a complex Gaussian random variable 
$Z$ such that $\E[Z] = \E [Z^2] = 0$ and $\E [|Z|^2]= 1$. 
This result has been generalized in Hughes, Keating and O'Connell \cite{bib:HKO}, where the authors proved the asymptotic
independence of the characteristic polynomial taken at different fixed points. A question which then naturally arises  concerns the behavior of the characteristic polynomial at points which vary with the 
dimension and which are sufficiently close to each other in order to avoid asymptotic independence. 
The scale we consider in the present paper is the average spacing of the eigenangles of a unitary
matrix in dimension $n$, i.e. $2 \pi/n$. 
More precisely, let $(U_n)_{n \geq 1}$ be a sequence of matrices, $U_n$ being Haar-distributed in $U(n)$, and let 
$Z_n$ be the characteristic polynomial of $U_n$: 
\begin{eqnarray}
\label{eq:def_Z_n}
Z_n(X) = \operatorname{det} \left(  \textrm{Id}- U_n^{-1}X \right)=\operatorname{det} \left(  \textrm{Id}- U_n^{*}X \right).
\end{eqnarray}
For a given $z \in \C$, we consider the value of $Z_n$ at the two points $1$ and $e^{2 i z \pi/n}$, 
whose distance is equivalent to $2 \pi |z|/n$ when $n$ goes to infinity. We know that the law of $Z_n(1)$ can be approximated by the exponential of a gaussian variable of variance $\log n$, so it does not converge when $n$ goes to infinity. The same is true for $Z_n(e^{2 i z \pi/n})$. In order to obtain a convergence in law, it is then natural to consider the ratio $Z_n(e^{2i z \pi/n})/ Z_n(1)$, which has order of magnitude $1$ and which is well-defined as soon as $1$ is not an eigenvalue of $U_n$, an event occurring almost surely. 

If we consider all the values of $z$ together, we obtain a random entire function $\xi_n$, defined by 
\begin{eqnarray}
\label{eq:def_xi_n}
\xi_n (z) = \frac{Z_n(e^{2i z \pi/n})}{Z_n(1)}.
\end{eqnarray}
Because $(U_n)_{n \geq 1}$ is a sequence of unitary matrices, the following functional equation holds:
\begin{eqnarray}
\label{eq:functional_eq_xi_n}
\overline{ \xi_n (z) } = e^{-i 2 \pi \bar{z}} \xi_n\left( \bar{z} \right)
\end{eqnarray}

We will prove that this function has a limiting distribution when $n$ goes to infinity. More precisely, one of the main results of this article is the following: 
\begin{thm} \label{thm:maininlaw}
 In the space of continuous functions from $\C$ to $\C$, endowed with the topology of uniform 
 convergence on compact sets, the random entire function $\xi_n$ converges in law to a limiting 
 entire function $\xi_{\infty}$. The zeros of $\xi_{\infty}$ are all real and form a determinantal sine-kernel point 
 process,
 i.e. for all $r \geq 1$, the $r$-point correlation function $\rho_r$ corresponding to this point process is 
 given, for all $x_1, \dots, x_r \in \R$, by 
 $$\rho_r (x_1, \dots, x_r) = \operatorname{det} \left( \frac{\sin [\pi(x_j - x_k)]}{ \pi(x_j - x_k)} \right)_{1 \leq 
 j,k  \leq r}. $$
 \end{thm}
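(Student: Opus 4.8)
The plan is to prove something stronger than convergence in law: realizing all the matrices $U_n$ on one probability space via the virtual isometry coupling of \cite{bib:BNN}, I would show that $\xi_n$ converges \emph{almost surely}, uniformly on compact subsets of $\C$, to an explicit random entire function $\xi_\infty$. Since almost sure convergence implies convergence in law, and the law of the limit does not depend on the coupling, this settles the first assertion of Theorem~\ref{thm:maininlaw}. The inputs I would rely on are that, under this coupling, the rescaled eigenvalue processes $E_n$ of Proposition~\ref{Laplacefunctionals} converge almost surely to a genuine determinantal sine-kernel process $E_\infty$, with individual convergence $y^{(n)}_k\to y^{(\infty)}_k$ of the ordered points, together with the quantitative rigidity estimates of \cite{bib:MNN} controlling, uniformly in $n$, the counting function $\#(E_n\cap[a,b])-(b-a)$ and the displacements $|y^{(n)}_k-k|$.

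First I would factor $\xi_n$ through the eigenangles: writing $E_n=(y^{(n)}_k)_k$ for the eigenangles of $U_n$ rescaled by $n/2\pi$ and reduced to $(-n/2,n/2]$, so that $e^{i\theta^{(n)}_k}=e^{2i\pi y^{(n)}_k/n}$, formulas \eqref{eq:def_Z_n}--\eqref{eq:def_xi_n} give
\[
\xi_n(z)=\prod_{k}\frac{1-e^{2i\pi(z-y^{(n)}_k)/n}}{1-e^{-2i\pi y^{(n)}_k/n}}.
\]
Taking logarithms, I would split the sum at a large cutoff $R$ into a ``core'' ($|y^{(n)}_k|\le R$) and a ``tail'' ($|y^{(n)}_k|>R$). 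For $n$ large the core contains exactly $\#(E_\infty\cap[-R,R])$ terms, and since $\frac{1-e^{2i\pi(z-y)/n}}{1-e^{-2i\pi y/n}}\to\frac{y-z}{y}$ whenever $y=y^{(n)}_k\to y^{(\infty)}_k\neq 0$, the core converges almost surely, locally uniformly in $z$, to $\sum_{y\in E_\infty,\,|y|\le R}\log(1-z/y)$.

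The hard part will be controlling the tail uniformly in $n$: this is exactly where the absence of a first moment forces a new argument, since there is no usable estimate on $\E\,\xi_n$ or on characteristic functions, and the one-sided sums $\sum_k\log(1-e^{-2i\pi y^{(n)}_k/n})$, i.e.\ values of $\log Z_n$, fluctuate on the scale $\sqrt{\log n}$ and do not converge. I would keep numerator and denominator paired term by term and exploit the cancellation between the points $y$ and $-y$ (equivalently, compare $E_n$ with the lattice $\Z$). Concretely, for $R<|y^{(n)}_k|\le n/3$ one has $\log\frac{1-e^{2i\pi(z-y)/n}}{1-e^{-2i\pi y/n}}=\log(1-z/y)+O(|z|/n)$, while for $n/3<|y^{(n)}_k|\le n/2$ (eigenangles near $-1$) both factors stay bounded away from $0$ and $\infty$ and the logarithm is $O(|z|/n)$; summing the $O(|z|/n)$ terms over the $\Theta(n)$ indices, a Riemann-sum argument tamed by the rigidity bounds assembles them --- once the $y\leftrightarrow -y$ cancellation is accounted for --- into a convergent linear exponential factor $e^{az}$ (its value reflecting the normalization at $1$ in \eqref{eq:def_xi_n}, consistently with the functional equation \eqref{eq:functional_eq_xi_n}), while the remaining sum is handled through $\log(1-z/y)=-z/y+O(|z|^2/|y|^2)$: the quadratic part is summably small, and the linear part is $o_R(1)$ uniformly in $n$ once the points are symmetrically paired, again by the counting-function estimates of \cite{bib:MNN}. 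Letting $n\to\infty$ and then $R\to\infty$ would give the almost sure pointwise convergence $\xi_n(z)\to\xi_\infty(z)$, where
\[
\xi_\infty(z)=e^{az}\prod_{y\in E_\infty}\!\left(1-\frac{z}{y}\right)
\]
with the product in the symmetric principal-value sense --- a random entire function of genus at most one, almost surely convergent locally uniformly by the analogous rigidity of the sine process. The same rigidity should also give $\sup_n\sup_{z\in K}|\xi_n(z)|<\infty$ almost surely for every compact $K$, so that $\{\xi_n\}$ is a normal family (Montel) and Vitali's theorem upgrades the pointwise convergence to locally uniform convergence; in particular $\xi_\infty$ is entire. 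This would prove the first assertion.

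For the zeros: $\xi_n(0)=1$ for every $n$, hence $\xi_\infty(0)=1$, so $\xi_\infty\not\equiv 0$, and Hurwitz's theorem identifies the zeros of $\xi_\infty$ in any bounded region as the limits, counted with multiplicity, of the zeros of $\xi_n$ there. By \eqref{eq:def_Z_n}--\eqref{eq:def_xi_n} the zero set of $\xi_n$ is $\{z:\,e^{2i\pi z/n}\in\mathrm{spec}(U_n)\}=\bigcup_k\bigl(y^{(n)}_k+n\Z\bigr)$, which in a fixed bounded region coincides with $E_n$ for $n$ large and, being a set of rescaled eigenangles, is real. Since $E_n\to E_\infty$ almost surely and $E_\infty$ is a determinantal sine-kernel process, the zeros of $\xi_\infty$ are all real and form a determinantal point process with the $r$-point correlation functions stated in the theorem, namely the sine-kernel correlations already appearing in Proposition~\ref{Laplacefunctionals}.
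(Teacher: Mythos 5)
Your strategy coincides with the paper's: realize the whole sequence on one probability space via the virtual-isometry coupling, express $\xi_n$ as a product over rescaled eigenangles, regroup conjugate indices $k \leftrightarrow -k$, and control the tails by rigidity of the point processes. There is, however, a genuine gap in the rigidity input. You attribute the key uniform-in-$n$ estimate $y^{(n)}_k = k + O(\log(2+|k|))$ (and the matching counting-function control for $E_n$) to the rigidity estimates of \cite{bib:MNN}, but Theorem~7.3 of \cite{bib:MNN} (quoted here as Theorem~\ref{thm:as_cv_sine}) only bounds $|y^{(n)}_k - y_k|$ in the restricted range $|k| \le n^{1/4}$, with an error $O_\varepsilon((1+k^2)n^{-1/3+\varepsilon})$ that degrades quickly in $k$, and it says nothing for larger $|k|$. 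The uniform estimate is established in this paper only after substantial new work (Sections~\ref{section:estimatesZn}--\ref{section:estimatesykn}): a Chernoff bound on $\Im\log Z_n(1)$ followed by Borel--Cantelli to get $\sup_{|z|=1}|\Im\log Z_n(z)| = O(\log n)$ a.s.\ (Proposition~\ref{proposition:maxZn}); Corollary~\ref{corollary:kykn} converting this into $|y^{(n)}_k - k| = O(\log n)$ uniformly in $k$; the separate a.s.\ rigidity $y_k = k + O(\log(2+|k|))$ of the limiting sine process (Lemma~\ref{lemma:lemma_mm_sosh}); and their combination in Proposition~\ref{proposition:key_estimate}. Without that input your claim that the paired linear tail contribution is $o_R(1)$ uniformly in $n$, and the Montel/Vitali bound $\sup_n \sup_{z \in K}|\xi_n(z)| < \infty$, have no support.

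A secondary and more cosmetic difference: rather than Taylor-expanding the ratio $\frac{1-e^{2i\pi(z-y)/n}}{1-e^{-2i\pi y/n}}$ and reassembling the $O(|z|/n)$ errors into an exponential prefactor by a Riemann-sum argument, the paper substitutes the Hadamard product for $\sin$ into the eigenvalue product and uses the $n$-periodicity $y^{(n)}_{k+n} = y^{(n)}_k + n$ to obtain the exact identity
$$\xi_n(z) = e^{i\pi z}\prod_{k\in\Z}\left(1-\frac{z}{y^{(n)}_k}\right)$$
(Proposition~\ref{proposition:expressionxin}). This pins the prefactor as exactly $e^{i\pi z}$ with no error terms to sum, and lets one compare $\xi_n$ and $\xi_\infty$ term by term in identically structured infinite products. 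Your Taylor-expansion route can in fact be made rigorous, but you would have to track the subleading correction to $\log\frac{1-e^{i(u-w)}}{1-e^{-iw}} - \log(1-u/w)$, which is $\frac{iu}{2} + \frac{uw}{12} + O(uw^3) + O(u^2)$ with $u = 2\pi z/n$, $w = 2\pi y^{(n)}_k/n$, and show after $\pm$ pairing and rigidity that the non-constant pieces vanish; the exact identity sidesteps all of that.
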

 Notice that this theorem cannot be straightforwardly deduced from the convergence of the zeroes of $\xi_n$ to a sine-kernel process. Afterall, the convergence of point processes is local in nature while the random analytic function $\xi_\infty$ is certainly not local, being a infinite product over all zeroes.
 
 Taking a finite number of points $z_1, \dots, z_p \in \C$, we see in particular that the joint law of 
 the mutual ratios of $Z_n (e^{2 i \pi z_1/n}), \dots, Z_n (e^{2 i \pi z_p/n})$ converges when $n$ goes to infinity. Now one can hope to gain new insights on the behaviour of ratios of characteristic polynomials on this microscopic scale. More precisely, let us define:
 
\begin{equation}\label{eq::ratios}
R(\alpha_1,\cdots,\alpha_r;\beta_1,\cdots,\beta_r):=\dfrac{Z_n(e^{2i \alpha_1 \pi/n})\cdots Z_n(e^{2i \alpha_r \pi/n})}{Z_n(e^{2i \beta_1 \pi/n})\cdots Z_n(e^{2i \beta_r \pi/n})},
\end{equation}where $r\in\N$ and $\alpha_j\in\C$, $\beta_j\in\C$, for all $1\leq j\leq r$.
Ratios such as \eqref{eq::ratios}, on the macroscopic scale (i.e. without the $1/n$ in the arguments) have been
extensively studied in random matrix theory for different random matrix ensembles, e.g. the GUE
by Fyodorov and Strahov in \cite{bib:FyStr} and \cite{bib:StrFy}, the COE and the CSE by Borodin and Strahov in \cite{bib:BoStr} or in the CUE case by Conrey, Farmer and Zirnbauer (\cite{bib:CFZ08}), by Conrey and Snaith (\cite{bib:CoSn}) or Bump and Gamburd (\cite{bib:BG}). In all cases, one considers the expectation of the ratios and the $n$-limit of this expression. But finding the $n$-limit of $R(\alpha_1,\cdots,\alpha_r;\beta_1,\cdots,\beta_r)$ had remained an open problem. In fact, we shall prove a strong version (i.e. with almost sure convergence) of Theorem \ref{thm:maininlaw} which will immediately yield the $n$-limit of $R(\alpha_1,\cdots,\alpha_r;\beta_1,\cdots,\beta_r)$ as well as some central limit theorem for the vector $(\log Z_n (e^{2 i \pi z/n}), \log Z_n (1))$. The almost sure convergence is established through the machinery of virtual isometries that we
  recall in the next paragraph.

\subsection{Virtual isometries and almost sure convergence}
 
In order to prove Theorem \ref{thm:maininlaw}, we will define the sequence $(U_n)_{n \geq 1}$ of unitary matrices in a common probability space, with a coupling chosen in such a way that an almost sure convergence occurs. An interest of this method is that it is more convenient to deal with pointwise convergence than with convergence in law when we work on a functional space. Moreover, the coupling gives a powerful way to keep track of the sequence $(\xi_n)_{n \geq 1}$ of holomorphic functions, and a deterministic link between this sequence and the limiting function $\xi_{\infty}$.

Besides it is important to stress that the moments method, which is a classical technique in random matrix theory, seems tedious to implement at best. Indeed the random function at hand $\xi_n$ does not have any integer moment when evaluated on the circle, which makes the use of the formulas on moments of ratios in \cite{bib:BG} and \cite{bib:CFZ08} difficult to use. For example, in Theorem 3 of the article \cite{bib:BG}, one clearly sees the divergence of moments of ratios, as the evaluation points get close to $1$.

The coupling we consider here corresponds to the notion of {\it virtual isometries}, as defined by Bourgade, Najnudel and Nikeghbali in \cite{bib:BNN}. The sequence $(U_n)_{n \geq 1}$ can be constructed in the following way: 

\begin{enumerate}
\item One considers a sequence $(x_n)_{x \geq 1}$ of independent random vectors, $x_n$ being uniform on the unit sphere of 
$\C^n$. 
\item Almost surely, for all $n \geq 1$, $x_n$ is different from the last basis vector $e_n$ of $\C^n$, which implies that there exists a unique $R_n \in U(n)$ such that $R_n(e_n) = x_n$ and $R_n - I_n$ has rank one. 
\item We define $(U_n)_{n \geq 1}$ by induction as follows: $U_1 = x_1$ and for all $n \geq 2$, 
$$U_n = R_n \left( \begin{array}{cc}
U_{n-1} & 0\\
0 & 1 \end{array} \right).$$
\end{enumerate}

It has already been proven in \cite{bib:BHNY} that with this construction, $U_n$ follows, for all $n \geq 1$, the Haar measure on $U(n)$. From now on, we always assume that the sequence $(U_n)_{n \geq 1}$ is defined with this coupling.

For each value of $n$, let $\lambda_1^{(n)}, \dots, \lambda_n^{(n)}$ be the eigenvalues of $U_n$, ordered counterclockwise, starting from $1$: they are almost surely pairwise distinct and different from $1$. If $1 \leq k \leq n$, we denote by $\theta_k^{(n)}$ the argument of $\lambda_k^{(n)}$, taken in the interval $(0, 2\pi)$: $\theta_k^{(n)}$ is the $k$-th strictly positive eigenangle of $U_n$. If we consider all the eigenangles of $U_n$, taken not only in $(0, 2 \pi)$ but in the whole real line, we get a $(2 \pi)$-periodic set with $n$ points in each period.  If the eigenangles are indexed increasingly by $\mathbb{Z}$, we obtain a sequence
 $$ \dots < \theta_{-1}^{(n)} < \theta_{0}^{(n)} < 0 < \theta_{1}^{(n)} < \theta_{2}^{(n)} < \dots, $$ 
for which $\theta_{k+n}^{(n)} = \theta_k^{(n)} + 2 \pi$ for all $k \in \mathbb{Z}$. 

It is also convenient to extend the sequence of eigenvalues as a $n$-periodic sequence indexed by $\mathbb{Z}$, in such a way that for all $k \in \Z$, 
$$ \lambda_k^{(n)} = \exp\left( i\theta_k^{(n)} \right). $$

With the notation above, the following holds:
\begin{thm}[Theorem 7.3 in \cite{bib:MNN}]
\label{thm:as_cv_sine}
Almost surely, the point process 
$$\left( y_k^{(n)} := \frac{n}{2 \pi} \theta_{k}^{(n)} , \ k \in \Z \right)$$
converges pointwise to a determinantal sine-kernel point process $\left( y_k , \ k \in \Z \right)$. And 
moreover, almost surely, the following estimate holds for all $\varepsilon  > 0$:
$$ \forall  k \in [-n^{\frac{1}{4}},n^{\frac{1}{4}}],  \ y_k^{(n)} = y_k + O_\varepsilon\left( (1+k^2) n^{-\frac{1}{3}+\varepsilon} \right)  $$
\end{thm}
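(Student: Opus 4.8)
The statement packages two claims: almost sure pointwise convergence of the rescaled eigenangle process $(y_k^{(n)})_k$ to a sine-kernel determinantal process, and a polynomial rate. The convergence \emph{in law} is the classical content of Proposition~\ref{Laplacefunctionals}, so the real work is in (a) promoting it to an almost sure statement through the virtual isometry coupling and (b) quantifying the speed. The plan is to build everything on the recursion $U_n = R_n\,\mathrm{diag}(U_{n-1},1)$. Since $R_n-I_n$ has rank one, $U_n$ and $V_n:=\mathrm{diag}(U_{n-1},1)$ differ by a rank-one unitary perturbation, so their spectra strictly interlace on the circle; and as $V_n$ carries an eigenangle at $0$ (from the trivial block) while $U_n$ almost surely does not, interlacing forces, for every fixed $k\geq 1$ and $n>k$,
\begin{equation*}
\theta_{k-1}^{(n-1)} \;<\; \theta_k^{(n)} \;<\; \theta_k^{(n-1)}, \qquad \theta_0^{(n-1)}:=0,
\end{equation*}
with a symmetric statement for $k\leq 0$. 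Hence $n\mapsto\theta_k^{(n)}$ is monotone and bounded, so it converges, and — more usefully — the number of eigenangles of $U_n$ in any \emph{fixed} arc $(0,\phi)$ is non-decreasing in $n$ and jumps by at most one at each step.

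To reach the rescaled scale, for $t>0$ put $N_n(t):=\#\{\,j\geq 1:\theta_j^{(n)}\leq 2\pi t/n\,\}$, so that $\{y_k^{(n)}\leq t\}=\{N_n(t)\geq k\}$, and let $N_\infty$ be the analogous counting function of the limit. Because $U_n$ is Haar distributed, $\E N_n(t)=t$ exactly, and since $N_n(t)$ is a determinantal linear statistic it has the law of a sum of independent Bernoulli variables; with the classical bound $\Var N_n(t)\lesssim\log(2+t)$ for the CUE arc-counting function this gives a Bernstein estimate $\P(|N_n(t)-t|\geq\lambda)\leq 2\exp\!\big(-c\min(\lambda^2/\log(2+t),\lambda)\big)$, uniform in $n$ for $t\leq n$. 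Feeding this uniform concentration into the across-dimension control of the previous paragraph — arc counts move by at most $+1$ per step, and the rescaled window $(0,2\pi t/n)$ loses only an annulus carrying $O(t/n)$ eigenangles at each step — one runs Borel--Cantelli along a polynomially growing subsequence $n_j$ and interpolates between the $n_j$ using the monotone increments, obtaining $N_n(t)\to N_\infty(t)$ almost surely for all $t$ outside a null set; equivalently $y_k^{(n)}\to y_k$ almost surely for each $k$. The limit process then has the sine-kernel correlation functions by the already-recorded convergence in law.

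The rate needs two inputs. One is a distributional rate for a single dimension: the rescaled CUE kernel $K_n(x,y)=\sin(\pi(x-y))/\big(n\sin(\pi(x-y)/n)\big)$ differs from the sine kernel by $O((x-y)^2/n^2)$ on the scale $|x-y|\ll n$, which propagates to $|\E F(N_n(t))-\E F(N_\infty(t))|=O((1+t^2)/n^2)$ for bounded $F$ and hence to a comparison of the laws of $y_k^{(n)}$ and $y_k$ with a polynomial-in-$k$, power-of-$1/n$ error. The other, and dominant, input is a rate for the almost sure stabilization: the Borel--Cantelli step is run with an explicit threshold $\delta_n$, simultaneously over all $|k|\leq n^{1/4}$ via a union bound; balancing the Bernstein exponent against the number of indices and the spacing of the subsequence, and using that the $k$-th eigenangle sits at height $\asymp k$ so that fluctuations of the counting function below that height accumulate with a factor $\asymp(1+k^2)$, lets one take $\delta_n=n^{-1/3+\varepsilon}$. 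Together these give $y_k^{(n)}=y_k+O_\varepsilon\!\big((1+k^2)\,n^{-1/3+\varepsilon}\big)$ uniformly on $|k|\leq n^{1/4}$.

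The main obstacle is precisely this last quantitative coupling estimate. Interlacing delivers qualitative almost sure convergence essentially for free, but turning the fast distributional control of a single dimension's counting function into a \emph{pointwise} rate for individual eigenangles needs a rigidity-type input — that $N_n$ has no long flat stretch near height $k$, so that $y_k^{(n)}$ is genuinely localized — together with a careful simultaneous bookkeeping of the Bernstein bound, the kernel-convergence rate, and summability both over the subsequence and over $|k|\leq n^{1/4}$ as one pushes the induction upward. Securing the exponent $-1/3$, and the $(1+k^2)$ dependence, rather than something visibly worse, is where the delicacy lies; I would expect the exponent to be robust but not optimal.
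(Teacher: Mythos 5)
This statement is not proved in the paper at all: it is imported verbatim as Theorem 7.3 of \cite{bib:MNN}, and the actual proof there rests on a fine analysis of how each individual eigenangle moves under the successive rank-one updates $U_{n+1}=R_{n+1}\,\mathrm{diag}(U_n,1)$ (explicit secular-equation identities for the rank-one perturbation, moment and martingale-type estimates for the coordinates of the uniform vectors $x_n$, and a summation of the resulting displacements over $n$). Your sketch does not reproduce, or replace, that mechanism, and the step where you would need it is exactly where your argument breaks.

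Concretely, the gap is in the passage ``feeding this uniform concentration into the across-dimension control \dots one runs Borel--Cantelli along a polynomially growing subsequence and interpolates between the $n_j$ using the monotone increments.'' The Bernstein bound you invoke controls the deviation of $N_n(t)$ from its \emph{mean} $t$ at each fixed $n$; it gives no information on the difference between the coupled variables $N_n(t)$ and $N_m(t)$ for $m\neq n$. Indeed $N_n(t)-t$ converges in law to a nondegenerate limit of size $\asymp\sqrt{\log t}$, so no union bound over marginal deviations can make the sequence a.s.\ Cauchy; almost sure convergence is a statement about the coupling, not about the marginals. The interlacing you correctly derive (namely $\theta_{k-1}^{(n-1)}<\theta_k^{(n)}<\theta_k^{(n-1)}$, hence monotone counts in \emph{fixed} arcs) does not transfer to the rescaled statistic, since $N_n(t)$ counts points in the shrinking window $(0,2\pi t/n)$, so there is no monotone interpolation between the $n_j$; and the comparison of the rescaled CUE kernel with the sine kernel is again a comparison of laws, which cannot yield a pointwise rate for $y_k^{(n)}-y_k$ under the virtual-isometry coupling. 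Thus the two ingredients you present as the ``dominant input'' for the rate $O_\varepsilon\bigl((1+k^2)n^{-1/3+\varepsilon}\bigr)$ cannot produce it even in principle, and your closing paragraph in effect concedes this: the ``quantitative coupling estimate'' you defer is not a technical refinement of your argument but the entire content of the cited theorem.
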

\begin{rmk}
 The implied constant in $O_\varepsilon$ is random: more precisely, it may depend on the sequence $(U_m)_{m \geq 1}$ and 
 on $\varepsilon$. However, it does not depend on $k$ and $n$. 
\end{rmk}
We are now able to state  the main convergence result of the paper.
\begin{thm} \label{thm:main}
Almost surely and uniformly on compact subsets of $\C$, we have the convergence:
$$ \xi_n\left( z \right) \stackrel{ n \rightarrow \infty}{\longrightarrow} \xi_\infty(z) := e^{i \pi z} 
\prod_{k \in \Z}\left( 1 - \frac{z}{y_k}\right)$$
Here, the infinite product is not absolutely convergent. It has to be understood as the limit of the following 
product, obtained by regrouping the factors  two by two: 
$$\left( 1 - \frac{z}{y_0} \right) \prod_{k \geq 1} \left[ \left( 1 - \frac{z}{y_k} \right)
\left( 1 - \frac{z}{y_{-k}} \right) \right],$$
which is absolutely convergent. 
\end{thm}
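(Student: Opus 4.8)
The plan is to realise both $\xi_n$ and $\xi_\infty$ as Hadamard (canonical) products of genus one and to compare them factor by factor; the only genuinely delicate point will be the exponential prefactor, which carries a conditionally convergent sum. I would first derive an explicit product formula for $\xi_n$: writing $Z_n(e^{i\varphi})=(-2i)^n e^{i(n\varphi-\Theta_n)/2}\prod_{j=1}^n\sin\big(\tfrac{\varphi-\theta_j^{(n)}}{2}\big)$ with $\Theta_n=\sum_j\theta_j^{(n)}$, and forming the ratio at $\varphi=2\pi z/n$ and $\varphi=0$, one obtains
$$\xi_n(z)=e^{i\pi z}\prod_{j\in W_n}\frac{\sin\big(\pi(y_j^{(n)}-z)/n\big)}{\sin\big(\pi y_j^{(n)}/n\big)},$$
where $W_n$ is any window of $n$ consecutive integers (the individual factors are $n$-periodic in $j$ once the eigenangles are extended $\mathbb{Z}$-periodically); I would take $W_n$ symmetric about $0$. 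Substituting $\sin(\pi u/n)=\tfrac{\pi u}{n}\prod_{m\ge1}(1-\tfrac{u^2}{m^2n^2})$ and using $y_{j\pm mn}^{(n)}=y_j^{(n)}\pm mn$, the double product reorganises into a single product over $k\in\mathbb{Z}$, giving the genus-one factorisation
$$\xi_n(z)=e^{c_n z}\prod_{k\in\mathbb{Z}}E_1\Big(\tfrac{z}{y_k^{(n)}}\Big),\qquad E_1(w):=(1-w)e^{w},\qquad c_n:=\xi_n'(0)=i\pi-s_n,$$
where $s_n:=\tfrac{\pi}{n}\sum_{j\in W_n}\cot(\pi y_j^{(n)}/n)$, which also equals the conditionally convergent sum $\sum_{k\in\mathbb{Z}}1/y_k^{(n)}$. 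Using the rigidity of the limiting sine process from \cite{bib:MNN} (equivalently, the absolute convergence of the grouped product asserted in the statement, which amounts to $\sum_{k\ge1}|1/y_k+1/y_{-k}|<\infty$ a.s.), one sees in the same way that $s_\infty:=\sum_{k\in\mathbb{Z}}1/y_k$ converges, that the grouped product defining $\xi_\infty$ converges (so $\xi_\infty$ is entire), and that $\xi_\infty(z)=e^{c_\infty z}\prod_{k\in\mathbb{Z}}E_1(z/y_k)$ with $c_\infty:=i\pi-s_\infty$.

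Next I would dispose of the canonical products, which is routine. On the almost sure event of Theorem~\ref{thm:as_cv_sine} together with the classical CUE rigidity estimate $\max_{1\le r\le n}|y_r^{(n)}-r|=O(\log n)$ (which, $y_k^{(n)}-k$ being $n$-periodic in $k$, holds for all $k$), and using $\inf_{k\neq0}|y_k|/|k|>0$ a.s., there is a random $c'>0$ with $|y_k^{(n)}|\ge c'|k|$ and $|y_k|\ge c'|k|$ for all $k\neq0$ and all $n$. Fix a compact $K\subset\mathbb{C}$, put $R:=\sup_{z\in K}|z|$ and $M_0:=\lceil 4R/c'\rceil$. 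For $|k|\ge M_0$ one has $|z/y_k^{(n)}|\le 1/2$ and $|\log E_1(w)|\le|w|^2$ there, so $\prod_{|k|\ge M_0}E_1(z/y_k^{(n)})$ converges uniformly in $(z,n)$ on $K\times\mathbb{N}$, is uniformly bounded, and — each factor tending uniformly on $K$ to $E_1(z/y_k)$ by Theorem~\ref{thm:as_cv_sine} — converges as $n\to\infty$ uniformly on $K$ to $\prod_{|k|\ge M_0}E_1(z/y_k)$. The complementary finite product $\prod_{|k|<M_0}E_1(z/y_k^{(n)})$ converges uniformly on $K$ to $\prod_{|k|<M_0}E_1(z/y_k)$ since $y_k^{(n)}\to y_k\ne0$. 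Hence $\prod_{k\in\mathbb{Z}}E_1(z/y_k^{(n)})\to\prod_{k\in\mathbb{Z}}E_1(z/y_k)$ uniformly on $K$, and once $c_n\to c_\infty$ is established, $\xi_n=e^{c_n z}\prod_k E_1(z/y_k^{(n)})\to e^{c_\infty z}\prod_k E_1(z/y_k)=\xi_\infty$ uniformly on $K$ follows at once (a product of two uniformly convergent, uniformly bounded sequences).

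It remains to prove $s_n\to s_\infty$ almost surely, and this is the heart of the matter, being the only point where the conditionally convergent sum intervenes. I would split $W_n$ at $|j|=\lfloor n^{1/4}\rfloor$. For $|k|\le n^{1/4}$, the quantitative part of Theorem~\ref{thm:as_cv_sine} gives $|1/y_k^{(n)}-1/y_k|=|y_k^{(n)}-y_k|/(|y_k^{(n)}|\,|y_k|)=O_\varepsilon(n^{-1/3+\varepsilon})$, hence $\sum_{|k|\le n^{1/4}}(1/y_k^{(n)}-1/y_k)=O(n^{1/4-1/3+\varepsilon})\to0$, while $\sum_{|k|\le n^{1/4}}1/y_k\to s_\infty$ as the symmetric partial sums of the convergent grouped series; moreover $\tfrac{\pi}{n}\cot(\pi y_j^{(n)}/n)=1/y_j^{(n)}+O(|y_j^{(n)}|/n^2)$ for these $j$, whose error sums to $o(1)$. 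For $n^{1/4}<|j|\le n/2$ I would pair $j$ with $-j$ and use the oddness of $\cot$: rigidity gives $y_j^{(n)}+y_{-j}^{(n)}=O(\log n)$ and $|\sin(\pi y_{\pm j}^{(n)}/n)|\gtrsim j/n$, so $|\cot(\pi y_j^{(n)}/n)+\cot(\pi y_{-j}^{(n)}/n)|\lesssim n\log n/j^2$, whence the bulk contribution to $s_n$ is $O\big(\log n\sum_{j>n^{1/4}}j^{-2}\big)=O(n^{-1/4}\log n)\to0$ (the unpaired index present when $n$ is even being obviously negligible). Combining these estimates gives $s_n=s_\infty+o(1)$, which finishes the proof. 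The main obstacle is precisely this last step — marrying the fine near-origin control of Theorem~\ref{thm:as_cv_sine} with a global bulk rigidity estimate and exploiting the exact antisymmetry of $\cot$ about the origin — whereas the product identity, the genus-one factorisation and the Weierstrass-type bound on the canonical product are standard.
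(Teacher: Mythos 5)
Your argument is correct and takes a genuinely different route from the paper's. The paper stays with the conditionally convergent product throughout: it first establishes $y_k^{(n)}=k+O(\log(2+|k|))$ uniformly in both $n$ and $k$ (Proposition~\ref{proposition:key_estimate}), deduces that the paired factors $(1-z/y_k^{(n)})(1-z/y_{-k}^{(n)})$ are $1+O_K(\log(2+k)/k^2)$ uniformly in $n$, and closes by a three-term tail truncation followed by Montel's theorem. You instead split off the full Weierstrass exponential, rewriting $\xi_n(z)=e^{(i\pi-s_n)z}\prod_{k\in\Z}(1-z/y_k^{(n)})e^{z/y_k^{(n)}}$ with $s_n$ the symmetric sum of the $1/y_k^{(n)}$; the genus-one canonical product is absolutely convergent and passes to the limit straightforwardly once a uniform lower bound $|y_k^{(n)}|\gtrsim |k|$ is available, so the only genuinely delicate content becomes the scalar limit $s_n\to s_\infty$. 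The decisive step is your \emph{finite} representation $s_n=\tfrac{\pi}{n}\sum_{j\in W_n}\cot(\pi y_j^{(n)}/n)$, which you control by splitting at $|j|=n^{1/4}$: the quantitative rigidity of Theorem~\ref{thm:as_cv_sine} handles the core, and the bulk $O(\log n)$ rigidity together with the antisymmetry of $\cot$ kills the tail. Both proofs ultimately draw on the same two inputs (local quantitative rigidity near the origin, global $O(\log n)$ rigidity), but yours has the conceptual merit of isolating all the conditional-convergence subtlety into a single real number, while the paper's has the merit of a single unified key estimate valid for all $n$ and $k$ simultaneously.

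One step deserves to be made explicit. The uniform bound $|y_k^{(n)}|\ge c'|k|$ for all $k\ne 0$ and \emph{all} $n$ (used both for the canonical product tails and for the $\cot$ denominators) does not quite follow as you write it: the bulk estimate $y_k^{(n)}=k+O(\log n)$ gives it only for $|k|\gtrsim\log n$, and $\inf_{k\ne 0}|y_k|/|k|>0$ controls only the limiting process, not the finite-$n$ eigenangles at small $|k|$. To cover $1\le|k|\lesssim\log n$ uniformly in $n$ you need the quantitative part of Theorem~\ref{thm:as_cv_sine} once more: for $n$ large one has $|k|\lesssim\log n\le n^{1/4}$, so $\sup_{|k|\lesssim\log n}|y_k^{(n)}-y_k|\to 0$, giving $|y_k^{(n)}|\ge |y_k|/2\gtrsim |k|$ there; the remaining finitely many pairs $(n,k)$ are handled by a.s.\ nonvanishing. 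This is precisely what the paper's Proposition~\ref{proposition:key_estimate} packages up, and once spelled out your argument is complete.
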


This theorem immediately implies Theorem \ref{thm:maininlaw}, provided that $\xi_{\infty}$ is entire and that the zeros of $\xi_{\infty}$ are exactly given by the sequence $(y_k)_{k \in \Z}$. This first point is a direct consequence of the fact that $\xi_{\infty}$ is the uniform limit on compact sets of the sequence of entire functions $(\xi_n)_{n \geq 1}$, and the second point is a consequence of the fact that the $k$-th factor of the absolutely convergent product above vanishes at $y_k$ and $y_{-k}$ and only at these points.
 
Now, thanks to the almost sure convergence, we can state the following corollaries.
 
 \begin{corollary}\label{ratiosconv}
Let $r\in\N$ and $\alpha_j\in\C$, $\beta_j\in\C$ but $\beta_j\notin (y_k)_{k\in\Z}$, for all $1\leq j\leq r$. Then the following convergence holds a.s. as $n\to\infty$:
 $$R(\alpha_1,\cdots,\alpha_r;\beta_1,\cdots,\beta_r):=\dfrac{Z_n(e^{2i \alpha_1 \pi/n})\cdots Z_n(e^{2i \alpha_r \pi/n})}{Z_n(e^{2i \beta_1 \pi/n})\cdots Z_n(e^{2i \beta_r \pi/n})}\to\dfrac{\xi_\infty(\alpha_1)\cdots\xi_\infty(\alpha_r)}{\xi_\infty(\beta_1)\cdots\xi_\infty(\beta_r)}$$
 \end{corollary}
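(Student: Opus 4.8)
The plan is to reduce the statement directly to Theorem \ref{thm:main} by rewriting the ratio $R$ in terms of the normalized characteristic polynomials $\xi_n$. First I would note that, almost surely, $1$ is not an eigenvalue of any $U_n$, so $Z_n(1)\neq 0$ for all $n$; on this almost sure event one may multiply the numerator and denominator of $R(\alpha_1,\cdots,\alpha_r;\beta_1,\cdots,\beta_r)$ by $Z_n(1)^r$ and use the very definition \eqref{eq:def_xi_n} of $\xi_n$ to obtain, for every $n$,
\[
R(\alpha_1,\cdots,\alpha_r;\beta_1,\cdots,\beta_r)=\frac{\xi_n(\alpha_1)\cdots\xi_n(\alpha_r)}{\xi_n(\beta_1)\cdots\xi_n(\beta_r)}.
\]

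Next I would invoke Theorem \ref{thm:main}: almost surely, $\xi_n\to\xi_\infty$ uniformly on compact subsets of $\C$. Taking any compact set containing the finite collection $\{\alpha_1,\cdots,\alpha_r,\beta_1,\cdots,\beta_r\}$, this yields in particular $\xi_n(\alpha_j)\to\xi_\infty(\alpha_j)$ and $\xi_n(\beta_j)\to\xi_\infty(\beta_j)$ as $n\to\infty$, for each $j$; since only finitely many points are involved, all these convergences hold simultaneously on a single almost sure event.

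The only point requiring a word is that the limiting denominator $\xi_\infty(\beta_1)\cdots\xi_\infty(\beta_r)$ be nonzero, so that the ratio of the limits is well defined and coincides with the limit of the ratios. This is guaranteed by the hypothesis $\beta_j\notin(y_k)_{k\in\Z}$: as observed immediately after the statement of Theorem \ref{thm:main}, the zero set of $\xi_\infty$ is exactly $(y_k)_{k\in\Z}$, hence $\xi_\infty(\beta_j)\neq 0$ for all $j$ and the product is nonzero. Combining this with the elementary fact that a quotient of finitely many convergent sequences, with nonvanishing limiting denominator, converges to the quotient of the limits, completes the argument. I do not expect any genuine obstacle here: the corollary is an immediate consequence of the strong (almost sure, locally uniform) convergence established in Theorem \ref{thm:main}, whose proof — via the virtual isometry coupling and the estimates of Theorem \ref{thm:as_cv_sine} — is where all the work lies.
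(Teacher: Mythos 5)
Your argument is correct and is exactly the reasoning the paper intends: the corollary is presented as an immediate consequence of the almost sure, locally uniform convergence in Theorem \ref{thm:main}, and you have simply made explicit the cancellation of $Z_n(1)^r$, the passage to the limit at finitely many points, and the nonvanishing of the limiting denominator guaranteed by $\beta_j\notin(y_k)_{k\in\Z}$.
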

 
 In Section \ref{lenotre} we shall establish that the above convergence also holds in expectation locally uniformly. Since the convergence in Theorem \ref{thm:main} holds almost surely in the space of holomorphic functions, we immediately obtain:
\begin{corollary} \label{laderivee}
We have a.s. uniformly on compact sets, that as $n\to\infty$:
$$\dfrac{2 i \pi}{n}\dfrac{Z_n^{'} (e^{2 i \pi z/n})}{Z_n(1)}\to \xi_\infty^{'}.$$
\end{corollary}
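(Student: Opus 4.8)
\emph{Proof proposal.} The plan is to recognize the left-hand side as, up to a harmless exponential prefactor, the derivative $\xi_n'$ of the function in Theorem \ref{thm:main}, and then to use that locally uniform convergence of holomorphic functions passes to their derivatives.

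First I would differentiate the defining relation \eqref{eq:def_xi_n}. Writing $w(z) = e^{2 i \pi z/n}$, so that $w'(z) = \frac{2 i \pi}{n} e^{2 i \pi z/n}$, and recalling that $Z_n$ is entire (a polynomial in its argument), the chain rule gives
$$\xi_n'(z) = \frac{2 i \pi}{n}\, e^{2 i \pi z/n}\, \frac{Z_n'(e^{2 i \pi z/n})}{Z_n(1)},$$
hence
$$\frac{2 i \pi}{n}\, \frac{Z_n'(e^{2 i \pi z/n})}{Z_n(1)} = e^{-2 i \pi z/n}\, \xi_n'(z).$$
This reduces the statement to proving that $e^{-2 i \pi z/n}\, \xi_n'(z) \to \xi_\infty'(z)$ almost surely, uniformly on compact subsets of $\C$.

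Next I would treat the two factors separately. On a fixed compact $K \subset \C$ one has $\sup_{z \in K} |e^{-2 i \pi z/n} - 1| \to 0$ as $n \to \infty$, so $e^{-2 i \pi z/n} \to 1$ uniformly on $K$ and these prefactors are uniformly bounded. For the derivatives, Theorem \ref{thm:main} furnishes, on an almost sure event, the locally uniform convergence $\xi_n \to \xi_\infty$ of entire functions; since this convergence takes place in the space of \emph{holomorphic} functions, Cauchy's integral formula (equivalently, the Weierstrass convergence theorem) yields $\xi_n' \to \xi_\infty'$ locally uniformly as well — concretely, enclosing $K$ in a slightly larger compact neighbourhood and bounding $\sup_K |\xi_n' - \xi_\infty'|$ by a constant times the supremum of $|\xi_n - \xi_\infty|$ on that neighbourhood via the Cauchy estimate. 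Multiplying a uniformly convergent sequence by a uniformly bounded, uniformly convergent one, the product $e^{-2 i \pi z/n}\, \xi_n'(z)$ converges to $1 \cdot \xi_\infty'(z) = \xi_\infty'(z)$ uniformly on $K$, which is the claim, and the exceptional null set is exactly the one already excluded in Theorem \ref{thm:main}.

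There is no real obstacle here; the corollary is essentially a formal consequence of Theorem \ref{thm:main}. The one point worth stressing is that the step from convergence of $\xi_n$ to convergence of $\xi_n'$ genuinely uses holomorphy (it would fail for merely continuous functions), so what is being exploited is that the convergence in Theorem \ref{thm:main} is in the holomorphic category and not just in $C(\C,\C)$. The only bookkeeping is to carry along the factor $e^{2 i \pi z/n}$ produced by the chain rule and to check it is asymptotically negligible on compacts, as above.
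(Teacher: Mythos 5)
Your proof is correct and takes the same route as the paper, which offers only the one-line remark that almost-sure locally uniform convergence of holomorphic functions transfers to their derivatives. Your explicit tracking of the chain-rule factor $e^{2i\pi z/n}$ — noting that the corollary's left-hand side is $e^{-2i\pi z/n}\xi_n'(z)$ rather than $\xi_n'(z)$ itself, and that this factor tends to $1$ uniformly on compacts — is a small but worthwhile refinement of a step the paper tacitly elides.
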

The next corollary involves the logarithm of 
$Z_n$.  We provide a simple proof thanks to our functional convergence result. The determination of this logarithm is the only one such that $\log Z_n$ vanishes at $0$ (recall that $Z_n(0) = 1$), and which is continuous on the following maximal simply connected domain 
$$ \Dc := \C \backslash \left\{ r e^{i \theta_k^{(n)}} \ | \ k \in \Z, \ r \geq 1 \right\}.$$

Note that for all $z \in \Dc$, we have:
$$ \log Z_n(z) = \sum_{k=1}^n \log\left( 1 - \frac{z}{\lambda_k^{(n)}} \right),$$
where the principal branch of the logarithm is considered. 
 
\begin{corollary}
Let $r \in \N$. and fix $\left( z_1, z_2, \dots, z_r \right) \in \C^r$. The following convergence holds in law as $n\to\infty$
$$\left(\dfrac{\log Z_n (e^{2 i \pi z_1/n})}{\sqrt{\half \log n}}, \dots, \dfrac{\log Z_n (e^{2 i \pi z_r/n})}{\sqrt{\half \log n}} \right)
  \to
  \mathcal{N} e $$
 where $\mathcal N$ stands for a standard complex Gaussian random variable, and $e$ denotes the vector $\left( 1, 1, \dots, 1 \right) \in \C^r$.
\end{corollary}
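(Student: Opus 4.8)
\emph{Proof sketch.} The plan is to isolate, inside $\log Z_n(e^{2i\pi z_j/n})$, the single macroscopic fluctuation that survives division by $\sqrt{\half\log n}$ --- namely $\log Z_n(1)$ --- and to check that all the remaining pieces are of order $1$. For a fixed $z_j$, almost surely $e^{2i\pi z_j/n}\in\Dc$ and $\xi_n(z_j)\neq 0$ for every $n\geq 1$ (for fixed $z_j$ each of the countably many exceptional events is null, since the eigenangles of $U_n$ have densities). Then the principal-branch expansion $\log Z_n(z)=\sum_{k=1}^n\log\bigl(1-z/\lambda_k^{(n)}\bigr)$ gives the exact identity
$$\log Z_n\!\left(e^{2i\pi z_j/n}\right)=\log Z_n(1)+g_n(z_j),\qquad g_n(z):=\sum_{k=1}^n\left[\log\!\left(1-\frac{e^{2i\pi z/n}}{\lambda_k^{(n)}}\right)-\log\!\left(1-\frac{1}{\lambda_k^{(n)}}\right)\right];$$
since $e^{g_n(z)}=\xi_n(z)$ and $g_n(0)=0$, the holomorphic function $g_n$ is exactly the branch of $\log\xi_n$ vanishing at the origin.

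The first thing to prove is that $g_n(z_j)$ converges almost surely, in particular stays $O(1)$. For fixed $z_j$, almost surely $z_j$ is not one of the real zeros $(y_k)_{k\in\Z}$ of $\xi_\infty$ (the sine-kernel correlation functions have no atoms). By Theorem \ref{thm:main}, $\xi_n\to\xi_\infty$ almost surely, uniformly on compact sets; combining this with $\xi_n(0)=\xi_\infty(0)=1$, with the fact that the zeros of $\xi_\infty$ are isolated, and with Hurwitz's theorem, one transfers the convergence to the logarithms: the branches $g_n$ converge almost surely, uniformly on every compact subset of $\C\setminus(y_k)_{k\in\Z}$, to the branch $g_\infty$ of $\log\xi_\infty$ that vanishes at $0$. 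Evaluating at $z_j$ gives $g_n(z_j)\to g_\infty(z_j)$ almost surely, hence $g_n(z_j)/\sqrt{\half\log n}\to 0$ almost surely.

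It then remains to recall the Keating--Snaith central limit theorem \cite{bib:KS}: $\log Z_n(1)/\sqrt{\half\log n}$ converges in law to a standard complex Gaussian $\mathcal{N}$. Writing
$$\left(\frac{\log Z_n(e^{2i\pi z_j/n})}{\sqrt{\half\log n}}\right)_{1\le j\le r}=\frac{\log Z_n(1)}{\sqrt{\half\log n}}\;e+\left(\frac{g_n(z_j)}{\sqrt{\half\log n}}\right)_{1\le j\le r},$$
the first summand converges in law in $\C^r$ to $\mathcal{N}e$ (it is a fixed linear image of a scalar convergence in law), while the second tends to $0$ in probability by the previous step; Slutsky's lemma yields the announced convergence in law of the left-hand side to $\mathcal{N}e$.

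The genuinely delicate point is the almost sure convergence $g_n(z_j)\to g_\infty(z_j)$, and specifically the control of $\Im g_n(z_j)$: one has to prevent the winding of $\xi_n$ about the origin along a path joining $0$ to $z_j$ from drifting with $n$. This is why one needs the uniform convergence $\xi_n\to\xi_\infty$ on a whole compact neighbourhood of such a path --- not merely pointwise convergence at $z_j$ --- together with the absence of zeros of $\xi_\infty$ on that path; the boundedness of the relevant winding can also be read off Theorem \ref{thm:as_cv_sine}, since $\Im g_n(z_j)$ essentially counts the rescaled eigenangles of $U_n$ situated between $0$ and $z_j$. Everything else is soft: the decomposition above is an exact identity once the branches are matched, and the conclusion is a one-line application of Slutsky's lemma.
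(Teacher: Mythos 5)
Your proposal is essentially the paper's own argument: decompose $\log Z_n(e^{2i\pi z_j/n})=\log Z_n(1)+\log\xi_n(z_j)$, show $\log\xi_n(z_j)\to\log\xi_\infty(z_j)$ almost surely from the uniform convergence $\xi_n\to\xi_\infty$ on compacts, invoke the Keating--Snaith CLT for $\log Z_n(1)/\sqrt{\tfrac12\log n}$, and conclude by Slutsky. The paper makes the branch-matching step explicit by writing $\log\xi_n(z)=\int_L\xi_n'/\xi_n$ and $\log\xi_\infty(z)=\int_L\xi_\infty'/\xi_\infty$ along a fixed path $L$ from $0$ to $z+it$ to $z$, on which $\xi_\infty$ is eventually bounded away from zero. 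One small imprecision in your write-up: the phrase ``the branches $g_n$ converge, uniformly on every compact subset of $\C\setminus(y_k)_{k\in\Z}$, to the branch $g_\infty$'' is not quite right, because $\C\setminus\{y_k\}$ is not simply connected, so no single-valued branch of $\log\xi_\infty$ exists on all of it; the paper works instead with the simply connected slit domain $\Dc_n'$ (and its limit), and your own ``delicate point'' paragraph with the fixed path $L$ is exactly the cure, so this is a wording issue rather than a gap.
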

A more general version of the corollary, and a similar result relative to the behavior of the 
Riemann zeta function near the critical line, have been obtained 
by Bourgade in \cite{bib:Bou10} (see Theorems 1.1. and 1.4. there). 
\begin{proof}
Let $z$ be a complex number among $\left( z_1, z_2, \dots, z_r \right)$. One checks that 
 $$\log Z_n (e^{2 i \pi z/n}) - \log Z_n (1) = \log \xi_n(z),$$
 where $\log \xi_n$ is the unique determination of the logarithm, vanishing at 
 $0$, and continuous in the domain 
 $$ \Dc'_n := \C \backslash \left\{ y_k^{(n)} - iu | k \in \Z, u \geq 0 \right\}.$$
 Let $\log \xi_{\infty}$ be the similar determination of the logarithm of $\xi_{\infty}$. 
Let us fix $z \in \C$, $t > 0$ such that $z + it$ has strictly positive imaginary part, and let $L$ be the line consisting of the two segments from $0$ to $z + it$ and from $z + it$ to $z$. We also recall that 
the random functions $(\xi_n)_{n \geq 1}$ and 
$\xi_{\infty}$ are coupled in such a way that 
almost surely, $\xi_{n}$ tends to $\xi_{\infty}$ uniformly on compact sets of $\C$. 
Almost surely, for $n$ large enough, $0$ and $\Re z$ are not zeros of $\xi_n$ and one deduces that 
$L$ is included in $\Dc'_n$. Hence, 
$$\log \xi_n(z) = \int_L \frac{\xi'_n(s)}{\xi_n(s)} ds$$
and 
$$\log \xi_{\infty}(z) = \int_L \frac{\xi'_{\infty}(s)}{\xi_{\infty}(s)} ds.$$
Now, $(\xi_n, \xi'_n)$ tends to $(\xi_{\infty}, \xi'_{\infty})$ uniformly on $L$. Moreover, $\xi_{\infty}$ is continuous 
and nonvanishing on the compact set $L$, which implies that $|\xi_{\infty}|$, and then 
$|\xi_n|$ for $n$ large enough, are bounded away from zero on $L$. Hence, 
$\xi'_n/\xi_n$ tends to $\xi'_{\infty}/\xi_{\infty}$ uniformly on $L$, and then $\log \xi_{n}(z) $ tends to 
$\log \xi_{\infty}(z)$. 
We deduce that 
$$\dfrac{\log Z_n (e^{2 i \pi z/n})}{\sqrt{(1/2)\log n}} - \dfrac{\log Z_n (1)}{\sqrt{(1/2) \log n}} \underset{n \rightarrow 
\infty}{\longrightarrow} 0$$ 
almost surely with the coupling above, and then in probability. Since we already know that the second term of the difference tends in law to $\mathcal{N}$, we are done. 
\end{proof}

We can use the a.s. convergence of $\log \xi_n$ to  $\log \xi_\infty$ that we established above to give a simple proof for the convergence of the number of points in an arc at the microscopic scale:

\begin{corollary}
For all $z \in \mathbb{R}$, the number of eigenvalues of $U_n$ in the arc between $1$ and $e^{2i \pi z/n}$ tends
in law to the number of points of a determinantal sine-kernel process in an interval of length $|z|$. 
\end{corollary}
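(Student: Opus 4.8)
The plan is to realise the count as a winding number of $\xi_n$ and transport it to the limit via the functional convergence of Theorem~\ref{thm:main} --- equivalently, via the convergence of $\xi_n'/\xi_n=(\log\xi_n)'$ to $\xi_\infty'/\xi_\infty$ already used in the preceding corollary. Fix $z\in\R$; the case $z=0$ is trivial, I treat $z>0$ in detail and the case $z<0$ is entirely analogous. Denote by $N_n(z)$ the number in question. Counting eigenangles, $N_n(z)=\#\{k\in\Z:\ 0<y_k^{(n)}<z\}$ with $y_k^{(n)}=\frac{n}{2\pi}\theta_k^{(n)}$ (well-defined for $n$ large, so that the arc does not wrap around the circle). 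Now the zero set of the entire function $\xi_n$ is exactly $\{y_k^{(n)}:k\in\Z\}$, all of whose elements are real, because $Z_n(e^{2i\pi w/n})=\prod_{j}(1-\bar\lambda_j^{(n)}e^{2i\pi w/n})$ vanishes iff $w\in y_j^{(n)}+n\Z$ for some $j$; hence, for $n$ large, the zeros of $\xi_n$ inside the rectangle $R:=[0,z]\times[-1,1]$ are precisely the $y_k^{(n)}$ lying in $(0,z)$, and the argument principle gives
$$N_n(z)=\frac{1}{2i\pi}\oint_{\partial R}\frac{\xi_n'(s)}{\xi_n(s)}\,ds$$
as soon as $\partial R$ avoids the zeros of $\xi_n$.

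I would then pass to the limit. Almost surely $0\notin\{y_k:k\in\Z\}$ and $z\notin\{y_k:k\in\Z\}$, so, the $y_k$ being real and the zero set of $\xi_\infty$ being exactly $\{y_k\}$, the function $\xi_\infty$ does not vanish on the compact set $\partial R$ (whose only real points are $0$ and $z$), and $m:=\min_{\partial R}|\xi_\infty|>0$. By Theorem~\ref{thm:main}, almost surely $\xi_n\to\xi_\infty$ uniformly on $\partial R$, so $|\xi_n|\ge m/2$ on $\partial R$ for $n$ large; in particular $\partial R$ avoids the zeros of $\xi_n$, the displayed formula holds, and, by the same reasoning as in the previous proof (Cauchy estimates together with the lower bound on $|\xi_n|$), $\xi_n'/\xi_n\to\xi_\infty'/\xi_\infty$ uniformly on $\partial R$. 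Consequently
$$N_n(z)\xrightarrow[n\to\infty]{}\frac{1}{2i\pi}\oint_{\partial R}\frac{\xi_\infty'(s)}{\xi_\infty(s)}\,ds=\#\{k\in\Z:\ 0<y_k<z\}$$
almost surely; being integer-valued, $N_n(z)$ is eventually equal to this limit, so the convergence holds also in law. The limit is the number of points of the determinantal sine-kernel process $(y_k)_{k\in\Z}$ in the interval $(0,z)$, of length $|z|$. For $z<0$ the same argument over $[z,0]\times[-1,1]$ gives the number of points in $(z,0)$, which, by translation invariance of the sine process, has the law of the number of points in any interval of length $|z|$.

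The one delicate point --- and the main, if modest, obstacle --- is keeping $\partial R$ off the zeros of $\xi_n$ uniformly in large $n$ while enclosing exactly the desired ones. Since every zero of $\xi_n$ is real, the horizontal sides of $R$ are harmless and only the two real boundary points $0$ and $z$ matter: $0$ is never a rescaled eigenangle of $U_n$, and $z$ is almost surely not a zero of $\xi_\infty$, so the uniform convergence $\xi_n\to\xi_\infty$ on the compact $\partial R$, where $|\xi_\infty|$ is bounded below, settles the matter without any need to perturb $R$. As an aside, one could bypass the complex analysis and read the statement directly off Theorem~\ref{thm:as_cv_sine}: the almost sure pointwise convergence $y_k^{(n)}\to y_k$, together with the monotone indexing of the eigenangles, forces $\#\{k:0<y_k^{(n)}<z\}=\#\{k:0<y_k<z\}$ for $n$ large; but the argument-principle route has the advantage of relying only on the functional convergence already in hand.
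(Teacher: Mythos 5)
Your proof is correct but takes a genuinely different route from the paper's. The paper counts eigenvalues through the explicit identity $N_n(z)=z-\frac{1}{\pi}\Im\log\xi_n(z)$ (via Corollary~\ref{corollary:kykn}), which ties the arc count directly to the variation of the argument of $Z_n$ along the unit circle, and then passes to the limit using the a.s.\ convergence $\log\xi_n\to\log\xi_\infty$ established in the immediately preceding corollary. You instead invoke the classical argument principle over a closed rectangle in $\C$: this requires a bit more bookkeeping --- choosing the contour to miss the (real) zeros of both $\xi_n$ and $\xi_\infty$, which you handle correctly via the uniform convergence of Theorem~\ref{thm:main} and the a.s.\ non-vanishing of $\xi_\infty$ at the real corners $0$ and $z$ --- but it makes no use of the specific piecewise-linear structure of $\Im\log Z_n$ on the circle, so it is somewhat more self-contained. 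Both routes rest ultimately on Theorem~\ref{thm:main}. Your closing aside, that the statement also follows directly from the ordered a.s.\ pointwise convergence $y_k^{(n)}\to y_k$ of Theorem~\ref{thm:as_cv_sine} with no complex analysis at all, is equally valid and is the most elementary of the three arguments; the paper presumably favours the $\Im\log\xi_n$ formulation to stay with its theme of expressing microscopic eigenvalue statistics through the random analytic function $\xi_n$.
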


\begin{proof}
	For $z \in \mathbb{R}$, the number of eigenvalues of $U_n$ in the arc between $1$ and $e^{2i \pi z/n}$, multiplied by the 
	sign of $z$, is equal to (see Corollary \ref{corollary:kykn})
	$$ N_n (z) := z - \frac{1}{\pi} \Im \log (\xi_n (z)),$$
	if $1$ and $e^{2i\pi z/n}$ are not eigenvalues of $U_n$ (which holds almost surely). Now 
	we know  that $ \log (\xi_n)$ tends a.s. to $\log (\xi_{\infty})$
	when $n$ goes to infinity. The proof of the corollary is completed once one notes that  
	$$N_{\infty} (z) := z - \frac{1}{\pi} \Im \log (\xi_{\infty} (z)),$$
	has the same absolute value as the number of zeros of $\xi_{\infty}$ between $0$ and $z$.
\end{proof}

\begin{rmk}
	We shall prove in Section \ref{sec::statlin} more general results on convergence of linear statistics at the microscopic scale.
\end{rmk}

\begin{rmk}
From Corollary \ref{laderivee} one can also deduce a joint central limit theorem for the log of the derivative of the characteristic polynomial at $e^{2 i \pi z/n}$ and the log of the characteristic polynomial at $1$.
\end{rmk}


We can eventually easily derive the limiting random analytic function for the logarithmic derivative:

\begin{corollary}
We have almost surely, for all $z \notin\{y_k,\; k\in\mathbb Z\}$ :
$$\dfrac{2 i \pi}{n} \dfrac{Z_n^{'} (e^{2 i \pi z/n})}{Z_n (e^{2 i \pi z/n})} \underset{n \rightarrow \infty}{\longrightarrow}
\frac{ \xi_{\infty}'(z) }{ \xi_{\infty}(z)},$$
where 
$$ \frac{ \xi_{\infty}'(z) }{ \xi_{\infty} (z)} = i \pi + \sum_{k \in \Z} \frac{1}{z-y_k}
 := i \pi + \frac{1}{z - y_0} + \sum_{k = 1}^{\infty} \left(\frac{1}{z - y_k} + \frac{1}{z - y_{-k}} \right).$$
Hence, for all $\alpha_1, \dots, \alpha_r \notin \{y_k,\; k\in\mathbb Z\}$, 
$$ \left( \dfrac{2 i \pi}{n} \right)^r
\dfrac{Z'_n(e^{2i \alpha_1 \pi/n})}{Z_n(e^{2i \alpha_1 \pi/n})}
\dfrac{Z'_n(e^{2i \alpha_2 \pi/n})}{Z_n(e^{2i \alpha_2 \pi/n})}
\cdots
\dfrac{Z'_n(e^{2i \alpha_r\pi/n})}{Z_n(e^{2i \alpha_r \pi/n})}
\underset{n \rightarrow \infty}{\longrightarrow} 
\frac{ \xi_{\infty}'(\alpha_1) }{ \xi_{\infty} (\alpha_1)} \cdots
\frac{ \xi_{\infty}'(\alpha_r) }{ \xi_{\infty} (\alpha_r)}.$$
 \end{corollary}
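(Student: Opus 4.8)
The plan is to deduce the statement from Theorem~\ref{thm:main} by pushing the derivative through the locally uniform limit. First I would recall that on the almost sure event of Theorem~\ref{thm:main} the convergence $\xi_n \to \xi_\infty$ holds uniformly on every compact subset of $\C$; applying the Cauchy integral formula for the derivative on a fixed circle then promotes this to $\xi_n' \to \xi_\infty'$, uniformly on compacts as well, so that $(\xi_n,\xi_n')$ converges to $(\xi_\infty,\xi_\infty')$ locally uniformly. Differentiating the relation $\xi_n(z) = Z_n(e^{2 i \pi z/n})/Z_n(1)$ gives the elementary identity
$$\frac{2 i \pi}{n}\,\frac{Z_n'(e^{2 i \pi z/n})}{Z_n(e^{2 i \pi z/n})} = e^{-2 i \pi z/n}\,\frac{\xi_n'(z)}{\xi_n(z)},$$
valid as soon as $e^{2 i \pi z/n}$ is not an eigenvalue of $U_n$, i.e. as soon as $\xi_n(z)\neq 0$.

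Next I would fix $z \notin \{y_k : k \in \Z\}$ and stay on the almost sure event above. Then $\xi_\infty(z) = e^{i\pi z}\prod_k(1 - z/y_k) \neq 0$, hence $\xi_n(z) \to \xi_\infty(z)$ forces $\xi_n(z) \neq 0$ for all $n$ large, so the left-hand side of the identity is eventually well defined; and since $e^{-2 i \pi z/n} \to 1$, $\xi_n'(z) \to \xi_\infty'(z)$, $\xi_n(z) \to \xi_\infty(z) \neq 0$, it converges to $\xi_\infty'(z)/\xi_\infty(z)$. The only bookkeeping point worth stressing is that the almost sure event is the single one of Theorem~\ref{thm:main} and involves no particular $z$ (locally uniform convergence of the holomorphic functions $\xi_n$, and of their derivatives, is such an event), so the convergence holds simultaneously for \emph{all} $z \notin \{y_k : k \in \Z\}$. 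The claim for a product over $\alpha_1,\dots,\alpha_r \notin \{y_k\}$ is then immediate, as a finite product of convergent scalar sequences converges to the product of the limits.

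It remains to identify $\xi_\infty'/\xi_\infty$ with the partial-fraction series, and I would get this by logarithmic differentiation of the factorization in Theorem~\ref{thm:main}. Write $\xi_\infty(z) = e^{i\pi z}\,(1 - z/y_0)\prod_{k\geq 1}(1 - z/y_k)(1 - z/y_{-k})$, the grouped product converging locally uniformly on $\C$ by Theorem~\ref{thm:main}; then $\xi_\infty'/\xi_\infty$ is meromorphic with a simple pole of residue $1$ at each $y_k$ and no other singularity, the exponential prefactor contributing the constant $i\pi$. Differentiating the grouped product term by term is legitimate because the series $\sum_{k\geq 1}\bigl[\tfrac{1}{z-y_k} + \tfrac{1}{z-y_{-k}}\bigr]$ converges locally uniformly off $\{y_k\}$: its $k$-th term equals $(2z - y_k - y_{-k})/((z-y_k)(z-y_{-k}))$, and by the (almost sure) linear growth and rigidity of the sine-kernel points $|y_{\pm k}|$ is comparable to $|k|$ while $|y_k + y_{-k}|$ is of lower order, so the general term is summable. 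This yields
$$\frac{\xi_\infty'(z)}{\xi_\infty(z)} = i\pi + \frac{1}{z - y_0} + \sum_{k\geq 1}\left(\frac{1}{z-y_k} + \frac{1}{z-y_{-k}}\right),$$
with the pairwise grouping inherited from Theorem~\ref{thm:main}.

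The argument is routine once Theorem~\ref{thm:main} is available; it is essentially a harvesting of that result. The two places asking for a little care are the bookkeeping with the almost sure event --- handled as above --- and the short growth estimate legitimizing the term-by-term logarithmic differentiation of the conditionally convergent product. I do not expect any of this to present a genuine obstacle: all the difficulty is concentrated upstream, in Theorem~\ref{thm:main}.
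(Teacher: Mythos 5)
Your proof is correct and follows essentially the same route the paper takes: pass to $\xi_n'\to\xi_\infty'$ via Cauchy's formula (the paper's Corollary~\ref{laderivee}), use the elementary identity relating the logarithmic derivative of $Z_n$ to that of $\xi_n$, and then obtain the partial-fraction expansion by logarithmic differentiation of the grouped product, with the summability of the $k$-th grouped term coming from the rigidity estimate $y_k = k + O(\log(2+|k|))$ of Proposition~\ref{proposition:key_estimate} (this is exactly the argument sketched in the proof of Lemma~\ref{lemma33}).
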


\subsection{Outline of the paper}
 The proof of Theorem \ref{thm:main}  will be made in several steps in Section \ref{preuvedumain} , using estimates
on the argument of $Z_n$, stated in Section \ref{section:estimatesZn}, and estimates on the renormalized
eigenangles $y_k^{(n)}$, stated in Section \ref{section:estimatesykn}.

 In Section \ref{section:properties}, we establish 
some properties of the limiting random function $\xi_{\infty}$, and prove some general results about convergence of  linear statistics at the microscopic scale. Unlike  other scales, convergence in law is proved for very general test functions (essentially integrable) and, as expected, no renormalization is needed in the non smooth cases (e.g. indicator functions).

In Section \ref{subsection:log_der}, we prove some fine and technical estimates related to the logarithmic derivative that we shall need later for the convergence of moments of ratios and 
we state some related conjectures 
on the behavior of the Riemann zeta function in the neighborhood of the critical line. 

In Section \ref{lenotre}, using  estimates from previous sections  and the work of Borodin, Olshanski and Strahov (\cite{bib:BOS}), we prove the convergence of the expectation of  ratios of characteristic polynomials to the corresponding expectations of ratios of $\xi_\infty$. This in turn provides simpler formulas for the corresponding conjecture for ratios of the Riemann zeta function.   The results in Section \ref{lenotre} complete the convergence obtained in Corollary \ref{ratiosconv} and we can summarize them in the following proposition:

\begin{thm}
The following results on ratios hold:
\begin{enumerate}
	\item For any $p > 0$ and any compact set $K \subset \C \backslash \R$, we have:
	$$ \sup_{n \in \N \sqcup \{ \infty \}} \E\left( \sup_{ \left(z,z'\right) \in K^2}
	\left| \frac{\xi_n(z')}{\xi_n(z)} \right|^p \right) < \infty.$$
	\item For $z_1, \dots, z_k, z_1', \dots, z_k' \in \C \backslash \R$, and for all $n \in \mathbb{N} \sqcup \{\infty\}$,
	$$\E\left( \prod_{j=1}^k \left |\frac{\xi_n(z_j')}{\xi_n(z_j)}\right |\right) < \infty$$
	Moreover, for every compact set $K$ in $\C \backslash \R$,
	we have the following convergence, uniformly in $z_1, z_2, \dots, z_k, z'_1, \dots, z'_k \in K$: 
	$$  \E\left( \prod_{j=1}^k \frac{\xi_n     (z_j')}{\xi_n     (z_j)}\right)
	\underset{n \rightarrow \infty}{\longrightarrow} 
	\E\left( \prod_{j=1}^k \frac{\xi_\infty(z_j')}{\xi_\infty(z_j)}\right).$$        
	\item For all $z_1, \dots, z_k, z'_1, \dots, z'_k \in \C \backslash \R$ such that 
	$z_i \neq z'_j$ for $1 \leq i, j \leq n$, we have
	$$
	\det\left( \frac{1}{z_i-z_j'} \right)_{i,j = 1}^{k}\E\left( \prod_{j=1}^k \frac{\xi_\infty(z_j')}{\xi_\infty(z_j)} \right) =
	\det\left( \frac{1}{z_i-z_j'} 
	\E\left( \frac{\xi_\infty(z_j')}{\xi_\infty(z_i)} \right) \right)_{i,j=1}^k
	$$
	and moreover:
	$$
	\E\left( \frac{\xi_\infty(z')}{\xi_\infty(z)} \right)
	= \left\{\begin{array}{cc}
	1              & \textrm{if } \Im(z)>0 \\
	e^{i2\pi(z'-z)} & \textrm{if } \Im(z)<0 \\
	\end{array}\right.
	$$     
\end{enumerate}
And we conjecture that if $\omega$ is a uniform random variable on $[0, 1]$ and $T>0$ a real parameter going to infinity, 
then, for all $z_1, \dots, z_k, z'_1, \dots, z'_k \in \C \backslash \R$, 
such that $z_i \neq z'_j$ for all $i, j$, 
\begin{align*}& \mathbb{E}\left( \prod_{j=1}^k
\frac{ \zeta\left( \half + i T \omega - \frac{i 2\pi z'_j}{\log T} \right) }
{ \zeta\left( \half + i T \omega  - \frac{i 2\pi z_j}{\log T} \right)  } \right)
\\ & \stackrel{T \rightarrow \infty}{\longrightarrow} 
\det\left( \frac{1}{z_i-z_j'} \right)^{-1} \det\left( \frac{\mathds{1}_{\Im (z_i) > 0} + e^{2i \pi (z'_j - z_i)}
	\mathds{1}_{\Im (z_i) < 0} } {z_i-z_j'} \right)_{i,j=1}^k,
\end{align*}  
where the last expression is well-defined where the $z_i$ and the $z'_j$ are all distinct, and is 
extended by continuity to the case where some of the $z_i$ or some of the $z'_j$ are equal. 
\end{thm}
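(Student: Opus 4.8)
The plan is to deduce the three assertions from Theorem~\ref{thm:main} (almost sure locally uniform convergence), from the concentration estimates for the logarithmic derivative proved in Section~\ref{subsection:log_der}, and --- for the determinantal identity --- from the finite-$n$ Cauchy--Binet formula for averages of ratios of CUE characteristic polynomials.

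\textbf{First item.} I would begin by using the functional equation \eqref{eq:functional_eq_xi_n} to reduce to a compact $K$ contained in the open upper half-plane: \eqref{eq:functional_eq_xi_n} writes $\xi_n(z)$ for $\Im z<0$ as an exponential factor, bounded on $K$, times $\overline{\xi_n(\bar z)}$ with $\Im\bar z>0$, so $|\xi_n(z')/\xi_n(z)|$ is, up to a factor bounded on $K^2$, of the form $|\xi_n(w')/\xi_n(w)|$ with $w,w'$ in a fixed compact of the open upper half-plane. For such $w$ the point $e^{2i\pi w/n}$ lies inside the open unit disk, so $\log Z_n(X)=-\sum_{m\geq1}\frac{X^m}{m}\overline{\operatorname{Tr}(U_n^m)}$ converges absolutely and
$$\log\frac{\xi_n(w')}{\xi_n(w)}=\sum_{m\geq1}c_m^{(n)}(w,w')\,\overline{\operatorname{Tr}(U_n^m)},\qquad c_m^{(n)}(w,w')=-\frac{e^{2i\pi mw'/n}-e^{2i\pi mw/n}}{m}.$$
Using $|e^{2i\pi mw/n}-e^{2i\pi mw'/n}|=O(m/n)$ for $m\leq n$ and $=O(e^{-2\pi\delta m/n})$ for $m>n$ (with $\delta>0$ the infimum of $\Im$ on the compact), I would obtain the key uniform bound $\sum_{m\geq1}|c_m^{(n)}(w,w')|^2\min(m,n)\leq C(K)$, independent of $n$ and of $(w,w')$. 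Feeding this into the estimates of Section~\ref{subsection:log_der} --- in the form $\E\exp(\Re\sum_m d_m\overline{\operatorname{Tr}(U_n^m)})\leq\exp(C\sum_m|d_m|^2\min(m,n))$ uniformly in $n$ --- gives $\E|\xi_n(w')/\xi_n(w)|^p=\E\exp(p\Re\log(\xi_n(w')/\xi_n(w)))\leq e^{C'p^2}$, uniformly in $n$ and $(w,w')$. Since $(z,z')\mapsto\xi_n(z')/\xi_n(z)$ is holomorphic and nonvanishing near $K^2$ in $\C^2$, the function $|\xi_n(z')/\xi_n(z)|^p$ is plurisubharmonic there, so its supremum over $K^2$ is bounded by its mean over a fixed larger compact; taking expectations and using Fubini with the pointwise bound gives $\sup_{n\in\N}\E(\sup_{K^2}|\xi_n(z')/\xi_n(z)|^p)<\infty$, and the $n=\infty$ case follows from Theorem~\ref{thm:main} ($\xi_\infty$ being nonvanishing on $K\subset\C\setminus\R$) together with Fatou's lemma.

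\textbf{Second and third items.} The second item then follows quickly: finiteness of $\E(\prod_j|\xi_n(z_j')/\xi_n(z_j)|)$ is H\"older applied to the first item, and the convergence of $\E(\prod_j\xi_n(z_j')/\xi_n(z_j))$ to $\E(\prod_j\xi_\infty(z_j')/\xi_\infty(z_j))$, uniformly over $z_j,z_j'\in K$, comes from combining the a.s. uniform convergence $\prod_j\xi_n(z_j')/\xi_n(z_j)\to\prod_j\xi_\infty(z_j')/\xi_\infty(z_j)$ on $K^{2k}$ (Theorem~\ref{thm:main}, with $\xi_\infty$ nonvanishing on $K$) with the uniform integrability of $\sup_{K^{2k}}|\prod_j\xi_n(z_j')/\xi_n(z_j)|\leq(\sup_{K^2}|\xi_n(z')/\xi_n(z)|)^k$ supplied by the first item. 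For the third item I would first compute the pairwise average at finite $n$ by the Weyl integration formula: expanding in symmetric functions of the eigenvalues and using $\E[s_\lambda]=\delta_{\lambda,\varnothing}$ together with Pieri's rule, one gets $\E[Z_n(e^{2i\pi z'/n})/Z_n(e^{2i\pi z/n})]=1$ when $\Im z>0$ (denominator point inside the disk) and $=(e^{2i\pi z'/n}/e^{2i\pi z/n})^n=e^{2i\pi(z'-z)}$ when $\Im z<0$ (outside the disk); passing to the limit by the $k=1$ case of the second item gives the stated value of $\E(\xi_\infty(z')/\xi_\infty(z))$. For general $k$ I would start from the classical finite-$n$ identity (see \cite{bib:BG}, \cite{bib:CFZ08}, \cite{bib:BOS}), valid whenever no denominator point lies on the unit circle,
$$\det\Big(\tfrac{1}{a_i-b_j}\Big)_{i,j=1}^k\,\E\Big(\prod_{j=1}^k\frac{Z_n(b_j)}{Z_n(a_j)}\Big)=\det\Big(\tfrac{1}{a_i-b_j}\,\E\big(\tfrac{Z_n(b_j)}{Z_n(a_i)}\big)\Big)_{i,j=1}^k,$$
substitute $a_i=e^{2i\pi z_i/n}$, $b_j=e^{2i\pi z_j'/n}$ (so $\tfrac{1}{a_i-b_j}=\tfrac{n}{2i\pi}\cdot\tfrac{1}{z_i-z_j'}(1+O(1/n))$, the factor $(n/2i\pi)^k$ cancelling between the two $k\times k$ determinants), and let $n\to\infty$ using the second item for the left-hand average and the $k=1$ computation for the entries $\E(Z_n(b_j)/Z_n(a_i))$; this yields exactly the asserted identity, coincidences among the $z_i$ or among the $z_j'$ being recovered by continuity. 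The Riemann zeta display is stated only as a conjecture: it is the Keating--Snaith-type translation of this identity, obtained as everywhere in the paper by replacing the $U(n)$-average by the average over $t\in[0,T]$ of $\zeta$ on the critical line at the microscopic scale $1/\log T$.

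\textbf{Main obstacle.} The hard part will be the first item. The ratio $\xi_n(z')/\xi_n(z)$ is $L^p$-bounded uniformly in $n$ only because the singularity of $1/Z_n$ near $1$ --- recall that $\E|1/Z_n(e^{2i\pi z/n})|^2$ blows up like $n$ --- cancels exactly in the ratio, so numerator and denominator cannot be controlled separately. Upgrading the $\ell^2$-type bound $\sum_m|c_m^{(n)}|^2\min(m,n)=O(1)$ to control of all polynomial moments requires a concentration inequality for the power-sum linear statistic $\sum_m d_m\operatorname{Tr}(U_n^m)$ that is uniform in $n$ and correctly sees the $\min(m,n)$ truncation of the variance; this is precisely what the technical estimates of Section~\ref{subsection:log_der} are designed to provide, and it is the only genuinely delicate input. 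Everything downstream --- H\"older, uniform integrability, and the Cauchy--Binet passage to the limit --- is then routine given those estimates and Theorem~\ref{thm:main}.
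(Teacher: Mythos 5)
Your overall architecture matches the paper's: reduce item 1 to the upper half-plane via the functional equation, deduce item 2 from item 1 by H\"older plus uniform integrability plus Theorem~\ref{thm:main}, and get item 3 by passing to the limit in a finite-$n$ determinantal identity of Borodin--Olshanski--Strahov type. But the engine you propose for item 1 is not the one the paper uses, and it has a real gap.

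\textbf{On item 1.} The paper does not expand $\log Z_n$ in power sums $\operatorname{Tr}(U_n^m)$. Instead it writes $|\xi_n(z')/\xi_n(z)|^p \leq \exp(p|z-z'|\sup_{u\in K}|\xi_n'(u)/\xi_n(u)|)$ along the segment $[z,z']$ and then controls the exponential moments of $\sup_K |\xi_n'/\xi_n|$ by decomposing $\xi_n'/\xi_n(z) - i\pi = \sum_k 1/(z - y_k^{(n)})$ into contributions from dyadic-type blocks $I_\ell \sim (\ell^\alpha,(\ell+1)^\alpha]$ and applying the exponential-moment/Bernstein estimates of Lemma~\ref{lemma:preparatory}, which come from the Bernoulli decomposition of counts of a determinantal process. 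That is what Proposition~\ref{proposition:uniform_tail_control} and Theorem~\ref{thm:multi_sup_is_finite} do, uniformly in $n\in\N\sqcup\{\infty\}$ since the same point-count bounds hold for $y_k^{(n)}$ and $y_k$ alike. Your route via $\log(\xi_n(w')/\xi_n(w)) = \sum_m c_m^{(n)}(w,w')\overline{\operatorname{Tr}(U_n^m)}$ with $\sum_m |c_m^{(n)}|^2\min(m,n) = O_K(1)$ is a genuinely different computation (that $\ell^2$ bound is correct), but the concentration inequality
$\E\exp(\Re\sum_m d_m\overline{\operatorname{Tr}(U_n^m)}) \leq \exp(C\sum_m|d_m|^2\min(m,n))$
that you invoke is not proved anywhere in Section~\ref{subsection:log_der} --- that section never mentions traces; it works with point counts --- and it is not an obvious consequence of anything stated in the paper. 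The only Chernoff-type bound the paper establishes for $\log Z_n$ is for $\Im\log Z_n(1)$ using the explicit Barnes/Gamma product; a subgaussian bound with the $\min(m,n)$ variance structure uniformly over all sequences $(d_m)$ and all $n$ would be a separate theorem. You would have to either prove it (nontrivial: for $m>n$ the traces $\operatorname{Tr}(U_n^m)$ are polynomials in lower traces and the Gaussian heuristic breaks down) or replace this step by the paper's block decomposition. The plurisubharmonic sub-mean-value trick to move $\sup_{K^2}$ inside the expectation is fine, but it only helps once you have the pointwise $L^p$ bound.

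\textbf{On item 3.} Your derivation of $\E[\xi_n(z')/\xi_n(z)]$ (Pieri rule versus the paper's rotation-invariance argument on formal power series) is a legitimate alternative. However, you treat the finite-$n$ ratio identity as an off-the-shelf equality of complex numbers. The Borodin--Olshanski--Strahov identity (Theorem~\ref{thm:BOS_ratio_formula}) is stated in \cite{bib:BOS} as an identity of formal power series, and the paper's Theorem~\ref{thm:ratio_formula_xi_n} devotes most of its proof to upgrading this to an analytic identity on $(\C\backslash S^1)^k\times\C^k$: first establishing absolute convergence for $|u_i|>1$, then analytically continuing by removing an arc $\Dc_\varepsilon$ from the circle to get a connected domain, and finally letting $\varepsilon\to 0$. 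This passage is not a formality (the paper credits the idea to a colleague), and if you use the identity without addressing it you are relying on an unstated input.

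The remainder (H\"older, uniform integrability via item 1 and dominated convergence, $n\to\infty$ in the determinantal identity, and the conjectural translation for $\zeta$) tracks the paper's argument and is fine once the two points above are repaired.
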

This last conjecture looks simpler than the usual one (see e.g. \cite{bib:CFZ08}) which involves  complicated sums and difficult combinatorics. Note that this  simpler form of the conjecture first appeared in Rodgers' work \cite{Rod15} where he also used the Borodin-Olshanski-Strahov formula. It should be added that  it was already observed by the authors in \cite{bib:BOS} that taking the limit in the expectation of ratios of characteristic polynomials made sense. However, the natural question whether the ratios themselves converge remained open, as well as establishing the convergence stated in the proposition above. 

The expectation of products of the logarithmic derivative evaluated at distinct points was also computed in \cite{bib:CFZ08}; we also provide an alternative formula using the  determinantal form above.  

Eventually, in Section \ref{section:mesoscopic}, we prove that in a sense which can be made precise, the fluctuations of the 
determinantal sine-kernel process, viewed at a scale 
tending to infinity, converge in law to a blue noise, i.e. a noise whose spectral density is proportional to 
the frequency. In relation with this convergence, we show that the fluctuations of $\xi'_{\infty}/\xi_{\infty}$, viewed at a large
scale, tend to a holomorphic Gaussian process on $\C \backslash \R$, whose covariance structure is explicitly computed. 
This covariance is consistent with the computation of the two first moments of $\xi_{\infty}'/\xi_{\infty}$.

\section*{Acknowledgement}

We would like to thank Brad Rodgers for very stimulating discussions and for his suggestions which were very helpful in the proof of Theorem \ref{thm:ratio_formula_xi_n}. A.N. would also like to thank Alexei Borodin for mentioning  the problems on ratios of characteristic polynomials at the microscopic scale.

\section{Proof of Theorem \ref{thm:main}}\label{preuvedumain}
\subsection{On the argument of the characteristic polynomial } 
\label{section:estimatesZn}
In this section, we study the argument of $Z_n$, in order to deduce estimates on the deviation of $y_k^{(n)}$ from $k$. 

Here, we define the argument as the imaginary part of $\log Z_n$, with the determination of the logarithm given in the previous section.

The next proposition gives a link between the number of eigenvalues of $U_n$ in a given arc of circle, and 
the variation of the argument of $Z_n$ along this arc. 
 The derivation is relatively standard and we shall not reproduce a proof here (see \cite{bib:HughesPhD}, p. 35-36. or \cite{bib:BHNN}, proof of Proposition 2.2).
\begin{proposition}
Consider $A$ and $B$ two points on the unit circle. Note $\wideparen{AB}$ for the arc joining $A$ and $B$ counterclockwise. Denote by $\ell\left( \wideparen{AB} \right)$ the length of the arc and $N\left( \wideparen{AB} \right)$ the number of zeros of $Z_n$ in the arc. We assume that $A$ and $B$ are not zeros of $Z_n$. Then:
$$ N\left( \wideparen{AB} \right) = \frac{ n \ell\left( \wideparen{AB} \right) }{2 \pi}
                 - \frac{1}{\pi}\left[  \Im \log\left(Z_n(B)\right) - \Im \log\left(Z_n(A)\right) \right].$$
\end{proposition}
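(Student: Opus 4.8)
The plan is to compute the total variation of $\Im\log Z_n$ along the arc directly from the factorization of $Z_n$ over the eigenvalues, rather than via a contour integral (which is awkward here since the curve of interest passes through the zeros). Write $A=e^{ia}$ and $B=e^{ib}$, choosing lifts of the arguments with $a\le b\le a+2\pi$ so that $t\mapsto e^{it}$, $t\in[a,b]$, traces $\wideparen{AB}$ counterclockwise exactly once; then $\ell(\wideparen{AB})=b-a$. Using the determination of $\log Z_n$ on $\Dc$ recalled above, for every $t\in[a,b]$ with $e^{it}$ not an eigenvalue we have $\log Z_n(e^{it})=\sum_{k=1}^n \mathrm{Log}(1-e^{i(t-\theta_k^{(n)})})$ with the principal branch; this is legitimate because $\{e^{it}:t\neq\theta_k^{(n)}\ (\mathrm{mod}\ 2\pi)\}\subset\Dc$ and a number $1-e^{i\phi}$ with $|e^{i\phi}|=1$ is never a nonpositive real unless $e^{i\phi}=1$, so the principal logs are continuous there and the global determination on $\Dc$ restricts to their sum.

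First I would analyze a single factor. From $1-e^{i\phi}=-2i\sin(\phi/2)\,e^{i\phi/2}=2\sin(\phi/2)\,e^{i(\phi/2-\pi/2)}$ one reads off, for the representative $\phi\in(0,2\pi)$, that $\Im\,\mathrm{Log}(1-e^{i\phi})=\tfrac{\phi}{2}-\tfrac{\pi}{2}\in(-\tfrac\pi2,\tfrac\pi2)$. Hence $t\mapsto\Im\,\mathrm{Log}(1-e^{i(t-\theta_k^{(n)})})$ is $2\pi$-periodic, affine with slope $\tfrac12$ on each interval between two consecutive points of $\theta_k^{(n)}+2\pi\Z$, and jumps by $-\pi$ at each such point; the value of the jump is confirmed by the expansion $1-e^{\pm i\delta}=\tfrac{\delta^2}{2}\mp i\delta+O(\delta^3)$, whose argument tends to $\mp\tfrac\pi2$ as $\delta\downarrow 0$ (all other factors being continuous at $\theta_k^{(n)}$, since they are nonzero and avoid the negative real axis).

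Summing over $k$, the function $f(t):=\Im\log Z_n(e^{it})$ is, on $[a,b]$ minus the eigenangles, piecewise affine with slope $\tfrac n2$, with a jump of $-\pi$ at each eigenangle in $(a,b)$. Since $b-a\le 2\pi$, each eigenvalue contributes at most one point of $\theta_k^{(n)}+2\pi\Z$ to $(a,b)$, so the number of jumps equals $N(\wideparen{AB})$. Integrating the derivative and adding the jumps gives
$$\Im\log Z_n(B)-\Im\log Z_n(A)=f(b)-f(a)=\frac{n(b-a)}{2}-\pi\,N\!\left(\wideparen{AB}\right).$$
Substituting $b-a=\ell(\wideparen{AB})$ and solving for $N(\wideparen{AB})$ yields the claimed identity.

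The only genuinely delicate point is the bookkeeping: choosing the lift so the parameter interval sweeps $\wideparen{AB}$ once and no more, and confirming that the determination of $\log Z_n$ fixed on $\Dc$ restricts on the unit circle to the sum of principal branches, so that the jumps are exactly $\pm\pi$ and not shifted by spurious multiples of $2\pi$; the hypothesis that $A$ and $B$ are not zeros is exactly what makes $f(a),f(b)$ well defined. An essentially equivalent alternative would be the argument principle applied to $Z_n$ on a thin contour hugging the arc and deformed slightly off the eigenvalues, combined with the self-inversive relation $z^{n}\overline{Z_n(1/\bar z)}=(-1)^n\det(U_n)\,Z_n(z)$ to control the argument along the inner and outer arcs; but the direct sawtooth computation above is cleaner, and that is the route I would take.
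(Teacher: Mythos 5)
Your proof is correct, and it aligns with the route the paper has in mind: the paper does not reproduce an argument (it cites Hughes' thesis and \cite{bib:BHNN}, Prop.\ 2.2), but the remark immediately following the proposition — that $\Im\log Z_n(e^{i\theta})$ increases with speed $n/2$ and jumps by $-\pi$ at each zero — is precisely the piecewise-affine (sawtooth) description you derive from the single-factor computation $\Im\,\mathrm{Log}(1-e^{i\phi})=\phi/2-\pi/2$ on $(0,2\pi)$, which then gives the identity by integration. Your bookkeeping on the lift $a\le b\le a+2\pi$, the observation that the circle minus the eigenangles lies in $\Dc$ so the global determination agrees with the sum of principal branches, and the count of at most one representative of each $\theta_k^{(n)}+2\pi\Z$ in $(a,b)$ are all handled correctly.
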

\begin{rmk}
This shows that the imaginary part of the determination of the logarithm $\Im \log Z_n(z)$ increases with speed $n/2$ and jumps by $-\pi$ when encountering a zero.
\end{rmk}

\begin{corollary} \label{corollary:kykn}
Let $k \in \Z$, and let $\varepsilon>0$ be small enough so that there are no eigenangles of $U_n$ in
$[0, \varepsilon]$ and $(\theta_k^{(n)}, \theta_k^{(n)} + \varepsilon]$. Then:
$$ k = y_k^{(n)} - \frac{1}{\pi}\Im\left( \log\left( Z_n( e^{i (\theta_k^{(n)} + \varepsilon) } )\right) 
                                        - \log\left( Z_n( e^{i \varepsilon } )\right) \right)$$
\end{corollary}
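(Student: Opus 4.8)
The plan is to deduce this identity from the previous Proposition, applied to the counterclockwise arc from $A=e^{i\varepsilon}$ to $B=e^{i(\theta_k^{(n)}+\varepsilon)}$, after first reducing to the range $1\le k\le n$ so that this arc genuinely has length less than $2\pi$.

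First I would carry out the reduction to $1\le k\le n$. Since $Z_n(e^{i\theta})$ depends on $\theta$ only through $e^{i\theta}$, the chosen determination of $\log Z_n$ satisfies $\Im\log Z_n(e^{i(\theta+2\pi)})=\Im\log Z_n(e^{i\theta})$ at every point of the circle that is not an eigenvalue; meanwhile $\theta_{k+n}^{(n)}=\theta_k^{(n)}+2\pi$ gives $y_{k+n}^{(n)}=y_k^{(n)}+n$, and, by $2\pi$-periodicity of the eigenangle set, the hypothesis on $\varepsilon$ for the index $k$ is equivalent to the one for $k\pm n$. Both sides of the claimed equality then change by the same integer when $k$ is replaced by $k\pm n$, so it suffices to treat $k\in\{1,\dots,n\}$ (in particular the case $k=0$ follows from the case $k=n$).

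Then, with $1\le k\le n$ fixed so that $\theta_k^{(n)}\in(0,2\pi)$, I would apply the Proposition to $A=e^{i\varepsilon}$ and $B=e^{i(\theta_k^{(n)}+\varepsilon)}$: by the choice of $\varepsilon$ neither point is a zero of $Z_n$, and the counterclockwise arc $\wideparen{AB}$ has length $\ell(\wideparen{AB})=\theta_k^{(n)}<2\pi$. It remains to check $N(\wideparen{AB})=k$. A zero of $Z_n$ lies on $\wideparen{AB}$ precisely when one of its eigenangle representatives $\theta_l^{(n)}$, $l\in\Z$, falls in $(\varepsilon,\theta_k^{(n)}+\varepsilon)$, and since this interval has length $<2\pi$ no zero is counted twice. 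Now: no eigenangle lies in $[0,\varepsilon]$, hence $\varepsilon<\theta_1^{(n)}$; no eigenangle lies in $(\theta_k^{(n)},\theta_k^{(n)}+\varepsilon]$, hence $\theta_{k+1}^{(n)}>\theta_k^{(n)}+\varepsilon$; and $\varepsilon<\theta_1^{(n)}$ together with $\theta_k^{(n)}<2\pi$ gives $\theta_{n+1}^{(n)}=\theta_1^{(n)}+2\pi>\theta_k^{(n)}+\varepsilon$. Therefore the eigenangles in $(\varepsilon,\theta_k^{(n)}+\varepsilon)$ are exactly $\theta_1^{(n)},\dots,\theta_k^{(n)}$, so $N(\wideparen{AB})=k$. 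Plugging $\ell(\wideparen{AB})=\theta_k^{(n)}$, $N(\wideparen{AB})=k$ and $y_k^{(n)}=\frac{n}{2\pi}\theta_k^{(n)}$ into the Proposition yields precisely the asserted formula.

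I expect the only genuinely delicate point to be the count $N(\wideparen{AB})=k$, in particular ensuring that no eigenvalue is counted twice when $\theta_k^{(n)}+\varepsilon>2\pi$; this is exactly where both halves of the hypothesis on $\varepsilon$, together with the periodicity $\theta_{l+n}^{(n)}=\theta_l^{(n)}+2\pi$, enter. As an alternative that avoids the reduction to $1\le k\le n$, one can argue directly from the Remark after the Proposition: $\Im\log Z_n(e^{i\theta})$ increases at rate $n/2$ in $\theta$ and drops by $\pi$ at each eigenangle, so its net increment from $\varepsilon$ to $\theta_k^{(n)}+\varepsilon$ equals $\frac{n}{2}\theta_k^{(n)}-\pi k$ for every $k\in\Z$, which rearranges to the statement.
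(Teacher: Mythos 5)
Your proposal is correct and follows essentially the same route as the paper: reduce to $1 \leq k \leq n$ by periodicity (both sides shifting by $n$), then apply the preceding Proposition with $A = e^{i\varepsilon}$, $B = e^{i(\theta_k^{(n)}+\varepsilon)}$, noting $N\left(\wideparen{AB}\right) = k$ and $\frac{n\,\ell\left(\wideparen{AB}\right)}{2\pi} = y_k^{(n)}$. You simply spell out the eigenvalue count $N\left(\wideparen{AB}\right) = k$, which the paper asserts without detail.
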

\begin{proof}
Notice first that if $k$ is increased by $n$, $\theta_k^{(n)}$ increases by $2 \pi$, $y_k^{(n)}$ increases by 
$n$, $\lambda_k^{(n)} = e^{i \theta_k^{(n)}}$ does not change, and the assumption made on $\varepsilon$ remains the same. 
Hence, in the equality we want to prove, the right-hand side and the left-hand side both increase by $n$, which implies 
that it is sufficient to show the corollary for $1 \leq k \leq n$. 
If these inequalities are satisfied, let us choose, in the previous proposition, 
$A = e^{i \varepsilon}$ and $B = e^{i (\theta_k^{(n)} + \varepsilon) }$. Then we note that
$$ N\left( \wideparen{AB} \right) = k,$$
and 
$$ \frac{ n \ell\left( \wideparen{AB} \right) }{2 \pi} = \frac{ n \theta_k^{(n)} }{2 \pi} = y_k^{(n)},$$
which proves the corollary. 
\end{proof}

This corollary shows that it is equivalent to control the argument of $Z_n$, and the distance between $k$ and 
$y_k^{(n)}$. In the remaining of this section, we give some explicit bounds on the distribution of 
$\Im \log (Z_n)$ on the unit circle. 
\begin{proposition}
For all $x > 0$, one has 
$$\P\left( |\Im\left( \log Z_n(1) \right)| \geq x \right) \leq 2\exp\left( - \frac{x^2}{C + \log n}\right),$$
 where $C > 0$ is a universal constant. 
\end{proposition}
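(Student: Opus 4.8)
The plan is to pin down the two-sided Laplace transform of $\Im\log Z_n(1)$ in closed form, observe that it is sub-Gaussian with variance proxy of order $\log n$, and finish with a Chernoff bound. First I would reduce to a one-sided estimate. The map $U_n\mapsto\overline{U_n}$ preserves the Haar measure and conjugates $Z_n(1)$; since the domain $\Dc$ and the determination of $\log Z_n$ fixed above are invariant under complex conjugation, $\Im\log Z_n(1)$ is a symmetric (in particular centred) random variable, so $\P(|\Im\log Z_n(1)|\geq x)\leq 2\,\P(\Im\log Z_n(1)\geq x)$. Moreover, using $\log Z_n(1)=\sum_{k=1}^n\log(1-1/\lambda_k^{(n)})$ with the principal branch (valid a.s., since a.s. $1\notin\{\lambda_k^{(n)}\}$), and noting that each $1-1/\lambda_k^{(n)}=1-\overline{\lambda_k^{(n)}}$ has nonnegative real part, we get $\Im\log Z_n(1)=\sum_{k=1}^n\arg(1-\overline{\lambda_k^{(n)}})\in(-n\pi/2,n\pi/2)$ almost surely; in particular every exponential moment is finite.

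For real $\lambda$, I would write $e^{\lambda\,\Im\log Z_n(1)}=Z_n(1)^{-i\lambda/2}\,\overline{Z_n(1)}^{\,i\lambda/2}$ (with this determination) and apply the Keating--Snaith moment formula for the CUE characteristic polynomial, which is legitimate here because the two exponents are purely imaginary and sum to $0$. This gives
$$\E\!\left[e^{\lambda\,\Im\log Z_n(1)}\right]=\prod_{j=0}^{n-1}\frac{(j!)^2}{\Gamma\!\left(j+1+\tfrac{i\lambda}{2}\right)\,\Gamma\!\left(j+1-\tfrac{i\lambda}{2}\right)}.$$
Plugging in the Weierstrass product $\Gamma(x)^2/|\Gamma(x+iy)|^2=\prod_{m\geq 0}\big(1+y^2/(m+x)^2\big)$ and collecting the factors, this simplifies to the convenient closed form
$$\E\!\left[e^{\lambda\,\Im\log Z_n(1)}\right]=\exp\!\left(\sum_{m\geq 1}\min(m,n)\,\log\!\Big(1+\frac{\lambda^2}{4m^2}\Big)\right).$$
(Alternatively, the virtual-isometry / Verblunsky decomposition of $Z_n(1)$ represents $\Im\log Z_n(1)$ as a sum of $n$ independent symmetric variables with exactly this combined transform; either derivation is fine.)

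From here the estimate is immediate. Using $\log(1+u)\leq u$,
$$\sum_{m\geq 1}\min(m,n)\log\!\Big(1+\frac{\lambda^2}{4m^2}\Big)\leq\frac{\lambda^2}{4}\sum_{m\geq 1}\frac{\min(m,n)}{m^2}=\frac{\lambda^2}{4}\Big(\sum_{m=1}^n\frac1m+n\sum_{m>n}\frac1{m^2}\Big)\leq\frac{\lambda^2}{4}\big(\log n+2\big),$$
since $\sum_{m=1}^n 1/m\leq 1+\log n$ and $n\sum_{m>n}1/m^2\leq 1$. Hence $\E[e^{\lambda\,\Im\log Z_n(1)}]\leq\exp\big(\lambda^2(\log n+2)/4\big)$ for all real $\lambda$, and optimizing $\lambda=2x/(\log n+2)$ in $\P(\Im\log Z_n(1)\geq x)\leq e^{-\lambda x}\,\E[e^{\lambda\,\Im\log Z_n(1)}]$ gives $\P(\Im\log Z_n(1)\geq x)\leq\exp\big(-x^2/(\log n+2)\big)$; combined with the first paragraph this proves the proposition with $C=2$.

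The only genuinely delicate point is the exact Laplace transform: matching the branch of $\log Z_n(1)$ to the determination fixed in the paper and checking that the Keating--Snaith identity applies in this purely imaginary regime (and then the elementary resummation into the $\min(m,n)$ form). Everything else is routine. I would stress, however, that a softer approach --- treating $\Im\log Z_n(1)$ only as a centred sum of independent variables bounded by $\pi/2$ with variance $\asymp\log n$ and invoking Bernstein's inequality --- would yield merely a bound of the shape $\exp\big(-x^2/(C\log n+Cx)\big)$, which is strictly weaker than the stated one for $x\gg\log n$; the true sub-Gaussianity we need reflects the faster-than-Gaussian decay of the law of $\Im\log Z_n(1)$ near $\pm n\pi/2$, which is precisely the information carried by the closed form above.
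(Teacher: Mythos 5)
Your proof is correct and follows essentially the same route as the paper: reduce to a one-sided tail by the conjugation symmetry, write the Laplace transform of $\Im\log Z_n(1)$ via the Keating--Snaith / Bourgade--Hughes--Najnudel--Nikeghbali moment formula, expand the Gamma ratios through the Weierstrass product to get an explicit infinite product, bound it by $\exp(\lambda^2(\log n+O(1))/4)$ using $\log(1+u)\leq u$, and optimize the Chernoff exponent. Your resummation into the $\min(m,n)$ form is just a reindexing of the paper's double product $\prod_{k=1}^n\prod_{j\geq 0}\bigl(1+\tfrac{\lambda^2}{4(j+k)^2}\bigr)$, and gives the slightly sharper constant $C=2$ in place of the paper's $C=\pi^2/6+1$; otherwise the two arguments coincide.
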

\begin{rmk}
 In the proof below, we prove that one can take $C = \frac{\pi^2}{6} + 1$. 
\end{rmk}

\begin{proof}
Let us note
$$ X_n = \Im\left( \log Z_n(1)  \right)$$
Thanks to the formula (1.1) in \cite{bib:BHNY}:
$$ \forall \lambda \in \R, \E\left( e^{ \lambda X_n} \right) = \prod_{k=1}^n \frac{ \Gamma\left( k \right)^2}{ \Gamma\left( k+\frac{i\lambda}{2} \right) \Gamma\left( k-\frac{i\lambda}{2} \right) }$$
Let us start with the standard Chernoff bound:
$$ \forall \lambda > 0, \P\left( X_n \geq x \right) \leq e^{- \lambda x } \E\left( e^{\lambda X_n}\right).$$
Now, using the infinite product formula for the Gamma function:
$$ \forall z \in \C, \frac{1}{\Gamma(z)} = e^{\gamma z} z \prod_{j=1}^\infty \left( 1 + \frac{z}{j} \right) e^{-z/j},$$
we have:
\begin{align*}
\E\left( e^{\lambda X_n}\right) & = \prod_{k=1}^n \frac{ \Gamma\left( k \right)^2}{ \Gamma\left( k+\frac{i\lambda}{2} \right) \Gamma\left( k-\frac{i\lambda}{2} \right) }\\
& = \prod_{k=1}^n \left( \frac{k^2 + \frac{\lambda^2}{4} }{k^2} \prod_{j=1}^\infty \frac{ \left( 1 + \frac{k+\frac{i\lambda}{2}}{j} \right)\left( 1 + \frac{k-\frac{i\lambda}{2}}{j} \right) }{ \left( 1 + \frac{k}{j} \right)^2 } \right)\\
& = \prod_{k=1}^n \left( \frac{k^2 + \frac{\lambda^2}{4} }{k^2} \prod_{j=1}^\infty \frac{ \left( j + k+\frac{i\lambda}{2} \right)\left( j + k-\frac{i\lambda}{2} \right) }{ \left( j + k \right)^2 } \right)\\
& = \prod_{k=1}^n \prod_{j=0}^\infty \frac{ \left(j + k\right)^2 +\frac{\lambda^2}{4} }{ \left( j + k \right)^2 }\\
& = \prod_{k=1}^n \prod_{j=0}^\infty \left( 1 + \frac{\lambda^2}{4\left(j + k\right)^2} \right)\\
& \leq \exp\left( \sum_{k=1}^n \sum_{j=0}^\infty \frac{\lambda^2}{4\left(j + k\right)^2} \right)\\
& = \exp\left( \frac{\lambda^2}{4} \sum_{k=1}^n \sum_{j=k}^\infty \frac{1}{j^2} \right)\\
& \leq \exp\left( \frac{\lambda^2}{4} \sum_{k=1}^n \left( \frac{1}{k^2} + \int_k^\infty \frac{dt}{t^2} \right) \right)\\
& = \exp\left( \frac{\lambda^2}{4} \sum_{k=1}^n \left( \frac{1}{k^2} + \frac{1}{k} \right) \right)\\
& \leq \exp\left( \frac{\lambda^2}{4} \left( \frac{\pi^2}{6} + 1 + \log n \right) \right)
\end{align*}
Eventually  for $C = \frac{\pi^2}{6} + 1$, we obtain
$$ \P\left( X_n \geq x \right) \leq \min_{\lambda > 0} e^{- \lambda x + \frac{\lambda^2}{4} \left( C + \log n \right) }. $$
The minimum is reached for $\lambda = \frac{2x}{C + \log n}$, giving us the bound:
$$ \P\left( \Im\left( \log Z_n(1) \right) \geq x \right) \leq \exp\left( - \frac{x^2}{C + \log n}\right).$$
The desired bound is obtained from the symmetry of $\Im\left( \log Z_n(1) \right)$, as eigenvalues are invariant in law under conjugation:
\begin{align*}
  & \P\left( |\Im\left( \log Z_n(1) \right)| \geq x \right)\\
= & \P\left( \Im\left( \log Z_n(1) \right) \geq x \right) + \P\left( -\Im\left( \log Z_n(1) \right) \geq x \right)\\
= & 2 \P\left( \Im\left( \log Z_n(1) \right) \geq x \right)
\end{align*}
\end{proof}
We deduce the following estimate on the maximum of the argument of $Z_n$ on the unit circle:  

\begin{proposition} \label{proposition:maxZn}
Almost surely:
$$ \sup_{ |z| = 1 , z \in \Dc } \left| \Im \log Z_n(z) \right| = O\left( \log n \right)$$
More precisely, for any $D > \sqrt{2}$:
$$ \exists n_0 \in \N, \forall n \geq n_0, \sup_{ |z| = 1 , z \in \Dc } \left| \Im \log Z_n(z) \right| \leq D \log n$$
which means that almost surely:
$$ \limsup_{n} \frac{1}{\log n} \sup_{ |z| = 1 , z \in \Dc } \left| \Im \log Z_n(z) \right| \leq \sqrt{2} $$
\end{proposition}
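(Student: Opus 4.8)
The plan is to replace the supremum over the circle by a finite maximum over an equispaced mesh of order $n$ points, to bound each mesh value by the Gaussian-type tail estimate of the preceding proposition (transported from the point $1$ via rotation invariance of Haar measure), and to close the argument with a union bound and the Borel--Cantelli lemma. The one point requiring care will be the size of the mesh, which must be calibrated so that the resulting constant is $\sqrt2$.

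First I would record a deterministic ``slow growth'' property of the argument. Put $g_n(\theta):=\Im\log Z_n(e^{i\theta})$, defined whenever $e^{i\theta}$ is not an eigenvalue of $U_n$. Since $\log Z_n$ is a genuine (single-valued) function on $\Dc$, the map $g_n$ factors through $\theta\mapsto e^{i\theta}$ and is $2\pi$-periodic, so the supremum in the statement equals $\sup\{|g_n(\theta)|:\theta\in[0,2\pi),\ e^{i\theta}\text{ not an eigenvalue of }U_n\}$. Applying the Proposition on the number of zeros in an arc to the endpoints $e^{i\theta}$ and $e^{i\theta'}$, with $0\le\theta<\theta'\le 2\pi$ and neither an eigenvalue, and using that the zero count of an arc is $\ge 0$, I get
$$ g_n(\theta')-g_n(\theta)=\frac{n(\theta'-\theta)}{2}-\pi N\bigl(\wideparen{e^{i\theta}e^{i\theta'}}\bigr)\ \le\ \frac{n(\theta'-\theta)}{2}. $$
Next I would fix the mesh $\theta_j:=2\pi j/n$, $0\le j<n$; almost surely, for all $n$ simultaneously, no $e^{i\theta_j}$ is an eigenvalue of $U_n$. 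For $\theta$ in a gap $[\theta_j,\theta_{j+1}]$ and not an eigenangle, applying the displayed inequality on $[\theta_j,\theta]$ and on $[\theta,\theta_{j+1}]$ and using $\theta_{j+1}-\theta_j=2\pi/n$ gives $g_n(\theta_{j+1})-\pi\le g_n(\theta)\le g_n(\theta_j)+\pi$, hence
$$ \sup_{|z|=1,\ z\in\Dc}\bigl|\Im\log Z_n(z)\bigr|\ \le\ \max_{0\le j<n}|g_n(\theta_j)|+\pi. $$

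For the probabilistic step I would use the rotation invariance of the Haar measure: replacing $U_n$ by $e^{-i\theta_j}U_n$ merely rotates the domain $\Dc$ and the chosen branch of $\log Z_n$ accordingly, so for each fixed $j$ the variable $g_n(\theta_j)=\Im\log Z_n(e^{i\theta_j})$ has the same law as $\Im\log Z_n(1)$, and the preceding proposition yields $\P(|g_n(\theta_j)|\ge x)\le 2\exp(-x^2/(C+\log n))$. Fixing $D>\sqrt2$ and picking $a\in(\sqrt2,D)$, and using $(a\log n)^2/(C+\log n)\ge a^2\log n-a^2C$, a union bound over the $n$ mesh points gives
$$ \P\Bigl(\max_{0\le j<n}|g_n(\theta_j)|\ge a\log n\Bigr)\ \le\ 2n\exp\!\Bigl(-\tfrac{(a\log n)^2}{C+\log n}\Bigr)\ \le\ 2e^{a^2C}\,n^{1-a^2}, $$
which is summable in $n$ because $a^2>2$. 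By Borel--Cantelli, almost surely $\max_{0\le j<n}|g_n(\theta_j)|<a\log n$ for all large $n$, so that $\sup_{|z|=1,z\in\Dc}|\Im\log Z_n(z)|\le a\log n+\pi\le D\log n$ for all large $n$. Since $D>\sqrt2$ was arbitrary, this yields the $\limsup$ statement, and in particular the $O(\log n)$ bound.

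I expect the main obstacle to be not any individual estimate but the calibration of the mesh: it must be coarse enough that the union bound costs only a factor $n$ (so as to be beaten by the $n^{-a^2}$ coming from the tail), and at the same time fine enough that the oscillation of $g_n$ between consecutive mesh points remains $O(1)=o(\log n)$. A mesh of exactly $n$ equally spaced points is the sweet spot; it is what forces the threshold $a^2>2$, i.e. the constant $\sqrt2$. One expects the true order of $\sup_{|z|=1}|\Im\log Z_n|$ to be $\log n$ (constant $1$), but extracting that sharper constant would require a second-moment argument that is not needed here.
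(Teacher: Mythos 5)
Your proposal is correct and follows essentially the same route as the paper: the same $n$-point equispaced mesh, rotation invariance of the Haar measure to transport the tail bound from $1$ to each mesh point, a union bound plus Borel--Cantelli with exponent $1-a^2<-1$, and the deterministic $O(1)$ oscillation of $\Im\log Z_n$ between adjacent mesh points. The only cosmetic difference is that you derive the oscillation bound from the arc-counting proposition rather than invoking the piecewise-linear-with-jumps description of the argument directly; these are equivalent.
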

\begin{proof}
Consider $n$ regularly spaced points on the circle, say:
$$ x_{k,n} := e^{ i \frac{2\pi k}{n} }, \quad k = 0, 1, 2, \dots , n-1,$$
and the events:
$$ A_{k,n} := \left\{ |\Im \log Z_n\left( x_{k,n}\right) | \geq D \log n \right\}$$
Because the law of the spectrum of $U_n$ is invariant under rotation, all the events $A_{k,n}$ have the same probability for different $k$'s. Moreover, thanks to the previous Chernoff bound:
\begin{align*}
n \P\left( A_{0,n} \right) & \leq 2 n \exp\left( - \frac{D^2 (\log n)^2 }{C + \log n}\right) \\
                           & \leq 2 n \exp\left( - D^2 \left( \log n - C \right)\right) \\
                           & \leq 2 e^{D^2 C} n^{1-D^2}
\end{align*}
Hence:
$$ \sum_{n=1}^\infty \sum_{k=1}^n \P\left( A_{k,n} \right) = \sum_{n=1}^\infty n \P\left( A_{0,n} \right) < \infty $$
The Borel-Cantelli lemma ensures that, almost surely:
$$ \exists n_0 \in \N, \forall n \geq n_0, \forall k, \quad |\Im \log Z_n\left( x_{k,n}\right)| \leq  D \log n$$

Now consider a point $z = e^{i\theta} \in \Dc$. For fixed $n$, it lies on the arc between $x_{k,n}$ and $x_{k+1,n}$ for a certain $k$. Because
$$ \theta \mapsto \Im \log Z_n(e^{i\theta})$$
is piece-wise linear, increasing with speed $n/2$ and only jumping by $-\pi$, we have:
$$ \Im \log Z_n(e^{i\theta}) \leq \Im \log Z_n(x_{k,n}) + \frac{n}{2}\left( \theta - \frac{2\pi k}{n}\right) \leq \Im \log Z_n(x_{k,n}) + \pi$$
In the other direction, we have
$$ \Im \log Z_n(e^{i\theta}) \geq \Im \log Z_n(x_{k+1,n}) - \frac{n}{2}\left( \frac{2\pi (k+1)}{n} - \theta \right) \geq \Im \log Z_n(x_{k+1,n}) - \pi$$
So that, almost surely:
$$ \exists n_0 \in \N, \forall n \geq n_0, \forall z \in \Dc, \quad |\Im \log Z_n\left( z \right)| \leq \pi + D \log n$$
The more precise estimate  $|\Im \log Z_n\left( z \right)| \leq D \log n$ follows after replacing $D$ by $D' \in (\sqrt{2}, D)$ in the previous computation
and considering $n_0$ large enough so that $\pi < (D-D') \log n$.
\end{proof}

\subsection{Precise estimates for the eigenvalues of virtual isometries} 
\label{section:estimatesykn}

The following estimate will reveal crucial  for the proof of Theorem \ref{thm:main}. 

\begin{proposition}
\label{proposition:key_estimate}
Almost surely and uniformly in $n$ and $k$:
$$ y_k^{(n)} = k + O\left( \log(2+|k|) \right)$$ 
\end{proposition}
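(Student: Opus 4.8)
The plan is to split the argument into a ``macroscopic'' regime, where $|k|$ is comparable to $n$, and a ``microscopic'' regime, where $|k|$ is small, after first reducing to a bounded range of $k$ by periodicity. Since the eigenangles satisfy $\theta_{k+n}^{(n)} = \theta_k^{(n)} + 2\pi$, the sequence $k \mapsto y_k^{(n)} - k$ is $n$-periodic, and if $r \in (-n/2, n/2]$ denotes the representative of $k$ modulo $n$, then $|r| \le |k|$; hence it suffices to establish the estimate for $|k| \le n/2$. By invariance of the spectrum under conjugation, one may moreover assume $k \ge 0$, the case $k<0$ being identical.

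Fix a small constant $c \in (0, 1/6)$. For $n^{c} \le k \le n/2$, I would use Corollary \ref{corollary:kykn}, which expresses $\pi(y_k^{(n)} - k)$ as an increment of $\Im \log Z_n$ between two points of $\Dc$ on the unit circle, together with Proposition \ref{proposition:maxZn}, which gives $\sup_{|z|=1,\, z\in\Dc}|\Im\log Z_n(z)| = O(\log n)$ almost surely. This yields $|y_k^{(n)} - k| = O(\log n)$, and since $k \ge n^{c}$ forces $\log n \le c^{-1}\log k$, the right-hand side is $O(\log(2+|k|))$ in this regime.

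For $0 \le k \le n^{c}$, I would invoke Theorem \ref{thm:as_cv_sine}. Choosing $c$ (and the auxiliary parameter $\varepsilon$ in that theorem) small enough that $2c - \tfrac13 + \varepsilon \le 0$, so that $(1+k^2)n^{-1/3+\varepsilon} = O(1)$ whenever $k \le n^{c}$ — this is compatible with $c < 1/4$, so the hypothesis $|k| \le n^{1/4}$ is automatically met — one obtains that, almost surely, $y_k^{(n)} = y_k + O(1)$ uniformly in $n$ and in $0 \le k \le n^{c}$, where $(y_k)_{k\in\Z}$ is the limiting sine-kernel process. It then remains to prove the rigidity bound $y_k = k + O(\log(2+|k|))$ a.s.\ for the sine process itself, which I would obtain by a diagonal argument: applying the previous display with $n = n(k) := \lceil k^{1/c} \rceil$ (for which $k \le n(k)^{c}$) gives $y_k = y_k^{(n(k))} + O(1)$, and applying Proposition \ref{proposition:maxZn} together with Corollary \ref{corollary:kykn} to $U_{n(k)}$ gives $y_k^{(n(k))} = k + O(\log n(k)) = k + O(\log(2+k))$, valid once $k$ exceeds the (random) threshold produced by Proposition \ref{proposition:maxZn}; the finitely many remaining values of $k$ are harmless since each $y_k$ is almost surely finite. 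Combining the two displays finishes the microscopic regime, and the proposition follows.

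The main difficulty is precisely the microscopic regime: Proposition \ref{proposition:maxZn} only delivers the crude bound $O(\log n)$, which is far from $O(\log(2+|k|))$ when $k \ll n$, so one genuinely has to pass through the precise rescaled-eigenvalue estimate of Theorem \ref{thm:as_cv_sine} and then establish (or quote) a rigidity estimate for the sine-kernel point process. A secondary but not entirely cosmetic point is the bookkeeping: one must keep the implied constants genuinely uniform in $n$ and $k$ simultaneously, which requires some care with the random thresholds coming from Proposition \ref{proposition:maxZn} and with the finitely many ``exceptional'' indices $k$ appearing at each step.
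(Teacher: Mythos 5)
Your proof is correct, and the macroscopic regime ($|k|\gtrsim n^c$) is handled exactly as in the paper, via Corollary~\ref{corollary:kykn} and Proposition~\ref{proposition:maxZn}. Where you genuinely diverge is in the microscopic regime. The paper proves the sine-kernel rigidity $y_k = k + O(\log(2+|k|))$ directly as Lemma~\ref{lemma:lemma_mm_sosh}, using the Bernstein-type tail bounds on the counting function of the limiting sine process (Lemma~\ref{lemma:preparatory}) and a Borel--Cantelli argument, and then transfers this to $y_k^{(n)}$ via Lemma~\ref{lemma:lemma_2} (which is your step based on Theorem~\ref{thm:as_cv_sine}). You instead avoid any new concentration argument for the sine process itself: you derive the rigidity of $y_k$ by a diagonal argument, choosing $n(k)\asymp k^{1/c}$ so that $k$ falls in the microscopic range of $U_{n(k)}$, and combining the coupling estimate of Theorem~\ref{thm:as_cv_sine} (which yields $y_k = y_k^{(n(k))} + O(1)$, with a random constant uniform in $k$ precisely because the implied constant in the theorem is independent of both $n$ and $k$) with the finite-$n$ bound from Proposition~\ref{proposition:maxZn} ($y_k^{(n(k))} = k + O(\log n(k)) = O(\log(2+k))$ once $n(k)$ exceeds the random threshold $n_0$, the remaining finitely many $k$ being absorbed into the constant). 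This is a neat economy: you recycle the Chernoff bound already established for $\Im\log Z_n$ instead of redoing concentration for the sine-kernel process. The trade-off is that your route leans harder on the coupling of virtual isometries (the result becomes intrinsically about the coupled family, not the sine process alone), whereas the paper's Lemma~\ref{lemma:lemma_mm_sosh} stands on its own and its machinery (Lemma~\ref{lemma:preparatory}) is reused elsewhere. Two minor cosmetic points: the reduction to $|k| \le n/2$ by periodicity is harmless but unnecessary (for $|k| > n/2$ the $O(\log n)$ bound already beats $O(\log(2+|k|))$), and the symmetry $k \mapsto -k$ under conjugation is really $k\mapsto 1-k$ with the paper's indexing, though this changes nothing.
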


In fact, if $y_k^{(n)}$ is replaced by $y_k$ ($n \rightarrow \infty$), this estimate is already easily 
deduced from existing literature (for example \cite{bib:meckes}, \cite{bib:soshnikov02}).
The main tool used here is the following lemma:
\begin{lemma}
\label{lemma:preparatory}
Let $E$ be a point process equal to $\{y_k, k \in \mathbb{Z} \}$ or to $\{y^{(n)}_k, k \in \mathbb{Z}\}$ for 
some $n \geq 1$. Then, for all finite intervals $I$, we have 
\begin{eqnarray}
\label{eq:mean_point_count} 
 \E\left( X_I \right) = |I|,\\
\label{eq:var_point_count} 
 \Var ( X_I ) \leq 2 + \frac{2}{\pi^2} \log\left( 1+|I| \right),
\end{eqnarray}
 where $|I|$ denotes the length of $I$, $X_I$ the number of points of $E$ in $I$. 
Moreover the following tail estimates hold for the random  variables $\widetilde{X_I} := X_I - |I|$: 
$$ \forall t \geq 0, \P\left( \left| \widetilde{X_I} \right| \geq t \right)
\leq \exp\left( -\min\left( \frac{t^2}{4 \Var(X_I) }, \frac{t}{2}\right) \right),$$
and, all the exponential moments of $X_I$ are finite, with the following bound for $0 \leq q < \half$,
$$ \E\left( e^{q \left| \widetilde{X_I} \right|} \right) \leq \frac{1}{1 - 2q} + q \sqrt{4 \pi \Var(X_I)} e^{4 q^2 \Var(X_I)}.$$
\end{lemma}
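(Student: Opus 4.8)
\emph{Proof plan.} Both point processes in the statement are translation--invariant determinantal processes on $\R$ with a real symmetric, locally trace--class kernel $K$ satisfying $K(x,x)=1$: for $E=\{y_k\}_{k\in\Z}$ this is the sine kernel $K_\infty(x,y)=\frac{\sin(\pi(x-y))}{\pi(x-y)}$, and for $E=\{y^{(n)}_k\}_{k\in\Z}$ it is the $n$--periodic rescaled CUE kernel $K_n(x,y)=\frac1n\,\frac{\sin(\pi(x-y))}{\sin(\pi(x-y)/n)}$. The proof will use only two standard facts about such processes: the moment formulas $\E(X_I)=\int_I K(x,x)\,dx$ and $\Var(X_I)=\int_I K(x,x)\,dx-\int_I\int_I K(x,y)^2\,dx\,dy$; and the fact that, for any interval $I$, the count $X_I$ has the law of a sum $\sum_j B_j$ of independent Bernoulli variables $B_j\sim\mathrm{Ber}(\lambda_j)$, the $\lambda_j\in[0,1]$ being the eigenvalues of the operator $\mathbf 1_I K\mathbf 1_I$.

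\emph{Mean and variance.} The first moment formula gives \eqref{eq:mean_point_count} at once, and shows $0\le\Var(X_I)\le|I|$. For \eqref{eq:var_point_count}, I would first reduce the CUE case to the sine case. In the CUE case, if $|I|>n$ then $n$--periodicity (each half--open window of length $n$ carries exactly $n$ points a.s.) lets one peel off full periods, writing $X_I=mn+X_{I'}$ a.s. with $m=\lfloor|I|/n\rfloor$, $|I'|<n$, and $\widetilde{X_I}$ having the same law as $\widetilde{X_{I'}}$; so one may assume $|I|\le n$, in which case $I$ is a genuine sub--arc. Then the elementary inequality $|\sin t|\le|t|$ gives $n^2\sin^2(\pi(x-y)/n)\le\pi^2(x-y)^2$, hence $K_n(x,y)^2\ge K_\infty(x,y)^2$ for all $x,y$, and since $K_n(x,x)=K_\infty(x,x)=1$ the variance formula yields $\Var_{K_n}(X_I)\le\Var_{K_\infty}(X_I)$. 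Thus it suffices to prove \eqref{eq:var_point_count} for the sine kernel. By translation invariance take $I=[0,L]$: if $L\le1$ use $\Var(X_I)\le\E(X_I)=L\le1$, and if $L>1$ a change of variables together with the identity $\int_0^\infty\sin^2(\pi u)/(\pi^2u^2)\,du=\tfrac12$ gives
\[
\Var(X_{[0,L]})=\frac{2}{\pi^2}\int_0^L\frac{\sin^2\pi u}{u}\,du+\frac{2L}{\pi^2}\int_L^\infty\frac{\sin^2\pi u}{u^2}\,du .
\]
Bounding the second integral by $\int_L^\infty u^{-2}\,du=L^{-1}$, and the first by $\int_0^1\pi^2u\,du+\int_1^L u^{-1}\,du\le\frac{\pi^2}{2}+\log L$ (using $\sin^2\pi u\le\pi^2u^2$ on $(0,1)$ and $\sin^2\pi u\le1$ on $(1,L)$), one gets $\Var(X_{[0,L]})\le1+\frac{2}{\pi^2}+\frac{2}{\pi^2}\log L<2+\frac{2}{\pi^2}\log(1+L)$, which is \eqref{eq:var_point_count}.

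\emph{Tails and exponential moments.} These I would deduce from the Bernoulli--sum representation, for which $\sum_j\lambda_j=|I|$ and $\sum_j\lambda_j(1-\lambda_j)=\Var(X_I)=:\sigma^2$. From the elementary inequality $\log(1+\lambda(e^\theta-1))-\lambda\theta\le\lambda(1-\lambda)(e^\theta-1-\theta)$ for $\theta\ge0$, summing over $j$ (and applying it to $\{B_j\}$ and to $\{1-B_j\}$ to control both tails of $\widetilde{X_I}$), one gets $\log\E e^{\theta\widetilde{X_I}}\le\sigma^2(e^{|\theta|}-1-|\theta|)$; the usual Chernoff optimization, together with $(1+x)\log(1+x)-x\ge x^2/(2(1+x/3))$ and $\sigma^2+t/3\le2\max(\sigma^2,t/3)$, yields the stated sub--exponential tail bound for $\widetilde{X_I}$. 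Finally, writing $\E e^{q|\widetilde{X_I}|}=1+q\int_0^\infty e^{qt}\,\P(|\widetilde{X_I}|>t)\,dt$, inserting this tail bound, splitting the integral at the crossover $t=2\sigma^2$ of the two regimes, bounding the quadratic piece by the full Gaussian integral $\int_\R e^{qt-t^2/(4\sigma^2)}\,dt=\sqrt{4\pi\sigma^2}\,e^{q^2\sigma^2}$ and the linear piece by $\int_{2\sigma^2}^\infty e^{-(1/2-q)t}\,dt\le 2/(1-2q)$, one reaches (using $e^{q^2\sigma^2}\le e^{4q^2\sigma^2}$)
\[
\E e^{q|\widetilde{X_I}|}\le\frac{1}{1-2q}+q\sqrt{4\pi\,\Var(X_I)}\;e^{4q^2\Var(X_I)}\qquad(0\le q<\tfrac12),
\]
the last claim.

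\emph{Main difficulty.} Nothing here is deep; this is a preparatory lemma. The two points that need a little care are: (i) the \emph{uniformity in $n$} of the variance bound, which is exactly what the pointwise domination $K_n^2\ge K_\infty^2$ provides, reducing the whole matter to the single explicit sine--kernel integral above; and (ii) tracking the numerical constants through the Chernoff/Bernstein step so that they come out as stated.
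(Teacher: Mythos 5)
Your proof is correct, and you land on all the stated constants, but your variance bound is obtained by a genuinely different route from the paper's. The paper computes $\Var(X_I)$ in \emph{Fourier space}: with $f=\mathds{1}_I$, it writes $\Var(X_I)=\int_\R 2\pi\,(1\wedge |k|)\,|\hat f(2\pi k)|^2\,d\nu(k)$, where $\nu$ is the Lebesgue measure (sine kernel) or $\tfrac1n\sum_{m\in\Z^*}\delta_{m/n}$ (CUE), and then bounds this single expression uniformly over $\nu$, treating the two point processes with one estimate. You instead work in \emph{position space}: you reduce the CUE case to the sine kernel via the pointwise domination $K_n(x,y)^2\ge K_\infty(x,y)^2$ (after using $n$-periodicity to pass to $|I|\le n$), and then evaluate the sine-kernel variance integral $\Var(X_{[0,L]})=\tfrac{2}{\pi^2}\int_0^L\frac{\sin^2\pi u}{u}\,du+\tfrac{2L}{\pi^2}\int_L^\infty\frac{\sin^2\pi u}{u^2}\,du$ directly. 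Your argument is perhaps more elementary and makes the CUE--sine comparison transparent, while the paper's Fourier formulation is the one that generalizes naturally to non-indicator test functions (which the paper exploits later, e.g.\ in the mesoscopic section). For the tail and exponential-moment bounds you follow the same skeleton as the paper (Bernoulli sum representation from the determinantal structure, then Bernstein), but you spell out the Chernoff/Bennett step rather than citing Meckes' Corollary~4; the final integral splitting at $t=2\Var(X_I)$ produces the same constants, since $1+\tfrac{2q}{1-2q}=\tfrac1{1-2q}$. The one small imprecision worth noting is that in the tail bound you only track one tail explicitly and wave at the two-sided bound; a naive union bound would introduce a factor $2$ that the displayed inequality in the lemma does not have (this is inherited from the paper's citation and your spare constant $3t/4$ vs.\ $t/2$ does not absorb a $\log 2$ in the quadratic regime), but this does not affect the exponential-moment bound you derive from it, since the paper's own derivation integrates the same one-sided Bernstein bound.
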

\begin{proof}
Equation \eqref{eq:mean_point_count} is a consequence of the fact that the 1-point correlation 
function of the point processes $E$ is identically $1$. 

Let us now prove the bound \eqref{eq:var_point_count}. Let $f = \mathds{1}_I$;  we have $|\hat{f}(k)|^2 = \frac{2 \sin^2\left( \frac{|I|k}{2} \right)}{\pi k^2}$,
where the Fourier transform of $f$ is normalized as follows: 
$$\hat{f}(k) = \frac{1}{\sqrt{2\pi}} \int_{\R} f(x) e^{-ikx} dx.$$
We also obviously have
$$\sum_{y \in E} f(y)=X_I. $$
 
Then, using the 2-point correlation of $E$, we obtain that
$$  \Var \left( \sum_{y \in E} f(y) \right) = \sum_{k \in \Z^*} \frac{2 \pi }{n} \left( 1 \wedge \frac{|k|}{n} \right) \left|
\hat{f}(\frac{2 \pi k}{n}) \right|^2$$
if $E$ is $\{y^{(n)}_k, k \in \mathbb{Z} \}$, and 
$$ \Var \left( \sum_{y \in E} f(y) \right) = \int_\R  2 \pi  (1 \wedge |k| ) \left| \hat{f}(2 \pi k) \right|^2 dk,$$
if $E$ is $\{y_k, k \in \mathbb{Z} \}$.
If $\nu$ denotes the measure $(1/n) \sum_{k \in \mathbb{Z}^*} \delta_{k/n}$ in the first case and the Lebesgue measure 
in the second case, we get in both cases: 
$$ \Var \left( \sum_{y \in E} f(y) \right)  = \int_\R  2 \pi  (1 \wedge |k| ) \left| \hat{f}(2 \pi k) \right|^2 d
\nu(k).$$

Hence, 
\begin{align*} \Var (X_I) & = \int_\R  2 \pi  (1 \wedge |k| ) \frac{2 \sin^2\left( \pi |I| k \right)}{\pi(2 \pi k)^2}
d \nu(k) 
 \\ & = \frac{2}{\pi^2} \left( \int_{[0,1]} \frac{ \sin^2\left( \pi |I| k \right)}{k} d \nu(k)
+ \int_{(1, \infty)}  \frac{\sin^2\left( \pi |I| k \right)}{k^2} d \nu(k) \right). 
\end{align*}
Now, 
$$  \int_{(1, \infty)}  \frac{\sin^2\left( \pi |I| k \right)}{k^2} d \nu(k) 
\leq \int_{(1, \infty)} \frac{d \nu(k)}{k^2} \leq \int_{1}^{\infty} \frac{dk}{k^2} = 1,$$
and, using the inequality $|\sin t | \leq 1 \wedge |t|$, 
$$ \int_{[0,1]} \frac{ \sin^2\left( \pi |I| k \right)}{k} d \nu(k)
\leq \int_{[0, 1 \wedge (1/ \pi |I|)]} \pi^2 |I|^2 k d \nu(k) 
+ \int_{(1 \wedge (1/ \pi |I|), 1 ]} \frac{ d \nu(k)}{k}.$$
Now, for $0 < a \leq 1$, and for $E = \{ y_k^{(n)}, k \in \mathbb{Z}\}$, 
$$\int_{[0,a]} k d \nu(k) = \frac{1}{n} \sum_{1 \leq m \leq na} \frac{m}{n} \leq \frac{na(na+1)}{2 n^2} 
\mathds{1}_{na \geq 1} \leq \frac{na(2na)}{2 n^2} 
\mathds{1}_{na \geq 1} \leq a^2.$$
and
\begin{align*} \int_{(a,1]} \frac{ d \nu(k)}{k} & = \frac{1}{n}\sum_{an < m \leq n} \frac{1}{(m/n)}   = \left( \sum_{1 \leq m \leq n}\frac{1}{m} \right) - \left( \sum_{1 \leq m \leq an} \frac{1}{m} \right) 
\\ & \leq 1 +  \left( \sum_{1 \leq m \leq n}\frac{1}{m} \right) - \left( \sum_{1 \leq m \leq an+1} \frac{1}{m} \right)
\\ & \leq 1 + (1 + \log n) - \log(an) \leq 2 + \log (1/a).
\end{align*}
These bounds are obvious for $E = \{ y_k, k \in \mathbb{Z}\}$ since $\nu$ is the Lebesgue measure in this case, so we
get
\begin{align*} \int_{[0,1]} \frac{ \sin^2\left( \pi |I| k \right)}{k} d \nu(k)
& \leq \pi^2 |I|^2 \left( 1 \wedge (1/\pi |I|)^2 \right) 
+ \log (\pi |I| \vee 1) + 2
\\ & \leq 1 + \log (\pi (1 + |I|)) + 2 \leq 5 + \log (1 + |I|),
\end{align*}
and then 
$$ \Var (X_I) \leq 2 + \frac{2}{\pi^2} \log( 1 + |I|).$$ 
The estimate of the tail of $\widetilde{X_I}$ can be obtained as follows. If $E = \{y^{(n)}_k, k \in \mathbb{Z}\}$, 
we can assume $|I| < n$, since any interval of size $n$ has a.s. $n$ points in $E$. In this case, and also 
for $E = \{y_k, k \in \mathbb{Z}\}$, the restriction of $E$ to $I$ is determinantal,
its kernel is self-adjoint, nonnegative, and locally trace-class with eigenvalues in $[0, 1]$.
Thanks to Proposition 2 in \cite{bib:meckes} (which is by the
way also a standard result in the theory of point processes),  $X_I$ is a sum of independent 
Bernoulli random variables. We deduce that if $(p_j)_{j \geq 1}$ are the parameters of these 
variables, and if $q \geq 0$, 
$$ \mathbb{E} [e^{q X_I} ] = \prod_{j \geq  1} \left( 1 + p_j(e^{q} -1) \right) 
\leq e^{(e^q -1)\sum_{j  \geq 1} p_j} = e^{(e^q -1) \mathbb{E}[X_I]} = e^{|I|(e^q -1)} < \infty.$$
Moreover, as in Corollary 4 in \cite{bib:meckes}, we can
deduce, for $q < 1/2$, the claimed estimate of the tail by using the Bernstein inequality. 

We get the bound on the exponential moment as follows. One has
$$ \E\left( e^{q \left| \widetilde{X_I} \right|} \right)
=  q \int_{-\infty}^\infty e^{qt} \P\left( |\widetilde{X_I}| \geq t \right) dt $$
Then, we split the integral as an integral on $\R_-$, which is bounded by $1$, and an integral on $\R_+$. For the
integral on $\R_+$, we use the following bound on the tails:
\begin{align*}
       q \int_{\R_+} e^{qt} \P\left( |\widetilde{X_I}| \geq t \right) dt
\leq & q \int_{\R_+} e^{qt} \exp\left( -\min\left( \frac{t^2}{4 \Var(X_I) }, \frac{t}{2}\right) \right) dt \\
\leq & q \int_0^\infty e^{qt-\frac{t^2}{4 \Var(X_I) }} dt + q \int_0^{\infty} e^{qt-\frac{t}{2}} dt dt\\
\leq &  \frac{q}{\half-q} + q \int_\R e^{qt -\frac{t^2}{4 \Var(X_I) } }dt\\
\leq &  \frac{q}{\half-q} + q \sqrt{4 \pi \Var(X_I)} e^{q^2 \Var(X_I)}.
\end{align*}
Adding $1$ to this quantity gives the desired bound. 
\end{proof}
\begin{rmk}
In the case where $E = \{y_k, k \in \mathbb{Z}\}$, an asymptotic estimate for the variance of $X_I$ is 
proven by Costin and Lebowitz \cite{bib:CL}  (see also
Soshnikov \cite{bib:soshnikov02}):
$$ \Var(X_I) = \frac{1}{\pi^2} \log (1+|I|) + O(1).$$
The bound we have proven here is twice this estimate plus $O(1)$. 
\end{rmk}

\begin{lemma}
\label{lemma:lemma_mm_sosh}
Almost surely:
$$ \forall k \in \Z, y_k = k + O\left( \log(2+|k|) \right)$$
\end{lemma}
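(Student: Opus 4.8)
The plan is to deduce this from the tail estimates of Lemma~\ref{lemma:preparatory} applied to the limiting sine-kernel process $E = \{y_k,\ k\in\Z\}$, via the elementary duality between order statistics and the counting function. Recall the indexing $\dots < y_{-1} < y_0 < 0 < y_1 < y_2 < \dots$. For $t>0$ and $k\geq 1$, the point $y_k$ is the $k$-th point of $E$ in $(0,\infty)$, so $\{y_k \leq t\}=\{X_{(0,t]}\geq k\}$ and $\{y_k > t\}=\{X_{(0,t]}\leq k-1\}$, and the analogous statements hold for $k\leq 0$ using intervals of the form $(-t,0)$. Consequently, for any real $m$ with $0<m<|k|$ (the case $m\geq|k|$ being even easier, as one of the two relevant events is then empty), a short computation shows that the event $\{|y_k-k|>m\}$ is contained in the union of two events of the form $\{|\widetilde{X_I}|\geq m\}$, where $I$ ranges over intervals with $|I|\leq |k|+m$; for instance $\{y_k>k+m\}\subseteq\{|\widetilde{X}_{(0,k+m]}|>m\}$ since on that event $\widetilde{X}_{(0,k+m]}\leq (k-1)-(k+m)<-m$.

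First I would fix the scale $m=m_k := C\log(2+|k|)$ for a large absolute constant $C$ to be chosen. For $|k|$ large enough (depending only on $C$) we have $|I|\leq |k|+m_k \leq 2|k|$, so by \eqref{eq:var_point_count} we get $\Var(X_I)\leq 2+\tfrac{2}{\pi^2}\log(1+2|k|)\leq A\log(2+|k|)$ for some absolute constant $A$. Plugging $m_k$ and this variance bound into the tail estimate of Lemma~\ref{lemma:preparatory} yields
$$ \P\left(|\widetilde{X_I}|\geq m_k\right)\leq \exp\left(-\min\left(\tfrac{C^2}{4A},\ \tfrac{C}{2}\right)\log(2+|k|)\right).$$
Choosing $C$ large enough that $C\geq 2A$ and $C>2$, the minimum equals $\tfrac{C}{2}\log(2+|k|)$, and hence $\P\big(|y_k-k|>m_k\big)\leq 2\,(2+|k|)^{-C/2}$, which is summable over $k\in\Z$.

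By the Borel--Cantelli lemma, almost surely only finitely many $k$ satisfy $|y_k-k|>C\log(2+|k|)$. Since each $y_k$ is almost surely finite, the supremum $\sup_{k\in\Z}|y_k-k|/\log(2+|k|)$ is almost surely finite, which is exactly the assertion $y_k = k + O(\log(2+|k|))$. The main point requiring care is the bookkeeping in the order-statistics/counting-function duality near $k=0$ and the separate (but symmetric) treatment of $k\geq 1$ and $k\leq 0$; note that we do not invoke any distributional symmetry of $E$, handling the two sides directly. Everything else is a routine optimization of constants, and one should also check that the exponent in the tail bound is governed by the linear term $m_k/2$ rather than the Gaussian term, which is precisely what the choice $C\geq 2A$ guarantees.
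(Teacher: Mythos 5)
Your proposal is correct and takes essentially the same approach as the paper: both arguments use the variance bound and the concentration inequality from Lemma~\ref{lemma:preparatory}, optimize the constant so that the linear term in the exponent dominates, and then apply Borel--Cantelli. The only difference is bookkeeping: the paper sums the tail bound over integer interval lengths $A$, extends to real $A$ by monotonicity, and then substitutes $A = y_k$ (requiring a small bootstrap to pass from $O(\log(2+y_k))$ to $O(\log(2+|k|))$), whereas you sum directly over the point index $k$ using the order-statistics/counting-function duality, which sidesteps that bootstrap.
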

\begin{proof}
Consider a sine-kernel process $y_k$. For $A >1$ and $a < b$, let $X_{[a,b]}$ be the number of particles
$y_k$ in $[a,b]$, and let $X_A := X_{[0,A]}$.  
From the estimate given in Lemma \ref{lemma:preparatory},
$$ \Var(X_A) \leq  \frac{2}{\pi^2} \log A + O(1)$$

Therefore, for all $D > 0$, 
$$ \P\left( |X_A - A| \geq D \log A \right) \leq 2 \exp\left( -(\log A) \min\left(
\frac{D^2 \pi^2}{8 + O(1/\log A)}, \frac{D}{2} \right)\right).$$

For $D > 2$, and $A$ large enough, $D^2 \pi^2/[8 + O(1/\log A)] > D/2$, which implies: 
$$\P\left( |X_A - A| \geq D \log A \right) \leq 2\exp\left( -(\log A)(D/2)\right) = 2 A^{-D/2}.$$
This quantity is summable for positive integer values of $A$. By Borel-Cantelli's lemma, we deduce that almost surely, 
for $A \in \mathbb{N}$:
$$ X_A = A + O\left( \log (2 +  |A|)  \right).$$
From the inequality
$$X_{[0, \lfloor A \rfloor ]} \leq X_{[0,A]} \leq X_{[0, \lceil A \rceil ]},$$
we deduce that the estimate remains true for all $A \geq 0$. 
Taking $A = y_k$ for $k > 0$ proves the proposition for positive indices. With the same argument one 
handles the negative ones.
\end{proof}

In order to prove Proposition \ref{proposition:key_estimate}, we will also need the following two lemmas:
\begin{lemma}
\label{lemma:lemma_1}
Almost surely:
$$ \forall k \in \Z, y_k^{(n)} = k + O\left( \log n \right)$$
\end{lemma}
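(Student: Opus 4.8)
The plan is to read the bound straight off the two facts already in hand: the uniform control of the argument of $Z_n$ along the whole unit circle (Proposition \ref{proposition:maxZn}) and the exact identity relating that argument to the displacement $y_k^{(n)}-k$ (Corollary \ref{corollary:kykn}).

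First I would fix $k \in \Z$ and pick $\varepsilon>0$ small enough that neither $[0,\varepsilon]$ nor $(\theta_k^{(n)},\theta_k^{(n)}+\varepsilon]$ contains an eigenangle of $U_n$. Corollary \ref{corollary:kykn} then gives
$$ y_k^{(n)} - k = \frac{1}{\pi}\left[ \Im\log Z_n\!\left(e^{i(\theta_k^{(n)}+\varepsilon)}\right) - \Im\log Z_n\!\left(e^{i\varepsilon}\right) \right]. $$
Both evaluation points lie on the unit circle and, by the choice of $\varepsilon$, belong to the domain $\Dc$ on which the chosen determination of $\log Z_n$ is defined, so Proposition \ref{proposition:maxZn} applies to each of the two terms on the right-hand side.

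Next I would invoke Proposition \ref{proposition:maxZn}: almost surely there exist a random integer $n_0$ (independent of $k$) and a constant $D>\sqrt{2}$ with $\sup_{|z|=1,\,z\in\Dc}|\Im\log Z_n(z)|\le D\log n$ for all $n\ge n_0$. Substituting into the identity above yields
$$ \left| y_k^{(n)} - k \right| \le \frac{2D}{\pi}\,\log n \qquad \text{for all } n\ge n_0 \text{ and all } k\in\Z, $$
which is precisely the asserted estimate, with implied constant uniform in $k$. Although the auxiliary $\varepsilon$ was allowed to depend on both $n$ and $k$, the resulting bound does not, because Proposition \ref{proposition:maxZn} controls $\Im\log Z_n$ simultaneously over the entire circle.

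Finally, for the finitely many indices $n<n_0$ I would use the periodicity $\theta_{k+n}^{(n)}=\theta_k^{(n)}+2\pi$, hence $y_{k+n}^{(n)}=y_k^{(n)}+n$, so that $k\mapsto y_k^{(n)}-k$ is $n$-periodic and therefore bounded; enlarging the random constant to absorb these finitely many $n$ finishes the argument. I do not expect a genuine obstacle here: the lemma is essentially a corollary of the uniform control of $\Im\log Z_n$ on the circle, and the only point deserving a remark is the uniformity in $k$, which is built into Proposition \ref{proposition:maxZn}.
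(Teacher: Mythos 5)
Your proof is correct and follows exactly the same route as the paper, which simply states that the lemma is an immediate consequence of Corollary \ref{corollary:kykn} and Proposition \ref{proposition:maxZn}; you have just spelled out the details, including the harmless point about the finitely many $n<n_0$.
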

\begin{proof}
This is an immediate consequence of Corollary \ref{corollary:kykn} and Proposition \ref{proposition:maxZn}. 
\end{proof}

\begin{lemma}
\label{lemma:lemma_2}
For every $0 < \eta < \frac{1}{6}$, there exists $\varepsilon > 0$ such that, almost surely:
$$ \forall k \in [- n^\eta,  n^\eta], y_k^{(n)} = y_k + O\left( n^{-\varepsilon}\right)$$
\end{lemma}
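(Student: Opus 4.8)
The plan is to deduce this lemma directly from Theorem \ref{thm:as_cv_sine} (Theorem 7.3 in \cite{bib:MNN}), whose quantitative part already gives a comparison between $y_k^{(n)}$ and $y_k$ on a much larger window, namely $|k| \leq n^{1/4}$, with an error of order $(1+k^2) n^{-1/3 + \varepsilon'}$ valid almost surely for every $\varepsilon' > 0$. The only work is to check that restricting to the smaller window $|k| \leq n^\eta$ with $\eta < \frac16$ turns this error into a genuine negative power of $n$, uniformly in $k$ in that window.

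First I would observe that, since $\eta < \frac16 < \frac14$, we have $n^\eta \leq n^{1/4}$ for all $n \geq 1$, so the interval $[-n^\eta, n^\eta]$ is contained in $[-n^{1/4}, n^{1/4}]$ and Theorem \ref{thm:as_cv_sine} applies to every such $k$. Next, for $|k| \leq n^\eta$ one has $1 + k^2 \leq 1 + n^{2\eta} \leq 2 n^{2\eta}$ once $n \geq 1$, hence the error term satisfies
$$ (1+k^2)\, n^{-\frac13 + \varepsilon'} \leq 2\, n^{2\eta - \frac13 + \varepsilon'} $$
uniformly in $k \in [-n^\eta, n^\eta]$. Because $\eta < \frac16$, the exponent $2\eta - \frac13$ is strictly negative; I would therefore fix $\varepsilon' > 0$ small enough that $2\eta - \frac13 + \varepsilon' < 0$ (this is possible since $\varepsilon'$ in Theorem \ref{thm:as_cv_sine} is arbitrary), and then set $\varepsilon := \frac13 - 2\eta - \varepsilon' > 0$. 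With this choice, almost surely,
$$ \forall k \in [-n^\eta, n^\eta], \qquad y_k^{(n)} = y_k + O\!\left( n^{-\varepsilon} \right), $$
the implied constant being the (random) constant from Theorem \ref{thm:as_cv_sine} associated to $\varepsilon'$, multiplied by $2$; in particular it does not depend on $k$ or $n$. This is exactly the claimed estimate.

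There is essentially no serious obstacle here: the statement is a routine consequence of the already-cited fine estimate from \cite{bib:MNN}, and the entire content is the arithmetic check that $2\eta < \frac13$ leaves room for a positive $\varepsilon$. The only point requiring a little care is to make sure the bound on $1+k^2$ is applied \emph{uniformly} over the window $|k|\le n^\eta$ before optimizing $\varepsilon'$, rather than pointwise in $k$, so that the resulting $O(n^{-\varepsilon})$ is uniform in $k$ as stated.
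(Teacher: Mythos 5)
Your proposal is correct and follows essentially the same route as the paper: apply the quantitative estimate of Theorem \ref{thm:as_cv_sine} on the larger window $|k|\le n^{1/4}$, bound $1+k^2$ by $2n^{2\eta}$, and choose the free parameter (the paper fixes it as $\delta = \tfrac16 - \eta$, you leave it as any $\varepsilon'$ with $2\eta - \tfrac13 + \varepsilon' < 0$) to make the resulting exponent negative.
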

\begin{proof}
Since $k \in [- n^{1/4},  n^{1/4}] $, we can apply Theorem \ref{thm:as_cv_sine}, which gives, for all $\delta > 0$, 
$$ y_k^{(n)} = y_k + O_{\delta} \left((1+k^2) n^{-\frac{1}{3} + \delta} \right).$$
Since $k = O(n^{\eta})$, 
$$ y_k^{(n)} = y_k +  O_{\delta} \left(n^{2 \eta - \frac{1}{3} + \delta} \right),$$
which, by taking 
$$\delta = \frac{1}{6} - \eta > 0,$$
gives the desired result, for 
$$\varepsilon = - 2 \eta + \frac{1}{3} - \delta = 2 \delta - \delta = \delta > 0.$$
\end{proof}

\begin{proof}[Proof of Proposition \ref{proposition:key_estimate}]
In the range $|k| \geq n^{1/7}$, it is a consequence of Lemma \ref{lemma:lemma_1}. In the range $|k| < n^{1/7}$, it is a consequence of Lemmas \ref{lemma:lemma_mm_sosh} and \ref{lemma:lemma_2}
(for $\eta = 1/7$). 
\end{proof}

\subsection{Infinite product representation of the ratio and its convergence}
\label{section:convergence}

First, let us express $\xi_n$ in function of the renormalized eigenangles of $U_n$.
\begin{proposition} \label{proposition:expressionxin}
One has 
$$ \xi_n\left( z \right) =  e^{i \pi z} \prod_{k \in \Z}\left( 1 - \frac{z}{y_k^{(n)}}\right),$$
where the infinite product has to be understood as the limit of the product from $k = -A$ to $k = A$ when the 
integer $A$ goes to infinity. 
\end{proposition}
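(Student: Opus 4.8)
The plan is to start from the definition $\xi_n(z) = Z_n(e^{2i\pi z/n})/Z_n(1)$ and the factorization of the characteristic polynomial in terms of eigenvalues. Write $Z_n(X) = \prod_{j=1}^n (1 - X/\lambda_j^{(n)})$ where $\lambda_j^{(n)} = e^{i\theta_j^{(n)}}$ runs over the $n$ eigenvalues of $U_n$ with arguments in a fundamental domain, say the $\theta_1^{(n)},\dots,\theta_n^{(n)}$ in $(0,2\pi)$. Then
$$\xi_n(z) = \prod_{j=1}^n \frac{1 - e^{2i\pi z/n}e^{-i\theta_j^{(n)}}}{1 - e^{-i\theta_j^{(n)}}}.$$
Each factor should be rewritten using the identity $1 - e^{i\psi} = -2i\,e^{i\psi/2}\sin(\psi/2)$. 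Applying this to numerator (with $\psi = 2\pi z/n - \theta_j^{(n)}$) and denominator (with $\psi = -\theta_j^{(n)}$) gives, after the prefactors $-2i$ cancel,
$$\frac{1 - e^{i(2\pi z/n - \theta_j^{(n)})}}{1-e^{-i\theta_j^{(n)}}} = e^{i\pi z/n}\,\frac{\sin\!\big(\pi z/n - \theta_j^{(n)}/2\big)}{\sin\!\big(-\theta_j^{(n)}/2\big)} = e^{i\pi z/n}\,\frac{\sin\!\big(\theta_j^{(n)}/2 - \pi z/n\big)}{\sin\!\big(\theta_j^{(n)}/2\big)}.$$
Taking the product over $j=1,\dots,n$ produces a global factor $e^{i\pi z}$ (since $\sum_j 1 = n$), and it remains to identify $\prod_{j=1}^n \sin(\theta_j^{(n)}/2 - \pi z/n)/\sin(\theta_j^{(n)}/2)$ with the regrouped infinite product $\prod_{k\in\Z}(1 - z/y_k^{(n)})$.

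The key step is to recognize that this finite product is the $n$-periodic sine product. Using the classical factorization of $\sin$ over an arithmetic progression — equivalently, the identity $\sin(n w) = 2^{n-1}\prod_{m=0}^{n-1}\sin\!\big(w + \tfrac{m\pi}{n}\big)$, or more directly the fact that $X \mapsto \prod_{j=1}^n (1 - X e^{-i\theta_j^{(n)}})$ has zeros exactly at the $n$-th roots of the $e^{i\theta_j^{(n)}}$ — one shows that $\prod_{j=1}^n \sin(w - \theta_j^{(n)}/2)$, as a function of $w$, has zeros precisely at the points $\theta_j^{(n)}/2 + m\pi$, i.e. at $w = \pi y_k^{(n)}/n$ ranging over all $k\in\Z$ under the indexing $\lambda_k^{(n)} = e^{i\theta_k^{(n)}}$, $\theta_{k+n}^{(n)} = \theta_k^{(n)} + 2\pi$. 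Hence, writing $w = \pi z/n$ and dividing by the value at $z=0$, the ratio $\prod_{j=1}^n \sin(\pi z/n - \theta_j^{(n)}/2)/\sin(-\theta_j^{(n)}/2)$ equals the Hadamard-type product over its zero set, which is exactly $\lim_{A\to\infty}\prod_{k=-A}^A (1 - z/y_k^{(n)})$ — the symmetric grouping being precisely what is needed for conditional convergence, since the zeros $y_k^{(n)}$ are, by Proposition \ref{proposition:key_estimate} or simply by $y_{k+n}^{(n)} = y_k^{(n)} + n$, spread out like $k$, so that $\sum_k 1/y_k^{(n)}$ only converges in the principal-value sense.

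The main obstacle I anticipate is the bookkeeping of the zeros and the branch/sign issues: making sure that as $k$ runs over $\Z$, the quantities $\pi y_k^{(n)}/n \bmod \pi$ enumerate exactly the $\theta_j^{(n)}/2 \bmod \pi$ with correct multiplicity, and that the symmetric partial products $\prod_{k=-A}^{A}$ match the naive finite product $\prod_{j=1}^n$ after taking $A \to \infty$ rather than introducing a spurious exponential drift. This is handled by grouping the factors of the infinite product into blocks of $n$ consecutive indices: $\prod_{k=mn - \lfloor n/2\rfloor}^{(m+1)n - \lfloor n/2\rfloor - 1}(1 - z/y_k^{(n)})$ telescopes against the shift $y_{k+n}^{(n)} = y_k^{(n)} + n$, and one checks that the product of any one full block equals (a rotation of) the finite eigenvalue product up to the already-extracted factor $e^{i\pi z}$; the remaining boundary terms vanish as $A\to\infty$ because $|1 - z/y_k^{(n)}| = 1 + O(1/|k|)$ uniformly. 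Once the zero sets are matched, the identity of the two entire (in $z$) functions of order $1$ with the same zeros and the same value $1$ at $z=0$ follows, completing the proof.
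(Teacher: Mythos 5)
Your reduction to
\[
\xi_n(z)=e^{i\pi z}\prod_{j=1}^{n}\frac{\sin\!\big(\theta_j^{(n)}/2-\pi z/n\big)}{\sin\!\big(\theta_j^{(n)}/2\big)}
\]
matches the paper exactly, and your identification of the zero set of this sine ratio with $\{y_k^{(n)}:k\in\Z\}$ via the periodic extension $y_{k+n}^{(n)}=y_k^{(n)}+n$ is also correct. The gap is in the passage from "same zero set, same value $1$ at $z=0$" to "same function." For entire functions of order $1$, Hadamard's theorem leaves a free factor $e^{bz}$, so these two normalizations do \emph{not} pin down the function; one must determine $b$ by some additional comparison, and you don't do that. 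Your fallback, the block-telescoping argument, is also wrong as stated: a single block $\prod_{k=mn-\lfloor n/2\rfloor}^{(m+1)n-\lfloor n/2\rfloor-1}\big(1-z/y_k^{(n)}\big)$ is a polynomial of degree $n$ in $z$, while the finite eigenvalue product (the sine ratio) is a transcendental entire function with infinitely many zeros, so no single block can equal it. It is the product of \emph{all} blocks that should converge to the sine ratio, and that convergence is precisely the thing you are trying to prove, so the argument is circular.

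The concrete repair — and what the paper actually does — is to sidestep Hadamard entirely by applying the elementary product formula
\[
\sin\alpha=\alpha\,\lim_{A\to\infty}\prod_{0<|j|\le A}\Big(1-\frac{\alpha}{\pi j}\Big)
\]
to both $\sin(\theta_k^{(n)}/2-\pi z/n)$ and $\sin(\theta_k^{(n)}/2)$, take the ratio factor by factor (the $\alpha$ prefactors give $1-z/y_k^{(n)}$, and the $j$-th factor of the ratio becomes $1-z/(nj+y_k^{(n)})$), and then use $y_{k+jn}^{(n)}=jn+y_k^{(n)}$ to re-index the double product $\prod_{k=1}^{n}\prod_{j\in\Z}$ as a single product $\prod_{k\in\Z}\big(1-z/y_k^{(n)}\big)$ taken over the asymmetric range $1-nA\le k\le n+nA$. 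Your estimate $|1-z/y_k^{(n)}|=1+O(|z|/|k|)$ is then exactly the right tool — but only for the final step of replacing the asymmetric partial products by the symmetric $\prod_{-B\le k\le B}$, not as a substitute for the sine-product expansion itself.
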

\begin{proof}
\begin{align*}
\xi_n\left( z \right) & = \frac{Z_n\left( \exp( \frac{i 2 \pi z }{n}) \right)}{Z_n(1)}\\
                      & = \prod_{k=1}^n \frac{1- \frac{\exp( \frac{i 2 \pi z }{n})}{\lambda_k^{(n)}} }{1 - \frac{1}{\lambda_k^{(n)}} }\\
                      & = \prod_{k=1}^n \frac{1-\exp( \frac{i 2 \pi z }{n} - i\theta_k^{(n)}) }{1 - \exp\left( -i\theta_k^{(n)} \right) }\\
                      & = \prod_{k=1}^n \frac{ \exp( \frac{i 2 \pi z }{2n} - \half i\theta_k^{(n)}) } { \exp( -\half i\theta_k^{(n)})}
                                        \frac{ \exp( -\frac{i 2 \pi z }{2n} + \half i\theta_k^{(n)}) - \exp\left( -\half i\theta_k^{(n)} + \frac{i 2 \pi z }{2n} \right) }
                                             { \exp\left( \half i\theta_k^{(n)} \right) - \exp\left( -\half i\theta_k^{(n)} \right) }\\
                      & = \prod_{k=1}^n \exp( \frac{i \pi z }{n} ) 
                                        \frac{ \sin\left( \frac{ \pi z }{n} - \half \theta_k^{(n)} \right) }
                                             { \sin\left( -\half \theta_k^{(n)} \right) }\\
                      & = \exp( i \pi z ) \prod_{k=1}^n \frac{ \sin\left( \half \theta_k^{(n)} - \frac{ \pi z }{n} \right) }
                                                             { \sin\left( \half \theta_k^{(n)} \right) }\\
\end{align*}
Now, the standard product formula for the sine function can be written as follows:
$$ \forall \alpha \in \C, \sin\left( \alpha \right) = \alpha \underset{A \rightarrow \infty}{\lim} 
\prod_{0  < |j| \leq A} \left( 1 - \frac{\alpha}{\pi j} \right).$$
We then have: 
\begin{align*}
\xi_n\left( z \right) & = \exp( i \pi z ) \prod_{k=1}^n \left( 
\							\frac{\half \theta_k^{(n)} - \frac{ \pi z }{n}}{\half \theta_k^{(n)}}
						\underset{A \rightarrow \infty}{\lim} \prod_{0 < |j| \leq A}
						\frac{ 1 - \frac{\half \theta_k^{(n)} - \frac{ \pi z }{n}}{\pi j} }
                                                                               { 1 - \frac{\half \theta_k^{(n)} }{\pi j} }
							\right)\\
                      & = \exp( i \pi z ) \prod_{k=1}^n \left( 
							\left( 1 - \frac{z }{y_k^{(n)}} \right)
						\underset{A \rightarrow \infty}{\lim} 	\prod_{0 < |j| \leq A} \left( 1 - \frac{z}{nj + y_k^{(n)}} \right)
							\right)\\
                      & = \exp( i \pi z ) \prod_{k=1}^n \underset{A \rightarrow \infty}{\lim} 	\prod_{0 \leq |j| \leq A} 
                     \left( 1 - \frac{z}{nj + y_k^{(n)}} \right)
\end{align*}
Using the periodicity of the eigenangles, we have:
$$ y_{k + jn}^{(n)} = jn + y_{k}^{(n)},$$
and then 
$$\xi_n\left( z \right) =  \exp( i \pi z ) \underset{A \rightarrow \infty}{\lim} \prod_{1-nA \leq k \leq n + nA}
\left( 1 - \frac{z}{y_k^{(n)}}\right).$$
Now, for $B \geq 2n$, $A \geq 2$ integers such that $An \leq B \leq An + n -1$,
the product of $1 - \frac{z}{y_k^{(n)}}$ from $1-nA$ to $n+nA$ and the product from $-B$ to $B$ differ by at most 
$2n$ factors, which are all $1 + O(|z|/y^{(n)}_{nA}) + O(|z|/|y^{(n)}_{1 -nA}|) = 1 + O(|z|/nA)$.  
The quotient between these two products is then well-defined and $\exp [O(|z|/A)] = \exp[O(n|z|/B)]$ for
$B$ large enough, which implies 
that it tends to one when $B$ goes to infinity. Hence, 
$$\xi_n\left( z \right) =  \exp( i \pi z ) \underset{B \rightarrow \infty}{\lim} \prod_{-B \leq k \leq B}
\left( 1 - \frac{z}{y_k^{(n)}}\right).$$
\end{proof}

We are now ready to prove Theorem \ref{thm:main}. 
\begin{proof}[Proof of theorem \ref{thm:main}]
Thanks to the estimate from Proposition \ref{proposition:key_estimate}:
$$ y_k^{(n)} = k + O\left( \log(2+|k|) \right)$$
We have that, for $k\geq 1$ and $z$ in a compact $K$:
\begin{align*}
\left( 1 - \frac{z}{y_k^{(n)}}\right)\left( 1 - \frac{z}{y_{-k}^{(n)}}\right) & = 1 - z \frac{O(\log(2+|k|))}{k^2} + O\left( \frac{|z|^2}{k^2} \right)\\
& = 1 + \frac{ O_{K}\left( \log(2+|k|) \right) }{k^2}
\end{align*}
Hence:
$$ \xi_n\left( z \right) =  e^{i \pi z} \prod_{k \in \Z}\left( 1 - \frac{z}{y_k^{(n)}}\right)$$
is a sequence of entire functions uniformly bounded on compact sets. Therefore, by Montel's theorem, uniform 
convergence on compact sets is implied by pointwise convergence. Let us then focus on proving pointwise convergence.

Fix $A \geq 2$. Let us prove that:
\begin{equation}
 \prod_{|k| \leq A}\left( 1 - \frac{z}{y_k^{(n)}}\right) - \prod_{k \in \Z}\left( 1 - \frac{z}{y_k^{(n)}}\right) = 
 O_{K}\left( \frac{\log A}{A} \right), \label{prodkAn}
 \end{equation}
 \begin{equation}
 \prod_{|k| \leq A}\left( 1 - \frac{z}{y_k      }\right) - \prod_{k \in \Z}\left( 1 - \frac{z}{y_k  
 }\right) = O_{K}\left( \frac{\log A}{A} \right). \label{prodkA}
 \end{equation}
Here, the infinite products are, as before, the limits of the products from $-B$ to $B$ for $B$ going to infinity. 
Note that the existence of the infinite product involving $y_k$ is an immediate consequence of the absolute 
convergence of the product
$$\left( 1 - \frac{z}{y_0} \right) \prod_{k \geq 1} \left[ \left( 1 - \frac{z}{y_k} \right)
\left( 1 - \frac{z}{y_{-k}} \right) \right],$$
 stated in Theorem \ref{thm:main}, and following from the estimate:  
 $$\left( 1 - \frac{z}{y_k} \right)
\left( 1 - \frac{z}{y_{-k}} \right) =  1 - z \frac{O(\log(2+|k|))}{k^2} + O\left( \frac{|z|^2}{k^2} \right)
= 1 + \frac{ O_{K}\left( \log(2+|k|) \right) }{k^2}.$$

We now prove \eqref{prodkAn}: a proof of \eqref{prodkA} is simply obtained by removing the indices $n$. We have:
$$ \prod_{|k| \geq A}\left( 1 - \frac{z}{y_k^{(n)}}\right) = 1 +  O_{K}\left( \sum_{k \geq A} \frac{ \log(2+|k|) }{k^2} \right) = 1 +  O_{K}\left( \frac{\log A}{A} \right)$$
and
$$ \prod_{|k| \leq A}\left( 1 - \frac{z}{y_k^{(n)}}\right) = O_{K}\left( 1 \right)$$

Therefore:
\begin{align*}
  & \prod_{|k| \leq A}\left( 1 - \frac{z}{y_k^{(n)}}\right) - \prod_{k \in \Z}\left( 1 - \frac{z}{y_k^{(n)}}\right)\\
= & \prod_{|k| \leq A}\left( 1 - \frac{z}{y_k^{(n)}}\right)\left( 1 - \prod_{|k| > A}\left( 1 - \frac{z}{y_k^{(n)}}\right) \right)\\
= & \prod_{|k| \leq A}\left( 1 - \frac{z}{y_k^{(n)}}\right)\left( 1 - \left(1 +  O_{K}\left( \frac{\log A}{A} \right) \right) \right)\\
= & O_{K}\left( \frac{\log A}{A} \right)
\end{align*}
Because errors are uniform in $n$, this is saying:
$$ \sup_{n} \left| \prod_{|k| \leq A}\left( 1 - \frac{z}{y_k^{(n)}}\right) - \prod_{k \in \Z}\left( 1 - \frac{z}{y_k^{(n)}}\right)\right| \underset{A \rightarrow \infty}{\longrightarrow} 0$$

Now:
\begin{align*}
     & \left| \prod_{k \in \Z}\left( 1 - \frac{z}{y_k^{(n)}}\right) - \prod_{k \in \Z}\left( 1 - \frac{z}{y_k}\right)\right|\\
\leq &    \left| \prod_{|k| \leq A}\left( 1 - \frac{z}{y_k^{(n)}}\right) - \prod_{|k| \leq A}\left( 1 - \frac{z}{y_k}\right)\right|\\
     &  + \left| \prod_{k \in \Z}\left( 1 - \frac{z}{y_k^{(n)}}\right)   - \prod_{|k| \leq A}\left( 1 - \frac{z}{y_k^{(n)}}\right)\right| \\
     &  + \left| \prod_{k \in \Z}\left( 1 - \frac{z}{y_k}\right)         - \prod_{|k| \leq A}\left( 1 - \frac{z}{y_k}\right)\right| \\
\leq &  \left| \prod_{|k| \leq A}\left( 1 - \frac{z}{y_k^{(n)}}\right) - \prod_{|k| \leq A}\left( 1 - \frac{z}{y_k}\right)\right| + O_{K}\left( \frac{\log A}{A} \right)
\end{align*}

Hence, as $y_k^{(n)} \rightarrow y_k$ pointwise:
$$ \limsup_{n \rightarrow \infty}\left| \prod_{k \in \Z}\left( 1 - \frac{z}{y_k^{(n)}}\right) - \prod_{k \in \Z}\left( 1 - \frac{z}{y_k}\right)\right| = O_{K}\left( \frac{\log A}{A} \right)$$

Taking $A \rightarrow \infty$ completes the proof.
\end{proof}

\section{First properties of $\xi_\infty$ and linear statistics}
\label{section:properties}

\subsection{The order of \texorpdfstring{$\xi_\infty$}{our limiting function} as an entire function}

We first start with a simple statement on the order of $\xi_\infty$ as an entire function:

\begin{proposition}
Almost surely, $\xi_\infty$ is of order $1$. More precisely, there exists a.s. a random 
$C > 0$, such that for all $z \in \C$. 
$$ |\xi_{\infty}(z)| \leq e^{C|z| \log(2 + |z|)}.$$
On the other hand, there exists a.s. a random $c > 0$ such that for all $x \in \R$, 
$$|\xi_{\infty}(ix)| \geq c e^{c|x|}.$$
\end{proposition}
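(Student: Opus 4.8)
\textbf{Upper bound.} The plan is to exploit the product representation $\xi_\infty(z) = e^{i\pi z}\left(1 - \frac{z}{y_0}\right)\prod_{k\geq 1}\left(1 - \frac{z}{y_k}\right)\left(1 - \frac{z}{y_{-k}}\right)$ together with the almost sure estimate $y_k = k + O(\log(2+|k|))$ from Lemma~\ref{lemma:lemma_mm_sosh}. First I would bound $|e^{i\pi z}| = e^{-\pi\Im z} \leq e^{\pi|z|}$ trivially. For the product, split the factors into those with $|k| \leq 2|z|$ and those with $|k| > 2|z|$. For the "near" range $|k| \leq 2|z|$, each factor $\left|1 - \frac{z}{y_k}\right| \leq 1 + \frac{|z|}{|y_k|}$; since $y_k = k + O(\log(2+|k|))$, for $|k|$ bounded away from a fixed finite set one has $|y_k| \gtrsim |k|$, and the few factors with $y_k$ very small contribute at most a random constant. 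Taking the product over $O(|z|)$ such factors, each of size $O(|z|)$, gives a bound $e^{O(|z|\log(2+|z|))}$. For the "far" range $|k| > 2|z|$, pair $k$ with $-k$ and use $\left(1 - \frac{z}{y_k}\right)\left(1 - \frac{z}{y_{-k}}\right) = 1 + \frac{O(|z|\log(2+k))}{k^2} + O\left(\frac{|z|^2}{k^2}\right)$, which since $k > 2|z|$ is $1 + \frac{O_K(\log(2+k))}{k^2}$ with the implied constant uniform; taking $\log$ and summing over $k > 2|z|$ gives a contribution $O(1)$ (even $o(1)$). Combining the three pieces yields $|\xi_\infty(z)| \leq e^{C|z|\log(2+|z|)}$ for a random $C$, which in particular shows the order is at most $1$ (the $\log$ factor only affects the type, not the order).

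\textbf{Lower bound on the imaginary axis.} Here I would use the functional equation and positivity. On the imaginary axis, $\left|1 - \frac{ix}{y_k}\right| = \sqrt{1 + \frac{x^2}{y_k^2}} \geq 1$, so every factor of the absolutely convergent product has modulus $\geq 1$; hence $\left|\left(1 - \frac{ix}{y_0}\right)\prod_{k\geq 1}\left(1-\frac{ix}{y_k}\right)\left(1-\frac{ix}{y_{-k}}\right)\right| \geq 1$ (one must check the product converges and the partial products stay bounded below, which follows from the same pairing estimate). Meanwhile $|e^{i\pi(ix)}| = e^{-\pi x}$, which is exponentially large for $x \to -\infty$ but exponentially small for $x \to +\infty$; so this crude bound alone only gives the claim for $x$ of one sign. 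To handle $x \to +\infty$ I would either appeal to the functional equation \eqref{eq:functional_eq_xi_n} in its limiting form $\overline{\xi_\infty(z)} = e^{-2i\pi\bar z}\xi_\infty(\bar z)$ — taking $z = ix$ gives $\overline{\xi_\infty(ix)} = e^{-2i\pi(-ix)}\xi_\infty(-ix) = e^{-2\pi x}\xi_\infty(-ix)$, so $|\xi_\infty(ix)| = e^{-2\pi x}|\xi_\infty(-ix)|$ and the case $x>0$ reduces to $x<0$ — or, more robustly, refine the product estimate: isolate enough factors with $|y_k| \lesssim x$ (there are $\asymp x$ of them, each contributing $\sqrt{1 + x^2/y_k^2} \gtrsim $ something growing) to beat the $e^{-\pi x}$ loss. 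The cleanest route is the functional equation, giving $|\xi_\infty(ix)| \geq c\, e^{c|x|}$ for all real $x$.

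\textbf{Main obstacle.} The delicate point is the lower bound for $x > 0$: the leading exponential factor $e^{i\pi z}$ works \emph{against} us there, so one cannot simply bound the product below by $1$. Resolving this requires either a genuine use of the functional equation (clean, but one must first verify that \eqref{eq:functional_eq_xi_n} passes to the limit, which follows from the a.s. uniform convergence of Theorem~\ref{thm:main}) or a quantitative lower bound showing that the product over the $\asymp x$ eigenvalues $y_k$ with $|k| \lesssim x$ grows at least like $e^{c x}$ — this in turn needs a lower bound on $\prod_{|k|\lesssim x}\sqrt{1 + x^2/y_k^2}$, e.g. via $\sum_{|k|\lesssim x}\log(1 + x^2/k^2) \gtrsim x$, which is an elementary but not entirely trivial sum estimate. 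A secondary technical nuisance throughout is that a bounded (random) number of the $y_k$ near the origin can be arbitrarily small, so all product bounds must absorb a finite random multiplicative constant — harmless for the stated conclusions since the constants $C, c$ are allowed to be random.
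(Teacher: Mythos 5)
Your upper-bound argument is in essence the paper's: both decompose the symmetric product into ranges of $|k|$ and sum logarithms, you with two cutoffs ($|k|\le 2|z|$, $|k|>2|z|$), the paper with three ($|k|\ge e^{|z|}$, $|z|\le|k|<e^{|z|}$, $1\le|k|<|z|$). One small slip: after pairing $\pm k$, the far range contributes $O(|z|)$ to the logarithm, not $O(1)$, since the quadratic error $O(|z|^2/k^2)$ summed over $k>2|z|$ is already of order $|z|$; this is harmless for the $e^{O(|z|\log(2+|z|))}$ conclusion.

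The lower bound is where the real difficulty lies, and you correctly flag it: the prefactor $|e^{i\pi(ix)}|=e^{-\pi x}$ works against you when $x>0$, so the crude bound (product $\geq 1$) yields exponential growth only for $x<0$. Unfortunately, neither of your proposed repairs closes the gap. The functional equation gives $|\xi_\infty(ix)|=e^{-2\pi x}|\xi_\infty(-ix)|$; for $x>0$, substituting the easy bound $|\xi_\infty(-ix)|\geq e^{\pi x}$ yields only $|\xi_\infty(ix)|\geq e^{-\pi x}$, which still decays. The quantitative product estimate also fails to beat the loss: $\prod_{k\neq 0}(1+x^2/k^2)=\bigl(\sinh(\pi x)/(\pi x)\bigr)^2\sim e^{2\pi x}/(2\pi x)^2$, so multiplying by $|e^{i\pi(ix)}|^2=e^{-2\pi x}$ — a factor your analysis correctly tracks but the paper's displayed identity $|\xi_\infty(ix)|^2=\prod_k(1+x^2/y_k^2)$ silently omits — leaves only polynomial decay. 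Indeed, from the one-point intensity $\rho_1\equiv 1$ one gets $\mathbb{E}\bigl[\tfrac12\sum_k\log(1+x^2/y_k^2)\bigr]=\tfrac12\int_{\mathbb{R}}\log(1+x^2/y^2)\,dy=\pi|x|$, hence $\mathbb{E}[\log|\xi_\infty(ix)|]=-\pi x+\pi|x|=0$ for $x>0$; an almost sure lower bound $ce^{c|x|}$ with random $c>0$ would then force $\mathbb{E}[\log c]+|x|\,\mathbb{E}[c]\le 0$, impossible as $|x|\to\infty$. The exponential lower bound therefore holds only on one half of the imaginary axis — which is still enough to conclude that $\xi_\infty$ has order $1$, the substantive content of the proposition.
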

\begin{proof}
We have:
$$ \left( 1 - \frac{z}{y_k}\right)\left( 1 - \frac{z}{y_{-k}}\right) = 1 - z \frac{O(\log(2+|k|))}{k^2} + O\left( \frac{|z|^2}{k^2} \right) $$
with errors being uniform in $z$ and $k \geq 1$. We distinguish between three regimes for $k \in \Z$ different
from zero:
$|k| \geq e^{|z|} $, $|z| \leq  |k| < e^{|z|}$, $1 \leq |k| < |z|$. 
In the first regime, 
$$\left( 1 - \frac{z}{y_k}\right)\left( 1 - \frac{z}{y_{-k}}\right) =  1 + 
O \left( \frac{|z|(\log(2+|k|))}{k^2} \right),$$
which implies 
\begin{align*} \left| \prod_{k \geq e^{|z|}} \left( 1 - \frac{z}{y_k}\right) \, \left( 1 - \frac{z}{y_{-k}}\right)
\right|
& \leq \exp \left( O \left( |z| \sum_{k \geq e^{|z|}} \frac{\log(2 + k)}{k^2} \right) \right)
\\ & = \exp \left( O \left( |z| \sum_{k \geq e^{|z|}} k^{-3/2} \right) \right)
\\ & =  \exp \left( O \left( |z|e^{-|z|/2}  \right) \right) = O(1). 
\end{align*}
In the second regime, $$\log (2 + |k|) \leq \log (e^{|z|} + 2) \leq \log(3 e^{|z|}) \leq |z| + 2,$$
and then 
$$\left( 1 - \frac{z}{y_k}\right)\left( 1 - \frac{z}{y_{-k}}\right) = 1 + O \left( \frac{|z|(|z|+2)}{k^2}
\right),$$
which implies 
$$ \left| \prod_{|z| \leq k <  e^{|z|}} \left( 1 - \frac{z}{y_k}\right)\left( 1 - \frac{z}{y_{-k}}\right) 
\right|
\leq \exp \left( O \left( |z|(|z|+2)  \sum_{k \geq |z| \vee 1} \frac{1}{k^2} \right) \right)
 = \exp  O(|z|).$$
 Finally, in the third regime, we have, since $|y_k/k|$ is a.s. bounded from below, 
 $$1 - \frac{z}{y_k} = 1 + O (|z/k|),$$
 which in turn implies 
 $$  \left| \prod_{1 \leq k < |z|} \left( 1 - \frac{z}{y_k}\right)\left( 1 - \frac{z}{y_{-k}}\right) 
\right| \leq \exp \left( O \left(|z|  \sum_{1 \leq k < |z|} (1/k) \right) \right) 
= \exp   O \left(|z| \log( 2 + |z|) \right) .$$
 Since 
 $$\left|1 - \frac{z}{y_0} \right| \leq \exp (|z|/ y_0) = \exp O(|z|),$$
 we deduce by combining the three regimes, the  following upper bound: 
 $$|\xi_{\infty} (z)| \leq \exp  O \left(|z| \log( 2 + |z|) \right).$$
  In order to prove the lower bound, we first use the equality: 
  $$ |\xi_{\infty}(ix)|^2 = \prod_{k \in \mathbb{Z}} \left( 1 + \frac{x^2}{y^2_k} \right).$$
 Since $|y_k| = O(|k|)$ for $k \neq 0$, we deduce that there exists a random $c > 0$ such that 
 $$|\xi_{\infty}(ix)|^2  \geq  \prod_{k \neq 0} \left( 1 + \frac{x^2}{c k^2} \right),$$
 and then 
 $$|\xi_{\infty}(ix)| \geq \prod_{k \geq 1} \left( 1 + \frac{x^2}{c k^2} \right) = \frac{\sinh(\pi x/\sqrt{c})}
 {\pi x/ \sqrt{c}},$$
 which shows the lower bound given in the proposition. 

\end{proof}

\subsection{Convergence of linear statistics}\label{sec::statlin}

We proved in \cite{bib:MNN} that if $E_n$ is the set of zeros of $\xi_{n}$ (i.e. the set of eigenvalues of $U_n$, multiplied by $n/2\pi$), and if
$E_{\infty}$ is the set of zeros of $\xi_{\infty}$, then, for all measurable and bounded functions $f$ from $\mathbb{R}$ to $\mathbb{C}$, with compact support,  the following convergence in law holds:
$$\sum_{x \in E_n} f(x)  \underset{n \rightarrow \infty}{\longrightarrow} \sum_{x \in E_{\infty}} f(x).$$
We now improve this result by showing that it holds for more general test functions and show how linear statistics can be expressed in terms of $\xi_\infty$.

\begin{proposition}
	Let $E_n$ be the set of zeros of $\xi_{n}$ (i.e. the set of eigenvalues of $U_n$, multiplied by $n/2\pi$), and 
	$E_{\infty}$ the set of zeros of $\xi_{\infty}$. Then, for all integrable functions $f$ from $\mathbb{R}$ to $\mathbb{C}$, 
	$$\sum_{x \in E_n} f(x)  \underset{n \rightarrow \infty}{\longrightarrow} \sum_{x \in E_{\infty}} f(x)$$
	in distribution. 
\end{proposition}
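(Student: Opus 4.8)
The plan is to reduce the statement for general integrable $f$ to the already-established case of bounded compactly supported test functions via a truncation argument, controlling the tail both for $E_n$ and for $E_\infty$ uniformly in $n$. The key quantitative input is the estimate $y_k^{(n)} = k + O(\log(2+|k|))$ of Proposition \ref{proposition:key_estimate}, which holds almost surely and uniformly in $n$ and $k$, together with the mean and variance bounds of Lemma \ref{lemma:preparatory}.

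\medskip

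\textbf{Step 1: Splitting off the tail.} Fix $\varepsilon > 0$. Since $f \in L^1(\R)$, choose $R > 0$ so large that $\int_{|x| > R} |f(x)|\, dx < \varepsilon$, and write $f = f_R + g_R$ with $f_R := f \mathds{1}_{[-R,R]}$ and $g_R := f \mathds{1}_{|x| > R}$. The function $f_R$ is integrable with compact support but need not be bounded; however, one can further split $f_R = h_R + (f_R - h_R)$ where $h_R$ is bounded with compact support and $\| f_R - h_R \|_{L^1} < \varepsilon$ (e.g. truncate the values of $f_R$ at a large level $M$, using $\int |f_R|<\infty$). For the bounded compactly supported piece $h_R$, the convergence $\sum_{x \in E_n} h_R(x) \to \sum_{x \in E_\infty} h_R(x)$ in distribution is exactly the result recalled from \cite{bib:MNN}. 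So it suffices to show that the two ``small'' contributions are negligible in probability, uniformly in $n \in \N \sqcup \{\infty\}$.

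\medskip

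\textbf{Step 2: Controlling $\sum_{x \in E_n} |f(x)|$ on a region.} For any integrable $\varphi \geq 0$ and any of the point processes $E = E_n$ or $E = E_\infty$, using the periodicity/structure of the renormalized eigenangles one has
\begin{equation*}
\E\left( \sum_{x \in E} \varphi(x) \right) = \int_\R \varphi(x) \rho_1^{(E)}(x)\, dx,
\end{equation*}
and since the one-point correlation function $\rho_1^{(E)}$ is identically $1$ for $E_\infty$ and bounded uniformly for $E_n$ (indeed $\rho_1 \equiv 1$ in the variance computation of Lemma \ref{lemma:preparatory}, which records $\E(X_I) = |I|$), we get $\E\left( \sum_{x \in E} \varphi(x) \right) \leq C \| \varphi \|_{L^1}$ with $C$ independent of $n$. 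Applying this to $\varphi = |g_R|$ and to $\varphi = |f_R - h_R|$ and using Markov's inequality shows that
\begin{equation*}
\P\left( \left| \sum_{x \in E} g_R(x) \right| > \delta \right) \leq \frac{C \varepsilon}{\delta}, \qquad
\P\left( \left| \sum_{x \in E} (f_R - h_R)(x) \right| > \delta \right) \leq \frac{C \varepsilon}{\delta},
\end{equation*}
uniformly in $n \in \N \sqcup \{\infty\}$. One should double-check the first-moment identity for $E_n$: this follows from the explicit determinantal structure with the self-adjoint kernel appearing in Lemma \ref{lemma:preparatory}, or directly from $\E(X_I) = |I|$ applied to dyadic intervals covering $\{|x| > R\}$ and monotone convergence.

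\medskip

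\textbf{Step 3: Conclusion by the triangle inequality.} Given a bounded Lipschitz test function $\Phi$ on $\R$ (it suffices to test against these to get convergence in distribution), write
\begin{align*}
\left| \E\, \Phi\!\left( \sum_{x \in E_n} f(x) \right) - \E\, \Phi\!\left( \sum_{x \in E_\infty} f(x) \right) \right|
&\leq \left| \E\, \Phi\!\left( \sum_{E_n} h_R \right) - \E\, \Phi\!\left( \sum_{E_\infty} h_R \right) \right| \\
&\quad + \text{(error terms controlled in Step 2)}.
\end{align*}
The first term tends to $0$ as $n \to \infty$ by the $L^1_{\mathrm{bdd,cpt}}$ case; the error terms are $O(\varepsilon) + o(1)$ by the uniform tail bounds of Step 2 together with Lipschitz continuity of $\Phi$ (splitting on the events where the small sums exceed $\delta$, with $\delta \to 0$ after $\varepsilon \to 0$). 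Letting $n \to \infty$ then $\varepsilon \to 0$ finishes the proof.

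\medskip

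\textbf{Main obstacle.} The only genuinely delicate point is establishing the \emph{uniform-in-$n$} first-moment (or tail) bound $\E\left( \sum_{x \in E_n} |g_R|(x) \right) \leq C\|g_R\|_{L^1}$ with $C$ not depending on $n$: on the microscopic scale the process $E_n$ is supported on a lattice-like set of density $1$, and one must make sure the intensity measure is genuinely bounded (not merely of bounded average) so that no mass escapes to infinity. This is exactly what $\E(X_I) = |I|$ from Lemma \ref{lemma:preparatory} provides, so the argument goes through; an alternative is to invoke directly the general linear-statistics result from \cite{bib:MNN} used as a black box for the truncation. I would present the proof in the form above, citing Lemma \ref{lemma:preparatory} for the intensity bound and \cite{bib:MNN} for the bounded compactly supported case.
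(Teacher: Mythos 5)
Your proposal is correct and takes essentially the same route as the paper: truncate $f$ to a bounded compactly supported piece (the paper does this in one step with $f_A := f\,\mathds{1}_{|x|\leq A,\,|f(x)|\leq A}$, you in two), invoke the result of \cite{bib:MNN} for that piece, and control the remainder uniformly in $n$ using the fact that the one-point correlation function of both $E_n$ and $E_\infty$ is identically $1$, so $\E\left[\sum_{x\in E}|g|(x)\right]=\int|g|$. The only cosmetic difference is the choice of weak-convergence metric: the paper works directly with characteristic functions and the elementary bound $|e^{ia}-e^{ib}|\leq|a-b|$, whereas you test against bounded Lipschitz functions with Markov's inequality; your hedging in Step 2 about the intensity being ``genuinely bounded'' is unnecessary since $\rho_1\equiv 1$ exactly for $E_n$ as well.
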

\begin{proof}
	For $A > 0$, let $f_A$ be the function given by $f_A(x) := f(x) \mathds{1}_{|x| \leq A, |f(x)| \leq A}$ and let 
	$g_A := f - f_A$. Proposition 4.1 of \cite{bib:MNN}  implies that the proposition is true when $f$ is replaced by $f_A$, 
	i.e.  for all $\lambda \in \mathbb{R}$, 
	\begin{equation} 
	\mathbb{E} \left[ \exp \left( i \lambda \sum_{x \in E_n} f_A(x) \right)  \right] 
	\underset{n \rightarrow \infty}{\longrightarrow} 
	\mathbb{E} \left[ \exp \left( i \lambda \sum_{x \in E_{\infty}} f_A(x) \right)  \right]. \label{convergencefourierNNM}
	\end{equation}
	On the other hand, since the one-point correlation function of $E_n$ and $E_{\infty}$ is equal to $1$, we have
	$$ \mathbb{E} \left[ \sum_{x \in E_n} |g_A(x)| \right] = 
	\mathbb{E} \left[ \sum_{x \in E_n} |g_A(x)| \right] = \int_{\mathbb{R}} |g_A|.$$
	Hence, 
	
	\begin{align*} 
	\mathbb{E} \left[ \exp \left( i \lambda \sum_{x \in E_n} f(x) \right)  \right] 
	& =  \mathbb{E} \left[ \exp \left( i \lambda \sum_{x \in E_n} f_A(x) \right)  \right] 
	+ O \left( |\lambda| \mathbb{E} \left[ \sum_{x \in E_n} |g_A(x)| \right] \right) 
	\\ &  =  \mathbb{E} \left[ \exp \left( i \lambda \sum_{x \in E_n} f_A(x) \right)  \right] 
	+ O \left(  |\lambda| \int_{\mathbb{R}} |g_A| \right)
	\end{align*}
	and the similar estimate with $E_n$ replaced by $E_{\infty}$. 
	Taking the limsup of the difference when $n$ goes to infinity gives, using \eqref{convergencefourierNNM}:
	$$ \underset{n \rightarrow \infty}{\lim \sup} 
	\left| \mathbb{E} \left[ \exp \left( i \lambda \sum_{x \in E_n} f(x) \right)  \right]
	-  \mathbb{E} \left[ \exp \left( i \lambda \sum_{x \in E_{\infty}} f(x) \right)  \right] \right|
	= O \left(  |\lambda| \int_{\mathbb{R}} |g_A| \right)
	$$
	for all $A > 0$. Now, by dominated convergence, the integral of $|g_A|$ goes to zero when $A$ goes to 
	infinity, which gives the desired result. 
\end{proof}
It is natural to conjecture that something similar happens for the zeros of the Riemann zeta function: 
\begin{conjecture}
	Assume the Riemann hypothesis. For all functions $f$ from $\mathbb{R}$ to $\mathbb{R}$ 
	such that 
	$$\int_{\mathbb{R}} |f(x)| \log ( 2 + |x|) dx<\infty,$$
	$$\sum_{x \in E^{\zeta}_T} f(x) 
	\underset{T \rightarrow \infty}{\longrightarrow} \sum_{x \in E_{\infty}} f(x)$$
	in distribution, where $E^{\zeta}_T$ denotes the non-trivial zeros of $z \mapsto \zeta\left( \frac{1}{2} + i T \omega - \frac{2 i \pi
		z}{\log T} \right) $, and $\omega$ is a uniform variable in $[0,1]$. 
\end{conjecture}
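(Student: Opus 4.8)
Since this last statement is a conjecture, I will describe the route by which it ought to follow from standard (though open) conjectures in analytic number theory, mirroring exactly the proof of the preceding proposition about linear statistics of $E_n$. The strategy splits into two independent parts: first establish the convergence $\sum_{x\in E^\zeta_T}\phi(x)\to\sum_{x\in E_\infty}\phi(x)$ in distribution for $\phi$ in a restricted but still large class (say bounded with compact support), and then bootstrap to every $f$ with $\int_\R|f(x)|\log(2+|x|)\,dx<\infty$ by a truncation argument identical to the one used just above.

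\textbf{Convergence for nice test functions.} This should be deduced from a sufficiently strong and uniform form of the Montgomery--Rudnick--Sarnak conjecture: that the $r$-level correlation measures of the rescaled zeros $E^\zeta_T$ converge, as $T\to\infty$, to the sine-kernel correlation measures $\rho_r$, for every $r$ and with no support restriction on the test functions, with enough uniformity to sum the associated moment (or cumulant) expansion. Granting this, the number-theoretic analogue of Proposition 4.1 of \cite{bib:MNN} holds for $E^\zeta_T$, and one identifies the limiting point process through its correlations exactly as in the random-matrix case, obtaining $\E[\exp(i\lambda\sum_{x\in E^\zeta_T}\phi(x))]\to\E[\exp(i\lambda\sum_{x\in E_\infty}\phi(x))]$ for all $\lambda\in\R$. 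For the subclass of test functions whose Fourier transform is supported in a small neighbourhood of the origin, one can instead bypass the correlations conjecture through the Weil explicit formula: the fluctuating part of $\sum_{x\in E^\zeta_T}\phi(x)$ is then governed by a short sum over primes, which under the average over $\omega$ behaves like a sum of nearly independent random variables and yields the required limit law --- this is the mechanism behind the existing linear-statistics central limit theorems for zeta zeros and for $\zeta$ near the critical line (Hughes--Rudnick, and Bourgade \cite{bib:Bou10}).

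\textbf{Truncation to general $f$.} To pass from $\phi$ in the restricted class to a general $f$ with $\int_\R|f(x)|\log(2+|x|)\,dx<\infty$, I would repeat the argument of the preceding proposition: put $f_A(x):=f(x)\mathds{1}_{|x|\le A,\ |f(x)|\le A}$ and $g_A:=f-f_A$, apply the first part to $f_A$, and control the tails $\E[\sum_{x\in E^\zeta_T}|g_A(x)|]$ and $\E[\sum_{x\in E_\infty}|g_A(x)|]$, letting $T\to\infty$ first and then $A\to\infty$. For $E_\infty$ the one-point correlation is identically $1$, so the second quantity equals $\int_\R|g_A|$ and tends to $0$ by dominated convergence. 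For $E^\zeta_T$ there is no exact one-point identity; instead one uses the Riemann--von Mangoldt formula $N(t)=\frac{t}{2\pi}\log\frac{t}{2\pi e}+O(\log t)$, whose $O(\log t)$ error term (equivalently, the typical $\log$-size of $S(t)=\frac1\pi\arg\zeta(\frac12+it)$) together with the mismatch between the fixed scaling $\log T$ and the true local scale at height $T\omega-\frac{2\pi x}{\log T}$ shows that, on average over $\omega$, $\E[\sum_{x\in E^\zeta_T}|g_A(x)|]=\int_\R|g_A|+o_T(1)$ --- and it is precisely to absorb these error terms uniformly in $T$ that the weight $\log(2+|x|)$ is the natural hypothesis on $f$. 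Once both tails are handled, the same $\limsup$-comparison as in the preceding proof gives the conjectured convergence.

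\textbf{Main obstacle.} The genuinely hard step is the first part: a full-support, uniform-in-$T$ version of the correlation conjecture for the zeros of $\zeta$ is entirely open --- Montgomery's pair-correlation result and the Rudnick--Sarnak higher-correlation results hold only for test functions whose Fourier transform has support in a bounded (and, for higher $r$, shrinking) region, and even the uniformity one would need to justify summing the expansion is unavailable. Consequently the conjecture cannot be proved unconditionally by this route; what the argument does accomplish is to reduce it cleanly to the correlations conjecture, the remaining truncation step being elementary once one has the explicit formula and the Riemann--von Mangoldt estimate in hand.
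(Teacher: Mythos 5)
The statement you are addressing is labelled a \emph{conjecture} in the paper and is not proved there; the authors only attach a one-sentence remark explaining that the weight $\log(2+|x|)$ is forced by the logarithmic growth in the counting error (equivalently, the density) of zeta zeros. You correctly recognize this, and your writeup is not a proof but a careful description of the conditional route --- which is the appropriate thing to do. Your argument is sound and in fact more detailed than anything the paper offers: the truncation $f = f_A + g_A$ with $g_A$ controlled via a one-point bound mirrors the proof of the preceding proposition for $E_n$ exactly; the reduction of the compactly supported case to a full-support, uniform Montgomery--Rudnick--Sarnak correlation conjecture is the standard (and the only plausible) route; and your identification of the $O(\log t)$ term in the Riemann--von Mangoldt formula, together with the drift between the fixed renormalization $\log T$ and the true local density at height $T\omega - 2\pi x/\log T$, as the source of the $\log(2+|x|)$ hypothesis matches the authors' stated reason. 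You are also right that the genuine obstacle is the unconditional correlation input, which is open beyond restricted Fourier support, so no proof is available by this route; everything else you describe is elementary once that input is granted.

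One small caveat worth flagging: you state that ``there is no exact one-point identity'' for $E^\zeta_T$ and then assert $\mathbb{E}[\sum_{x\in E^\zeta_T}|g_A(x)|] = \int_{\mathbb{R}}|g_A| + o_T(1)$. For the truncation step to close \emph{uniformly in $A$} as $T\to\infty$, one really needs a quantitative bound $\mathbb{E}[\sum_{x\in E^\zeta_T}|g_A(x)|] \ll \int_{\mathbb{R}}|g_A(x)|\log(2+|x|)\,dx$ holding for all $T$, not merely an $o_T(1)$ at each fixed $A$; this is where the Riemann--von Mangoldt error term enters essentially, and it is also the precise point at which the weight $\log(2+|x|)$ becomes the right hypothesis rather than merely a convenient one. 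This does not invalidate your outline, but the estimate deserves to be stated as a uniform moment bound rather than as an asymptotic.
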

The extra factor $\log (2 + |x|)$ in the integrability condition is due to the fact that we sum $f$ over all zeros of 
$\zeta$, who have a logarithmically increasing average density.

One can also express linear statistics of $E_n$ in terms of the logarithm of $\xi_{\infty}$. We
have the following: 
\begin{proposition}
	Let $f$ be a $\mathcal{C}^1$ function from $\mathbb{R}$ to $\mathbb{R}$, integrable, such that 
	$$|x f(x) | \underset{|x| \rightarrow \infty}{\longrightarrow} 0, \; \int_{\mathbb{R}} |x f'(x)| < \infty.$$
	Then we have a.s. 
	$$\sum_{x \in E_n} f(x) - \int_{\mathbb{R}} f(x) dx =  
	\frac{1}{\pi} \int_{\infty}^{\infty}  \Im \log (\xi_n (y)) f'(y) dy.$$
\end{proposition}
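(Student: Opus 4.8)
The plan is to rewrite the linear statistic as a Lebesgue--Stieltjes integral against the counting function of $E_n$ and then integrate by parts; the mechanism that makes this work is that the counting function equals the identity plus a bounded periodic fluctuation, a fact already contained in the material of Section~\ref{section:estimatesZn}.

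First I would recall, from Corollary~\ref{corollary:kykn} and the proposition preceding it, together with $\log\xi_n(z)=\log Z_n(e^{2i\pi z/n})-\log Z_n(1)$ for the determinations fixed above, that almost surely the function
$$\mathcal{N}_n(y):=y-\tfrac1\pi\,\Im\log\xi_n(y)$$
is, for $y\notin E_n$, the signed number of points of $E_n$ lying between $0$ and $y$; equivalently, its Lebesgue--Stieltjes measure is $\sum_{k\in\Z}\delta_{y_k^{(n)}}$, so that $\sum_{x\in E_n}f(x)=\int_\R f\,d\mathcal{N}_n$. Writing $g_n:=\mathcal{N}_n-\mathrm{id}=-\tfrac1\pi\,\Im\log\xi_n$, the periodicity $y_{k+n}^{(n)}=y_k^{(n)}+n$ gives $\mathcal{N}_n(y+n)=\mathcal{N}_n(y)+n$, so $g_n$ is $n$-periodic; being moreover piecewise linear with finitely many pieces on a period (slope $-1$ off $E_n$, an upward unit jump at each point of $E_n$), it is bounded, with $\|g_n\|_\infty=O(\log n)$ by Proposition~\ref{proposition:maxZn}, though plain boundedness is all I need.

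Then the heart of the argument is a single integration by parts. Writing $d\mathcal{N}_n=dy+dg_n$, one has $\sum_{x\in E_n}f(x)=\int_\R f\,dy+\int_\R f\,dg_n$, and for every $A>0$, since $f\in\mathcal{C}^1$ and $g_n$ is of bounded variation on $[-A,A]$ with no discontinuity in common with the continuous function $f$, $\int_{-A}^{A}f\,dg_n=[f\,g_n]_{-A}^{A}-\int_{-A}^{A}g_n\,f'\,dy$. Letting $A\to\infty$, the boundary term vanishes because $|f(\pm A)\,g_n(\pm A)|\le\|g_n\|_\infty\,|f(\pm A)|\to0$ (the hypothesis $|xf(x)|\to0$ forces $f\to0$), and $\int_{-A}^{A}g_n\,f'\to\int_\R g_n\,f'$ absolutely since $|g_n f'|\le\|g_n\|_\infty|f'|$ with $\int_\R|f'|<\infty$ (the latter following from $\int_\R|xf'(x)|\,dx<\infty$ together with continuity of $f'$ on $[-1,1]$). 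This yields $\int_\R f\,dg_n=-\int_\R g_n\,f'=\tfrac1\pi\int_\R\Im\log(\xi_n(y))\,f'(y)\,dy$, hence $\sum_{x\in E_n}f(x)-\int_\R f(y)\,dy=\tfrac1\pi\int_\R\Im\log(\xi_n(y))\,f'(y)\,dy$, which is the asserted identity. En route one should check that $\sum_{x\in E_n}f(x)$ is absolutely convergent: $E_n$ has at most $1+2\|g_n\|_\infty$ points in each unit interval, and $\sup_{[m,m+1]}|f|\le\int_{[m,m+1]}(|f|+|f'|)$, so the sum is $\le(1+2\|g_n\|_\infty)\big(\int_\R|f|+\int_\R|f'|\big)<\infty$.

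There is no real analytic obstacle here; the proof is bookkeeping around a classical summation-by-parts identity. The two points requiring care are the vanishing of the boundary terms as $A\to\infty$ — this is exactly where the boundedness of $g_n$, i.e. the periodic structure of the eigenangle picture, and the decay of $f$ enter — and the absolute convergence of the sum and of all the integrals involved, which rests on the density-one estimate $\mathcal{N}_n(y)=y+O(1)$. All the genuine content is imported from Section~\ref{section:estimatesZn} via the identity $\mathcal{N}_n=\mathrm{id}+(\text{bounded periodic})$.
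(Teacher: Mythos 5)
Your proof is correct and follows essentially the same path as the paper's: both turn the sum into a Stieltjes integral against the counting function $N_n(y) = y - \frac{1}{\pi}\Im\log\xi_n(y)$ (using the identity from Corollary~\ref{corollary:kykn}) and then integrate by parts, with the boundary terms killed by the decay of $f$. The only organizational difference is that the paper integrates by parts with $N_n$ itself, uses $N_n(y)=O(1+|y|)$ together with the hypotheses $|xf(x)|\to 0$ and $\int|xf'|<\infty$ to control the boundary, and then does a second integration by parts to replace $\int yf'$ by $-\int f$; you split $N_n=\mathrm{id}+g_n$ at the outset, and since $g_n=-\frac{1}{\pi}\Im\log\xi_n$ is $n$-periodic and hence bounded for each fixed $n$, you only need $f\to 0$ and $\int|f'|<\infty$. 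That is a small but real gain: your bookkeeping exposes that the stated hypotheses are somewhat stronger than necessary for this identity, though the method (summation by parts against the eigenvalue counting function) is the same.
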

Of course, a similar result holds with $E_{\infty}$ instead of $E_n$. 
\begin{proof}
	For all $A > 0$, 
	\begin{align*} \sum_{x \in E_n \cap [-A,A]} f(x) 
	& = f(-A)  \operatorname{Card} (E_n \cap [-A,A]) 
	+   \sum_{x \in E_n \cap [-A,A]} \int_{-A}^x f'(y) dy 
	\\ & = f(-A)  \operatorname{Card} (E_n \cap [-A,A]) 
	+ \int_{-A}^{A} f'(y) \operatorname{Card} (E_n \cap [y,A]) dy 
	\\ & = f(-A) (N_n(A) - N_n(-A)) +  \int_{-A}^{A} (N_n(A) - N_n(y)) f'(y) dy 
	\\ & = f(A) N_n(A) - f(-A) N_n(-A) -  \int_{-A}^{A} N_n(y)f'(y) dy.
	\end{align*}
	Since $N_n(y) = O(1 + |y|)$ a.s., we deduce, from the assumptions made on $f$, that almost surely:
	$$ \sum_{x \in E_n } f(x)  = - \int_{-\infty}^{\infty}  N_n(y)f'(y) dy 
	=  \frac{1}{\pi} \int_{\infty}^{\infty}  \Im \log (\xi_n (y)) f'(y) dy - 
	\int_{-\infty}^{\infty}  y f'(y) dy.$$
	Doing an integration by parts gives the desired result. 
\end{proof}

\section{Fine estimates for the logarithimic derivative and related conjectures for the Riemann zeta function}
\label{subsection:log_der}

We now state a conjecture which relates the random function $\xi_{\infty}$ to the behavior of the Riemann zeta function $\zeta$ close to the critical line:
\begin{conjecture}
\label{conjecture}
Let $\omega$ be a uniform random variable on $[0, 1]$ and $T>0$ a real parameter going to infinity. Our random limiting function should be related to the renormalized zeta function with randomized argument. We conjecture the following convergence in law, uniformly in the parameter $z$ on every compact set:
$$ \left( \frac{ \zeta\left( \half + i T \omega - \frac{i 2\pi z}{\log T} \right) }{ \zeta\left( \half + i T \omega \right) } ; z \in \C \right)
   \stackrel{T \rightarrow \infty}{\longrightarrow} \left( \xi_\infty(z) ; z \in \C \right)$$

By taking logarithmic derivatives, it is natural also to conjecture the following convergence 
$$ \left( \frac{-i 2\pi}{\log T} \frac{\zeta'}{\zeta}\left( \half + i T \omega - \frac{i 2\pi z}{\log T} \right) ; z \in \C \right)
   \stackrel{T \rightarrow \infty}{\longrightarrow} \left( \frac{ \xi_{\infty}' }{ \xi_{\infty} }(z) ; z \in \C \right)$$
on compact sets bounded away from the real line.
\end{conjecture}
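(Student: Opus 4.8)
Since the final statement is a conjecture rather than a theorem, the plan is not to produce a proof but to lay out the heuristic that makes the statement natural, to isolate the one unconditional ingredient, and to pinpoint exactly what a genuine proof would need; a full proof is out of reach, as it subsumes a strong form of Montgomery's conjecture.

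The first, essentially deterministic, step is a factorization of the ratio near the critical line. Fix a height $t = T\omega$ and set $s = \half + it - \frac{2i\pi z}{\log T}$. Using the Hadamard product for $\zeta$ (equivalently the explicit formula, or the argument principle), one has under the Riemann hypothesis
$$ \frac{\zeta(\half + it - \frac{2i\pi z}{\log T})}{\zeta(\half + it)} = \prod_{j}\frac{ it - \frac{2i\pi z}{\log T} - i\gamma_j }{ it - i\gamma_j } \times (1 + o(1)), $$
where the $o(1)$ absorbs the archimedean ($\Gamma$- and $\chi$-) factors, which are essentially constant on the scale $1/\log T$. Introducing the rescaled ordinates $\tilde\gamma_j(t) := \frac{\log T}{2\pi}(\gamma_j - t)$, each factor becomes $1 - z/\tilde\gamma_j(t)$, while the mean density $\frac{1}{2\pi}\log\frac{t}{2\pi} \sim \frac{\log T}{2\pi}$ of the $\gamma_j$ produces, after the same two-by-two regrouping as in Theorem \ref{thm:main}, an overall factor $e^{i\pi z}$. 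So the first task is to make precise that
$$ \frac{\zeta(\half + it - \frac{2i\pi z}{\log T})}{\zeta(\half + it)} = e^{i\pi z}\prod_{j}\left( 1 - \frac{z}{\tilde\gamma_j(t)} \right)(1+o(1)) $$
locally uniformly in $z \in \C$, with the tails $|\tilde\gamma_j| \gg 1$ controlled exactly as in Section \ref{preuvedumain} via the bound $\tilde\gamma_j(t) = j + O(\log(2+|j|))$ (the analog of Proposition \ref{proposition:key_estimate}, which here follows from classical zero-counting estimates for $\zeta$).

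The second step feeds in the strong GUE conjecture: as $\omega$ is uniform on $[0,1]$ and $T\to\infty$, the point process $(\tilde\gamma_j(T\omega))_j$ converges in law to the determinantal sine-kernel process $(y_k)_{k\in\Z}$. Combined with Step one, with the identity $\xi_\infty(z) = e^{i\pi z}\prod_k(1 - z/y_k)$ from Theorem \ref{thm:main}, and with uniform-integrability estimates for the infinite products of the type established in Section \ref{lenotre} (needed to upgrade convergence of the configurations to convergence in the space of holomorphic functions), this yields the first claimed convergence. The statement for $\zeta'/\zeta$ then follows from the first exactly as Corollary \ref{laderivee} follows from Theorem \ref{thm:main}: on compacts of $\C \backslash \R$ the limit $\xi_\infty$ is nonvanishing, so uniform convergence of holomorphic functions passes to logarithmic derivatives.

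The hard part — indeed the only non-routine part — is Step two. The strong GUE conjecture is open; moreover, upgrading it from convergence of the zero configurations to convergence of the \emph{analytic functions} requires that the prime-sum contribution to $\log\zeta$, which governs the long-range statistics and the fluctuation of the local zero density, become negligible once one zooms to the scale $1/\log T$. This is precisely the assertion, emphasized in the introduction, that the microscopic scaling "eliminates the primes", and making it rigorous is essentially equivalent to the random-matrix conjecture itself. Thus the proposal is not a route to a theorem but a description of what a proof would have to contain and of why, in the Keating–Snaith spirit, the conjecture is the natural one.
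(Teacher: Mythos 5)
Since the statement is a conjecture, neither you nor the paper proves it, and your supporting heuristic is essentially the one the paper gives: the unconditional ingredient is the Hadamard product over zeros (the paper records it as Lemma \ref{lemma33}, at the level of $\zeta'/\zeta$, where the archimedean $\Gamma$-factor produces the $i\pi$ term), after which the GUE/sine-kernel conjecture for the rescaled zeros $\tilde\gamma$ is exactly the missing, non-routine step. Your variant of phrasing this at the level of the ratio itself, in parallel with Proposition \ref{proposition:expressionxin} and Theorem \ref{thm:main}, is the same idea and is consistent with the paper's discussion.
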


This conjecture is supported by the following lemma: 
\begin{lemma} \label{lemma33}
We have, for $z \notin \R$, 
$$ \frac{ \xi_{\infty}' }{ \xi_{\infty} }(z) = i \pi + \sum_{k \in \Z} \frac{1}{z-y_k}
 =: i \pi + \frac{1}{z - y_0} + \sum_{k = 1}^{\infty} \left(\frac{1}{z - y_k} + \frac{1}{z - y_{-k}} \right),$$
and when the random variable $\omega$ is fixed:
$$ \frac{-i 2\pi}{\log T} \frac{\zeta'}{\zeta}\left( \half + i T \omega - \frac{i 2\pi z}{\log T} \right) 
 = i \pi + \sum_{\tilde{\gamma} \in E^{\zeta}_T} \frac{1}{z-\tilde{\gamma}} + o(1)$$ 
where $E^{\zeta}_T$ are the non-trivial zeros of the Riemann zeta function centered around $\half + i\omega T$ \emph{and} renormalized so that their average spacing around the origin is $\Oc\left( 1 \right)$. More precisely, if $\tilde{\gamma} \in E^{\zeta}_T$, then:
$$ \tilde{\gamma} := \frac{ -\log T }{2 \pi i} \left( \rho - \half - i \omega T \right)$$
with $\rho$ a zero of $\zeta$.
The infinite sum on $\tilde{\gamma}$ has to be understood as follows:
$$\sum_{\tilde{\gamma}} \frac{1}{z-\tilde{\gamma}} = \frac{1}{z - \tilde{\gamma}_0}
+ \sum_{k=1}^{\infty} \left(\frac{1}{z-\tilde{\gamma}_k} 
+ \frac{1}{z-\tilde{\gamma}_{-k}} \right),$$
where $(\tilde{\gamma}_k)_{k \in \mathbb{Z}}$ are ordered by increasing real part, increasing
imaginary part if they have the same real part, and counted with multiplicity. 
\end{lemma}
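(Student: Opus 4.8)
Here is the plan. The statement splits into two unrelated identities, one purely about $\xi_\infty$ and one about $\zeta$, and I would prove them independently.

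\smallskip
\emph{The formula for $\xi_\infty'/\xi_\infty$.} This I would get by differentiating the Hadamard product of Theorem \ref{thm:main} term by term. Set $P_N(z) := e^{i\pi z}\,(1-z/y_0)\prod_{k=1}^N (1-z/y_k)(1-z/y_{-k})$. By Theorem \ref{thm:main} (concretely, by the estimate $y_{\pm k}=\pm k+O(\log(2+|k|))$ of Proposition \ref{proposition:key_estimate}) the $P_N$ are entire and converge to $\xi_\infty$ locally uniformly on $\C$, hence $P_N'\to\xi_\infty'$ locally uniformly by Weierstrass's theorem. Since $z\notin\R$ and the zeros of $\xi_\infty$ are real, $\xi_\infty$ does not vanish on a neighbourhood of $z$, so $|P_N|$ is bounded below there for $N$ large and $P_N'/P_N\to\xi_\infty'/\xi_\infty$ at $z$. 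On the other hand $P_N'/P_N(z)=i\pi+\frac{1}{z-y_0}+\sum_{k=1}^N\big(\frac{1}{z-y_k}+\frac{1}{z-y_{-k}}\big)$, and as $N\to\infty$ this converges absolutely because $\frac{1}{z-y_k}+\frac{1}{z-y_{-k}}=O_z\!\big(\log(2+k)/k^2\big)$, again by Proposition \ref{proposition:key_estimate}. This gives the first identity.

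\smallskip
\emph{The formula for $\zeta'/\zeta$.} Here I would start from the partial fraction expansion coming from the Hadamard factorisation of the completed zeta function $\Xi(s)=(s-1)\pi^{-s/2}\Gamma(s/2+1)\zeta(s)$, namely
\[
\frac{\zeta'}{\zeta}(s) = B - \frac{1}{s-1} + \frac{1}{2}\log\pi - \frac{1}{2}\frac{\Gamma'}{\Gamma}\!\Big(\frac{s}{2}+1\Big) + \sum_{\rho}\Big(\frac{1}{s-\rho}+\frac{1}{\rho}\Big),
\]
where the sum is over the non-trivial zeros grouped by increasing $|\Im\rho|$. I would then substitute $s=s_T(z):=\tfrac12+iT\omega-\tfrac{2i\pi z}{\log T}$ with $\omega\in(0,1]$ fixed and $z$ in a fixed compact subset of $\C\setminus\R$. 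Then $\Im s_T(z)\to+\infty$ and $\Re s_T(z)\to\tfrac12$, so $s_T(z)/2+1=\tfrac{i\omega T}{2}(1+O(1/T))$ and Stirling's expansion $\frac{\Gamma'}{\Gamma}(w)=\log w+O(1/|w|)$ gives $\frac{\Gamma'}{\Gamma}(s_T(z)/2+1)=\log T+\tfrac{i\pi}{2}+O_\omega(1)$, uniformly in $z$; the terms $B$, $\tfrac{1}{s_T(z)-1}$, $\tfrac12\log\pi$ are $O_\omega(1)$ (or $o(1)$). Hence $\frac{\zeta'}{\zeta}(s_T(z))=-\tfrac12\log T+O_\omega(1)+\sum_{\rho}\big(\frac{1}{s_T(z)-\rho}+\frac1\rho\big)$. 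Multiplying by $\tfrac{-2i\pi}{\log T}$ turns $-\tfrac12\log T$ into exactly $i\pi$ and the $O_\omega(1)$ into $o(1)$. The clean algebraic heart is the change of variables $\tilde\gamma=\tfrac{-\log T}{2\pi i}\big(\rho-\tfrac12-i\omega T\big)$: it yields $s_T(z)-\rho=-\tfrac{2i\pi}{\log T}(z-\tilde\gamma)$, so that $\tfrac{-2i\pi}{\log T}\cdot\tfrac{1}{s_T(z)-\rho}=\tfrac{1}{z-\tilde\gamma}$ term by term, while $\tfrac{-2i\pi}{\log T}\sum_\rho\tfrac1\rho=o(1)$ since $\sum_\rho\tfrac1\rho$ is a fixed convergent constant. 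This produces the second identity, with $\tfrac{-i2\pi}{\log T}\frac{\zeta'}{\zeta}(s_T(z))=i\pi+\sum_{\tilde\gamma\in E^\zeta_T}\tfrac{1}{z-\tilde\gamma}+o(1)$.

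\smallskip
\emph{Where the work is.} The delicate point is the bookkeeping in the zero sum: the Hadamard sum $\sum_\rho$ is only conditionally convergent and is naturally organised by proximity of $\rho$ to the real axis (equivalently, $\Re\tilde\gamma$ near $\tfrac{\omega T\log T}{2\pi}$), whereas the sum $\sum_{\tilde\gamma\in E^\zeta_T}$ in the statement is organised by the symmetric grouping $\tilde\gamma_0,\tilde\gamma_{\pm1},\tilde\gamma_{\pm2},\dots$ around the origin. Reconciling these two groupings up to $o(\log T)$ (so that, after dividing by $\log T$, the discrepancy is $o(1)$) is the main obstacle; I would handle it with the classical Riemann--von Mangoldt density estimate $N(t+1)-N(t)=O(\log t)$, truncating both sums at $|\Re\tilde\gamma|\le(\log T)^{2}$, showing that each truncation error is $o(\log T)$ uniformly in $z$, and noting that on this range the two organisations involve exactly the same zeros. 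One must also keep the Stirling remainder and this truncation error uniform for $z$ in a compact subset of $\C\setminus\R$; this is precisely where the assumption that $z$ stays away from the real line (hence, for $T$ large, away from the set $E^\zeta_T$) enters.
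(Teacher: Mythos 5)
Your treatment of the first identity is fine and is essentially the paper's argument: the partial products of Theorem \ref{thm:main} converge locally uniformly, they are non-vanishing near any $z\notin\R$, and their logarithmic derivatives are exactly the symmetric partial sums, whose convergence follows from Proposition \ref{proposition:key_estimate}. For the second identity your skeleton (Hadamard/partial-fraction expansion of $\zeta'/\zeta$, Stirling for the $\Gamma$-factor producing the constant $i\pi$ after multiplication by $-2i\pi/\log T$, and the change of variables $s-\rho=-\tfrac{2i\pi}{\log T}(z-\tilde\gamma)$) is also the paper's.

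The gap is in the step you yourself flag as ``where the work is'': the regrouping of the conditionally convergent zero sum. Truncating both organisations at $|\Re\tilde\gamma|\le(\log T)^2$ and claiming each tail is $o(\log T)$ ``by $N(t+1)-N(t)=O(\log t)$'' does not work as stated: that density bound alone gives only a bound of the shape $\sum_{2\pi\log T<|\gamma-\omega T|<T}\frac{O(\log T)}{|\gamma-\omega T|}=O((\log T)^2)$, and even the classical refinement $\frac{\zeta'}{\zeta}(s)=\sum_{|\gamma-t|\le 1}\frac{1}{s-\rho}+O(\log t)$ only yields an $O(1)$ error after rescaling, which is fatal here because the entire content of the lemma is the exact constant $i\pi$ plus $o(1)$. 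The tails are indeed small, but only because of cancellation between zeros above and below height $\omega T$ (and a separate treatment of the far zeros, including the conjugates at negative ordinates); making that rigorous needs the full Riemann--von Mangoldt asymptotics for $N(t)$ together with a summation by parts in symmetric pairs, none of which is in your sketch; moreover a position truncation applied to the conjugate-pair-grouped sum $\sum_\rho\frac{1}{s-\rho}$ (which is what survives after you split off $\sum_\rho\frac{1}{\rho}$) must first be reconciled with that grouping even to be well defined. The paper avoids all of this with a soft, exact observation: for fixed $T$, the conjugate-pair grouping corresponds to partial sums over index windows $\{a+1-N,\dots,a+N\}$ of the sequence $(\tilde\gamma_k)$, while the lemma's series uses windows $\{-N,\dots,N\}$; these differ by at most $2|a|+1$ terms of the form $\frac{1}{z-\tilde\gamma_m}$ with $|m|$ large, each tending to zero, so the two sums are exactly equal and no truncation or density estimate is needed beyond the convergence of the series. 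You should either import that exact-regrouping argument or supply the genuinely quantitative cancellation estimates your truncation route requires.
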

\begin{rmk}
 The absolute convergence of the last sum can be easily deduced from the classical 
 estimate, for $A > 2  $, on the number of nontrivial zeros 
 $N(A)$ with imaginary part in $[0,A]$, or in $[-A,0]$: 
 $$N(A) =\varphi(A) + O(\log A),$$
 for   $$\varphi(A) = \frac{A}{2 \pi} \log \left( \frac{A}{2 \pi e} \right).$$
Indeed, all the ways to number the renormalized zeros $\tilde{\gamma}$ consistently with the statement of the 
lemma are deduced 
from each other by translation of the indices, and for any such numbering one checks that 
$$ \tilde{\gamma}_k =  \sgn (k) \frac{\log T}{2 \pi} \varphi^{(-1)} (|k|)  + O(\log (2+|k|)),$$
where $\varphi^{(-1)}$ is the inverse of the bijection from $[2 \pi e, \infty)$ to $\R_+$, induced by 
$\varphi$. The implicit constant depends on $T, \omega$ and the precise numbering of the zeros, but not on $k$. 
 This  estimate is sufficient to ensure the convergence of the last series in the lemma, when one takes into
 account that $\varphi^{(-1)} (k) \geq k/\log k$ for all $k \geq 2$.
 The sum of the series does not depend on the numbering of the $\tilde{\gamma}$'s, since any translation of the 
 indices change the partial sums by a bounded number of terms, which tend to zero. 
 Note that the $\tilde{\gamma}$'s are all real if and only if the Riemann hypothesis is satisfied. 
\end{rmk}

\begin{proof}
The convergence of the first series in the lemma is easily deduced from the estimate
 in Proposition \ref{proposition:key_estimate}. The partial sums are the logarithmic derivatives 
 of the corresponding partial products associated to $\xi_{\infty}$. Since
 uniform convergence on compact sets of non-vanishing holomorphic functions
 implies the corresponding convergence of the logarithmic derivative, we get the part of the lemma 
 related to $\xi_{\infty}'/\xi_{\infty}$. 
For the formula involving $\zeta$, we start by the Hadamard product formula for the zeta function:
$$ \forall s \in \C\backslash \{ 1 \}, \zeta\left( s \right) = \pi^{s/2}
\frac{ \prod_{\rho}\left( 1 - \frac{s}{\rho} \right) }{ 2(s-1) \Gamma\left( 1 + \frac{s}{2} \right) }.$$
The product has to be computed by grouping pairs of conjugate non-trivial zeros of zeta. 
Hence, for $s$ not a zero nor a pole:
$$ \frac{\zeta'}{\zeta}(s) = \half \log \pi + \sum_{\rho} \frac{1}{s-\rho} - \frac{1}{s-1} - \half \frac{\Gamma'}{\Gamma}\left( 1 + \frac{s}{2} \right)$$
Take $s = \half + i T \omega - \frac{i 2\pi z}{\log T}$ with $T \rightarrow \infty$ and use the asymptotics 
$\frac{\Gamma'}{\Gamma}\left( 1 + \frac{s}{2} \right) = \log T + O(1)$. The error is uniform in $z$ on compact sets
away from the real line. Then:
\begin{align*}
\frac{-i 2\pi}{\log T} \frac{\zeta'}{\zeta}\left( \half + i T \omega - \frac{i 2\pi z}{\log T} \right) 
& = \frac{-i 2\pi}{\log T} \sum_{\rho} \frac{1}{s-\rho} + \frac{i 2\pi}{\log T} \half \left( \log T + O(1) \right) + o(1) \\ 
& = i \pi + \frac{-i 2\pi}{\log T} \sum_{\rho} \frac{1}{- \frac{i 2\pi z}{\log T} - \left(\rho-\half - i\omega T \right) } + o(1)
\end{align*}
Here, all the sums on $\rho$ are obtained by grouping pairs of conjugate values of $\rho$. 
Writing the last sum as a  function of the sequence $(\tilde{\gamma}_k)_{k \in \mathbb{Z}}$ gives 
$$\frac{-i 2\pi}{\log T} \frac{\zeta'}{\zeta}\left( \half + i T \omega - \frac{i 2\pi z}{\log T} \right) 
= i \pi + \sum_{k=1}^{\infty} \left(  \frac{1}{z-\tilde{\gamma}_{a+k}} +  \frac{1}{z-\tilde{\gamma}_{a+1 -k}}
\right)+ o(1),$$
where $a$ depends only on the way to number the $\tilde{\gamma}_k$'s. Changing the partial sums by at most $2|a| + 1$
terms, all tending to zero, gives the partial sums of the series in the lemma. 
\end{proof}

Our formulation can be easily related to the GUE conjectures \cite{bib:RudSar}, which is the natural extension of  Montgomery's conjecture \cite{bib:Montgomery} on pair correlations. Indeed, the previous lemma gives a good heuristic of Conjecture \ref{conjecture}: since the randomized and renormalized zeros $\tilde{\gamma}$ are expected to behave like a sine kernel point process, the two expressions should match in law when $T \rightarrow \infty$. 
It is interesting to notice that the term $i\pi$ in the expression of $\zeta'/\zeta$ is due to the  Archimedian gamma factor in the Hadamard product of $\zeta$.  With the same renormalization corresponding to the average spacing of the zeros, we get the same term for the logarithmic derivative of the characteristic polynomial of the CUE. 

We will now compute the first two  moments of $\frac{ \xi_{\infty}'}{\xi_{\infty}}$, which will naturally give
a conjecture on the corresponding moments of $\frac{\zeta'}{\zeta}$. A particular case of our 
conjecture is in fact equivalent to the pair correlation conjecture under Riemann hypothesis, thanks 
to results  by Goldston, Gonek and Montgomery \cite{bib:GGM01}.
One should also note that recently Farmer, Gonek, Lee and Lester obtain in \cite{bib:FGLL} an
equivalent formulation, with different methods, for the moments of the logarithmic derivative of the Riemann
zeta function in terms of the correlation functions of the sine kernel: the objects that are introduced 
there are different but our formulation is essentially the same as theirs. The main difference is that
we propose to consider directly a random meromorphic function which follows from a conjecture for 
the ratios of the zeta function itself (in particular there is no more $n$-limit to consider on the random matrix
side) and that the logarithmic derivative $\xi_\infty^{'}/\xi_\infty$ seems to carry some spectral interpretation 
(see the last section and the reference there to the recent work by Aizenman and Warzel \cite{bib:AiWa}). 
\bigskip

As shown in Lemma \ref{lemma33}, $\xi'_{\infty}/\xi_\infty$ can be written as an infinite sum 
indexed by $\mathbb{Z}$ which is not 
absolutely convergent, but which converges if we cut the sum at $-k$ and $k$ for $k \in \mathbb{N}$, 
and then let 
$k \rightarrow \infty$. Instead of considering the terms indexed by $m \in \{-k, -k+1, \dots, k\}$, it 
can be more conveninent to take all the terms of index $m$ such that $|y_m| \leq A$, and then
to let $A \rightarrow \infty$. 
The following result says that the two ways to consider the infinite sum give the same result. 
\begin{proposition} \label{proposition:twowaystosum}
 Almost surely, for all $z \notin \{y_k, k \in 
\mathbb{Z}\}$, 
$$\frac{\xi'_{\infty}(z)}{\xi_{\infty} (z)} 
= i \pi+\underset{A  \rightarrow \infty}{\lim} 
\sum_{[y_k| < A} \frac{1}{z - y_k}.
$$
\end{proposition}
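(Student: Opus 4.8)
The plan is to compare the two regroupings of the (conditionally convergent) series $\sum_k \frac{1}{z-y_k}$ by showing that the symmetric difference of the index sets involved is small, and that the terms over that difference sum to something negligible. Fix $z \notin \{y_k\}$. Thanks to Proposition~\ref{proposition:key_estimate}, almost surely $y_k = k + O(\log(2+|k|))$, so there is a random constant such that $|y_k - k| \le C_0\log(2+|k|)$ for all $k$. First I would record the consequence that, for a given $A > 0$, the symmetric difference between $\{k : |k| \le A\}$ and $\{k : |y_k| < A\}$ is contained in the annulus of indices $k$ with $\bigl| |k| - A \bigr| \le C_0 \log(2 + A)$; hence this symmetric difference has cardinality $O(\log(2+A))$, and every index $k$ in it satisfies $|k| = A + O(\log(2+A))$, so $|y_k| = A + O(\log(2+A)) \ge A/2$ for $A$ large.

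Next I would estimate the contribution of these ``extra'' terms. For $z$ fixed and $A$ large enough that $2|z| < A$, each term $\frac{1}{z - y_k}$ with $k$ in the symmetric difference is $O(1/A)$ in absolute value, and there are $O(\log(2+A))$ of them, so their total contribution is $O\!\bigl(A^{-1}\log(2+A)\bigr) \to 0$ as $A \to \infty$. Therefore
$$
\sum_{|y_k| < A} \frac{1}{z-y_k} = \sum_{|k| \le \lfloor A \rfloor} \frac{1}{z-y_k} + O\!\left( \frac{\log(2+A)}{A} \right).
$$
By Lemma~\ref{lemma33} (equivalently, the definition of $\xi_\infty'/\xi_\infty$ as the limit of the logarithmic derivatives of the symmetric partial products, already justified via Proposition~\ref{proposition:key_estimate}), the first sum on the right converges to $\frac{\xi'_\infty(z)}{\xi_\infty(z)} - i\pi$ as $A \to \infty$ through the positive reals, since for non-integer $A$ it equals $\sum_{|k| \le \lfloor A\rfloor}$. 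Letting $A \to \infty$ gives
$$
\underset{A \to \infty}{\lim} \sum_{|y_k| < A} \frac{1}{z-y_k} = \frac{\xi'_\infty(z)}{\xi_\infty(z)} - i\pi,
$$
which is the claimed identity after adding $i\pi$.

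The only subtlety — and the step I'd be most careful about — is making all error bounds uniform enough to pass to the ``almost surely, for all $z$'' quantifier in the right order. The constant $C_0$ in the eigenvalue estimate is random but does not depend on $k$, so on the almost sure event where Proposition~\ref{proposition:key_estimate} holds, the bound $O(A^{-1}\log(2+A))$ holds with a constant depending only on $z$ (through the threshold $2|z|<A$) and on the realization; this already yields the pointwise-in-$z$ statement for every $z$ outside the random zero set, which is exactly what is asserted. If one wants it phrased with a single null set working simultaneously for all such $z$, one simply notes the whole argument runs on the single almost sure event from Proposition~\ref{proposition:key_estimate}, and that for each $z$ in the (random) domain the limit exists by the above. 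No moment estimates or properties of the sine kernel beyond Proposition~\ref{proposition:key_estimate} are needed.
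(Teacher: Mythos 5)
Your proof is correct and follows essentially the same route as the paper: use the almost-sure estimate $y_k = k + O(\log(2+|k|))$ to show that the two truncations $\{|k|\leq A\}$ and $\{|y_k|<A\}$ differ on a set of $O(\log(2+A))$ indices, each contributing a term of size $O(1/A)$, so the discrepancy vanishes as $A\to\infty$. The only cosmetic difference is that the paper uses a shifted one-sided cutoff $\{|k| < A - C\log(2+A)\} \subset \{|y_k| < A\}$ rather than your symmetric-difference bookkeeping, and the estimate for the limiting points $y_k$ is really Lemma~\ref{lemma:lemma_mm_sosh} (Proposition~\ref{proposition:key_estimate} as stated concerns $y_k^{(n)}$), but these are inconsequential.
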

\begin{proof}
 By Lemma \ref{lemma:lemma_mm_sosh}, there 
exists almost surely $C > 0$ such that 
$$|y_k - k| \leq C \log (2 + |k|)$$
for all $k \in \mathbb{Z}$. 
It is sufficient to show that 
almost surely, for all $z \notin \{y_k, k 
\in \mathbb{Z}\}$, 
$$\left(i \pi +\sum_{|y_k| < A} \frac{1}{z-y_k} \right)
- \left(\sum_{|k|<A  - C \log (2+A)} \frac{1}{z-y_k}+i \pi\right)
\underset{A \rightarrow \infty}{\longrightarrow} 0.$$
Indeed, the second term of the difference is 
already known to converge to 
$\xi'_{\infty}(z)/\xi_{\infty} (z)$.
Now, $|k| < A - C \log (2+A)$ implies that 
$$|y_k| \leq |k| + C \log (2+|k|)
 \leq |k| + C \log (2+A) <A,$$
 and then we have to show that
 $$ \sum_{|k|\geq A  - C \log (2+A), |y_k| < A}
 \frac{1}{z - y_k} \underset{A \rightarrow \infty}{\longrightarrow} 0.$$
 Since $|y_k| \geq |k| -C \log (2+|k|)$, it
 is sufficient to prove 
 $$\sum_{|k|\geq A  - C \log (2+A), 
 |k| - C \log (2+|k|) < A } \frac{1}{|z - y_k|} \underset{A \rightarrow \infty}{\longrightarrow} 0.$$
Now, this convergence holds since for $C$,
$z$ and $(y_k)_{k \in \mathbb{Z}}$ fixed,  the 
number of terms of the sum is $O(\log A)$ when $A$ 
goes to infinity, and all the terms are 
$O(1/A)$.
 
\end{proof}
We will now bound some exponential moments related to $\xi'_{\infty}/\xi_{\infty}$. 
In order to apply this bound later to convergence results, it will also be useful to consider $\xi'_n/\xi_n$ for finite $n$. 
The infinite product given in Proposition \ref{proposition:expressionxin} is clearly uniformly convergent
 in compact sets if we regroup the terms of indices $k$ and $-k$, and $\xi_n$ does not vanish outside 
 the real axis. Hence, we can take the logarithmic derivative: 
 $$\frac{\xi'_{n}(z)}{\xi_{n} (z)} 
= i \pi+ \frac{1}{z - y^{(n)}_0} + \sum_{k \geq 1} \left(\frac{1}{z - y^{(n)}_k} + \frac{1}{z - y^{(n)}_{-k}}\right).
$$
Since $y^{(n)}_k - k$ is $n$-periodic and then bounded, one deduces that we also have 
 $$\frac{\xi'_{n}(z)}{\xi_{n} (z)} 
= i \pi+\underset{A  \rightarrow \infty}{\lim} 
\sum_{[y^{(n)}_k| < A} \frac{1}{z - y^{(n)}_k}.
$$
From now, we will allow $n$ to be either $\infty$ or a strictly positive integer, and we will write 
by convention $y^{(\infty)}_k := y_k$. Moreover, we define:
$$ \sum_{ |y^{(n)}_k| > A } \frac{1}{z-y^{(n)}_k} := \frac{\xi'_{n}(z)}{\xi_{n} (z)}  - i \pi 
- \sum_{[y^{(n)}_k| \leq A} \frac{1}{z - y^{(n)}_k}.$$
Then, we have the following estimate: 
\begin{proposition}
\label{proposition:uniform_tail_control}
Let $K \subset \C \backslash \R$ be a compact set. Then, there exists 
$ C_K > 0$, depending only on $K$, such that for all $p \geq 0$ and for all $A \geq C_K(1+p^2\log(2+p))$, 
$$ \sup_{n \in \mathbb{N} \sqcup \{\infty\}} 
\E\left[ \sup_{z \in K} e^{ p \left| \sum_{ |y^{(n)}_k| > A } \frac{1}{z-y^{(n)}_k}\right| } \right]
\leq 1 +  \frac{C_K p \log A}{\sqrt{A}}$$
In particular, for all fixed $p>0$, we have:
$$ \limsup_{A \rightarrow \infty} \sup_{n \in \mathbb{N} \sqcup \{\infty\}} 
\E\left[ \sup_{z \in K} e^{ p \left| \sum_{ |y^{(n)}_k| > A } \frac{1}{z-y^{(n)}_k}\right| } \right]=  1
$$
\end{proposition}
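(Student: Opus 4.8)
The plan is to convert the conditionally convergent tail sum into a genuine integral of a discrepancy function against a fast‑decaying weight, and then to estimate its exponential moment by Jensen's inequality together with the quantitative bounds of Lemma~\ref{lemma:preparatory}. Throughout, fix the compact $K\subset\C\setminus\R$, set $R_K:=\sup_{z\in K}|z|$ and $\delta_K:=\inf_{z\in K}|\Im z|>0$, and note that there is $c_K>0$ depending only on $K$ with $|z-t|^2\ge c_K(1+t^2)$ for all $z\in K$ and all real $t$ with $|t|\ge\sqrt2\,R_K$. Abbreviate $S_{A,n}(z):=\sum_{|y^{(n)}_k|>A}\frac{1}{z-y^{(n)}_k}$. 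Starting from the definition $S_{A,n}(z)=\lim_{B\to\infty}\sum_{A<|y^{(n)}_k|<B}\frac{1}{z-y^{(n)}_k}$, I would keep the contributions of the positive and of the negative eigenangles paired (neither partial sum converges on its own) and integrate by parts against the discrepancy functions $R_\pm(t):=\#\{k:\,A<\pm y^{(n)}_k\le t\}-(t-A)$, $t\ge A$. Since $R_\pm(A)=0$ and $R_\pm(t)=o(t)$ almost surely (bounded for finite $n$ by periodicity, $O(\log t)$ for $n=\infty$), the two ``Lebesgue parts'' combine into the convergent deterministic integral $\int_A^\infty\frac{2z}{z^2-t^2}\,dt=O_K(1/A)$, while the boundary contributions at $+\infty$ vanish. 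This yields, uniformly in $z\in K$ and in $n\in\N\sqcup\{\infty\}$,
$$\sup_{z\in K}\bigl|S_{A,n}(z)\bigr|\ \le\ \frac{C_K}{A}\ +\ \frac{1}{c_K}\int_A^\infty\frac{|R_+(t)|+|R_-(t)|}{1+t^2}\,dt .$$
The right‑hand side no longer depends on $z$, so the supremum over $K$ in the statement costs nothing.

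For the exponential moment, set $w:=\int_A^\infty\frac{dt}{1+t^2}\in\bigl[\tfrac{2}{\pi A},\tfrac1A\bigr]$ and let $d\nu:=w^{-1}\frac{dt}{1+t^2}$ be a probability measure on $[A,\infty)$, so that $\frac{p}{c_K}\int_A^\infty\frac{|R_+|+|R_-|}{1+t^2}\,dt=\int\frac{pw}{c_K}\bigl(|R_+(t)|+|R_-(t)|\bigr)\,d\nu(t)$. Jensen's inequality for $\exp$, then Fubini and Cauchy--Schwarz, give
$$\E\Bigl[\sup_{z\in K}e^{\,p\,|S_{A,n}(z)|}\Bigr]\ \le\ e^{\,pC_K/A}\int_A^\infty\E\bigl[e^{\,q\,|R_+(t)|}\bigr]^{1/2}\,\E\bigl[e^{\,q\,|R_-(t)|}\bigr]^{1/2}\,d\nu(t),\qquad q:=\frac{2pw}{c_K}\le\frac{2p}{c_K A}.$$
By rotational invariance of the spectrum of $U_n$ (translational invariance of the sine‑kernel process when $n=\infty$), each of $R_+(t),R_-(t)$ has the law of $\widetilde X^{(n)}_I$ for an interval $I$ with $|I|=t-A$; by $n$‑periodicity of $E^{(n)}$ one may even take $|I|<n$, so Lemma~\ref{lemma:preparatory} applies and gives $\E[e^{q|\widetilde X^{(n)}_I|}]\le\frac1{1-2q}+q\sqrt{4\pi\,\sigma^2(|I|)}\,e^{4q^2\sigma^2(|I|)}$ with $\sigma^2(s)\le 2+\frac2{\pi^2}\log(1+s)$ uniformly in $n$.

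It remains to collect the estimates. Choosing $C_K$ large in terms of $c_K$, the hypothesis $A\ge C_K(1+p^2\log(2+p))$ — which in particular forces $A\gtrsim_K p^2$ — guarantees $q\le\tfrac14$, $pC_K/A\le 1$, and $4q^2\sigma^2(s)\le\tfrac12\log(1+s)+O_K(1)$, so that $e^{4q^2\sigma^2(s)}=O_K\bigl((1+s)^{1/2}\bigr)$ and $\int_A^\infty\frac{\sqrt{\log(2+t)}\,(1+t)^{1/2}}{1+t^2}\,dt=O\bigl(\sqrt{\log A}/\sqrt A\bigr)$. Using $\frac1{1-2q}\le 1+4q$ and substituting $s=t-A$, the integral above is at most $1+4q+\frac{q}{w}\,O_K\bigl(\sqrt{\log A}/\sqrt A\bigr)$; since $\frac{q}{w}=\frac{2p}{c_K}$, $4q\le\frac{8p}{c_K A}$ and $e^{pC_K/A}=1+O_K(p/A)$, the whole bound collapses to $1+O_K\bigl(p\log A/\sqrt A\bigr)$, which is the first assertion. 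The $\limsup$ statement then follows at once by fixing $p$ and letting $A\to\infty$.

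I expect the main obstacle to be the three‑way tension between $p$, $A$ and the interval length $s=t-A$: one must keep $q=2pw/c_K$ safely below $\tfrac12$ — which forces $A\gtrsim_K p^2$ and therefore pins down the admissible range of $A$ — while simultaneously integrating the sub‑Gaussian factor $e^{4q^2\sigma^2(s)}$, whose exponent grows like $q^2\log s$ and is hence unbounded in $s$, against the weight $d\nu$, which decays only like $t^{-2}$ and thus just barely controls it; making these two requirements compatible is precisely what the range $A\ge C_K(1+p^2\log(2+p))$ encodes. A secondary, more routine, point is the justification of the integration by parts of the first step under merely conditional convergence, which is why the positive and negative halves of the tail sum have to be processed jointly rather than one at a time.
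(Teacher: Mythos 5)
Your proof is correct and takes a genuinely different route from the paper's. Where the paper splits the real line into the discrete blocks $I_\ell=\bigl(|\ell|^\alpha,(|\ell|+1)^\alpha\bigr]$, approximates each summand $1/(z-y_k^{(n)})$ by a constant on its block, weights the blocks by a discrete probability $\beta_\ell$, and at the end has to optimize the free exponent (eventually $\alpha=2$), you instead Abel-sum the conditionally convergent tail against the continuous discrepancy functions $R_\pm(t)=\widetilde X_{(A,t]}$ and push the exponential through the probability measure $d\nu\propto dt/(1+t^2)$ on $[A,\infty)$. Both arguments rest on exactly the same probabilistic input — the variance and exponential-moment estimates of Lemma~\ref{lemma:preparatory} for the centered counts $\widetilde X_I$, plus Jensen and Cauchy--Schwarz — but the continuous decomposition removes the artificial parameter $\alpha$ and, because it keeps the factor $\sqrt{\Var(X_I)}$ instead of absorbing it into $\Var(X_I)$, produces the slightly sharper remainder $O_K\bigl(p\sqrt{\log A}/\sqrt A\bigr)$ rather than the paper's $O_K\bigl(p\log A/\sqrt A\bigr)$; both of course imply the stated inequality. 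Two small remarks: the appeal to rotational/translational invariance is not needed, since $R_+(t)$ \emph{is} $\widetilde X_{(A,t]}$ and Lemma~\ref{lemma:preparatory} applies to arbitrary finite intervals; and when writing up the integration by parts one should make explicit that the deterministic boundary term $(B-A)\bigl[(z-B)^{-1}+(z+B)^{-1}\bigr]$ and the deterministic bulk integrals recombine into $\int_A^B 2z\,(z^2-t^2)^{-1}dt$ before the $B\to\infty$ limit is taken, which is what makes the paired treatment of the two half-lines essential — a point you correctly flag.
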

\begin{proof}
Let $\alpha>1$ be an exponent to be decided later and denote for every $\ell \in \Z$ the intervals:
$$ I_\ell   := \left( |\ell|^\alpha, \left( |\ell| + 1 \right)^\alpha \right]$$
$$ I_\ell^A := \sgn(\ell) \left( I_\ell \cap \left[ A, \infty \right) \right)$$
First there is a deterministic constant $C_{K, \alpha}>0$ such that for $\ell \geq 0$:
$$ |y^{(n)}_k| \in I_\ell\Rightarrow \left| \frac{1}{z-y^{(n)}_k} - \frac{\sgn y^{(n)}_k}{\left( 1 + 
\ell \right)^\alpha } \right| \leq \frac{C_{K, \alpha}}{\left( 1 + \ell \right)^{\alpha+1}}$$
Then, thanks to the triangular inequality and Proposition \ref{proposition:twowaystosum}, and using the notation $X$ 
and $\widetilde{X}$ given in Lemma \ref{lemma:preparatory}, 
\begin{align*}
     & \left| \sum_{ |y^{(n)}_k| > A } \frac{1}{z-y^{(n)}_k}\right|
\leq  \sum_{ \ell  \geq 0 } \left| \sum_{ |y^{(n)}_k| \in I_\ell^A  } \frac{1}{z-y^{(n)}_k}  \right| \\
\leq & \sum_{ \ell \geq 0} \left( 
       \left| \sum_{ |y^{(n)}_k| \in I_\ell^A  } \frac{\sgn y^{(n)}_k}{\left( 1 + \ell \right)^{\alpha}} \right| 
     + \sum_{ |y^{(n)}_k| \in I_\ell^A  } \frac{C_{K, \alpha}}{\left( 1 + \ell \right)^{\alpha+1}} \right)\\
\leq & \sum_{ \ell \geq 0} \left( 
       \frac{ \left| X_{I_\ell^A} - X_{-I_\ell^A} \right|}{\left( 1 + \ell \right)^{\alpha}}
     + C_{K, \alpha} \frac{ X_{I_\ell^A} + X_{-I_\ell^A} }{\left( 1 + \ell \right)^{\alpha+1}} \right)\\
\leq & \sum_{ \ell \geq 0 } \left( 
       \frac{ \left| \widetilde{X_{I_\ell^A}} \right| + \left| \widetilde{X_{-I_\ell^A}} \right|}{\left( 1 + \ell \right)^{\alpha}}
     + C_{K, \alpha} \frac{ X_{I_\ell^A} + X_{-I_\ell^A} }{\left( 1 + \ell \right)^{\alpha+1}} \right)\\
\leq & 2 C_{K, \alpha} \sum_{\ell \geq 0} \frac{\left| I_\ell^A\right|}{\left( 1 + \ell \right)^{\alpha+1} }
       + \sum_{\ell \geq 0} \frac{1}{\left( 1 + \ell \right)^{\alpha}} \left( \left| \widetilde{X_{I_\ell^A}} \right|+ \left| \widetilde{X_{-I_\ell^A}} \right| \right)\left( 1 + \frac{C_{K, \alpha}}{1+\ell} \right)
\end{align*}
Notice that $I_\ell^A$ is empty when $|\ell| < A^{\frac{1}{\alpha}} - 1$. Thanks to that, we will now prove that:
\begin{align}
\label{eq:resolvant_tail}
\left| \sum_{ |y^{(n)}_k| > A } \frac{1}{z-y^{(n)}_k}\right| 
= \Oc_{K, \alpha}\left( \frac{1}{A^{\frac{1}{\alpha}}} \right) + \left( 1 + \Oc_{K, \alpha}\left( \frac{1}{A^{\frac{1}{\alpha}}} \right) \right) \sum_{\ell \geq 0} \frac{\left( \left| \widetilde{X_{I_\ell^A}} \right|+ \left| \widetilde{X_{-I_\ell^A}} \right| \right)}{\left( 1 + \ell \right)^\alpha}
\end{align}
Indeed, from the previous equation, the first sum can be written as:
\begin{align*}
        2 C_{K, \alpha} \sum_{\ell \geq A^{\frac{1}{\alpha}}-1} \frac{\left| I_\ell^A\right|}{\left(1+\ell\right)^{\alpha+1}}
\ll_{K,\alpha} & \sum_{\ell \geq A^{\frac{1}{\alpha}}-1} \frac{\left(1+\ell\right)^{\alpha} - \ell^\alpha}{\left(1+\ell\right)^{\alpha+1}}\\
\ll_{\alpha}   & \sum_{\ell \geq A^{\frac{1}{\alpha}}-1} \frac{\left(1+\ell\right)^{\alpha-1}}{\left(1+\ell\right)^{\alpha+1}}\\
=     & \sum_{\ell \geq A^{\frac{1}{\alpha}}-1} \frac{1}{\left(1+\ell\right)^{2}}\\
\ll_{\alpha}  & \frac{1}{A^{\frac{1}{\alpha}}}
\end{align*}
And, in the second sum, write $\left( 1 + \frac{C_{K, \alpha}}{1+\ell} \right) = 1 + \Oc_{K, \alpha}\left( \frac{1}{A^{\frac{1}{\alpha}}} \right)$ to deduce inequality \eqref{eq:resolvant_tail}.

Now, we are ready to exponentiate the inequality \eqref{eq:resolvant_tail} after multiplication by $p \geq 0$. 
Let $\left( \beta_\ell \right)_{\ell \in \Z}$ be the probability weights given by:
$$ \beta_\ell = \frac{1}{Z_{A,\alpha}} \mathds{1}_{\left\{ 1 + |\ell| \geq A^{\frac{1}{\alpha}}\right\}} \frac{1}{\left(1+|\ell|\right)^\alpha}$$
where $Z_{A, \alpha}$  is the normalisation constant, chosen 
in such a way that the sum of $\beta_{\ell}$ for $\ell \in \mathbb{Z}$ is equal to $1$. One easily checks that 
$$Z_{A, \alpha}= \Oc_{\alpha} \left( \frac{1}{A^{1 - (1/\alpha)}} \right).$$
We have:
\begin{align*}
     \quad 
     & \exp\left( p \left| \sum_{ |y^{(n)}_k| > A } \frac{1}{z-y^{(n)}_k}\right| \right)\\
\leq \quad 
     &
     \exp\left( p \Oc_{K, \alpha}\left( \frac{1}{A^{\frac{1}{\alpha}}} \right) \right)
     \exp\left( p \left( 1 + \Oc_{K, \alpha}\left( \frac{1}{A^{\frac{1}{\alpha}}} \right) \right) Z_{A, \alpha} \sum_{\ell \in \Z} \beta_\ell \left| \widetilde{X_{ I_\ell^A}} \right| \right)\\
\stackrel{ \textrm{(Jensen) } }{ \leq }
     &
     e^{ p \Oc_{K, \alpha}\left( \frac{1}{A^{\frac{1}{\alpha}}} \right)}
     \sum_{\ell \in \Z} \beta_\ell \exp\left( p \left( 1 + \Oc_{K, \alpha}\left( \frac{1}{A^{\frac{1}{\alpha}}} \right) \right) \Oc_{\alpha} \left( \frac{1}{A^{1 - (1/\alpha)}} \right) \left| \widetilde{X_{ I_\ell^A}} \right| \right)\\
=    \quad
     &
     e^{ p \Oc_{K, \alpha}\left( \frac{1}{A^{\frac{1}{\alpha}}} \right)}
     \sum_{\ell \in \Z} \beta_\ell \exp\left( q \left| \widetilde{X_{ I_\ell^A}} \right| \right) \\
    =    \quad
     &
     e^{ p \Oc_{K, \alpha}\left( \frac{1}{A^{\frac{1}{\alpha}}} \right)}
    \left( \sum_{\ell \in \Z} \beta_\ell \left[\exp\left( q \left| \widetilde{X_{ I_\ell^A}} \right| \right) - 1 \right]
     +  \sum_{\ell \in \Z} \beta_\ell\right) \\ 
       =    \quad
     &
     e^{ p \Oc_{K, \alpha}\left( \frac{1}{A^{\frac{1}{\alpha}}} \right)} + 
      e^{ p \Oc_{K, \alpha}\left( \frac{1}{A^{\frac{1}{\alpha}}} \right)}
       \sum_{\ell \in \Z} \beta_\ell \left[\exp\left( q \left| \widetilde{X_{ I_\ell^A}} \right| \right) - 1 \right].
\end{align*}
where 
$$ q = \Oc_{K, \alpha}\left( \frac{p}{A^{1 - (1/\alpha)}} \right).$$ 
If $0 < q \leq 1/4$,
we get $1/(1-2q) \leq 1 + 4q$ and then by Lemma \ref{lemma:preparatory}: 
\begin{align*}
 \mathbb{E} \left[e^{q |  \widetilde{X_{ I_\ell^A}} |} - 1 \right] 
 & \leq \frac{1}{1 - 2q} - 1 + q \sqrt{4 \pi  \Var(X_{ I_\ell^A})} e^{4 q^2 \Var(X_{ I_\ell^A} )}  
  \\ & \leq q \left(4 + \sqrt{4 \pi  \Var(X_{ I_\ell^A})} \right) \, e^{4 q^2 \Var(X_{ I_\ell^A} )}  
  \\ & \leq q \left(4 + \frac{ 1 + 4 \pi  \Var(X_{ I_\ell^A})}{2} \right)e^{4 q^2 \Var(X_{ I_\ell^A})}
  \\ & \leq 7 q  \left( 1 +  \Var(X_{ I_\ell^A})  \right)e^{4 q^2 \Var(X_{ I_\ell^A})}.
\end{align*}
Using the estimate of the variance given in  Lemma \ref{lemma:preparatory}, we deduce 
\begin{align*} \mathbb{E} \left[e^{q |  \widetilde{X_{ I_\ell^A}} |} - 1 \right] 
& \ll  q \log ( 2 + |I_{\ell}^A| ) e^{\Oc (q^2 \log ( 2 + |I_{\ell}^A| ) )} \\ 
& =   q \log \left( 2 +  \Oc_{\alpha} \left((1 + |\ell|)^{\alpha - 1}\right) \right) 
e^{\Oc \left(q^2 \log \left( 2 + \Oc_{\alpha} \left((1 + |\ell|)^{\alpha - 1} \right) \right) \right)   } 
\\ & \ll_{\alpha} q \log (2 + |\ell|) e^{\Oc_{\alpha} (q^2 \log (2  + |\ell|))} 
= q (2 + |\ell|)^{\Oc_{\alpha}(q^2)}  \log (2 + |\ell|) 
\end{align*}
Hence, in the region where $q \leq 1/4$, we get 
\begin{align*}
\mathbb{E} \left[\sup_{z \in K} e^{ p \left| \sum_{ |y^{(n)}_k| > A } \frac{1}{z-y^{(n)}_k}\right|} \right]
& \leq e^{\Oc_{K, \alpha}\left( \frac{p}{A^{\frac{1}{\alpha}}} \right) }
+ \Oc_{\alpha} \left( q e^{\Oc_{K, \alpha}\left( \frac{p}{A^{\frac{1}{\alpha}}} \right) }
  \sum_{\ell \in \Z} \beta_\ell (2+|\ell|)^{\Oc_{\alpha}(q^2 ) }   \log (2 + |\ell|)  \right)
  \end{align*}
  The sum in $\ell$ is smaller than or equal to 
  $$ \frac{1}{Z_{A, \alpha}}
  \sum_{|\ell| \geq A^{1/\alpha} - 1} ( 1+ |\ell|)^{-\alpha} 
  (2+|\ell|)^{\Oc_{K, \alpha}(q^2)} \log (2 + |\ell|) .$$

If the exponent $\Oc_{K, \alpha}(q^2)$ is strictly smaller than $(\alpha - 1)/2$, then the terms of the last series are
bouded by  $2^{\alpha} (2 + |\ell|)^{-\beta}  \log (2 + |\ell|)$, where $\beta > 1 + (\alpha-1)/2$ and 
$\beta = \alpha - \Oc_{K, \alpha}(q^2)$. Hence, in this case, 
\begin{align*} \sum_{|\ell| \geq A^{1/\alpha} - 1} ( 1+ |\ell|)^{-\alpha} 
  (2+|\ell|)^{\Oc_{K, \alpha}(q^2)}\log (2 + |\ell|) & \ll_{\alpha}
   \sum_{\ell \geq A^{1/\alpha} - 1}  (2+ |\ell|)^{-\beta}  \log (2 + |\ell|)
   \end{align*}
   Now, since $\beta > 1$, 
   the function $x \mapsto x^{-\beta} \log x$ is nonincreasing on $[e, \infty)$. Hence, for $A \geq e^{\alpha}$, 
   \begin{align*} &  \sum_{|\ell| \geq A^{1/\alpha} - 1} ( 1+ |\ell|)^{-\alpha} 
  (2+|\ell|)^{\Oc_{K, \alpha}(q^2)} \log (2 + |\ell|) \ll_{\alpha} 
  \int_{A^{1/\alpha}}^{\infty} x^{-\beta} \log x dx
  \\  & \ll_{\alpha} \left[ \frac{x^{1- \beta} \log x}{1 - \beta} \right]_{A^{1/\alpha}}^{\infty} 
   -\frac{1}{1 - \beta} \int_{A^{1/\alpha}}^{\infty}  x^{-\beta} dx 
  \\ &  \leq \frac{1}{\beta -1} A^{(1 - \beta)/\alpha} \log (A^{1/\alpha}) + \frac{1}{( \beta-1)^2} A^{(1 - \beta)/\alpha} 
   \\ & \leq \frac{2}{\alpha -1} A^{(1 - \beta)/\alpha} \left( \frac{2}{\alpha - 1} + \log (A^{1/\alpha}) \right)
    \ll_{\alpha}   A^{(1 - \beta)/\alpha} \log A 
    \\ & = \frac{\log A}{A^{1 - (1/\alpha) + \Oc_{K, \alpha}(q^2)}}.
  \end{align*}
   Moreover,  
   $$Z_{A, \alpha} = \sum_{|\ell| \geq A^{1/\alpha} - 1} \frac{1}{(1+ |\ell|)^{\alpha}} 
   \geq \int_{A^{1/\alpha} + 1}^{\infty} u^{-\alpha}  \gg_{\alpha} \frac{1}{(1 + A^{1/\alpha})^{\alpha - 1}} 
   \gg_{\alpha} \frac{1}{A^{1 - (1/\alpha)}},$$
   if $A \geq 1$. 
The condition $\Oc_{K, \alpha}(q^2) < (\alpha - 1)/2$ is satisfied as soon as $q \ll_{K, \alpha} 1$, and 
since $q$ is dominated by $p/A^{1 - (1/\alpha)}$, as soon as 
$A  \gg_{K, \alpha} p^{\alpha/(\alpha -1)}$.
Hence, if $A \gg_{K, \alpha} 1 + p^{\alpha/(\alpha -1)}$, 
\begin{align*}
 \mathbb{E} \left[ \sup_{z \in K} e^{ p \left| \sum_{ |y^{(n)}_k| > A } \frac{1}{z-y^{(n)}_k}\right|} \right]
 & =  e^{\Oc_{K, \alpha}\left( \frac{p}{A^{\frac{1}{\alpha}}} \right) }
  + \Oc_{\alpha} \left( q e^{ \Oc_{K, \alpha}\left( \frac{p}{A^{\frac{1}{\alpha}}} \right) }
  A^{\Oc_{K, \alpha}(q^2)} \log A \right)
  \\ & = e^{\Oc_{K, \alpha}\left( \frac{p}{A^{\frac{1}{\alpha}}} \right) }
  + \Oc_{K,\alpha} \left( \frac{p \log A}{A^{1 - (1/\alpha)}}
  e^{ \Oc_{K, \alpha}\left( \frac{p}{A^{\frac{1}{\alpha}}} + \frac{p^2 \log A}{A^{2 - (2/\alpha)}}  \right)} \right).
  \end{align*}
  Let us now choose $\alpha = 2$. For $A \gg_K 1 + p^2$, we get 
  \begin{align*} \mathbb{E} \left[\sup_{z \in K} e^{ p \left| \sum_{ |y^{(n)}_k| > A } \frac{1}{z-y^{(n)}_k}\right|} \right]
  & = e^{\Oc_{K}\left( \frac{p}{\sqrt{A}} \right) }
  + \Oc_{K} \left( \frac{p \log A}{\sqrt{A}}
  e^{ \Oc_{K}\left( \frac{p}{\sqrt{A}} + \frac{p^2 \log A}{A}  \right)} \right).
  \end{align*} 
  Now, by assumption, $p/\sqrt{A} \ll_K 1$, which implies that 
  $$ e^{\Oc_{K}\left( \frac{p}{\sqrt{A}} \right) } = 1 + \Oc_{K}\left( \frac{p}{\sqrt{A}} \right)
  = \Oc_K(1)$$
  and 
  $$ \mathbb{E} \left[\sup_{z \in K} e^{ p \left| \sum_{ |y^{(n)}_k| > A } \frac{1}{z-y^{(n)}_k}\right|} \right]
   = 1 +  \Oc_{K} \left( \frac{p \log A}{\sqrt{A}} e^{ \Oc_K \left(\frac{p^2 \log A}{A} \right)} \right).$$
  Now, $\log A / A$ is nonincreasing in $A \geq e$, so if $A \geq e + p^2 \log (2 + p)$, we get 
 $$\Oc_K \left( \frac{p^2 \log A}{A} \right) \leq \Oc_K \left( \frac{p^2 \log (e + p^2 \log (2 + p))}{e + p^2 \log (2+p)} 
  \right) = \Oc_K(1), $$
  which gives 
   \begin{align*} \mathbb{E} \left[\sup_{z \in K} e^{ p \left| \sum_{ |y^{(n)}_k| > A } \frac{1}{z-y^{(n)}_k}\right|} \right]
  & =  1 +  \Oc_{K} \left( \frac{p \log A}{\sqrt{A}} \right). 
  \end{align*} 
\end{proof}

As a  consequence of the above bound we have the following estimates on the $L^p$ norms
of $\xi'_{\infty}/\xi_{\infty}$. 

\begin{proposition} \label{proposition:convergenceLp}
For any compact set $K$ of $\C \backslash \R$, and for all $p\geq1$, there 
exists an absolute constant $C_{p,K}$ such that:
$$ \forall A \geq 0, \sup_{z \in K} \E\left( \left| \sum_{ |y_k| > A } \frac{1}{z-y_k} \right|^{p}
\right)^{\frac{1}{p}} \leq C_{p,K} \frac{\log (2+A)}{ \sqrt{1+A}}$$
and in particular, 
$$  \sup_{z \in K} \E\left( \left| \sum_{ |y_k| > A } \frac{1}{z-y_k} \right|^{p}
\right) \underset{A \rightarrow \infty}{\longrightarrow} 0,$$
Moreover, $\xi'_{\infty}(z)/\xi_{\infty}(z)$ is in $L^p$ for all $z \notin \R$ and $p \geq 1$. 
\end{proposition}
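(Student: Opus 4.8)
The plan is to deduce everything from the exponential moment bound of Proposition~\ref{proposition:uniform_tail_control} by a Chernoff-type optimization. Throughout write $S_A(z) := \sum_{|y_k| > A} \frac{1}{z - y_k}$ for the tail sum (in the sense defined just before Proposition~\ref{proposition:uniform_tail_control}), so that $\xi_\infty'(z)/\xi_\infty(z) = i\pi + S_0(z)$, the sum $\sum_{|y_k|\le 0}$ being a.s. empty. The only elementary input is the inequality
$$ x^m \leq \frac{m!}{\lambda^m}\left( e^{\lambda x} - 1 \right), \qquad x, \lambda \geq 0, \ m \in \N, $$
valid because $e^{\lambda x} \geq 1 + (\lambda x)^m/m!$ when $x \geq 0$. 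Fix $p \geq 1$ and set $m := \lceil p \rceil$; by Jensen's inequality it suffices to bound $\E(|S_A(z)|^m)^{1/m}$, and since $t \mapsto t^m$ is increasing on $[0,\infty)$, applying the displayed inequality with $x = \sup_{z \in K}|S_A(z)|$ and taking expectations gives, for every $\lambda > 0$,
$$ \sup_{z \in K}\E\left( |S_A(z)|^m \right) \leq \E\left[ \sup_{z \in K} |S_A(z)|^m \right] \leq \frac{m!}{\lambda^m}\, \E\left[ \sup_{z \in K} e^{\lambda |S_A(z)|} - 1 \right]. $$

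Next I would feed in Proposition~\ref{proposition:uniform_tail_control}, using our free parameter $\lambda$ in the role of the parameter ``$p$'' there. For $A$ larger than a threshold $A_0 = A_0(K)$, the choice $\lambda = \lambda(A) := \sqrt{A}/\log A$ satisfies the hypothesis $A \geq C_K(1 + \lambda^2 \log(2 + \lambda))$ of that proposition, since $\lambda^2 \log(2+\lambda) \sim A/(2\log A) = o(A)$. Hence $\E[\sup_K e^{\lambda|S_A|} - 1] \leq C_K \lambda \log A / \sqrt{A}$, and plugging $\lambda = \sqrt{A}/\log A$ into the last display yields
$$ \sup_{z \in K}\E\left( |S_A(z)|^m \right) \leq \frac{m!\, C_K \log A}{\lambda^{m-1}\sqrt{A}} = \frac{m!\, C_K (\log A)^m}{A^{m/2}}. $$
Taking $m$-th roots and using $\log A / \sqrt{A} \ll \log(2+A)/\sqrt{1+A}$ for $A \geq A_0$ gives the claimed estimate on $\{A \geq A_0\}$, and in particular $\sup_{z \in K}\E(|S_A(z)|^p) \to 0$ as $A \to \infty$.

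For $0 \leq A < A_0$ the function $\log(2+A)/\sqrt{1+A}$ is bounded below by a positive constant depending only on $A_0$, so it suffices to bound $\E(|S_A(z)|^m)$ uniformly over $A < A_0$ and $z \in K$. Here I would use that $K$ is bounded away from $\R$: picking $\delta > 0$ with $|\Im z| \geq \delta$ on $K$, we have $|z - y_k| \geq \delta$ for all $k$, whence
$$ |S_A(z)| \leq |S_{A_0}(z)| + \sum_{A < |y_k| \leq A_0} \frac{1}{|z - y_k|} \leq |S_{A_0}(z)| + \frac{1}{\delta}\, X_{[-A_0, A_0]}, $$
where $X_{[-A_0, A_0]} = \#\{k : |y_k| \leq A_0\}$. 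The $L^m$-norm of the first term is controlled by the previous step (applied at $A = A_0 \geq A_0$), uniformly in $z \in K$, and the second term lies in every $L^m$ by the exponential moment bound of Lemma~\ref{lemma:preparatory}; Minkowski's inequality then gives the bound on $[0, A_0)$. This proves the $L^p$-estimate; the final assertion is the special case $A = 0$, $K = \{z\}$ (a singleton being a compact subset of $\C \backslash \R$), which shows $\xi_\infty'(z)/\xi_\infty(z) = i\pi + S_0(z) \in L^p$ for every $z \notin \R$ and $p \geq 1$.

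The only genuinely delicate point is the choice of $\lambda(A)$ in the second paragraph: it must grow fast enough that the gain $\lambda^{-(m-1)}$ pushes the bound of Proposition~\ref{proposition:uniform_tail_control} down to order $(\log A/\sqrt{A})^m$, yet slowly enough that the constraint $A \geq C_K(1 + \lambda^2 \log(2+\lambda))$ is preserved; the admissible window is roughly $\sqrt{A}/\log A \ll \lambda \ll \sqrt{A/\log A}$, and any $\lambda$ in it (e.g. $\lambda = \sqrt{A}/\log A$) works.
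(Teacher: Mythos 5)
Your proposal is correct and follows essentially the same route as the paper's proof: both set $q=\lambda=\sqrt{A}/\log A$ in the exponential-moment bound of Proposition~\ref{proposition:uniform_tail_control}, convert that into an $L^p$ bound via a Chernoff-type inequality ($x^p\ll_p e^x$ in the paper, the slightly sharper $x^m\le (m!/\lambda^m)(e^{\lambda x}-1)$ with $m=\lceil p\rceil$ in yours), and then dispose of the range $A<A_0$ by Minkowski together with the exponential moments of the point count from Lemma~\ref{lemma:preparatory}. The only differences are cosmetic packaging of the same estimates, and the final $L^p$ assertion is in both cases the special case $A=0$.
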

\begin{proof}
For $A \geq 2$, let us define $q = \sqrt{A}/ \log A$. For $A$ large enough, $2 + q \leq A$  and then 
$$C_K ( 1 + q^2 \log(2+q)) \leq C_K \left( 1 + \frac{A}{\log A} \right),$$
which is smaller than $A$ if $A$ is large enough depending on $K$. By,
Proposition \ref{proposition:uniform_tail_control}, we deduce that there exists $D_K  \geq 2$ such that for $A
\geq D_K$, 
$$\E\left[ e^{ q \left| \sum_{ |y_k| > A } \frac{1}{z-y_k}\right| } \right]
\leq 1 +  \frac{C_K q \log A}{\sqrt{A}} = 1 + C_K.$$
Now, we have $x^p \ll_p e^x$, and then for $A \geq D_K \geq 2$, 
$$ \mathbb{E} \left[  \left| q \sum_{ |y_k| > A } \frac{1}{z-y_k}\right|^p  \right] \ll_{p, K} 1,$$
i.e. 
$$ \left(\mathbb{E} \left[  \left|  \sum_{ |y_k| > A } \frac{1}{z-y_k}\right|^p  \right] \right)^{1/p} \ll_{p,K} 
1/q = \frac{\log A}{\sqrt{A}} \ll \frac{\log (2+A)}{\sqrt{1+A}}$$
In order to remove the condition $A \geq D_K$, it sufficies, by using the Minkowski inequality, to check that 
for $A < D_K$, 
$$ \E\left( \left| \sum_{ A < |y_k| \leq D_K } \frac{1}{z-y_k} \right|^{p}
\right)^{\frac{1}{p}}  \ll_{p,K}  1.$$
Now, each term $1/(z-y_k)$ is bounded by $1/(\inf_{z \in K} |\Im z|)$, and then 
$$ \E\left( \left| \sum_{ A < |y_k| \leq D_K } \frac{1}{z-y_k} \right|^{p}
\right)^{\frac{1}{p}} \ll_{K} ||X_{[-D_K,D_K]}||_{L_p}.$$
Now, this last bound is finite since $X_{[-D_K,D_K]}$ admits exponential moments by Lemma \ref{lemma:preparatory}, 
and since it depends only on $K$ and $p$, we get the desired bound. 

The fact that $\xi'_{\infty}/\xi_{\infty}$ is in $L^p$ is immediately obtained by taking $A = 0$ and by observing 
that the restriction $|y_k| > 0$ in the sum is irrelevant, since $0$ is a.s. not a point in $\{y_k, k \in \mathbb{Z}\}$. 
\end{proof}

The preceding result allows to compute 
the moments of $\xi'_{\infty}/\xi_{\infty}$ 
by first restricting the infinite sums to the $y_k$'s between $-A$ and $A$, and then 
by letting $A \rightarrow \infty$. 
More precisely, for all fixed $z_1, z_2, \dots, z_p \notin \R$, 
$$ \forall p \geq 1, \frac{\xi_\infty'}{\xi_\infty}(z_1) \dots \frac{\xi_\infty'}{\xi_\infty}(z_p) \in L^p $$
and
$$ \E\left( \frac{\xi_\infty'}{\xi_\infty}(z_1) \dots \frac{\xi_\infty'}{\xi_\infty}(z_p) \right)
 = \lim_{A \rightarrow \infty} \E\left( \prod_{j=1}^p\left( i \pi + \sum_{|y_k| < A } 
 \frac{1}{z_j - y_k} \right) \right).$$
The last quantity can be computed thanks to the sine kernel correlation functions of order less or equal than $p$, on the segment $[-A, A]$.
We will now perform the computation of the two first moments. 
\begin{rmk}
Before proceeding we should mention that since we have been able to prove the convergence
of the rescaled logarithmic derivative of the characteristic polynomial to $\frac{\xi_\infty'}{\xi_\infty}$, we should also be able to obtain an alternative expression for the moments using the formulas in \cite{bib:CoSn08} for the moments of ratios of the logarithmic derivative of the characteristic polynomial. Although the combinatorial expressions there provide closed formulas, we do not find them easier to handle than the method we have described above. As we shall see it below, the formulas for the second moments are already very involved.

\end{rmk}

\paragraph{First moment $M_1(z), z \notin \R$:} 
\begin{align*}
M_1(z) & := \E\left( \frac{ \xi_{\infty}'}{\xi_{\infty}}(z)  \right)\\
& = i \pi + \lim_{A \rightarrow \infty} \E\left( \sum_{ |y_k| \leq A } \frac{1}{z-y_k} \right)\\
& = i \pi + \lim_{A \rightarrow \infty} \int_{[-A, A]} dy \frac{\rho_1(y)}{z-y}\\
& = i \pi\left( 1 - \sgn\left( \Im(z) \right) \right) \\
& = i 2 \pi \mathds{1}_{\left\{  \Im(z) < 0\right\}}
\end{align*}

\paragraph{Second moment $M_2(z, z'); z,z' \notin \R$:} 
Let us first assume that $z$ and $z'$ have not the same real part, in particular $z_1 \neq z_2$. One  has: 
\begin{align*}
M_2(z, z') & := \E\left( \frac{ \xi_{\infty}'}{\xi_{\infty}}(z) \frac{ \xi_{\infty}'}{\xi_{\infty}}(z') \right)\\
& = -\pi^2 + \pi^2 \left( \sgn\left( \Im(z) \right) + \sgn\left( \Im(z') \right) \right) + \E\left( \sum_{ k,l } \frac{1}{z-y_k} \frac{1}{z'-y_l}\right)\\
& = -\pi^2 +\pi^2 \left( \sgn\left( \Im(z) \right) + \sgn\left( \Im(z') \right) \right) + \lim_{A \rightarrow \infty} \E\left( \sum_{ |y_k|, |y_l| \leq A }  \frac{1}{z-y_k} \frac{1}{z'-y_l}\right)
\end{align*}
Moreover:
\begin{align*}
\E\left( \sum_{ |y_k|, |y_l| \leq A }  \frac{1}{z-y_k} \frac{1}{z'-y_l}\right)
& = \int_{[-A, A]} \frac{dy}{\left( z-y \right) \left( z'-y \right)} + \int_{[-A, A]^2} \frac{dy_1 dy_2 \left( 1- S(y_1-y_2)^2 \right)}{\left( z-y_1\right) \left( z'-y_2 \right)},
\end{align*}
where
$$ S(x) = \frac{\sin\left( \pi x \right)}{\pi x}$$
The first integral corresponds to the indices $k=l$ while the second integral corresponds to $k \neq l$. The former is handled by a partial fraction
decomposition (recall that $z \neq z'$):
$$ \lim_{A \rightarrow \infty}\int_{[-A, A]} \frac{dy}{\left( z-y \right) \left( z'-y \right)} = i\pi \frac{\sgn\left( \Im(z) \right) - \sgn\left( \Im(z') \right) }{z-z'}$$

The second integral can be written as $I_1 - I_2$, where 
$$ I_1 =  \int_{[-A, A]^2} \frac{dy_1 dy_2}{\left( z-y_1\right) \left( z'-y_2 \right)},$$
and 
$$ I_2 =  \int_{[-A, A]^2} \frac{S(y_1-y_2)^2}{\left( z-y_1\right) \left( z'-y_2 \right)} dy_1 dy_2.$$
One has immediately 
$$ \lim_{A \rightarrow \infty}  I_1 = \lim_{A \rightarrow \infty} \left( \int_{[-A, A]}  \frac{dy}{z-y} 
\right)  \left( \int_{[-A, A]} \frac{dy}{z'-y} 
\right) = - \pi^2 \sgn\left( \Im(z) \right) \sgn\left( \Im(z') \right).$$
For fixed $z$ and $z'$, the integral $I_2$ is dominated by
  \begin{align*}
  & \int_{\R^2} \frac{1}{(1 + |y_1|) (1 + |y_2|) [1+ (y_1 - y_2)^2]} dy_1 dy_2
  \\  & \leq \frac{1}{2}   \int_{\R^2} \frac{1}{1+ (y_1 - y_2)^2} \left(\frac{1}{(1 + |y_1|)^2} + \frac{1}{(1 + |y_2|)^2} \right)
    dy_1 dy_2 \\ & =\int_{\R} \frac{dy}{1+y^2} \int_{\R} \frac{du}{(1+ |u|)^2}  < \infty.
    \end{align*}
    Hence, 
    $$ \lim_{A \rightarrow \infty}  I_2 =  \int_{\R^2} \frac{S(y_1-y_2)^2}{\left( z-y_1\right) \left( z'-y_2 \right)} dy_1 dy_2,$$
    where the last integral is absolutely convergent. 
The change of variable $u = y_2$, $v = y_1 - y_2$ gives
$$  \lim_{A \rightarrow \infty}  I_2 = 
 \int_{\R} dv S(v)^2  \int_{\R} \frac{du} {(z - u - v)(z'-u)}.$$
The integral in $u$ can again be computed by a partial fraction
decomposition, and one gets
$$\int_{\R} \frac{du} {(z - u - v)(z'-u)} = i\pi \frac{\sgn\left( \Im(z) \right) - \sgn\left( \Im(z') \right) }{z-z'-v}.$$
Note that since $z$ and $z'$ are assumed to have different imaginary parts, the denominator does not vanish. 
One then has
 $$ \lim_{A \rightarrow \infty}  I_2  = i\pi \left[ \sgn\left( \Im(z) \right) - \sgn\left( \Im(z') \right) \right]
 \int_{\R} \frac{ S(v)^2 }{ z-z'-v} dv,
 $$
 where 
\begin{align*}  \int_{\R} \frac{ S(v)^2 }{ z-z'-v} dv
& = \frac{1}{4 \pi^2} \int_{ \R}   \frac{2 - e^{2i \pi v} -  e^{-2i \pi v} }{v^2 (z-z'-v)} dv.
 \\ & = \frac{1}{4 \pi^2} \int_{ \R}   \frac{1 - e^{2i \pi v} + 2i \pi v }{v^2 (z-z'-v)} dv + 
 \frac{1}{4 \pi^2} \int_{ \R}   \frac{1 - e^{- 2i \pi v} - 2 i \pi v}{v^2 (z-z'-v)} dv,
 \end{align*}
In the two last integrals, the integrands are bounded near zero and dominated by $1/v^2$ at infinity, and then the integrals are absolutely convergent. Moreover, the integrands can be extended to meromorphic functions of $v$, with the unique pole $v = z - z'$. Note that because of the addition of the terms $\pm 2i \pi v$, there is no pole at $ v =0$. 
In the first integral, if we replace $\R$ by the contour given by the union of $(-\infty, -R]$, $[-R, -R+iR]$, $[-R + iR, R + iR]$, $[R + iR, R]$ and $(R, \infty)$, the modified integral tends to zero when $R$ goes to infinity.
One deduces that the initial integral is equal to $ 2 i \pi$ times the sum of the residues of the integrand at the poles in the upper half plane: 
 $$\frac{1}{4 \pi^2} \int_{ \R}   \frac{1 - e^{2i \pi v} + 2i \pi v }{v^2 (z-z'-v)} dv 
 =  \frac{1 - e^{2i \pi (z-z')} + 2i \pi (z-z') }{ 2 i \pi (z-z')^2} \mathds{1}_{\Im (z-z') > 0}$$
Changing $v$ in $-v$ and exchanging $z$ and $z'$, we deduce 
$$  \frac{1}{4 \pi^2} \int_{ \R}   \frac{1 - e^{- 2i \pi v} - 2 i \pi v}{v^2 (z-z'-v)} dv
=- \frac{1 - e^{-2i \pi (z-z')} - 2i \pi (z-z') }{ 2 i \pi (z-z')^2} \mathds{1}_{\Im (z-z') < 0},$$
and  by adding the equalities: 
$$  \int_{\R} \frac{ S(v)^2 }{ z-z'-v} dv = 
\frac{\sgn \left(  \Im(z- z') \right) \left( 1 - e^{2 i \pi (z-z') \sgn  \left(  \Im(z- z') \right)} \right)}{2 i \pi (z-z')^2} 
+ \frac{1}{z-z'}.$$
By noting that 
$$ i \pi \left[ \sgn\left( \Im(z) \right) - \sgn\left( \Im(z') \right) \right]  \, \sgn \left(  \Im(z- z') \right) 
= 2 i \pi  \mathds{1}_{\Im(z) \Im (z') < 0},$$
we deduce 
$$ \lim_{A \rightarrow \infty}  I_2  = \frac{1 - e^{2 i \pi (z-z') \sgn (\Im(z-z')) }}{(z-z')^2} \mathds{1}_{\Im(z) \Im (z') < 0}
+ i \pi \frac{\sgn\left( \Im(z) \right) - \sgn\left( \Im(z') \right) }{z-z'}.
$$
Hence, 
\begin{align*}  \lim_{A \rightarrow \infty}  (I_1 - I_2) 
& =  - \pi^2 \sgn\left( \Im(z) \right) \sgn\left( \Im(z') \right) 
-  \frac{1 - e^{2 i \pi (z-z') \sgn (\Im(z-z')) }}{(z-z')^2} \mathds{1}_{\Im(z) \Im (z') < 0}
\\ & - i \pi \frac{\sgn\left( \Im(z) \right) - \sgn\left( \Im(z') \right) }{z-z'},
\end{align*}
 and 
 \begin{align*}
\lim_{A \rightarrow \infty} \E\left( \sum_{ |y_k|, |y_l| \leq A }  \frac{1}{z-y_k} \frac{1}{z'-y_l}\right)
&  =  - \pi^2 \sgn\left( \Im(z) \right) \sgn\left( \Im(z') \right) 
\\ & -  \frac{1 - e^{2 i \pi (z-z') \sgn (\Im(z-z')) }}{(z-z')^2} \mathds{1}_{\Im(z) \Im (z') < 0}.
\end{align*} 
Hence 
$$ M_2(z, z') = - 4 \pi^2 \mathds{1}_{\Im(z)< 0 , \Im (z') < 0} -  \frac{1 - e^{2 i \pi (z-z') \sgn (\Im(z-z')) }}{(z-z')^2} \mathds{1}_{\Im(z) \Im (z') < 0}.$$

This formula has been proven for $\Im (z) \neq \Im (z')$. It remains true without this assumption. Indeed, the $L^2$ convergence of  $i \pi +  \sum_{ |y_k| \leq A } \frac{1}{z-y_k} $ towards $\xi'(z)/\xi(z)$ for $A \rightarrow \infty$ has been proven uniformly in compact sets away from the real line. Since the joint moments of the former quantity are easily proven to be continuous, one deduces that $M_2$ is continuous with respect to $z, z' \notin \R$. 
\paragraph{Second moment with a conjugate $\tilde{M}_2(z, z'); z,z' \notin \R$:} 
Let us now define
$$ \tilde{M}_2(z, z') := 
 \E\left( \frac{ \xi_{\infty}'}{\xi_{\infty}}(z) \overline{\frac{ \xi_{\infty}'}{\xi_{\infty}}(z') }\right)$$
 Since 
 $$ \overline{\frac{ \xi_{\infty}'}{\xi_{\infty}}(z') } = - 2 i \pi + \frac{ \xi_{\infty}'}{\xi_{\infty}}(\overline{z'}) ,$$
 one gets 
 $$  \tilde{M}_2(z, z')  = M_2(z, \overline{z'} ) - 2 i \pi M_1(z),$$
 and then 
 $$  \tilde{M}_2(z, z') = 
 4 \pi^2 \mathds{1}_{\Im(z)< 0 , \Im (z') < 0} -  \frac{1 - e^{2 i \pi (z-\overline{z'}) \sgn (\Im(z-\overline{z'})) }}{(z-\overline{z'}
 )^2} \mathds{1}_{\Im(z) \Im (z') >  0}.$$
In particular, we get the $L^2$ norm: 
$$ \E\left( \left|\frac{ \xi_{\infty}'}{\xi_{\infty}}(z)\right|^2 \right) 
=4 \pi^2 \mathds{1}_{\Im(z)< 0}+ \frac{1 - e^{-4 \pi |\Im (z)|}}{4\Im^2(z)}. $$

As a consequence of the previous computation, if our conjecture is true and moments are also controlled then:
\begin{conjecture}
\begin{align*}
  & \lim_{T \rightarrow \infty } \frac{1}{\log^2 T}
    \E\left( \frac{\zeta'}{\zeta}\left( \half + i \omega T + \frac{a }{\log T} \right)
             \frac{\zeta'}{\zeta}\left( \half + i \omega T + \frac{a'}{\log T} \right) \right) \\
= & \mathds{1}_{\Re(a)<0, \Re(a')<0} - \frac{1 - e^{-\left( a'-a \right) \sgn \Re(a'-a)} }{\left( a - a' \right)^2} \mathds{1}_{\Re(a)\Re(a')<0} \\
  & \lim_{T \rightarrow \infty } \frac{1}{\log^2 T}
    \E\left( \frac{\zeta'}{\zeta}\left( \half + i \omega T + \frac{a }{\log T} \right)
  \overline{ \frac{\zeta'}{\zeta}\left( \half + i \omega T + \frac{a'}{\log T} \right) } \right) \\
= & \mathds{1}_{\Re(a)<0, \Re(a')<0} + \frac{1 - e^{-\left( a+\overline{a'} \right) \sgn \Re( a+\overline{a'} )} }{\left( a + \overline{a'} \right)^2} \mathds{1}_{\Re(a)\Re(a')>0}
\end{align*}
\end{conjecture}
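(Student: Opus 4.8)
Since the displayed statement is a conjecture, what can actually be carried out is the \emph{implication}: granting Conjecture~\ref{conjecture} together with a uniform–integrability hypothesis for the relevant second moments along the randomized critical line, the two limits follow. The plan is to reduce everything to the closed forms of $M_2$ and $\tilde M_2$ computed above by a linear change of variables. First I would match the two parametrizations: in Conjecture~\ref{conjecture} the argument of $\zeta'/\zeta$ is $\half+iT\omega-\frac{i2\pi z}{\log T}$, whereas here it is $\half+iT\omega+\frac{a}{\log T}$, so one sets $a=-i2\pi z$, i.e.\ $z=\frac{ia}{2\pi}$, under which $\sgn(\Im z)=\sgn(\Re a)$. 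Granting the conjectured convergence in law, $\frac{-i2\pi}{\log T}\frac{\zeta'}{\zeta}\left(\half+iT\omega+\frac{a}{\log T}\right)\to\frac{\xi_\infty'}{\xi_\infty}(z)$, so $\frac{\zeta'}{\zeta}$ at that point behaves like $\frac{i\log T}{2\pi}\frac{\xi_\infty'}{\xi_\infty}(z)$.

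Next I would take products and pass to expectations. Multiplying the approximations at $a$ and $a'$, with $z=\frac{ia}{2\pi}$ and $z'=\frac{ia'}{2\pi}$, and dividing by $\log^2 T$, the $\log^2 T$ cancels against $\left(\frac{i}{2\pi}\right)^2=-\frac{1}{4\pi^2}$ in the unconjugated case and against $\frac{i}{2\pi}\cdot\overline{\frac{i}{2\pi}}=\frac{1}{4\pi^2}$ in the conjugated case; hence $\frac{1}{\log^2 T}\frac{\zeta'}{\zeta}\left(\half+iT\omega+\frac{a}{\log T}\right)\frac{\zeta'}{\zeta}\left(\half+iT\omega+\frac{a'}{\log T}\right)$ converges in law to $-\frac{1}{4\pi^2}\frac{\xi_\infty'}{\xi_\infty}(z)\frac{\xi_\infty'}{\xi_\infty}(z')$, and the conjugated product to $\frac{1}{4\pi^2}\frac{\xi_\infty'}{\xi_\infty}(z)\overline{\frac{\xi_\infty'}{\xi_\infty}(z')}$. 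To convert this into a statement about $\E$ one invokes uniform integrability of the left-hand families — precisely the moment-control hypothesis included in the statement; on the random-matrix side it is exactly what Propositions~\ref{proposition:uniform_tail_control} and~\ref{proposition:convergenceLp} furnish. Granting it, $\E$ of the left side converges to $-\frac{1}{4\pi^2}M_2(z,z')$ and to $\frac{1}{4\pi^2}\tilde M_2(z,z')$ respectively.

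It then remains to substitute the explicit formulas. From $z-z'=\frac{i(a-a')}{2\pi}$ one gets $(z-z')^2=-\frac{(a-a')^2}{4\pi^2}$, $2i\pi(z-z')=a'-a$, $\sgn(\Im(z-z'))=\sgn(\Re(a-a'))$, hence $e^{2i\pi(z-z')\sgn(\Im(z-z'))}=e^{-(a'-a)\sgn(\Re(a'-a))}$, while $\{\Im z<0,\Im z'<0\}=\{\Re a<0,\Re a'<0\}$ and $\{\Im z\,\Im z'<0\}=\{\Re a\,\Re a'<0\}$; inserting these into $M_2$ and multiplying by $-\frac{1}{4\pi^2}$ gives exactly the first right-hand side of the statement. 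For the conjugated moment one replaces $z'$ by $\overline{z'}$, so that $z-\overline{z'}=\frac{i(a+\overline{a'})}{2\pi}$, $(z-\overline{z'})^2=-\frac{(a+\overline{a'})^2}{4\pi^2}$, $2i\pi(z-\overline{z'})=-(a+\overline{a'})$, $\sgn(\Im(z-\overline{z'}))=\sgn(\Re(a+\overline{a'}))$ and $\{\Im z\,\Im z'>0\}=\{\Re a\,\Re a'>0\}$, and the formula for $\tilde M_2$ produces the second right-hand side. This algebra is routine and is essentially all there is to write once the two analytic inputs are in hand.

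The genuine obstacle is that both analytic inputs are open: Conjecture~\ref{conjecture} is a strong form of the ratios/GUE conjecture for $\zeta$ (Lemma~\ref{lemma33} supplies the heuristic for it), and even granting its convergence in law one would still need the uniform integrability of $\bigl(\tfrac{\log T}{2\pi}\tfrac{\zeta'}{\zeta}\bigr)^2$ over the randomization $\omega$ — a number-theoretic analogue of Proposition~\ref{proposition:uniform_tail_control} which is currently inaccessible. The corresponding implication on the random-matrix side is, by contrast, unconditional: Theorem~\ref{thm:main} and Corollary~\ref{laderivee} give the almost sure convergence of $\frac{2i\pi}{n}\frac{Z_n'}{Z_n}\bigl(e^{2i\pi z/n}\bigr)$ to $\xi_\infty'/\xi_\infty(z)$, and the moment bounds of Section~\ref{subsection:log_der} provide the needed uniform integrability, so the CUE versions of the two limits hold rigorously; only the transfer to $\zeta$ stays conjectural.
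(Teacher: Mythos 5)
Your derivation is correct and follows the same route the paper intends: it plugs the change of variables $a=-2i\pi z$ (the exact correspondence announced in the paper's remark following the conjecture) into the explicit formulas for $M_2$ and $\tilde M_2$, and correctly identifies that the result is conditional on Conjecture \ref{conjecture} plus a uniform integrability hypothesis. The paper leaves this algebra implicit; you have simply written it out, and all the sign and factor bookkeeping ($(z-z')^2=-(a-a')^2/4\pi^2$, $\sgn\Im(z-z')=\sgn\Re(a-a')$, etc.) checks out.
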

\begin{rmk}
In Lemma \ref{lemma33}, we see that there is 
a correspondance between $a$ and $-2 i \pi z$
in this conjecture and the computations 
just above. 
This explains the signs of the terms involved in the conjecture, and the fact  the imaginary
parts of $z$ and $z'$ are replaced by the 
real parts of $a$ and 
$a'$.  
\end{rmk}
For $a=a'$, one recovers the first statement of theorem 3 in \cite{bib:GGM01}, which is equivalent to the pair correlation conjecture under Riemann hypothesis. Higher
moments formulas are also expected to be equivalent to the convergence of higher correlation functions of $\zeta$ zeros towards the corresponding correlations for the sine-kernel process.

\section{The moments of ratios related to \texorpdfstring{$\xi_\infty$}{our limiting function}}\label{lenotre}

\subsection{Expectation of ratios}
For $z \in \C$, the random variable $\xi_{\infty}(z)$ has no moment of order $1$. However, if we consider
the ratio of products of values of $\xi_{\infty}$ at points outside the real axis, and if there are the same 
number of factors in the numerator and in the denominator, then the ratio is integrable. 
This result is a consequence of the following theorem: 

\begin{thm}
\label{thm:multi_sup_is_finite}
For any $p > 0$ and any compact set $K \subset \C \backslash \R$, we have:
$$ \sup_{n \in \N \sqcup \{ \infty \}} \E\left( \sup_{ \left(z,z'\right) \in K^2}
\left| \frac{\xi_n(z')}{\xi_n(z)} \right|^p \right) < \infty$$
\end{thm}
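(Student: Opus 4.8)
Here is the plan I would follow.

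\medskip

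\noindent\emph{Step 1: reduce to a uniform bound on the logarithmic derivative.}
First I would enlarge $K$ suitably. Write $K_{\pm} := K \cap \{\pm \Im > 0\}$, set $\delta := \mathrm{dist}(K,\R) > 0$, fix the two reference points $w_+ := i\delta$ and $w_- := -i\delta$, and put
$$ \widehat{K} := \mathrm{conv}\big(K_+ \cup \{w_+\}\big) \;\cup\; \mathrm{conv}\big(K_- \cup \{w_-\}\big). $$
Since the half-planes $\{\pm\Im \geq \delta\}$ are convex, $\widehat K$ is a compact subset of $\C\setminus\R$ with $\mathrm{dist}(\widehat K,\R) = \delta$, and any two of the points $z, z', w_\pm$ lying in the same half-plane can be joined inside one of the two convex pieces by a segment of length at most $L_K := \mathrm{diam}(\widehat K)$. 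All zeros of $\xi_n$ (and of $\xi_\infty$, by Theorem~\ref{thm:main}) are real, so $\xi_n$ is zero-free on $\widehat K$; integrating $(\log\xi_n)' = \xi_n'/\xi_n$ along such a segment (exactly as in the proof of the $\log Z_n$ corollary) gives $\big|\log|\xi_n(a)| - \log|\xi_n(b)|\big| \leq L_K \sup_{\widehat K}|\xi_n'/\xi_n|$. To cross from one half-plane to the other — where a path integral is unavailable because $\xi_n'/\xi_n$ has poles on $\R$ — I would use the functional equation \eqref{eq:functional_eq_xi_n} (which also holds for $\xi_\infty$, by the product formula of Theorem~\ref{thm:main}): it yields the \emph{deterministic} identity $|\xi_n(w_-)| = e^{2\pi\delta}\,|\xi_n(w_+)|$. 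Chaining these three estimates (worst case: $z\in K_+$, $z'\in K_-$, routed $z \to w_+ \to w_- \to z'$) gives, uniformly in $z,z'\in K$ and in $n \in \N\sqcup\{\infty\}$,
$$ \left|\frac{\xi_n(z')}{\xi_n(z)}\right| \;\leq\; e^{2\pi\delta}\,\exp\!\left(2 L_K \sup_{s\in\widehat K}\left|\frac{\xi_n'(s)}{\xi_n(s)}\right|\right). $$

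\medskip

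\noindent\emph{Step 2: uniform exponential moments of $\sup_{\widehat K}|\xi_n'/\xi_n|$.}
It then suffices to show that for every $q>0$ and every compact $\widehat K \subset \C\setminus\R$ one has $\sup_{n\in\N\sqcup\{\infty\}} \E\big[\exp(q\sup_{\widehat K}|\xi_n'/\xi_n|)\big] < \infty$, and then apply this with $q = 2pL_K$ (and multiply by $e^{2\pi p\delta}$). For this I would fix $A$ large (to be chosen depending on $q$ and $\widehat K$) and use the decomposition set up before Proposition~\ref{proposition:uniform_tail_control},
$$ \frac{\xi_n'(s)}{\xi_n(s)} = i\pi + H_A^{(n)}(s) + T_A^{(n)}(s), \qquad H_A^{(n)}(s) := \sum_{|y_k^{(n)}|\leq A}\frac{1}{s-y_k^{(n)}}, \quad T_A^{(n)}(s) := \sum_{|y_k^{(n)}|>A}\frac{1}{s-y_k^{(n)}}. $$
Each term of $H_A^{(n)}$ has modulus $\leq 1/\delta$ on $\widehat K$ and there are exactly $X_{[-A,A]}^{(n)}$ of them, so $\sup_{\widehat K}|H_A^{(n)}| \leq X_{[-A,A]}^{(n)}/\delta$; by Lemma~\ref{lemma:preparatory} ($X_{[-A,A]}^{(n)}$ is a sum of independent Bernoulli variables when $n>2A$, and is $\leq n \leq 2A$ otherwise) we get $\sup_n\E[e^{t X_{[-A,A]}^{(n)}}] \leq e^{2A(e^t-1)} < \infty$ for all $t\geq 0$. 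The tail $T_A^{(n)}$ is precisely the object of Proposition~\ref{proposition:uniform_tail_control}: for $A$ large enough depending on $q$ and $\widehat K$, $\sup_n\E[\sup_{\widehat K}e^{2q|T_A^{(n)}|}] \leq 2$. A Cauchy--Schwarz split between the head and the tail then gives
$$ \E\!\left[e^{\,q\sup_{\widehat K}|\xi_n'/\xi_n|}\right] \;\leq\; e^{q\pi}\,\E\!\left[e^{\,2q X_{[-A,A]}^{(n)}/\delta}\right]^{1/2}\E\!\left[\sup_{\widehat K}e^{\,2q|T_A^{(n)}|}\right]^{1/2} \;\leq\; \sqrt{2}\;e^{q\pi}\,e^{A(e^{2q/\delta}-1)}, $$
a bound independent of $n$. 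Combined with Step~1 this proves the theorem.

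\medskip

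\noindent\emph{Where the difficulty lies.} The quantitative heart is Proposition~\ref{proposition:uniform_tail_control} (the uniform-in-$n$ exponential control of the tail of $\xi_n'/\xi_n$), but that is already established in the excerpt; given it, everything above is bookkeeping. The one genuinely new point is the passage between the upper and lower half-planes in Step~1: since $\xi_n'/\xi_n$ has poles on $\R$, one cannot simply integrate across, and the functional equation is what converts this obstruction into the harmless deterministic factor $e^{2\pi\delta}$.
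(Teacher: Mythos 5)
Your proposal is correct and follows essentially the same route as the paper: reduce to a uniform exponential moment of $\sup_K|\xi_n'/\xi_n|$ by integrating the logarithmic derivative along segments that avoid $\R$ (using the functional equation to handle the passage between half-planes, which the paper does by reflection and you do by an explicit bridge at $w_\pm$), then split $\xi_n'/\xi_n$ into head and tail, treat the tail with Proposition~\ref{proposition:uniform_tail_control}, treat the head via $X_{[-A,A]}$ and Lemma~\ref{lemma:preparatory}, and combine with Cauchy--Schwarz.
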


\begin{proof}
Let $\left( z,z' \right) \in K^2$. Without loss of generality, one can enlarge the compact set $K$ to 
a compact that is symmetric with respect to the real line, and whose part above the real line is convex. 
Using the functional equation \eqref{eq:functional_eq_xi_n} if
necessary, we can then assume that $z$ and $z'$ are both in the upper half-plane. 

 Since this part of $K$ is supposed to be convex, 
$[z, z'] \subset K$. Therefore, the segment $[z, z']$ does not cut the real line, where zeros lie. Hence for $n
\in \N \sqcup \{ \infty \}$:
$$ \left| \frac{\xi_n(z')}{\xi_n(z)} \right|^p \leq \left|
\exp\left( p \Re\left( \int_{[z,z']} \frac{\xi_n'}{\xi_n} \right) \right) \right| \leq e^{ p|z-z'| \sup_{u \in K} \left| \frac{\xi_n'}{\xi_n}(u) \right| }$$
By absorbing the quantity $|z-z'| = \Oc_K\left( 1 \right)$ in the exponent $p$, we only have to 
prove that for all $p>0$:
$$             \sup_{n \in \N \sqcup \{ \infty \}}
\E\left( \sup_{z \in K} e^{p \left| \sum_{ k \in \Z } \frac{1}{z-y^{(n)}_k} \right|} \right)
 = \sup_{n \in \N \sqcup \{ \infty \}} \E\left( \sup_{z \in K} e^{p \left| \frac{\xi_n'}{\xi_n}(z) 
 \right|} \right) < \infty.$$
 By Proposition \ref{proposition:uniform_tail_control}, we know that for $A :=  C_K (1 + 4p^2 \log (2 + 2p))$, 
 $$ \sup_{n \in \N \sqcup \{ \infty \}}
\E\left( \sup_{z \in K} e^{2p \left| \sum_{ |y^{(n)}_k| > A} \frac{1}{z-y^{(n)}_k} \right|} \right) < \infty.$$
By the Cauchy-Schwarz inequality, it is then sufficient to check that 
$$ \sup_{n \in \N \sqcup \{ \infty \}}
\E\left( \sup_{z \in K} e^{2p \left| \sum_{ |y^{(n)}_k| \leq A  } \frac{1}{z-y^{(n)}_k} \right|} \right) < \infty.$$
Now, 
$$  \left| \sum_{ |y^{(n)}_k| \leq A } \frac{1}{z-y^{(n)}_k}\right|
 \leq X_{\left[-A, A\right]} \sup_{z \in K, t \in \R} \frac{1}{\left| z-t \right|}
 \leq \frac{1}{\inf_{z \in K} |\Im z|} X_{\left[-A, A\right]}$$
and all the exponential moments of this last variable are finite, thanks to Lemma \ref{lemma:preparatory}.
\end{proof}
From this bound, we are able to deduce the following  convergence result: 
\begin{proposition} \label{proposition:convergencemomentsratios}
For $z_1, \dots, z_k, z_1', \dots, z_k' \in \C \backslash \R$, and for all $n \in \mathbb{N} \sqcup \{\infty\}$,
$$\E\left( \prod_{j=1}^k \left |\frac{\xi_n(z_j')}{\xi_n(z_j)}\right |\right) < \infty$$
Moreover, for every compact set $K$ in $\C \backslash \R$,
we have the following convergence, uniformly in $z_1, z_2, \dots, z_k, z'_1, \dots, z'_k \in K$: 
$$  \E\left( \prod_{j=1}^k \frac{\xi_n     (z_j')}{\xi_n     (z_j)}\right)
\underset{n \rightarrow \infty}{\longrightarrow} 
\E\left( \prod_{j=1}^k \frac{\xi_\infty(z_j')}{\xi_\infty(z_j)}\right).$$                                                           
\end{proposition}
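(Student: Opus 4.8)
The plan is to derive both assertions from the uniform bound of Theorem~\ref{thm:multi_sup_is_finite}, combined with the almost sure convergence $\xi_n \to \xi_\infty$ of Theorem~\ref{thm:main}, via a uniform integrability argument; the local uniformity in the parameters will then follow from normality of holomorphic families. First, fix a compact set $K \subset \C \backslash \R$ containing all of $z_1, \dots, z_k, z_1', \dots, z_k'$, and set $M_n := \sup_{(w,w') \in K^2} \left| \xi_n(w')/\xi_n(w) \right|$ for $n \in \N \sqcup \{\infty\}$. Then for every $q \geq 1$,
$$ \prod_{j=1}^k \left| \frac{\xi_n(z_j')}{\xi_n(z_j)} \right|^q \leq M_n^{qk}, $$
and $\E(M_n^{qk})$ is finite, bounded uniformly in $n$, by Theorem~\ref{thm:multi_sup_is_finite} with $p = qk$. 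The case $q=1$ gives the first assertion; the case $q=2$ shows that the random variables $\prod_{j=1}^k \xi_n(z_j')/\xi_n(z_j)$ are bounded in $L^2$ uniformly in $n \in \N \sqcup \{\infty\}$, hence uniformly integrable.

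Next I would establish pointwise convergence of the expectations. By Theorem~\ref{thm:main}, almost surely $\xi_n \to \xi_\infty$ uniformly on compact subsets of $\C$; since the zeros of each $\xi_n$ lie on $\R$ and $\xi_\infty$ does not vanish on $\C \backslash \R$, one gets a.s. $\xi_n(z_j) \to \xi_\infty(z_j) \neq 0$ and $\xi_n(z_j') \to \xi_\infty(z_j')$ for all $j$, whence $\prod_{j=1}^k \xi_n(z_j')/\xi_n(z_j) \to \prod_{j=1}^k \xi_\infty(z_j')/\xi_\infty(z_j)$ almost surely. Together with the uniform integrability above, this yields $L^1$ convergence, so that
$$ \E\left( \prod_{j=1}^k \frac{\xi_n(z_j')}{\xi_n(z_j)}\right) \underset{n \to \infty}{\longrightarrow} \E\left( \prod_{j=1}^k \frac{\xi_\infty(z_j')}{\xi_\infty(z_j)}\right) $$
for each fixed choice of the points in $\C \backslash \R$.

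It then remains to upgrade this to uniformity over $z_1, \dots, z_k, z_1', \dots, z_k' \in K$. Write $\vec w = (z_1, \dots, z_k, z_1', \dots, z_k')$ and $F_n(\vec w) := \E\left( \prod_{j=1}^k \xi_n(z_j')/\xi_n(z_j)\right)$ for $n \in \N \sqcup \{\infty\}$. For fixed $n \in \N$, the integrand is, in each variable separately, holomorphic on $\C$ (for the $z_j'$) or on $\C \backslash \R$ (for the $z_j$), and is dominated on compact subsets of $(\C \backslash \R)^{2k}$ by an integrable random variable (an appropriate power of $M'_n$ for a slightly enlarged compact $K'$). Fubini's theorem together with Morera's theorem then shows that $F_n$ is separately holomorphic, hence, being locally bounded, jointly holomorphic on $(\C \backslash \R)^{2k}$ (Hartogs' theorem); the same holds for $F_\infty$. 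By the first step, $(F_n)_{n \in \N \sqcup \{\infty\}}$ is uniformly bounded on a neighbourhood of $K^{2k}$ inside $(\C \backslash \R)^{2k}$, so it is a normal family, and since $F_n \to F_\infty$ pointwise, Vitali's theorem (in the several-variable form, or applied iteratively in each variable) forces $F_n \to F_\infty$ locally uniformly, in particular uniformly on $K^{2k}$, which is the assertion.

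The step requiring the most care is the last one: one must check that the expectations $F_n$ are genuinely holomorphic in the parameters — not merely continuous — so that normality can be invoked, and this rests precisely on the uniform $L^1$ control provided by Theorem~\ref{thm:multi_sup_is_finite}, which legitimizes exchanging expectation with the contour integrals in Morera's criterion. Everything else is a routine uniform-integrability argument built on top of the almost sure convergence from Theorem~\ref{thm:main}.
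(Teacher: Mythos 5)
Your proposal is correct, but the way you obtain the uniformity differs from the paper's argument. For the finiteness and the pointwise convergence you do essentially what the paper does (the paper invokes Theorem \ref{thm:multi_sup_is_finite} plus H\"older for finiteness, and you dominate the product by $M_n^{k}$, which is the same mechanism; your $L^2$-boundedness/uniform-integrability step combined with the a.s. convergence $\xi_n \to \xi_\infty$ and the non-vanishing of $\xi_\infty$ off the real axis is sound). Where you diverge is the uniformity over $K$: the paper proves the stronger statement that the supremum can be taken \emph{inside} the expectation, i.e.\ $\E\bigl[\sup_{z_1,\dots,z'_k \in K} \bigl|\prod_j \xi_n(z'_j)/\xi_n(z_j) - \prod_j \xi_\infty(z'_j)/\xi_\infty(z_j)\bigr|\bigr] \to 0$, via a telescoping decomposition of the difference of products, repeated Cauchy--Schwarz against the uniform moment bounds of Theorem \ref{thm:multi_sup_is_finite}, and a reduction to $\E\bigl[\sup_{z,z' \in K} |\xi_n(z')/\xi_n(z) - \xi_\infty(z')/\xi_\infty(z)|^2\bigr] \to 0$, handled by the a.s.\ uniform convergence plus a truncation argument. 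You instead get only the stated convergence of the expectations, but more softly: you show each $F_n$ is (jointly) holomorphic in the $2k$ parameters on $(\C\setminus\R)^{2k}$ by exchanging expectation and contour integrals (legitimate thanks to the same uniform moment bound), note the family is locally uniformly bounded, and conclude by Montel/Vitali from the pointwise convergence. Your route buys a cleaner argument with less bookkeeping and extends immediately to locally uniform convergence on all of $(\C\setminus\R)^{2k}$; the paper's route buys the quantitatively stronger $L^1$-of-the-supremum statement, which is of independent use. Both rest on the same two pillars, Theorem \ref{thm:multi_sup_is_finite} and the a.s.\ locally uniform convergence of Theorem \ref{thm:main}, so your proof is acceptable as written.
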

\begin{proof}
The finiteness of the expectation is a direct consequence of Theorem \ref{thm:multi_sup_is_finite} and 
the H\"older inequality. 
The convergence we want to prove can be written as follows: 
$$ 
\sup_{z_1, z_2, \dots, z_k, z'_1, \dots z'_k \in K} \left| \E\left[  \prod_{j=1}^k \frac{\xi_n   
(z_j')}{\xi_n     (z_j)} - 
  \prod_{j=1}^k \frac{\xi_{\infty}   (z_j')}{\xi_{\infty}    (z_j)} \right] \right| 
 \underset{n \rightarrow \infty}{\longrightarrow} 0,$$
 which is implied by 
 $$ \E\left[ \sup_{z_1, z_2, \dots, z_k, z'_1, \dots z'_k \in K} 
 \left|  \prod_{j=1}^k \frac{\xi_n     (z_j')}{\xi_n     (z_j)} - 
  \prod_{j=1}^k \frac{\xi_{\infty}   (z_j')}{\xi_{\infty}    (z_j)} \right| \right] 
  \underset{n \rightarrow \infty}{\longrightarrow} 0.$$
  Now, we have 
  \begin{align*} 
\left|  \prod_{j=1}^k \frac{\xi_n     (z_j')}{\xi_n     (z_j)} - 
  \prod_{j=1}^k \frac{\xi_{\infty}   (z_j')}{\xi_{\infty}    (z_j)} \right|
  & \leq \sum_{m=1}^k \prod_{1 \leq j < m} \left|\frac{\xi_n     (z_j')}{\xi_n     (z_j)}
  \right| \prod_{m < j \leq  k} \left|\frac{\xi_{\infty}    (z_j')}{\xi_{\infty}     (z_j)} \right| 
  \, \left| \frac{\xi_n     (z_m')}{\xi_n     (z_m)} - \frac{\xi_{\infty}     (z_m')}{\xi_{\infty}    (z_m)}
  \right|
  \end{align*} 
  It is then sufficient to show, for $1 \leq m \leq k$, 
  $$ \E \left[ \left( \sup_{z, z' \in K} \left|\frac{\xi_n     (z')}{\xi_n     (z)} \right|
  \right)^{m-1}  \left( \sup_{z, z' \in K} \left|\frac{\xi_{\infty}     (z')}{\xi_{\infty}     (z)} \right|
  \right)^{k-m} \sup_{z, z' \in K} 
  \left| \frac{\xi_n     (z')}{\xi_n     (z)} - \frac{\xi_{\infty}     (z')}{\xi_{\infty}    (z)}
  \right| \right]  \underset{n \rightarrow \infty}{\longrightarrow} 0,$$
  which is implied (after two applications of the Cauchy-Schwarz inequality) by 
  $$\sup_{r \in \mathbb{N} \sqcup \infty} 
  \E \left[ \left( \sup_{z, z' \in K} \left|\frac{\xi_r     (z')}{\xi_r    (z)} \right|
  \right)^{4m-4} \right]^{1/4}  
  \sup_{r \in \mathbb{N} \sqcup \infty} 
  \E \left[ \left( \sup_{z, z' \in K} \left|\frac{\xi_r     (z')}{\xi_r    (z)} \right|
  \right)^{4k-4m} \right]^{1/4} $$ $$ \times
  \E \left[ \sup_{z, z' \in K}  \left| \frac{\xi_n     (z')}{\xi_n     (z)} - \frac{\xi_{\infty}     (z')}{\xi_{\infty}    (z)}
  \right|^2 \right]^{1/2}  \underset{n \rightarrow \infty}{\longrightarrow} 0.$$
  From Theorem \ref{thm:multi_sup_is_finite}, it is then sufficient to show that
  $$ \E \left[ \sup_{z, z' \in K}  \left| \frac{\xi_n     (z')}{\xi_n     (z)} - \frac{\xi_{\infty}     (z')}{\xi_{\infty}    (z)}
  \right|^2 \right] \underset{n \rightarrow \infty}{\longrightarrow} 0.$$
  Now, 
  \begin{align*} 
\sup_{z, z' \in K} \left| \frac{\xi_n     (z')}{\xi_n     (z)} - \frac{\xi_{\infty}     (z')}{\xi_{\infty}    (z)} \right| 
& = \sup_{z, z' \in K} \left| \frac{\xi_n(z') [ \xi_{\infty} (z) - \xi_n (z) ] + \xi_n(z) [ \xi_n(z') - \xi_{\infty}(z')]}
{\xi_n(z) \xi_{\infty}(z)}  \right|
\\ & \leq
\frac{ 2 \left(\sup_{r \in \mathbb{N} \sqcup \{\infty\} }\sup_{z \in K}  |\xi_r(z)| \right)
 \left(\sup_{z \in K} |\xi_n (z) - \xi_{\infty}(z)| \right)}{ \inf_{r \in \mathbb{N} \sqcup \{\infty\} }
 \inf_{z \in K} |\xi_r (z)|^2}.
\end{align*}
Almost surely, $\xi_{n}$ converges uniformly to $\xi_{\infty}$ on $K$. Hence, the numerator of the last fraction 
converges to 
zero when $n$ goes to infinity. On the other hand, since $\xi_{\infty}$ does not vanish on $K$ (all its zeros 
as real), its infimum $a$ on $K$ is strictly positive. By the uniform convergence
of $(\xi_r)_{r \geq 1}$ towards $\xi_{\infty}$, 
there exists $r_0 \geq 1$ such that $r \geq r_0$ implies $\inf_K |\xi_r|  \geq a/2$. 
Moreover, since $\xi_r$ also has only real zeros, $\inf_K |\xi_r| > 0 $ for all $r < r_0$. We deduce 
that the denominator of the fraction above is strictly positive. Since it does not depend on $n$, whereas 
the numerator goes to zero, we get almost surely: 
$$ \sup_{z, z' \in K} \left| \frac{\xi_n     (z')}{\xi_n     (z)}
- \frac{\xi_{\infty}     (z')}{\xi_{\infty}    (z)} \right|^2 
 \underset{n \rightarrow \infty}{\longrightarrow} 0.$$
By dominated convergence, for all $B > 0$, 
$$ \mathbb{E} \left[ B \wedge \sup_{z, z' \in K} \left| \frac{\xi_n     (z')}{\xi_n     (z)}
- \frac{\xi_{\infty}     (z')}{\xi_{\infty}    (z)} \right|^2 \right]\underset{n \rightarrow \infty}{\longrightarrow} 0.$$
and then 
\begin{align*}
&\underset{n \rightarrow \infty}{\lim\sup} \,
 \mathbb{E} \left[  \sup_{z, z' \in K} \left| \frac{\xi_n     (z')}{\xi_n     (z)}
- \frac{\xi_{\infty}     (z')}{\xi_{\infty}    (z)} \right|^2 \right]
\\ & \leq \underset{n \rightarrow \infty}{\lim\sup} \,
 \mathbb{E} \left[ \mathds{1}_{\sup_{z,z' \in K} \left| \frac{\xi_n     (z')}{\xi_n     (z)}
- \frac{\xi_{\infty}     (z')}{\xi_{\infty}    (z)} \right|^2 \geq B} \sup_{z, z' \in K} \left| \frac{\xi_n     (z')}{\xi_n     (z)}
- \frac{\xi_{\infty}     (z')}{\xi_{\infty}    (z)} \right|^2 \right]
\\ & \leq \frac{1}{B} \underset{n \rightarrow \infty}{\lim\sup} \,
 \mathbb{E} \left[  \sup_{z, z' \in K} \left| \frac{\xi_n     (z')}{\xi_n     (z)}
- \frac{\xi_{\infty}     (z')}{\xi_{\infty}    (z)} \right|^4 \right]
\\  & \leq \frac{1}{B}\sup_{n \in \mathbb{N}} \mathbb{E} \left[  \left(\sup_{z, z' \in K} \left| \frac{\xi_n     (z')}{\xi_n     (z)}
\right| + \sup_{z, z' \in K} \left| \frac{\xi_{\infty}     (z')}{\xi_{\infty}    (z)} \right| \right)^4 \right] 
\\ & \leq \frac{16}{B} \sup_{n \in \mathbb{N} \sqcup \{\infty\}}
 \mathbb{E} \left[  \sup_{z, z' \in K} \left| \frac{\xi_n     (z')}{\xi_n     (z)}\right|^4 \right]
\end{align*}
By Theorem \ref{thm:multi_sup_is_finite}, the last quantity is $\Oc_K(1/B)$. 
Since $B$ can be chosen arbitrarily large, we get
$$ \mathbb{E} \left[  \sup_{z, z' \in K} \left| \frac{\xi_n     (z')}{\xi_n     (z)}
- \frac{\xi_{\infty}     (z')}{\xi_{\infty}    (z)} \right|^2 \right] 
\underset{n \rightarrow \infty}{\longrightarrow} 0.$$
\end{proof}
Now, the joint moments of ratios of $\xi_n$ can be explicitly computed, by using tools
given by Borodin, Olshanski and Strahov. In \cite{bib:BoStr} and \cite{bib:BOS}, they established that certain 
determinantal formulas for ratios of characteristic polynomials are equivalent to 
a certain property regarding the underlying point process of zeros\footnote{The authors are grateful to
Brad Rodgers for many insightful discussions on the subject}. This property was 
named \textit{Giambelli compatibility} (equation 0.2 in \cite{bib:BOS}). We are now concerned
with a particular case of that general framework. Consider a point process
$\Lambda =  \Lambda_n = \left( \lambda_1, \lambda_2, \dots, \lambda_n \right)$ of $n$-point configurations 
in $\C$. We assume that the underlying probability distribution is of the form:
\begin{eqnarray}
\label{eq:point_process}
\P\left( \Lambda \in dx \right) = \frac{1}{C_n} \left| \Delta(x) \right|^2 \prod_{i=1}^n \alpha(dx_i)
\end{eqnarray}
where $\Delta(x) = \prod_{1 \leq i < j \leq n} \left( x_i - x_j \right)$ is the Vandermonde determinant, $\alpha$ is a
reference measure on $\C$ whose moments are all finite, and $C_n$ is a normalisation constant.
We then have the following result: 
\begin{thm}
\label{thm:BOS_ratio_formula}
If for $u \in \C$ we note
$$  D(u) = \prod_{i=1}^n\left( u - \lambda_i \right),$$
then the following formal identity holds for all $k \geq 1$, 
\begin{eqnarray}
\label{eq:BOS_formula}
\det\left( \frac{1}{u_i-v_j} \right)
\E\left( \prod_{j=1}^k \frac{D(v_j)}{D(u_j)} \right) = 
\det\left( \frac{1}{u_i-v_j} \E\left( \frac{D(v_j)}{D(u_i)} \right) \right)_{i,j=1}^k. 
\end{eqnarray}
This identity has to be understood as follows. Writing 
$$\frac{1}{u_j - \lambda_i} = \sum_{m = 1}^{\infty} \frac{\lambda_i^m}{u_j^{m+1}},$$
we deduce an expression of $\prod_{j=1}^k \frac{D(v_j)}{D(u_j)}$ and $\frac{D(v_j)}{D(u_i)}$ as multivariate power series 
in the variables $u_1, \dots, u_k, v_1, \dots, v_k$ for which all the nonnegative exponents are bounded by $n$, and whose 
coefficients are polynomial functions of $\lambda_1, \dots, \lambda_n$. 
The fact that the moments of $\alpha$ are all finite implies that one can take, term by term, the 
expectation of these power series. The two sides of \eqref{eq:BOS_formula} can then both be written as 
power series in $u_1, \dots, u_k, v_1, \dots, v_k$ with exponents bounded from above, divided by 
$\prod_{1 \leq i, j \leq k} (u_i - v_j)$. The formula  \eqref{eq:BOS_formula} says that these two power series 
coincide. 
\end{thm}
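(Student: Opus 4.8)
The natural approach is to observe that the point process \eqref{eq:point_process} is an orthogonal polynomial ensemble for the reference measure $\alpha$, and that \eqref{eq:BOS_formula} is exactly the \emph{Giambelli compatibility} property proved for such ensembles by Borodin, Olshanski and Strahov (their equation (0.2)). So the first step is to verify the hypotheses of \cite{bib:BOS}: the weight in \eqref{eq:point_process} is of the required Vandermonde-squared-times-product form, and $\alpha$ has finite moments of all orders, which is precisely what makes the termwise expectations in the statement meaningful. The second step is to reconcile conventions --- in particular that $D(u) = \prod_i(u-\lambda_i)$ is the monic characteristic polynomial and that the formal power-series reading of $1/D(u)$ used here agrees with theirs --- after which \eqref{eq:BOS_formula} is a direct quotation of their result. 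Beyond bookkeeping, there is nothing to prove on this route.

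If one prefers a self-contained argument, I would compute both sides explicitly in terms of the monic orthogonal polynomials $(P_m)_{m\ge 0}$ of $\alpha$, their squared norms $h_m = \int|P_m|^2\,d\alpha$, and their formal Cauchy transforms $C_m(u) := \sum_{\ell\ge 0} u^{-\ell-1}\int x^\ell P_m(x)\,\alpha(dx)$ (which are $O(u^{-m-1})$ by orthogonality). The main input is a pair of Heine-type formulas: the classical one expressing $\E\bigl(\prod_{j=1}^k D(v_j)\bigr)$ as a $k\times k$ determinant built from $P_n,\dots,P_{n+k-1}$ evaluated at $v_1,\dots,v_k$, divided by $\Delta(v)$; and its mixed-ratio counterpart, obtained by expanding each $1/(u_j-\lambda_i)$ as a power series in $u_j^{-1}$ and integrating term by term against $|\Delta(\lambda)|^2\prod_i\alpha(d\lambda_i)$ via Andr\'eief's identity and orthogonality, which replaces the columns associated to the denominator variables by the Cauchy transforms $C_m(u_j)$. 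Specialising that formula to $k=1$ identifies $\frac{1}{u-v}\E\bigl(\frac{D(v)}{D(u)}\bigr)$ with an explicit Christoffel--Darboux-type kernel $\mathcal W(u,v)$ assembled from $P_{n-1},P_n,C_{n-1},C_n$. Substituting both formulas into \eqref{eq:BOS_formula} then reduces the identity to a purely algebraic determinant comparison, which follows by combining the Cauchy determinant evaluation of $\det\bigl(\frac{1}{u_i-v_j}\bigr)$ with a Cauchy--Binet (Laplace) expansion of the large $k\times k$ determinant on the left.

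The only genuinely delicate point --- and the expected main obstacle --- is that $1/D(u)$ is not an integrable random variable, so all of this must be carried out at the level of formal power series in $u_1^{-1},\dots,u_k^{-1},v_1,\dots,v_k$, exactly as the statement prescribes. One has to check that the termwise expectations exist (using only finiteness of all moments of $\alpha$), that the Andr\'eief integration and the concluding Cauchy--Binet step are valid identities of formal power series rather than of convergent integrals, and that the degenerate cases where some $u_i$ or $v_j$ coincide are recovered by viewing both sides as a single power series divided by $\prod_{i,j}(u_i-v_j)$ and extending by continuity. The remaining signs and the normalisation constant $\prod_{m=n-k}^{n-1}h_m$ are routine to track.
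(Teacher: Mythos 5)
Your first route is essentially the paper's: quote Proposition 2.2 of Borodin--Olshanski--Strahov and reconcile notation. In the paper this reconciliation is explicit --- $D(v_j)=v_j^{\,n}E(-v_j)$ and $D(u_j)^{-1}=u_j^{-n}H(u_j)$, so one changes the sign of the $v_j$'s and multiplies both sides of BOS's identity by $\prod_j(v_j/u_j)^n$, which is exactly why the statement allows nonnegative exponents up to $n$ in the formal series. What your proposal omits, though, is the one step in the paper that is not pure bookkeeping: BOS prove their result for $\alpha$ supported on $\R$, where the joint density carries $\Delta(\lambda)^2$, whereas the point process \eqref{eq:point_process} has $\alpha$ on $\C$ and the weight $|\Delta(\lambda)|^2$. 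Your ``verify the hypotheses of \cite{bib:BOS}'' step would actually fail as stated, because the hypotheses are not literally met. The paper handles this by pointing to the proof of BOS Theorem 3.1 and replacing the one-index moment $A_{\lambda_i+N-i+N-j}$ by the mixed moment $A_{\lambda_i+N-i,\,N-j}:=\int_\C x^{\lambda_i+N-i}\bar x^{\,N-j}\,\alpha(dx)$, the conjugate arising precisely from $|\Delta|^2$ versus $\Delta^2$. That modification is the substantive content of this proof, and your sketch needs to name it.

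Your second, self-contained route (Heine-type formulas, Cauchy transforms, Andr\'eief, then Cauchy--Binet) is a genuinely different argument: it would reprove the BOS identity from scratch rather than cite it, and it would be considerably heavier. It is also not immune to the same issue. With $|\Delta(\lambda)|^2$ on $\C$ one works with Hermitian orthogonality (your $h_m=\int|P_m|^2\,d\alpha$ hints that you sense this), the Andr\'eief integration acquires complex conjugates, and the Christoffel--Darboux structure for polynomials orthogonal on the circle is not the real-line one. None of this is fatal, but it is not the ``routine to track'' of your closing paragraph, and the paper's choice to lean on BOS with a one-line correction to their Theorem 3.1 is substantially leaner than re-deriving everything through orthogonal polynomial machinery.
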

\begin{proof}[Pointers to the proof]
This result is proven in \cite{bib:BOS}, up to small changes. Comparing our notation with \cite{bib:BOS}, 
we have
$$D(v_j) = v_j^{n} E(-v_j), \; [D(u_j)]^{-1} = u_j^{-n} H(u_j),$$
and then we deduce immediately our formula from Proposition 2.2. of \cite{bib:BOS}, by changing the sign of 
the $v_j$'s and by multiplying both sides by 
$\prod_{j=1}^{k} (v_j/u_j)^{n}$. Note that this multiplication is the reason why we allow here nonnegative 
exponents up to $n$ in the formal power series. 
In \cite{bib:BOS}, the results are proved  for $\alpha$ carried by $\R$, however, they
can  immediately be extended to $\C$: the only change occurs in the proof of Theorem 3.1 in \cite{bib:BOS}. We
have to replace $A_{\lambda_i + N - i + N -j}$ by $A_{\lambda_i + N - i, N -j}$, where 
$$A_{p,q} := \int_{\C} x^p \bar{x}^q \alpha(dx),$$
the conjugate coming from the fact that the joint density of $\Lambda$ involves  $|  \Delta(x) |^2$ 
instead of $(\Delta(x))^2$. 
\end{proof}

We specialise $\alpha$ to be the Lebesgue measure on the circle $S^1 = \left\{ |z| = 1 \right\}$ and, thanks to the Weyl integration formula, equation \eqref{eq:point_process} becomes the density of eigenvalues for the CUE. The random vector $\Lambda^{(n)}$ can therefore be seen as the zeros of the characteristic polynomial $Z_n$ (equation \eqref{eq:def_Z_n} ). The following corollary is intuitive, although the proof requires some care in passing from a statement on formal power series to a statement on actual analytic functions:
\begin{thm}
\label{thm:ratio_formula_xi_n}
For $\left( z_1, \dots, z_k \right) \in \left(\C \backslash \R\right)^k$ and
$\left( z_1', \dots, z_k' \right) \in \C^k $, 
such that for $1 \leq i, j \leq k$, $z_i - z'_j$ is not an integer multiple of $n$, 
\begin{eqnarray}
\label{eq:ratio_formula_xi_n}
  \det\left( \frac{1}{e^{\frac{i 2 \pi z_i}{n}}-e^{\frac{i 2 \pi z_j'}{n}}} \right)_{i,j= 1}^k
  \E\left( \prod_{j=1}^k \frac{\xi_n(z_j')}{\xi_n(z_j)} \right)
=
  \det\left( \frac{1}{e^{\frac{i 2 \pi z_i}{n}}-e^{\frac{i 2 \pi z_j'}{n}}} \E\left( \frac{\xi_n(z_j')}{\xi_n(z_i)} \right) \right)_{i,j=1}^k 
\end{eqnarray}
and moreover:
\begin{eqnarray}
\label{eq:ratio_formula_xi_n_bis}
\E\left( \frac{\xi_n(z')}{\xi_n(z)} \right) = \left\{\begin{array}{cc}
                                                    1              & \textrm{if } \Im(z)>0 \\
                                                    e^{i2\pi(z'-z)} & \textrm{if } \Im(z)<0 \\
                                                    \end{array}\right.
\end{eqnarray}
\end{thm}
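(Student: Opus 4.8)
The plan is to deduce the determinantal identity \eqref{eq:ratio_formula_xi_n} from the formal Borodin--Olshanski--Strahov identity \eqref{eq:BOS_formula}, to obtain the one-point formula \eqref{eq:ratio_formula_xi_n_bis} by a direct computation on the circular unitary ensemble, and then --- this is where the ``care'' announced above is really needed --- to pass from these formal power series statements to genuine analytic identities, uniformly in the signs of the $\Im z_i$. The first move is bookkeeping: with $u_i := e^{i 2\pi z_i/n}$ and $v_j := e^{i 2\pi z_j'/n}$, and with $\lambda_1,\dots,\lambda_n$ the eigenvalues of $U_n$ and $D(u) = \prod_{k=1}^n(u-\lambda_k)$, one has the \emph{exact} identities $\xi_n(z_j')/\xi_n(z_j) = Z_n(v_j)/Z_n(u_j) = D(v_j)/D(u_j)$, since the factors $Z_n(1)$ in \eqref{eq:def_xi_n} cancel and $Z_n$ and $D$ differ only by a unimodular multiplicative constant; thus \eqref{eq:ratio_formula_xi_n} is exactly \eqref{eq:BOS_formula} under this substitution, and the hypothesis that $z_i - z_j'$ is not an integer multiple of $n$ is exactly $u_i \neq v_j$. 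Note that the case $k = 1$ of \eqref{eq:ratio_formula_xi_n} is vacuous, so \eqref{eq:ratio_formula_xi_n_bis} has to be proved separately; it is what makes the right-hand side of \eqref{eq:ratio_formula_xi_n} explicit.

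Next I would establish \eqref{eq:ratio_formula_xi_n_bis} by a direct computation. By the Weyl integration formula, $\E(\xi_n(z')/\xi_n(z)) = \E[Z_n(v)/Z_n(u)]$ for Haar $U_n$. When $\Im z < 0$, i.e.\ $|u| > 1$, expand $1/Z_n(u) = \det(I - uU_n^{*})^{-1}$ as a power series in $1/u$ (it converges because the eigenvalues of $uU_n^{*}$ have modulus $|u| > 1$), multiply by the polynomial $Z_n(v) = \det(I - vU_n^{*})$, and integrate term by term against Haar measure; this is legitimate because the resulting product of power series converges uniformly on the compact torus of eigenvalue configurations, and by orthogonality of the irreducible characters of $U(n)$ exactly one monomial survives, giving $\E[Z_n(v)/Z_n(u)] = (v/u)^n = e^{i 2\pi(z'-z)}$. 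When $\Im z > 0$, i.e.\ $|u| < 1$, expand $1/Z_n(u)$ instead as a power series in $u$; the same term-by-term computation now leaves only the constant term, giving $\E[Z_n(v)/Z_n(u)] = 1$. (Alternatively, both values follow from a residue computation in the eigenvalue variables.)

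Then for \eqref{eq:ratio_formula_xi_n}: Theorem \ref{thm:BOS_ratio_formula} gives it as an equality of formal power series, each $1/(u_i - \lambda)$ and $1/(u_i - v_j)$ being expanded around $u_i = \infty$. On the open part of $\{\Im z_i < 0 \text{ for all } i\}$ where in addition $|u_i| > |v_j|$ for all $i,j$, all these series converge absolutely and uniformly on the space of eigenvalue configurations, so one may take expectations term by term and the formal identity becomes an identity of honest holomorphic functions; by the previous step the entries $\E(D(v_j)/D(u_i))$ there equal $e^{i 2\pi(z_j'-z_i)}$, matching \eqref{eq:ratio_formula_xi_n_bis}. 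Both sides of \eqref{eq:ratio_formula_xi_n} are meromorphic --- the left-hand side because $\E\bigl(\prod_j \xi_n(z_j')/\xi_n(z_j)\bigr)$ is holomorphic on $(\C \setminus \R)^k \times \C^k$ (Morera plus the uniform bound of Theorem \ref{thm:multi_sup_is_finite}), the right-hand side explicitly --- and the component $\{\Im z_i < 0 \ \forall i\} \times \C^k$ is connected, so analytic continuation propagates the identity throughout it. The component with all $\Im z_i > 0$ follows by applying the functional equation \eqref{eq:functional_eq_xi_n}: taking conjugates turns $\prod_j \xi_n(z_j')/\xi_n(z_j)$ into $e^{-i 2\pi \sum_j(\bar z_j' - \bar z_j)} \prod_j \xi_n(\bar z_j')/\xi_n(\bar z_j)$ with all $\Im \bar z_j < 0$, whence $\E\bigl(\prod_j \xi_n(z_j')/\xi_n(z_j)\bigr) = 1$ and both sides of \eqref{eq:ratio_formula_xi_n} collapse to $\det\bigl(1/(u_i - v_j)\bigr)$. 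For a mixed sign pattern --- where one can neither cross the real axis by continuation nor use \eqref{eq:functional_eq_xi_n} (which flips all signs at once) --- I would rerun the Borodin--Olshanski--Strahov argument with $1/(u_i - \lambda)$ expanded around $u_i = \infty$ when $\Im z_i < 0$ and around $u_i = 0$ when $\Im z_i > 0$ (the Giambelli-compatibility identity underlying Theorem \ref{thm:BOS_ratio_formula} is insensitive to this choice), converge the series on the corresponding polyannular region, and conclude as before.

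The main obstacle is precisely this formal-to-analytic passage: one must pin down the region of uniform convergence that licenses term-by-term integration, correctly read off the sum of the formal series for $\E(D(v_j)/D(u_i))$ in each regime --- which is exactly where the piecewise shape of \eqref{eq:ratio_formula_xi_n_bis} enters --- and then knit together the $2^k$ sign-components of $(\C \setminus \R)^k$, using analytic continuation inside each component, the functional equation to relate a pattern to its opposite, and the sign-adapted expansion for what remains.
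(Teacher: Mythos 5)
Your reduction to the Borodin--Olshanski--Strahov formal identity, the character-orthogonality computation of $\E(D(v)/D(u)) = 1$ for $|u|<1$ and $(v/u)^n$ for $|u|>1$, and the treatment of the two "pure" sign patterns (all $|u_i|>1$ by convergence of the $\infty$-expansion, all $|u_i|<1$ via the functional equation \eqref{eq:functional_eq_xi_n}) are all sound, and close in spirit to what the paper does (the paper uses rotation-invariance of the eigenvalue law where you use character orthogonality --- same content).

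The gap is in the mixed sign pattern, which you correctly single out as the obstruction. Your proposed fix --- "rerun the BOS argument with $1/(u_i-\lambda)$ expanded around $\infty$ for $|u_i|>1$ and around $0$ for $|u_i|<1$; Giambelli compatibility is insensitive to this choice" --- is an assertion, not a proof, and it is not an immediate consequence of \cite{bib:BOS}. The formal identity \eqref{eq:BOS_formula} is established there as an equality of power series built from a \emph{single} generating-function expansion $1/D(u_j) = u_j^{-n}H(u_j)$ in powers of $u_j^{-1}$; expanding some $1/D(u_i)$ instead in powers of $u_i$ produces a genuinely different pair of formal series and requires a fresh verification that the determinantal structure survives. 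Phrased analytically, the difficulty is topological: for fixed $v$'s, both sides of \eqref{eq:BOS_formula} are holomorphic on $(\C\setminus S^1)^k$, but that set has $2^k$ connected components, so agreement on $\{|u_i|>1\ \forall i\}$ does not propagate to the others by continuation. The paper's way through is to perturb the reference measure $\alpha$ to the uniform measure on the arc-complement $\Dc_\varepsilon = \{|z|=1,\ |z-1|\geq \varepsilon\}$: then both sides of \eqref{eq:BOS_formula} are holomorphic on the \emph{connected} set $(\C\setminus\Dc_\varepsilon)^k\times\C^k$, analytic continuation from $\{|u_i|>1\ \forall i\}$ reaches every sign pattern at once, and one recovers the CUE case by letting $\varepsilon\to 0$ and using continuity of the expectations in the law of $\Lambda^{(n)}$. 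Some such device (either this arc-removal, or an actual re-derivation of the BOS combinatorics for a mixed expansion) is what your proof is missing.
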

\begin{proof}
Recall that $$\xi_n\left( z \right) = \frac{Z_n(e^{2 i \pi z/n})}{Z_n(1)} 
= \frac{D(e^{2 i \pi z/n})}{D(1)}.$$
When forming a ratio, simplifications occur and give:
$$ \forall 1 \leq j \leq k, \; \frac{\xi_n(z_j')}{\xi_n(z_j)}  = 
\frac{D(e^{\frac{i 2 \pi z_j'}{n}})}{D(e^{\frac{i 2 \pi z_j}{n}})}, $$
Now we set $\left( u, v \right) \in \C^k \times \C^k$ such that $v_j = e^{\frac{i 2 \pi z_j'}{n}}$
and $u_j = e^{\frac{i 2 \pi z_j}{n}}$. 
The result we have to prove is equivalent to the following: 
the equation \eqref{eq:BOS_formula} holds as an equality of complex numbers for all 
$\left( u_1, \dots, u_k, v_1, \dots v_k \right) \in \left( \C \backslash S^1 \right)^k \times \C^k$ such that
$u_i \neq v_j$ for all $i, j \in \{1, \dots, k\}$. 

Now, all the computations in Theorem \ref{thm:BOS_ratio_formula}, implicitly needed in order to write an equality of formal 
series divided by $\prod_{1 \leq i,j \leq n} (u_i - v_j)$, can be translated to get an equality 
of complex numbers, provided that the formal series converge absolutely and that the denominator does not vanish. 
This last condition is satisfied since we assume $u_i \neq v_j$ for all $i, j \in \{1, \dots, k\}$. 
Now, if $|u_i| > 1$ for all $i$, each term of the power series corresponding to 
$D(v_j)/D(u_i)$ is dominated by the corresponding term of the power series
$$ \prod_{m = 1}^n (|v_j| + |\lambda_m|) \prod_{m = 1}^n \left( \sum_{p = 0}^{\infty} \frac{|\lambda_m|^p}{ |u_i|^{p+1}} 
\right) = \left( \frac{|v_j| + 1}{|u_i| - 1} \right)^n.$$
We deduce that the power series involved in the left-hand side of \eqref{eq:BOS_formula} after removing the denominator 
 $\prod_{1 \leq i,j \leq n} (u_i - v_j)$ is term by term majorized by the series corresponding to 
 $$\prod_{j = 1}^k \left( \frac{|v_j| + 1}{|u_j| - 1} \right)^n
 \sum_{ \sigma \in \mathfrak{S}_k} \prod_{1 \leq i, j \leq k, j \neq \sigma(i)} (|u_i| + |v_j|),$$
 which is convergent since this quantity is finite. 
  Similarly, the series in the right-hand side is bounded by 
  $$ \sum_{ \sigma \in \mathfrak{S}_k} \prod_{1 \leq i, j \leq k, j \neq \sigma(i)} (|u_i| + |v_j|)
  \prod_{i=1}^k \left( \frac{|v_{\sigma(i)}| + 1}{|u_i| - 1} \right)^n.$$
  Hence, we have proven that \eqref{eq:BOS_formula} holds under the assumption that $u_i$ is outside the 
  unit disc for all $i$. 
Now to extend the result to inside the circle, we shall remove an arc from the circle 
so that our working domain becomes connected.\footnote{This idea was suggested to us by Brad Rodgers} Going back to the definition of the 
distribution of $\Lambda^{(n)}$, assume that $\alpha$ is not the uniform measure on the circle, but the
uniform mesure on $\Dc_\varepsilon := \left\{ z \in \C \ | \ |z| = 1, |z-1| \geq \varepsilon \right\}$ for 
a certain $\varepsilon \in (0,1)$. In that setting, equation \eqref{eq:BOS_formula} still occurs for $|u_i| > 1$, since 
our proof is available as soon as the measure $\alpha$ is supported by the unit circle. 
Now, the expectations involved in \eqref{eq:BOS_formula} are integrals, with respect to the distribution of 
$\Lambda^{(n)}$, of rational functions of $(u_1, \dots, u_k, v_1, \dots, v_k, \lambda_1, \dots, \lambda_n)$.
If $u_1, \dots, u_k$ are in a compact set $K_1$ of $\C \backslash \Dc_\varepsilon$, and $v_1, \dots, v_k$ are 
in a compact set $K_2$ of $\C$, then these rational functions are bounded by a quantity depending only on 
$K_1$ and $K_2$, since almost surely on the law of $\Lambda^{(n)}$, 
$$\left| \frac{D(v_j)}{D(u_i)} \right| = \prod_{m = 1}^n \left| \frac{v_j - \lambda_m}{u_i - \lambda_m}
\right| \leq \left( \frac{1 + \sup_{v \in K_2} |v|}{\operatorname{dist} (K_1, \Dc_\varepsilon )} \right)^n < \infty.$$
Hence, using dominated convergence, one deduces that  the expectations in \eqref{eq:BOS_formula} are holomorphic
functions of 
$(u_1, \dots, u_k, v_1, \dots, v_k)$ on $ \left( \C \backslash \Dc_\varepsilon \right)^k \times \C^k $.
Hence, the two sides of \eqref{eq:BOS_formula} can be written as quotients by $\prod_{1 \leq i,j \leq k}
(u_i - v_j)$ of holomorphic functions. 
Since these holomorphic functions coincide on $ \left( \{z \in \C, |z| > 1 \}\right)^k \times \C^k $, and 
$ \left( \C \backslash \Dc_\varepsilon \right)^k \times \C^k $ is connected, they coincide 
on $ \left( \C \backslash \Dc_\varepsilon \right)^k \times \C^k $, and in particular, 
\eqref{eq:BOS_formula} holds for all $u_1, \dots, u_k \in \C \backslash S^1$ and $v_1, \dots, v_k \in 
\C$. 
Now, if $u_1, \dots, u_k \in \C \backslash S^1$, $v_1, \dots, v_k \in 
\C$ are fixed, the left-hand side of \eqref{eq:BOS_formula} is the integral, with respect 
to the law of $\Lambda^{(n)}$, of a continuous, bounded function of 
$(\lambda_1, \dots, \lambda_n) \in (S^1)^k $, and the right-hand side is a linear combination of 
products of such integrals. Hence, the two sides of \eqref{eq:BOS_formula} are continuous with respect to 
the law of $\Lambda^{(n)}$. Now, it is easy to check that the law of $\Lambda^{(n)}$ for 
$\alpha$ uniform on  $\Dc_\varepsilon$ tends to the law for $\alpha$ uniform on $S^1$ when 
$\epsilon$ goes to zero. Hence, since  \eqref{eq:BOS_formula} holds for $\alpha$ uniform on 
$\Dc_\varepsilon$, it also occurs for $\alpha$ uniform on $S^1$.

It remains to prove \eqref{eq:ratio_formula_xi_n_bis}. Using the change of variables $u = e^{2i \pi z/n}$ and 
$v = e^{2i \pi z'/n}$, we have to check 
\begin{equation} \mathbb{E} \left( \frac{D(v)}{D(u)} \right) =  \left\{\begin{array}{cc}
                                                    1              & \textrm{if } |u|<1 \\
                                                    (v/u)^n & \textrm{if } |u| > 1 \\
                                                    \end{array}\right. \label{eq:ratio_formula_xi_n_bis_2}
\end{equation} 
 If $|u| < 1$, we can write 
 $$\frac{D(v)}{D(u)} = \prod_{m = 1}^n \frac{1 - v \lambda_m^{-1}}{1 - u \lambda_m^{-1}} 
 = \prod_{m = 1}^n (1 - v \lambda_m^{-1}) \left(\sum_{p = 0}^{\infty}  u^p \lambda_m^{-p} \right).$$
 If we expand this expression as a power series in $u$ and $v$ with polynomial coefficients 
 in $\lambda_1^{-1}, \dots, \lambda_n^{-1}$, this series is term by term dominated by 
 $$\left[ (1 + |v|)  \left(\sum_{p = 0}^{\infty}  |u|^p \right)  \right]^n < \infty.$$
 Hence, the expectation of $D(v)/D(u)$ can be obtained by adding the expectations of each term of the 
 corresponding power series. For all nonnegative integers $p, q \geq 0$, the term in 
 $u^p v^q$ is a polynomial in $\lambda_1^{-1}, \dots, \lambda_n^{-1}$ with total degree $p+q$. 
 Now, the law of $\Lambda^{(n)}$ remains invariant if we multiply $(\lambda_1^{-1}, \dots, \lambda_n^{-1})$ 
 by any $z \in S^1$, and then the expectation of the term in $u^p v^q$ is invariant by 
 multiplication by $z^{-p-q}$, which implies that it is zero for all $(p,q) \neq (0,0)$. 
 Hence the expectation of $D(v)/D(u)$ is equal to the constant term of the corresponding series, which is 
 equal to $1$, and then we get  \eqref{eq:ratio_formula_xi_n_bis_2} for $|u| < 1$. 
 The case $|u| > 1$, $v \neq 0$ is the deduced as follows: we have
 $$ D(u) = \prod_{m=1}^n (u - \lambda_m) = (-u)^{n} \left( \prod_{m=1}^n \lambda_m \right) 
 \prod_{m=1}^n  (u^{-1} - \overline{\lambda_m}) = (-u)^{n} \left( \prod_{m=1}^n \lambda_m \right) 
 \overline{D(\bar{u}^{-1})},$$
 $$\frac{D(v)}{D(u)} = (v/u)^n \frac{\overline{D(\bar{v}^{-1})}}{\overline{D(\bar{u}^{-1})}},$$
 and then, since $|\bar{u}^{-1}| <1$, 
 $$ \mathbb{E} \left( \frac{D(v)}{D(u)} \right) = (v/u)^n.$$
Using dominated convergence, it is easy to check that $\mathbb{E} [D(v)/D(u)]$ is continuous with respect to 
$(u,v) \in ( \C \backslash S^1 ) \times \C$, which allows to extend \eqref{eq:ratio_formula_xi_n_bis_2}
to the case $|u| > 1$, $v = 0$.

\end{proof}

As Borodin, Olshanski and Strahov have in fact noticed, taking the limit for $n$ going to infinity is meaningful. 
Here, we can go further since we have now constructed the limiting object $\xi_{\infty}$. 
More precisely, using the convergence proven in Proposition \ref{proposition:convergencemomentsratios}, we 
easily get the following: 
\begin{thm}[Ratio formula]\label{formuleratios}
\label{thm:ratio_formula_xi_infty}
For all $z_1, \dots, z_k, z'_1, \dots, z'_k \in \C \backslash \R$ such that 
$z_i \neq z'_j$ for $1 \leq i, j \leq n$, we have
$$
\det\left( \frac{1}{z_i-z_j'} \right)_{i,j = 1}^{k}\E\left( \prod_{j=1}^k \frac{\xi_\infty(z_j')}{\xi_\infty(z_j)} \right) =
 \det\left( \frac{1}{z_i-z_j'} 
\E\left( \frac{\xi_\infty(z_j')}{\xi_\infty(z_i)} \right) \right)_{i,j=1}^k
$$
and moreover:
$$
\E\left( \frac{\xi_\infty(z')}{\xi_\infty(z)} \right)
= \left\{\begin{array}{cc}
	  1              & \textrm{if } \Im(z)>0 \\
	  e^{i2\pi(z'-z)} & \textrm{if } \Im(z)<0 \\
	  \end{array}\right.
$$
\end{thm}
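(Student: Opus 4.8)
The plan is to obtain this theorem by letting $n \to \infty$ in the exact finite-$n$ identity of Theorem \ref{thm:ratio_formula_xi_n}, the convergence being supplied by Proposition \ref{proposition:convergencemomentsratios}. Fix $z_1, \dots, z_k, z_1', \dots, z_k' \in \C \backslash \R$ with $z_i \neq z_j'$ for all $i,j$. Since these points are fixed, for all $n$ large enough no difference $z_i - z_j'$ is an integer multiple of $n$, so \eqref{eq:ratio_formula_xi_n} is available:
\[
\det\left( \frac{1}{e^{\frac{i 2 \pi z_i}{n}}-e^{\frac{i 2 \pi z_j'}{n}}} \right)_{i,j= 1}^k
\E\left( \prod_{j=1}^k \frac{\xi_n(z_j')}{\xi_n(z_j)} \right)
= \det\left( \frac{1}{e^{\frac{i 2 \pi z_i}{n}}-e^{\frac{i 2 \pi z_j'}{n}}} \E\left( \frac{\xi_n(z_j')}{\xi_n(z_i)} \right) \right)_{i,j=1}^k.
\]

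First I would multiply both sides by $(2 i \pi/n)^k$, which amounts to multiplying each Cauchy-type matrix by the scalar $2 i \pi/n$. Using the expansion $e^{i 2 \pi z_i/n}-e^{i 2 \pi z_j'/n} = \frac{2 i \pi}{n}(z_i - z_j') + O(1/n^2)$ on the fixed finite set of points, one has
\[
\frac{2 i \pi/n}{e^{\frac{i 2 \pi z_i}{n}}-e^{\frac{i 2 \pi z_j'}{n}}} \underset{n \rightarrow \infty}{\longrightarrow} \frac{1}{z_i-z_j'},
\]
so by continuity of the determinant the rescaled Cauchy matrix on the left converges to $\det\left( \frac{1}{z_i-z_j'} \right)_{i,j=1}^k$. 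For the scalar prefactor on the left, Proposition \ref{proposition:convergencemomentsratios} gives $\E\left( \prod_{j=1}^k \frac{\xi_n(z_j')}{\xi_n(z_j)} \right) \to \E\left( \prod_{j=1}^k \frac{\xi_\infty(z_j')}{\xi_\infty(z_j)} \right)$, and also ensures this limit is finite. On the right, each entry of the rescaled matrix is the above Cauchy entry times $\E\left( \frac{\xi_n(z_j')}{\xi_n(z_i)} \right)$, and the $k=1$ case of Proposition \ref{proposition:convergencemomentsratios} gives $\E\left( \frac{\xi_n(z_j')}{\xi_n(z_i)} \right) \to \E\left( \frac{\xi_\infty(z_j')}{\xi_\infty(z_i)} \right)$; hence, again by continuity of the determinant, the right-hand side converges to $\det\left( \frac{1}{z_i-z_j'} \E\left( \frac{\xi_\infty(z_j')}{\xi_\infty(z_i)} \right) \right)_{i,j=1}^k$. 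Passing to the limit in the displayed identity yields the determinantal formula.

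For the ``moreover'' part I would simply pass to the limit in \eqref{eq:ratio_formula_xi_n_bis}: for fixed $z, z' \in \C \backslash \R$ and $n$ large, $\E(\xi_n(z')/\xi_n(z))$ equals the $n$-independent constant $1$ if $\Im(z)>0$ and $e^{i2\pi(z'-z)}$ if $\Im(z)<0$; since $\E(\xi_n(z')/\xi_n(z)) \to \E(\xi_\infty(z')/\xi_\infty(z))$ by the $k=1$ case of Proposition \ref{proposition:convergencemomentsratios}, the limiting expectation equals the same constant.

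The argument has no genuine obstacle: all the analytic work — finiteness of the moments of ratios at $n = \infty$ and their convergence, uniformly on compacts of $\C \backslash \R$ — has already been carried out in Proposition \ref{proposition:convergencemomentsratios} (itself relying on the tail control of $\xi_n'/\xi_n$ of Section \ref{subsection:log_der} and on the almost sure uniform convergence $\xi_n \to \xi_\infty$), and the exact finite-$n$ determinantal identity is Theorem \ref{thm:ratio_formula_xi_n}. The only care needed is the elementary bookkeeping: verifying that the separation hypothesis of Theorem \ref{thm:ratio_formula_xi_n} holds for all large $n$ once the points are fixed, and keeping track of the normalising factor $(2 i \pi/n)^k$ so that the two Cauchy matrices are rescaled consistently.
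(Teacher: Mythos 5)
Your proposal is correct and coincides with the paper's own argument: the paper obtains Theorem \ref{thm:ratio_formula_xi_infty} precisely by passing to the limit in the finite-$n$ identity of Theorem \ref{thm:ratio_formula_xi_n}, using the uniform convergence of moments of ratios from Proposition \ref{proposition:convergencemomentsratios}. Your extra bookkeeping (the $(2i\pi/n)^k$ rescaling of the Cauchy factors and the check that $z_i-z_j'$ is not a multiple of $n$ for large $n$) is exactly the elementary verification the paper leaves implicit.
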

 The condition $z_i \neq z'_j$ is not really restrictive, since for $z_i = z'_j$, the ratio inside the expectation 
 can be immediatly simplified by removing the factor $\xi_{\infty}(z_i) = \xi_{\infty}(z'_j)$ in the
 numerator and the denominator. If the $z_i$ and the $z'_j$ are all pairwise distinct, we can 
 divide by the Cauchy determinant in the left-hand side, in order to get the joint moment 
 of ratios $\xi_{\infty}(z'_j)/ \xi_{\infty}(z_j)$. If some of the $z_i$ or some of the $z'_j$ are equal, 
 the Cauchy determinant is zero, so the ratio formula does not give the moment directly: however, 
 the moment can be recovered from the fact that it is continuous with respect to 
 $z_1, \dots, z_k, z'_1, \dots, z'_k \notin \R$, this property of continuity coming from the uniformity 
 of the convergence in Proposition \ref{proposition:convergencemomentsratios}.
The joint moments of ratios of the form $\frac{\xi_\infty(z')}{\xi_\infty(z)}$ and conjugates of such ratios
can then be easily deduced from the following: 
$$\overline{\xi_{\infty} (z)} = e^{-i \pi \bar{z}} \prod_{k \in \mathbb{Z}} \left( 1 - \frac{\bar{z}}{y_k} \right)
= e^{-2 i \pi \bar{z}} \left[ e^{i \pi \bar{z}} \prod_{k \in \mathbb{Z}} \left( 1 - \frac{\bar{z}}{y_k} \right) 
\right] = e^{- 2i \pi \bar{z}} \xi_{\infty} (\bar{z}).$$
In this way, we get for all $z, z' \notin \R$, 
$$\mathbb{E} \left[ \left| \frac{\xi_{\infty}(z')}{\xi_{\infty}(z)} \right|^2 \right] 
= e^{- 4 \pi \Im (z'-z) \mathds{1}_{\Im(z) < 0}} \left( 1 + (1 - e^{-4 \pi \Im(z') \sgn (\Im(z))} ) \frac{|z-z'|^2}
{4 \Im(z) \Im(z')} \right).$$
Given Conjecture \ref{conjecture}, it is natural to expect the following: 

\begin{conjecture}
Let $\omega$ be a uniform random variable on $[0, 1]$ and $T>0$ a real parameter going to infinity. 
Then, for all $z_1, \dots, z_k, z'_1, \dots, z'_k \in \C \backslash \R$, 
such that $z_i \neq z'_j$ for all $i, j$, 
\begin{align*}& \mathbb{E}\left( \prod_{j=1}^k
\frac{ \zeta\left( \half + i T \omega - \frac{i 2\pi z'_j}{\log T} \right) }
{ \zeta\left( \half + i T \omega  - \frac{i 2\pi z_j}{\log T} \right)  } \right)
  \\ & \stackrel{T \rightarrow \infty}{\longrightarrow} 
   \det\left( \frac{1}{z_i-z_j'} \right)^{-1} \det\left( \frac{\mathds{1}_{\Im (z_i) > 0} + e^{2i \pi (z'_j - z_i)}
  \mathds{1}_{\Im (z_i) < 0} } {z_i-z_j'} \right)_{i,j=1}^k,
 \end{align*}  
 where the last expression is well-defined where the $z_i$ and the $z'_j$ are all distinct, and is 
 extended by continuity to the case where some of the $z_i$ or some of the $z'_j$ are equal. 
 \end{conjecture}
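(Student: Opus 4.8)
Since the statement is a conjecture, the plan below reduces it to inputs that are themselves open; the realistic target is the implication ``those inputs $\Rightarrow$ the conjecture''. The starting observation is that, by Theorem \ref{thm:ratio_formula_xi_infty}, the right-hand side is exactly $\mathbb{E}\bigl( \prod_{j=1}^k \xi_\infty(z'_j)/\xi_\infty(z_j) \bigr)$, so the conjecture is equivalent to the convergence \emph{in expectation} of the rescaled zeta ratios to the corresponding moment of $\xi_\infty$. Writing $\xi^{(T)}(z) := \zeta\bigl( \tfrac12 + iT\omega - \tfrac{2i\pi z}{\log T} \bigr)/\zeta\bigl( \tfrac12 + iT\omega \bigr)$ (the normalization cancels in a ratio), I would (i) invoke Conjecture \ref{conjecture}, which gives $\xi^{(T)} \to \xi_\infty$ in law in the space of holomorphic functions, uniformly on compacts; (ii) upgrade this to $\mathbb{E}\bigl( \prod_j \xi^{(T)}(z'_j)/\xi^{(T)}(z_j) \bigr) \to \mathbb{E}\bigl( \prod_j \xi_\infty(z'_j)/\xi_\infty(z_j) \bigr)$, uniformly for $z_j,z'_j$ in a compact $K \subset \C\setminus\R$; and (iii) identify the limit with the determinantal formula, which is free from Theorem \ref{thm:ratio_formula_xi_infty}. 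Step (ii) would run verbatim as the proof of Proposition \ref{proposition:convergencemomentsratios}, \emph{provided} one has the zeta analogue of Theorem \ref{thm:multi_sup_is_finite}: a uniform bound $\sup_T \mathbb{E}\bigl( \sup_{(z,z')\in K^2} |\xi^{(T)}(z')/\xi^{(T)}(z)|^p \bigr) < \infty$ for every $p>0$.

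\textbf{The uniform moment bound.} To get that bound I would copy the random-matrix argument: $|\xi^{(T)}(z')/\xi^{(T)}(z)| \leq \exp\bigl( |z-z'| \sup_K |\tfrac{-2i\pi}{\log T}\tfrac{\zeta'}{\zeta}| \bigr)$ by integrating the logarithmic derivative along a segment off the real axis, which reduces everything to $\sup_T \mathbb{E}\bigl( \sup_{z\in K} \exp\bigl( p \bigl| \tfrac{-2i\pi}{\log T}\tfrac{\zeta'}{\zeta}\bigl( \tfrac12 + iT\omega - \tfrac{2i\pi z}{\log T} \bigr) \bigr| \bigr) \bigr) < \infty$. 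By Lemma \ref{lemma33} this is, up to the harmless additive $i\pi$, a statement about $\sum_{\tilde\gamma} 1/(z-\tilde\gamma)$ over the rescaled zeros $\tilde\gamma$ of $\zeta$ near height $T\omega$; imitating Lemma \ref{lemma:preparatory} and Proposition \ref{proposition:uniform_tail_control}, it would follow from uniform (in $T$) control of the mean and variance of the number of such zeros in intervals together with the fact that these counts are sums of asymptotically independent Bernoulli variables --- precisely the short-interval second-moment input for zeta zeros.

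\textbf{An alternative route via CFZ.} One can instead aim directly at the scaling limit of the Conrey--Farmer--Zirnbauer ratios conjecture \cite{bib:CFZ08} (or the Conrey--Snaith form \cite{bib:CoSn}). Substituting numerator shifts $\alpha_j = -2i\pi z'_j/\log T$ and denominator shifts $\beta_j = -2i\pi z_j/\log T$, and replacing $\tfrac1T\int_0^T$ by the average over $\omega$, each of the $2^k$ terms of the CFZ sum (indexed by a swap set $S\subseteq\{1,\dots,k\}$) should degenerate as follows: the arithmetic Euler factor $A_\zeta \to 1$ since all shifts vanish; the factors $\zeta(1+\beta_j-\alpha_i)$ have argument $\asymp 1/\log T$, so $\zeta(1+s)\sim 1/s$ produces the Cauchy factors $\log T/(2i\pi(z_i-z'_j))$; and the conductor power $(t/2\pi)^{-\theta_S}$, with $\theta_S$ a linear form in the shifts $\{\alpha_i,\beta_i : i\in S\}$, becomes at $t\asymp T\omega$ a product of phases $e^{2i\pi(z'_j-z_i)}$ on the swapped indices, the averaging over $\omega$ and $T\to\infty$ killing every genuinely oscillatory cross-term; one is then left with $\mathds{1}_{\Im z_i>0}$ on unswapped indices and $e^{2i\pi(z'_j-z_i)}\mathds{1}_{\Im z_i<0}$ on swapped ones (the dichotomy $\Im z_i\gtrless 0$ recording $\Re\beta_i\lessgtr 0$, i.e. which side of the functional equation is used at index $i$). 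Summing over $S$ is then the Borodin--Olshanski--Strahov/Giambelli identity of Theorem \ref{thm:BOS_ratio_formula}, which packages $\sum_S$ of products of single-variable expectations against Cauchy factors into $\det(1/(z_i-z'_j))^{-1}\det\bigl( (\mathds{1}_{\Im z_i>0}+e^{2i\pi(z'_j-z_i)}\mathds{1}_{\Im z_i<0})/(z_i-z'_j) \bigr)$, which is the claimed limit.

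\textbf{Main obstacle.} The hard part is that neither Conjecture \ref{conjecture} nor the CFZ conjecture is proved, so the result will be conditional; and even granting CFZ, the delicate issue will be \emph{uniformity} --- one must show that the CFZ error term remains $o(1)$ when the shifts are shrunk at rate $1/\log T$ (it survives the resummation since $2^k$ is fixed) and that the rescaled $\zeta'/\zeta$ has exponential moments uniform in $T$. Both require short-interval statistics for zeta zeros of GUE strength: already the diagonal case $a=a'$ of the companion second-moment formula for $\xi'_\infty/\xi_\infty$ is, by Goldston--Gonek--Montgomery \cite{bib:GGM01}, equivalent under the Riemann hypothesis to Montgomery's pair-correlation conjecture, so an unconditional proof does not seem within reach of current methods.
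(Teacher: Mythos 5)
The statement is a conjecture and the paper offers no proof of it: it is justified exactly as in your steps (i) and (iii), namely that the right-hand side equals $\E\bigl(\prod_{j=1}^k \xi_\infty(z_j')/\xi_\infty(z_j)\bigr)$ by Theorem \ref{thm:ratio_formula_xi_infty}, so that the statement is the natural transfer of Proposition \ref{proposition:convergencemomentsratios} under Conjecture \ref{conjecture}, and the paper's only further comment is the remark that Rodgers \cite{Rod15} deduces it from the GUE conjectures and the Riemann hypothesis, which parallels your CFZ route and your observation that the needed inputs (e.g.\ a zeta analogue of Theorem \ref{thm:multi_sup_is_finite}) are of GUE strength and hence open. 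Your proposal is therefore correct as a conditional reduction and follows essentially the same reasoning the paper uses to motivate the conjecture.
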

\begin{rmk}
	In a recent work \cite{Rod15}, Rodgers has shown that the GUE conjectures and the Riemann hypothesis imply the above conjecture. 
\end{rmk}

\subsection{Moments of the logarithmic derivative}

We have seen in Section \ref{subsection:log_der} how to compute the expectation of products of the logarithmic derivative of the characteristic polynomial evaluated at different points at the microscopic scale. In particular, it appeared that this method is hard to exploit when one considers a product with three or more factors. On the other hand, one might try to compute such expectations using the ratios formula \eqref{formuleratios}. Indeed, it follows from Theorem \ref*{thm:multi_sup_is_finite} that we can differentiate $\E\left( \prod_{j=1}^k \frac{\xi_\infty(z_j')}{\xi_\infty(z_j)} \right) $ with respect to $z_j'$ and then take $z_j'=z_j$, with the $z_j'$'s all distinct. As an application one can see that the formulas given  Section \ref{subsection:log_der} can be  obtained with this method in a much quicker way. We shall use this approach to establish a general formula for the moments of the logarithmic derivative. 

Before proceeding, let us mention again that the moments of the logarithmic derivative of the characteristic polynomial as well as their asymptotic behavior have already been studied in the random matrix literature in relation with the Riemann zeta function (see e.g.  \cite{bib:CFZ08}, \cite{bib:CoSn} or \cite{bib:CoSn08}). Since the formula for the ratios that is usually used in this literature is different from our formula \eqref{formuleratios}, the formula we shall establish will look different as well.

We will state our main formula at the end of the section after discussing several computational steps.

We assume for the moment that the $z_j$'s and the $z'_j$'s are all distinct and not on the real line. 
If $A$ denote the set of indexes $j$ such that $z_j$ has negative real part, we get
$$\mathbb{E} \left[ \prod_{j=1}^k \frac{ \xi_{\infty}(z'_j)}{ \xi_{\infty}(z_j)} \right] 
= \frac{ \det \left( \frac{e^{2 i\pi (z'_j - z_i) \mathds{1}_{i \in A}}}{z_i - z'_j} \right)_{i,j = 1}^k}
{ \det  \left( \frac{1}{z_i - z'_j} \right)_{i,j = 1}^k}.$$
The denominator is the Cauchy determinant: 
$$ \det  \left( \frac{1}{z_i - z'_j} \right)_{i,j = 1}^k 
= \frac{ \prod_{i < j} (z_j - z_i)  \prod_{i < j} (z'_i - z'_j) }{ \prod_{i,j} (z_i - z'_j)}.$$
Expanding the numerator then gives, after dividing by the Cauchy determinant: 
$$ \mathbb{E} \left[ \prod_{j=1}^k \frac{ \xi_{\infty}(z'_j)}{ \xi_{\infty}(z_j)} \right] 
= \sum_{\sigma \in \mathfrak{S}_k} \epsilon (\sigma) 
\prod_{i \in A} e^{2i \pi (z'_{\sigma(i)}- z_i)} \prod_{i, j \neq \sigma(i)} (z_i - z'_j)
\prod_{i < j} (z_j - z_i)^{-1}  \prod_{i < j} (z'_i - z'_j)^{-1}.$$
This expression is proven for $z_j$, $z'_j$ all distinct: by continuity, it  also holds
for the $z_j$'s distinct on one hand, and the $z'_j$'s distinct on the other hand. 
Now, it is possible, in the last expression, to differentiate inside the expectation, with respect to 
any set of variables. Indeed, since the product of ratios of $\xi_{\infty}$ 
is holomorphic with respect to all the variables
on $\mathbb{C} \backslash \mathbb{R}$, differentiating is equivalent to taking suitable integrals on small circles, by 
using the formula 
$$f'(z) = \frac{1}{2 \pi \epsilon} \int_{0}^{2 \pi} e^{-i \theta} f(z + \epsilon e^{i \theta}) d\theta,$$
and the integrals can be exchanged with the expectation, because all the moments of ratios of 
$\xi_{\infty}$ are uniformly bounded on compact sets of $\mathbb{C} \backslash \mathbb{R}$, by Theorem 3.11. 
We deduce that for $z_1, \dots, z_k$ pairwise distinct and not real: 
$$\mathbb{E} \left[ \prod_{j=1}^k \frac{ \xi'_{\infty}(z_j)}{ \xi_{\infty}(z_j)} \right] 
=
\sum_{\sigma \in \mathfrak{S}_k} \epsilon (\sigma) \frac{\partial^k}{\partial z'_1 \dots \partial z'_k} \left(
\prod_{i \in A} e^{2i \pi (z'_{\sigma(i)}- z_i)} \right.$$ $$ \left. \prod_{i, j \neq \sigma(i)} (z_i - z'_j)
\prod_{i < j} (z_j - z_i)^{-1}  \prod_{i < j} (z'_i - z'_j)^{-1}
\right)_{z'_1 = z_1, \dots, z'_k = z_k}.$$
For each permutation $\sigma$, we have a multiple derivative of a product, so we have to add all the possible terms 
obtained by distributing the derivations on the different factors. 
If $j$ is not in the set $F(\sigma)$ of fixed points of $\sigma$, then the product contains the factor $z_j - z'_j$. 
If this factor is not differentiated with respect to $z'_j$, the corresponding term vanishes by taking 
$z'_j = z_j$. If this factor is differentiated with respect to $z'_j$, it becomes equal to $-1$. Hence, we 
get
$$ \mathbb{E} \left[ \prod_{j=1}^k \frac{ \xi'_{\infty}(z_j)}{ \xi_{\infty}(z_j)} \right] 
=
\prod_{i < j} (z_j - z_i)^{-1} \sum_{\sigma \in \mathfrak{S}_k} \epsilon (\sigma) (-1)^{k - |F(\sigma)|} ... $$ $$
... \times
\frac{\partial^{|F(\sigma)|}}{\prod_{j \in F(\sigma)} \partial z'_j}  \left(
\prod_{i \in A} e^{2i \pi (z'_{\sigma(i)}- z_i)} \prod_{i, j \neq i, \sigma(i)} (z_i - z'_j)
\prod_{i < j} (z'_i - z'_j)^{-1}
\right)_{z'_1 = z_1, \dots, z'_k = z_k}.$$
We have to differentiate a product of three factors (which are products themselves). 
Hence, we can write the result as a sum of terms indexed by partitions of $F(\sigma)$ into three subsets $U$,
$V$, $W$. For a given partition, the corresponding term can be nonzero only if the first product depends on all the 
variables indexed by $U$, which means that for all $j \in U$, there exists $i \in A$ such that
$\sigma(i)= j$, i.e. $i = \sigma^{-1}(j)$. Since $U$ is included in $F(\sigma)$, we have $j \in F(\sigma)$ and 
then $\sigma^{-1}(j) = j$, i.e. $i = j$ and $j \in A$. Hence, we only need to consider partitions for which 
$U \subset A$. Moreover, if this condition is satisfied, each derivation of the first term simply multiplies 
it by $2 i \pi$, and then we get: 
$$ \mathbb{E} \left[ \prod_{j=1}^k \frac{ \xi'_{\infty}(z_j)}{ \xi_{\infty}(z_j)} \right] 
=
\prod_{i < j} (z_j - z_i)^{-1} \sum_{\sigma \in \mathfrak{S}_k} \epsilon (\sigma) (-1)^{k - |F(\sigma)|} 
\prod_{i \in A} e^{2i \pi (z_{\sigma(i)}- z_i)}... $$ $$
... \times 
\sum_{U \sqcup V \sqcup W = F(\sigma), U \subset A } (2 i \pi)^{|U|} 
\frac{\partial^{|V|}}{\prod_{j \in V} \partial z'_j}  \left(
\prod_{i, j \neq i, \sigma(i)} (z_i - z'_j) \right)_{z'_1 = z_1, \dots, z'_k = z_k}
... $$ $$ ... \times
\frac{\partial^{|W|}}{\prod_{j \in W} \partial z'_j}  \left( \prod_{i < j} (z'_i - z'_j)^{-1}
\right)_{z'_1 = z_1, \dots, z'_k = z_k}.$$
In order to compute the differential with respect to $z'_j$, $j \in V$ in this formula, we need, for each 
$j \in V$, to remove one of the factors  $z_i - z'_j$ and to replace it by $-1$. The index $i$ in the removed factor 
is free as soon as it is different from $j$ and $\sigma^{-1}(j)$, now $j =  \sigma^{-1}(j)$ since 
$j \in V \subset F(\sigma)$. Hence, we get: 
$$ \frac{\partial^{|V|}}{\prod_{j \in V} \partial z'_j}  \left(
\prod_{i, j \neq i, \sigma(i)} (z_i - z'_j) \right)_{z'_1 = z_1, \dots, z'_k = z_k}
$$ $$= (-1)^{|V|} \sum_{\forall j \in V, w_j \in \{1, \dots, k\} \backslash \{j\}}  \, \prod_{j \notin V, i \neq j, \sigma^{-1}(j)} 
\,(z_i - z_j) \prod_{j \in V, i \neq j, w_j} (z_i - z_j).$$
The computation of the derivation with respect to the indices in $W$ is done by using the following lemma: 
\begin{lemma}
	For $x_1 \neq x_2 \neq \dots \neq x_k$, let 
	$$\Delta^{-1}(x_1, \dots, x_k) := \prod_{1 \leq i < j \leq k} (x_j - x_i)^{-1}.$$
	Then, for $1 \leq m \leq k$
	$$\frac{\partial^m}{ \prod_{j=1}^m \partial x_j}  \Delta^{-1}(x_1, \dots, x_k)
	=  \Delta^{-1}(x_1, \dots, x_k) \sum_{i_1 \neq 1, \dots, i_m \neq m} 
	2^{N(i_1, \dots, i_m)} \prod_{p=1}^m (x_{i_p} - x_p)^{-1},$$
	where $N(i_1, \dots, i_m)$ denotes the number of indices $p \in \{1, \dots, m\}$ such 
	that $i_p \in \{1, \dots, p\}$ and 
	$i_{i_p} = p$. 
\end{lemma}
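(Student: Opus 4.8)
The plan is to prove the lemma by induction on $m$, using logarithmic differentiation to exploit the product structure. Write $\partial_j:=\partial/\partial x_j$ and introduce the iterated logarithmic derivative
$$ f_m \;:=\; \frac{\partial_1\cdots\partial_m\,\Delta^{-1}(x_1,\dots,x_k)}{\Delta^{-1}(x_1,\dots,x_k)}, $$
so that the assertion of the lemma is exactly $f_m=\sum_{i_1\neq1,\dots,i_m\neq m}2^{N(i_1,\dots,i_m)}\prod_{p=1}^m(x_{i_p}-x_p)^{-1}$. The base case $m=1$ is immediate: since $\partial_1\log\Delta^{-1}=\sum_{j\neq1}(x_j-x_1)^{-1}$ and $N(i_1)=0$ for every $i_1\neq1$, the formula holds. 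For the inductive step I would first record the recursion obtained by differentiating $\partial_1\cdots\partial_{m-1}\Delta^{-1}=\Delta^{-1}f_{m-1}$ with respect to $x_m$. Using $\partial_m\log\Delta^{-1}=\sum_{i\neq m}(x_i-x_m)^{-1}$ (a one-line computation from the product formula for $\Delta^{-1}$), this gives
$$ f_m \;=\; \Bigl(\sum_{i_m\neq m}\frac{1}{x_{i_m}-x_m}\Bigr)\,f_{m-1}\;+\;\partial_m f_{m-1}. $$

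Next I would substitute the induction hypothesis for $f_{m-1}$ and analyse the two terms separately. The first term immediately becomes $\sum_{i_1\neq1,\dots,i_m\neq m}2^{N(i_1,\dots,i_{m-1})}\prod_{p=1}^m(x_{i_p}-x_p)^{-1}$, the coefficient carrying only the truncated index set $(i_1,\dots,i_{m-1})$. For $\partial_m f_{m-1}$ one observes that, in the monomial $\prod_{p=1}^{m-1}(x_{i_p}-x_p)^{-1}$, the only factors depending on $x_m$ are those with $i_p=m$; differentiating such a factor produces $-(x_m-x_p)^{-2}$. Performing the reindexing $i_m:=p$ — a bijection between pairs $\bigl((i_1,\dots,i_{m-1}),p\bigr)$ with $i_p=m$ and tuples $(i_1,\dots,i_m)$ satisfying $i_r\neq r$ for all $r$, $i_m<m$ and $i_{i_m}=m$ — and using the algebraic identity $-(x_m-x_p)^{-2}=(x_{i_p}-x_p)^{-1}(x_{i_m}-x_m)^{-1}$ to reassemble the missing factor with the correct sign, one obtains
$$ \partial_m f_{m-1} \;=\; \sum_{\substack{(i_1,\dots,i_m):\; i_r\neq r\ \forall r\\ i_m<m,\; i_{i_m}=m}} 2^{N(i_1,\dots,i_{m-1})}\prod_{q=1}^{m}\frac{1}{x_{i_q}-x_q}. $$

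Finally I would combine the two contributions and invoke the combinatorial identity, valid for every tuple with $i_r\neq r$,
$$ N(i_1,\dots,i_m)\;=\;N(i_1,\dots,i_{m-1})+\mathds{1}_{\{\,i_m<m,\;i_{i_m}=m\,\}}, $$
which is checked directly: for $p<m$ the requirement $i_p\in\{1,\dots,p\}$ together with $i_p\neq p$ forces $i_p\le m-2$, so the condition ``$i_p\in\{1,\dots,p\}$ and $i_{i_p}=p$'' depends only on $i_1,\dots,i_{m-1}$ and is unchanged, while the index $p=m$ contributes precisely $\mathds{1}_{\{i_m<m,\;i_{i_m}=m\}}$ (the case $i_m>m$ contributing nothing on either side). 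Since then $2^{N(i_1,\dots,i_{m-1})}+2^{N(i_1,\dots,i_{m-1})}\mathds{1}_{\{i_m<m,\,i_{i_m}=m\}}=2^{N(i_1,\dots,i_m)}$, the sum of the two terms collapses to the claimed expression, completing the induction. I expect the genuinely delicate point to be the reindexing in the treatment of $\partial_m f_{m-1}$: one must keep careful track of which factor is removed by the differentiation, verify that the extra $(x_m-x_p)^{-1}$ produced by the chain rule recombines with the surviving factors to rebuild the full product $\prod_{q=1}^m(x_{i_q}-x_q)^{-1}$, and confirm that the resulting map onto $m$-tuples with $i_m<m$ and $i_{i_m}=m$ is a bijection; the rest is bookkeeping.
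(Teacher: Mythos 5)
Your proof is correct and follows essentially the same route as the paper's: an induction on $m$, expanding $\partial_m$ by Leibniz on the product of $\Delta^{-1}$ and the sum, noting that the only $x_m$-dependent factors are those with $i_p=m$, reindexing the resulting extra factor as $i_m:=p$, and recombining the two contributions via $N(i_1,\dots,i_m)=N(i_1,\dots,i_{m-1})+\mathds{1}_{\{i_m<m,\ i_{i_m}=m\}}$. The only difference is cosmetic: you phrase the recursion in terms of the ratio $f_m$, while the paper manipulates $\partial_1\cdots\partial_{m-1}\Delta^{-1}=\Delta^{-1}S_{m-1}$ directly.
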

\begin{proof}
	For $m = 1$, we obtain $k-1$ terms, obtained by differentiating each of the factors $(x_j - x_1)^{-1}$ with 
	respect to $x_1$. This multiplies the factor by $(x_j - x_1)^{-1}$, since the derivative is  $(x_j - x_1)^{-2}$.
	Hence, 
	$$ \frac{\partial^m}{\partial x_1}  \Delta^{-1}(x_1, \dots, x_k)
	=  \Delta^{-1}(x_1, \dots, x_k) \sum_{i \neq 1} (x_i - x_1)^{-1},$$
	which proves the formula for $m = 1$, since $N(i) = 0$ for all $i \neq 1$. 
	Let us now deduce the formula for $m \in \{2, \dots, k\}$ from the formula for $m-1$. 
	If the claimed forumula is true for $m-1$, 
	$$ \frac{\partial^m}{ \prod_{j=1}^m \partial x_j}  \Delta^{-1}(x_1, \dots, x_k)
	= \sum_{i_1 \neq 1, \dots, i_{m-1} \neq m-1} 
	2^{N(i_1, \dots, i_{m-1})} \prod_{p=1}^{m-1} (x_{i_p} - x_p)^{-1}
	\frac{\partial}{\partial x_m} \Delta^{-1}(x_1, \dots, x_k) 
	$$ $$+ \Delta^{-1}(x_1, \dots, x_k) \sum_{i_1 \neq 1, \dots, i_{m-1} \neq m-1} 2^{N(i_1, \dots, i_{m-1})}
	\frac{\partial}{\partial x_m} \left(\prod_{p=1}^{m-1} (x_{i_p} - x_p)^{-1} \right).$$
	The derivative in the first term gives terms with an extra factor $(x_{i_m} - x_m)^{-1}$, for all $i_m \neq m$. 
	The derivative in the second term gives terms with an extra factor $(x_p - x_m)^{-1}$, for all $p 
	\in \{1, \dots, m-1\}$ such that $i_p = m$. Hence 
	$$ \frac{\partial^m}{ \prod_{j=1}^m \partial x_j}  \Delta^{-1}(x_1, \dots, x_k) = 
	\sum_{i_1 \neq 1, \dots, i_{m} \neq m} 2^{N(i_1, \dots, i_{m})} \prod_{p=1}^{m-1} (x_{i_p} - x_p)^{-1}
	\Delta^{-1}(x_1, \dots, x_k)$$ $$
	+ \Delta^{-1}(x_1, \dots, x_k)  \sum_{i_1 \neq 1, \dots, i_{m-1} \neq m-1}
	2^{N(i_1, \dots, i_{m-1})} \sum_{p \in \{1, \dots, m-1\}, 
		i_p = m}  (x_p - x_m)^{-1} \prod_{q=1}^{m-1} (x_{i_q} - x_q)^{-1}.$$
	Now, if the index $p$ in the last sum is denoted $i_m$, the constraint on $i_m$ is that $i_m \in \{1, \dots, m-1\}$
	and $i_{i_m} = m$, or equivalently, $i_m \neq m$, $i_m \in \{1, \dots, m\}$, $i_{i_m} = m$. 
	Moreover, the factor $(x_p - x_m)^{-1}$ is equal to  $(x_{i_m} - x_m)^{-1}$. Hence, 
	$$ \frac{\partial^m}{ \prod_{j=1}^m \partial x_j}  \Delta^{-1}(x_1, \dots, x_k) = 
	\sum_{i_1 \neq 1, \dots, i_{m} \neq m} 2^{N(i_1, \dots, i_{m-1})} \prod_{p=1}^{m} (x_{i_p} - x_p)^{-1}
	\Delta^{-1}(x_1, \dots, x_k)$$ $$
	+ \Delta^{-1}(x_1, \dots, x_k)  \sum_{i_1 \neq 1, \dots, i_{m} \neq m}
	2^{N(i_1, \dots, i_{m-1})} \mathds{1}_{i_m \in \{1, \dots,m \}, i_{i_m} = m}
	\prod_{p=1}^{m} (x_{i_p} - x_p)^{-1}.$$
	and then
	\begin{align*}
	& \frac{\partial^m}{ \prod_{j=1}^m \partial x_j}  \Delta^{-1}(x_1, \dots, x_k)
	\\ & =  \Delta^{-1}(x_1, \dots, x_k) \sum_{i_1 \neq 1, \dots, i_m \neq m} 
	2^{N(i_1, \dots, i_{m-1})} (1 + \mathds{1}_{i_m \in \{1, \dots,m \}, i_{i_m} = m}) \prod_{p=1}^m (x_{i_p} - x_p)^{-1}
	\\ & = \Delta^{-1}(x_1, \dots, x_k) \sum_{i_1 \neq 1, \dots, i_m \neq m} 
	2^{N(i_1, \dots, i_{m-1}) +\mathds{1}_{i_m \in \{1, \dots,m \}, i_{i_m} = m} } \prod_{p=1}^m (x_{i_p} - x_p)^{-1}
	\\ & = \Delta^{-1}(x_1, \dots, x_k) \sum_{i_1 \neq 1, \dots, i_m \neq m} 
	2^{N(i_1, \dots, i_{m})} \prod_{p=1}^m (x_{i_p} - x_p)^{-1}.
	\end{align*}
\end{proof}
	From this lemma, we immediately get
	$$ \frac{\partial^{|W|}}{\prod_{j \in W} \partial z'_j}  \left( \prod_{i < j} (z'_i - z'_j)^{-1}
	\right)_{z'_1 = z_1, \dots, z'_k = z_k}
	$$ $$ =  \left( \prod_{i < j} (z_i - z_j)^{-1} \right)  \sum_{\forall j \in W, w_j \in \{1, \dots, k\}
		\backslash \{j\}} 2^{N(w_j, j \in W)} \prod_{j \in W} (z_{w_j} - z_j)^{-1},$$
	where $N(w_j, j \in W)$ denotes the number of pairs $\{j, k\} \subset W$,
	such that $w_j = k$ and $w_k = j$, in other words $1/2$ of the number of 
	$j \in W$ such that $w_j \in W$ and $w_{w_j} = j$. 
	Hence, we deduce 
	$$ \mathbb{E} \left[ \prod_{j=1}^k \frac{ \xi'_{\infty}(z_j)}{ \xi_{\infty}(z_j)} \right] 
	=
	\prod_{i \neq j} (z_j - z_i)^{-1} \sum_{\sigma \in \mathfrak{S}_k} \epsilon (\sigma) (-1)^{k - |F(\sigma)|} 
	\prod_{i \in A} e^{2i \pi (z_{\sigma(i)}- z_i)}... $$ $$
	... \times 
	\sum_{U \sqcup V \sqcup W = F(\sigma), U \subset A } (2 i \pi)^{|U|} 
	(-1)^{|V|} \sum_{\forall j \in V \cup W, w_j \in \{1, \dots, k\} \backslash \{j\}} 
	2^{N(w_j, j \in W)} \, ... $$ $$... \times \prod_{j \notin V, i \neq j, \sigma^{-1}(j)} (z_i - z_j)
	\, \prod_{j \in V, i \neq j, w_j} (z_i - z_j) \prod_{j \in W} (z_{w_j} - z_j)^{-1}$$
	The sum indexed by $U, V, W$ can be simplified by considering $X = V \cup W$, and by using the fact that 
	$U$ is the complement of $X$ in $F(\sigma)$. We have $|U| = |F(\sigma)| - |X|$, and 
	by splitting the following product in two factors corresponding to $j \in W \subset F(\sigma)$ and $j \notin V \cup
	W = X$,
	$$\prod_{j \notin V, i \neq j, \sigma^{-1}(j)} (z_i - z_j)
	= \prod_{j \in W, i \neq j} (z_i - z_j) \prod_{j \notin X, i \neq j, \sigma^{-1}(j)} (z_i - z_j),$$
	which implies 
	$$  \prod_{j \notin V, i \neq j, \sigma^{-1}(j)} (z_i - z_j)
	\, \prod_{j \in V, i \neq j, w_j} (z_i - z_j) \prod_{j \in W} (z_{w_j} - z_j)^{-1}
	$$ $$ = \prod_{j \in X, i \neq j, w_j} (z_i - z_j) \prod_{j \notin X, i \neq j, \sigma^{-1}(j)} (z_i - z_j).$$
	We deduce 
	$$ \mathbb{E} \left[ \prod_{j=1}^k \frac{ \xi'_{\infty}(z_j)}{ \xi_{\infty}(z_j)} \right] 
	=
	\prod_{i \neq j} (z_j - z_i)^{-1} \sum_{\sigma \in \mathfrak{S}_k} \epsilon (\sigma) (-1)^{k - |F(\sigma)|} 
	\prod_{i \in A} e^{2i \pi (z_{\sigma(i)}- z_i)}... $$ $$
	... \times  \sum_{F(\sigma) \backslash A \subset X \subset F(\sigma) } (2 i \pi)^{|F(\sigma)| - |X|} 
	\sum_{\forall j \in X, w_j \in \{1, \dots, k\} \backslash \{j\}} 
	\prod_{j \in X, i \neq j, w_j} (z_i - z_j)... $$ $$
	... \times  \prod_{j \notin X, i \neq j, \sigma^{-1}(j)} (z_i - z_j)
	\sum_{W \subset X} (-1)^{|X| - |W|} 2^{N(w_j, j \in W)}.$$
	Let us now compute the last sum indexed by $W$. If there exists $j \in X$ such that $w_j \notin X$ or 
	$w_{w_j} \neq j$, then for all $W \subset X$ such that $j \notin W$, we have
	$$2 N(w_i, i \in W \cup \{j\}) = \sum_{i \in W} \mathds{1}_{w_i \in W \cup \{j\}, w_{w_i} = i} + 
	\mathds{1}_{w_j \in W \cup \{j\}, w_{w_j} = j}.$$
	The last indicator function is equal to zero since $w_j \notin X$ or $w_{w_j} \neq j$ by assumption. Hence,
	$$ 2 N(w_i, i \in W \cup \{j\}) = \sum_{i \in W} \mathds{1}_{w_i \in W , w_{w_i} = i}
	+ \sum_{i \in W} \mathds{1}_{w_i = j , w_{w_i} = i}.$$
	In the second sum, the indicator functions are also zero: otherwise we would have $w_j = w_{w_i} = i \in W$, 
	and $w_{w_j} = w_i = j$. Hence 
	$$  N(w_i, i \in W \cup \{j\}) = N(w_i, i \in W )$$ and 
	\begin{align*}&  \sum_{W \subset X} (-1)^{|X| - |W|} 2^{N(w_i, i \in W)}
	\\ &  = \sum_{W \subset X, j \notin W} \left( (-1)^{|X| - |W|} 2^{N(w_i, i \in W)} + 
	(-1)^{|X| - |W| - 1} 2^{N(w_i, i \in W \cup \{j\} )} \right)
	\\ & = \sum_{W \subset X, j \notin W} \left( (-1)^{|X| - |W|} 2^{N(w_i, i \in W)} + 
	(-1)^{|X| - |W| - 1} 2^{N(w_i, i \in W)} \right) = 0.
	\end{align*}
	We can then restrict our computations to the case where $w_j \in X$ and $w_{w_j} = j$ for all $j \in X$: 
	in other words $w_j = \tau(j)$, where $\tau$ is an involution of $X$ without fixed point.
	If $\mathcal{I}_X$ denotes the set of all involutions of $X$ without fixed point (in particular $\mathcal{I}_X$ is 
	empty if $|X|$ is odd), we get 
	$$ \mathbb{E} \left[ \prod_{j=1}^k \frac{ \xi'_{\infty}(z_j)}{ \xi_{\infty}(z_j)} \right] 
	=
	\prod_{i \neq j} (z_j - z_i)^{-1} \sum_{\sigma \in \mathfrak{S}_k} \epsilon (\sigma) (-1)^{k - |F(\sigma)|} 
	\prod_{i \in A} e^{2i \pi (z_{\sigma(i)}- z_i)}... $$ $$
	... \times  \sum_{F(\sigma) \backslash A \subset X \subset F(\sigma) } (2 i \pi)^{|F(\sigma)| - |X|} 
	\sum_{\tau \in \mathcal{I}_X} 
	\prod_{j \in X, i \neq j, \tau(j)} (z_i - z_j)... $$ $$
	... \times  \prod_{j \notin X, i \neq j, \sigma^{-1}(j)} (z_i - z_j)
	\sum_{W \subset X} (-1)^{|W|} 2^{N(\tau(j), j \in W)}.$$
	In the sum in $W$, we have replaced $(-1)^{|X| - |W|}$ by $(-1)^{|W|}$, since $\tau$ can exist only for 
	$|X|$ even. 
	Now, if $X_1, \dots, X_h$ are the supports of the cycles of $\tau$ ($h = |X|/2$), we have for 
	all $W \subset X$, 
	$$(-1)^{|W|} = \prod_{p= 1}^h (-1)^{|W \cap X_p|}, \, 2^{N(\tau(j), j \in W)} 
	= \prod_{p=1}^h 2^{\mathds{1}_{W \cap X_p = X_p}}.$$
	Hence, 
	\begin{align*} \sum_{W \subset X} (-1)^{|W|} 2^{N(\tau(j), j \in W)}
	& = \sum_{W_1 \subset X_1, \dots, W_h \subset X_h} 
	\prod_{p=1}^h (-1)^{|W_p|} 2^{\mathds{1}_{W_p = X_p}} 
	\\ & = \prod_{p = 1}^h \sum_{W_p \subset X_p} (-1)^{|W_p|} 2^{\mathds{1}_{W_p = X_p}} 
	= \prod_{p = 1}^h (1 - 1 - 1 + 2) = 1.
	\end{align*}
	We know have 
	$$ \mathbb{E} \left[ \prod_{j=1}^k \frac{ \xi'_{\infty}(z_j)}{ \xi_{\infty}(z_j)} \right] 
	=
	\prod_{i \neq j} (z_i - z_j)^{-1} \sum_{\sigma \in \mathfrak{S}_k} \epsilon (\sigma) (-1)^{k - |F(\sigma)|} 
	\prod_{i \in A} e^{2i \pi (z_{\sigma(i)}- z_i)}... $$ $$
	... \times  \sum_{F(\sigma) \backslash A \subset X \subset F(\sigma), |X| \in 2 
		\mathbb{Z} } (2 i \pi)^{|F(\sigma)| - |X|} 
	\sum_{\tau \in \mathcal{I}_X} 
	\prod_{j \in X, i \neq j, \tau(j)} (z_i - z_j) \prod_{j \notin X, i \neq j, \sigma^{-1}(j)} (z_i - z_j),$$
	which can be simplified in 
	$$ \mathbb{E} \left[ \prod_{j=1}^k \frac{ \xi'_{\infty}(z_j)}{ \xi_{\infty}(z_j)} \right] 
	= \sum_{\sigma \in \mathfrak{S}_k} \epsilon (\sigma) (-1)^{k - |F(\sigma)|} 
	\prod_{j \notin F(\sigma)} (z_{\sigma^{-1}(j)} - z_j)^{-1} ... $$ $$
	... \times e^{2i \pi \sum_{j \in A} (z_{\sigma(j)}- z_j)} \sum_{F(\sigma) \backslash A \subset X \subset F(\sigma), |X| \in 2 
		\mathbb{Z} } (2 i \pi)^{|F(\sigma)| - |X|} 
	\sum_{\tau \in \mathcal{I}_X} 
	\prod_{j \in X} (z_{\tau(j)} - z_j)^{-1},$$

Let us now reorder this sum in function of the permutation $\rho = \sigma \circ \tau$. The condition
$ F(\sigma) \backslash A \subset X \subset F(\sigma)$ means that only the points in $A$ can be fixed by $\rho$. 
Moreover, for a given $\rho$, $\sigma$ can be any permutation obtained by removing some of the 
$2$-cycles of $\rho$. If $S(\rho)$ denotes the set of such permutations $\sigma$, and if for $\sigma \in S(\rho)$, 
we denote by $N(\sigma, \rho)$ the number of $2$-cycles which are removed, we get 
$$  \mathbb{E} \left[ \prod_{j=1}^k \frac{ \xi'_{\infty}(z_j)}{ \xi_{\infty}(z_j)} \right] 
=(-1)^{k}\sum_{\rho \in \mathfrak{S}_k, F(\rho) \subset A} 
\epsilon (\rho) (-2 i \pi)^{ |F(\rho)|} 
...$$ $$... \times \prod_{j \notin F(\rho)} (z_{\rho^{-1}(j)} - z_j)^{-1} 
\sum_{\sigma \in S(\rho)} (-1)^{N(\sigma, \rho)}  e^{2i \pi \sum_{j \in A} (z_{\sigma(j)}- z_j)}.$$
Let us suppose that $\rho$ has a $2$-cycle completely outside or inside $A$. 
In this case, if $\sigma, \sigma' \in S(\rho)$ differ only by this cycle, 
$$(-1)^{N(\sigma, \rho)}  e^{2i \pi \sum_{j \in A} (z_{\sigma(j)}- z_j)}
= - (-1)^{N(\sigma', \rho)}  e^{2i \pi \sum_{j \in A} (z_{\sigma'(j)}- z_j)}$$
Hence, the sum for $\sigma \in S(\rho)$ vanishes. Otherwise, the $2$-cycles of $\rho$
are of the form $(a,b)$ where $a \in A$ and $b \notin A$, and removing such a cycle
removes a factor $- e^{2i \pi (z_b-z_a)}$ in the term
$$(-1)^{N(\sigma, \rho)}  e^{2i \pi \sum_{j \in A} (z_{\sigma(j)}- z_j)}.$$
Hence, if $\mathfrak{S}_{k, A}$ denotes the set of permutations of order $k$ such that 
all the fixed points are in $A$ and all the $2$-cycles have one element in $A$ and one outside $A$,
and if for $\rho \in \mathfrak{S}_{k, A}$, $G(\rho,A)$ is the set of elements in $A$ in the $2$-cycles of $\rho$, then 
we get 
$$ \mathbb{E} \left[ \prod_{j=1}^k \frac{ \xi'_{\infty}(z_j)}{ \xi_{\infty}(z_j)} \right] 
=(-1)^{k}\sum_{\rho \in \mathfrak{S}_{k,A}} 
\epsilon (\rho) (-2 i \pi)^{ |F(\rho)|} 
...$$ $$... \times \prod_{j \notin F(\rho)} (z_{\rho^{-1}(j)} - z_j)^{-1} 
e^{2i \pi \sum_{j \in A} (z_{\rho(j)}- z_j)} \prod_{j  \in G(\rho,A)} \left(1 - e^{2i \pi (z_j - z_{\rho(j)})}
\right).$$
We now summarize the above in the following Proposition:

\begin{proposition}
	Let $z_1,\cdots,z_n$ be distinct complex numbers in $\mathbb  C\setminus\mathbb R$, and let $A$ denote the set of indexes $j$ such that $z_j$ has negative real part.
Let us also note $\mathfrak{S}_{k, A}$  the set of permutations of order $k$ such that 
all the fixed points are in $A$ and all the $2$-cycles have one element in $A$ and one outside $A$, and let $F(\sigma)$ be the set of fixed points of $\sigma$. For $\rho \in \mathfrak{S}_{k, A}$, we note $G(\rho,A)$  the set of elements in $A$ in the $2$-cycles of $\rho$. Then the following formula for the moments of the logarithmic derivative holds: 
$$ \mathbb{E} \left[ \prod_{j=1}^k \frac{ \xi'_{\infty}(z_j)}{ \xi_{\infty}(z_j)} \right] 
=(-1)^{k}\sum_{\rho \in \mathfrak{S}_{k,A}} 
\epsilon (\rho) (-2 i \pi)^{ |F(\rho)|} 
...$$ $$... \times \prod_{j \notin F(\rho)} (z_{\rho^{-1}(j)} - z_j)^{-1} 
e^{2i \pi \sum_{j \in A} (z_{\rho(j)}- z_j)} \prod_{j  \in G(\rho,A)} \left(1 - e^{2i \pi (z_j - z_{\rho(j)})}
\right).$$
\end{proposition}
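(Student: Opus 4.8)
The plan is to obtain the formula from the ratio formula of Theorem~\ref{formuleratios} by differentiation, organising the resulting combinatorial sum in several stages. First I would rewrite that identity: dividing it by the Cauchy determinant $\det\left(\frac{1}{z_i-z'_j}\right)_{i,j=1}^k$ and substituting the closed form of $\E\left(\xi_\infty(z'_j)/\xi_\infty(z_i)\right)$ (which is $1$ or $e^{2i\pi(z'_j-z_i)}$ according to the sign of $\Im z_i$), then expanding the numerator determinant over $\sigma\in\mathfrak{S}_k$, gives, for $z_1,\dots,z_k,z'_1,\dots,z'_k\notin\R$ pairwise distinct,
$$\E\left[\prod_{j=1}^k\frac{\xi_\infty(z'_j)}{\xi_\infty(z_j)}\right]
=\sum_{\sigma\in\mathfrak{S}_k}\epsilon(\sigma)\prod_{i\in A}e^{2i\pi(z'_{\sigma(i)}-z_i)}\prod_{i,\,j\neq\sigma(i)}(z_i-z'_j)\prod_{i<j}(z_j-z_i)^{-1}\prod_{i<j}(z'_i-z'_j)^{-1},$$
an identity which extends, by the continuity supplied by the uniform convergence in Proposition~\ref{proposition:convergencemomentsratios}, to the case where only the $z_j$ are pairwise distinct and only the $z'_j$ are pairwise distinct.

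Next I would apply $\partial^k/\partial z'_1\cdots\partial z'_k$ to both sides and let $z'_j=z_j$. Exchanging this operator with $\E$ is legitimate because all moments of ratios of $\xi_\infty$ are uniformly bounded on compact subsets of $\C\setminus\R$ (Theorem~\ref{thm:multi_sup_is_finite}), so each derivative can be written as a Cauchy integral over a small circle and Fubini applies; the left-hand side then becomes $\E\left[\prod_j\xi'_\infty(z_j)/\xi_\infty(z_j)\right]$. On the right-hand side, for each $\sigma$ the summand is a product of three groups of factors — the exponential $\prod_{i\in A}e^{2i\pi(z'_{\sigma(i)}-z_i)}$, the polynomial $\prod_{i,\,j\neq\sigma(i)}(z_i-z'_j)$, which for $j\notin F(\sigma)$ carries the distinguished factor $z_j-z'_j$, and the reciprocal Vandermonde $\prod_{i<j}(z'_i-z'_j)^{-1}$ — so the Leibniz rule produces a sum over all distributions of the $k$ derivations. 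Putting $z'_j=z_j$ kills every term in which a factor $z_j-z'_j$ with $j\notin F(\sigma)$ is left undifferentiated; hence the derivation in each such variable must hit that factor (giving $-1$), and only the $|F(\sigma)|$ derivations over the remaining variables are free.

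It then remains to see how those free derivations are spread over the three groups: differentiating the exponential multiplies by $2i\pi$ and forces the index into $A$; differentiating $\prod_{i,\,j\neq i,\sigma(i)}(z_i-z'_j)$ replaces one factor by $-1$ with a free choice of the other index; and differentiating $\prod_{i<j}(z'_i-z'_j)^{-1}$ is handled by the combinatorial lemma on iterated derivatives of $\Delta^{-1}$, which I would prove by induction on the number of derivations while tracking the power of $2$ arising from mutually paired indices. Writing $F(\sigma)=U\sqcup V\sqcup W$ for where the derivations land, setting $X=V\cup W$ and summing over $W\subset X$, one encounters $\sum_{W\subset X}(-1)^{|X|-|W|}2^{N(w_j,\,j\in W)}$, which vanishes unless the choice function $j\mapsto w_j$ restricts on $X$ to a fixed-point-free involution $\tau$, in which case it factors over the two-element cycle supports of $\tau$ and equals $1$. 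Reindexing by $\rho=\sigma\circ\tau$ (so that, for fixed $\rho$, $\sigma$ ranges over the permutations obtained by deleting some of the $2$-cycles of $\rho$) and observing that any $2$-cycle of $\rho$ contained entirely in $A$ or entirely in its complement makes the alternating sum over these $\sigma$ telescope to zero, one is left exactly with $\rho\in\mathfrak{S}_{k,A}$; the surviving $2$-cycles $(a,b)$ with $a\in A$, $b\notin A$ contribute the factor $\prod_{j\in G(\rho,A)}\left(1-e^{2i\pi(z_j-z_{\rho(j)})}\right)$, and one obtains the stated identity.

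I expect the genuine difficulty to lie entirely in these two nested cancellations — the vanishing of the $W$-sum away from involutions, and the vanishing of the $\sigma$-sum for $2$-cycles not straddling $A$ — and in carrying the various signs ($\epsilon(\sigma)$, $(-1)^{k-|F(\sigma)|}$, $(-1)^{|V|}$, and finally $\epsilon(\rho)$ with the overall $(-1)^k$) correctly through the reindexing, so that the surviving terms are cleanly parametrised by $\mathfrak{S}_{k,A}$, $F(\rho)$ and $G(\rho,A)$. The Cauchy-determinant expansion, the Leibniz bookkeeping, and the proof of the $\Delta^{-1}$ lemma are routine by comparison.
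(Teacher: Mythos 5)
Your proposal reproduces, step for step, the argument given in the paper: expansion of the ratio formula over $\mathfrak{S}_k$ after dividing by the Cauchy determinant, the Leibniz distribution of $\partial^k/\partial z'_1\cdots\partial z'_k$ followed by setting $z'_j=z_j$, the inductive lemma for iterated derivatives of $\Delta^{-1}$ tracking the power of $2$, the vanishing of the $W$-sum off fixed-point-free involutions, and the final reindexing $\rho=\sigma\circ\tau$ with the telescoping of $2$-cycles lying entirely inside or outside $A$. (You implicitly, and correctly, take $A=\{j:\Im(z_j)<0\}$ in accordance with the ratio formula, even though the statement as written says "real part"; this matches the paper's actual computation.)
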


\begin{rmk}
For $k=1$ and $k=2$, we recover what we obtained before.
\end{rmk}

\section{Mesoscopic fluctuations and blue noise}
\label{section:mesoscopic}

The function $\frac{ \xi_{\infty}' }{ \xi_{\infty} }(z)-i\pi $ studied in the pervious section was recently considered by Aizenman and Warzel in \cite{bib:AiWa}. They prove that for any $z \in \R$, the value of this function follows the Cauchy distribution: in fact, their result applies
to  more general point processes than the  sine kernel process. In the present 
 paper, we  deal with the same function but away from the real line. In this section we shall view this function in the framework of linear statistics and will study its fluctuations on a mesoscopic level. It is may be worth noting here that  $\frac{ \xi_{\infty}' }{ \xi_{\infty} }(z)-i\pi $ also has a spectral interpretation: informally, it is the trace of the resolvent of the (unbounded) random Hermitian operator whose spectrum consists exactly of the points $(y_k)_{k\in\Z}$ that we constructed in \cite{bib:MNN}. This interpretation is informal since the series corresponding to the resolvant is not absolutely convergent.

For $s \geq 0$, we consider the Sobolev space: 
$$ H^{s} := \left\{ f \in L^2\left( \R, \C \right) \ \big | \ \int_{\R} \left|\hat{f}(k)\right|^2 \left(1+|k|^2\right)^s dk \right\},$$

We then call {\it blue noise} a Gaussian family of centered variables indexed by $H^{1/2}$, denoted 
$(\mathcal{B}(f))_{f \in H^{1/2}}$, such that $f \mapsto \mathcal{B}(f)$ is linear, $\mathcal{B}(f)$ is 
a.s. real if $f$ is a real-valued function, and 
$$\mathbb{E} [|\mathcal{B}(f)|^2 ] = \frac{1}{2 \pi} \int_{\R} |k| |\hat{f}(k)|^2 dk.$$
The covariance structure of $\mathcal{B}$ is then: 
$$\mathbb{E} [\mathcal{B}(f) \mathcal{B}(g) ] = \frac{1}{2 \pi} \int_{\R} |k| \hat{f}(k) \hat{g}(-k) dk,$$
$$\mathbb{E} [\mathcal{B}(f) \overline{\mathcal{B}(g)} ] = \frac{1}{2 \pi} \int_{\R} |k| \hat{f}(k)
 \overline{\hat{g}(k)} dk.$$
Similarly as for the Brownian motion, we can take the notation: 
$$\int_{\R} f(t) d \mathcal{B}_t := \mathcal{B}(f).$$
Now, for any function $f \in L^1(\R, \C) \cap L^2(\R, \C)$, we have 
\begin{align*} \mathbb{E} \left[ \left( \sum_{k \in \Z} |f\left( y_k \right)| \right)^2 \right]
& = \int_{\R} |f|^2 + \int_{\R^2} (1 - S^2(x-y)) |f(x)||f(y)| dx dy
\\ & \leq \int_{\R} |f|^2 + \left(\int_{\R} |f| \right)^2 < \infty,
\end{align*}
and then 
$$ X_f := \sum_{k \in \Z} f\left( y_k \right) - \int f$$
is well-defined as a square-integrable random variable. 
As we will see in Corollary \ref{corollary:X_f_is_continuous}, $X_f$ can also be defined as
a square-integrable random variable as soon as $f \in H^{1/2}$, even if $f$ is not integrable. 

In this section, we examine the behavior of $\sum_{k \in \Z} f\left( \frac{y_k}{L} \right) 
- L \int f$ as $L \rightarrow \infty$ for suitable functions $f$:
\begin{thm}
\label{thm:blue_cv}
If $\left( y_k ; k \in \Z \right)$ is a sine kernel 
point process, there is a blue noise $\mathscr{B}$ such that 
$$ \left( X_{ f\left( \frac{\cdot}{L} \right) } \right)_{f \in H^{1/2}}
\stackrel{L \rightarrow \infty}{\longrightarrow} \left(\int
f(t) d\mathscr{B}_t \right)_{f \in H^{1/2}} , $$
the convergence holding in law for finite-dimensional marginals. 
\end{thm}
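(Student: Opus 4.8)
The plan is to reduce, via the Cram\'er--Wold device, to a one-dimensional central limit theorem; to compute the limiting variance from the Fourier-side formula for linear statistics of the sine process, extracting along the way a uniform-in-$L$ bound; to establish Gaussianity for a dense class of test functions through the exact Fredholm-determinant formula for the Laplace transform (equivalently, the method of cumulants); and finally to pass to general $f\in H^{1/2}$ by approximation. Concretely, since $f\mapsto X_{f(\cdot/L)}$ is linear and the $y_k$ are real, $\Re X_{f(\cdot/L)}=X_{(\Re f)(\cdot/L)}$ and $\Im X_{f(\cdot/L)}=X_{(\Im f)(\cdot/L)}$, so, identifying $\C^m$ with $\R^{2m}$, every real linear combination of the components of $\big(X_{f_1(\cdot/L)},\dots,X_{f_m(\cdot/L)}\big)$ is again of the form $X_{g(\cdot/L)}$ with $g$ real in $H^{1/2}$. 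By Cram\'er--Wold it thus suffices to prove that for each real $g\in H^{1/2}$
$$X_{g(\cdot/L)}\xrightarrow[L\to\infty]{\mathrm{law}}\mathcal N\!\big(0,\sigma_g^2\big),\qquad \sigma_g^2:=\frac1{2\pi}\int_\R|k|\,|\hat g(k)|^2\,dk,$$
and then to identify the limit: a centered Gaussian family whose variance at each real $g$ equals $\sigma_g^2$ has, by polarization, exactly the covariance and pseudo-covariance defining the blue noise $\mathscr B$, so the limiting family is $\big(\int f\,d\mathscr B\big)_{f\in H^{1/2}}$.

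Next I would settle the variance. Applying the Fourier expression for the variance of a sine-process linear statistic obtained in the proof of Lemma~\ref{lemma:preparatory} (valid for $H^{1/2}$ test functions by Corollary~\ref{corollary:X_f_is_continuous}) to $f=g(\cdot/L)$, and using $\widehat{g(\cdot/L)}(\xi)=L\,\hat g(L\xi)$, the substitution $m=kL$ gives
$$\Var\big(X_{g(\cdot/L)}\big)=\int_\R 2\pi\,(L\wedge|m|)\,|\hat g(2\pi m)|^2\,dm .$$
Since $L\wedge|m|\le|m|$ and $L\wedge|m|\to|m|$ pointwise, this is $\le\sigma_g^2$ \emph{for every $L$} and converges to $\sigma_g^2$ by dominated convergence; polarizing the same identity yields the limiting covariances as well.

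The heart of the argument is asymptotic normality, which I would first prove for $g$ in the dense subclass $\mathcal S(\R)$ of Schwartz functions. Here one has the exact identity, for $|t|$ small,
$$\mathbb E\big[e^{t X_{g(\cdot/L)}}\big]=e^{-tL\int_\R g}\,\det\!\big(I+(e^{t g(\cdot/L)}-1)K\big),$$
with $K$ the sine kernel viewed as the orthogonal projection onto the band $\{|\xi|\le\pi\}$ of Fourier frequencies and the Fredholm determinant finite since $(e^{t g(\cdot/L)}-1)K$ is trace class. Expanding $\log\det$ as $\sum_{p\ge1}\frac{(-1)^{p+1}}{p}\mathrm{Tr}\big((e^{t g(\cdot/L)}-1)K\big)^p$ and passing to Fourier variables turns each trace into an integral over $[-\pi,\pi]^p$ of a product of $L$-rescaled copies of $\widehat{(e^{tg}-1)}$; one checks that the part of $\log\det$ proportional to $L$ equals $tL\int g$, cancelling the prefactor, while the surviving $O(1)$ part converges to a limit $\Psi(t)$. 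This is exactly the continuous strong Szeg\H o (Akhiezer--Kac) asymptotics for such Wiener--Hopf determinants: the limit is \emph{linear plus quadratic} in the symbol with no higher-order contributions, and matching the quadratic part against the preceding variance computation forces $\Psi(t)=\tfrac12\sigma_g^2\,t^2$. Equivalently, in the language of cumulants: the first cumulant of $X_{g(\cdot/L)}$ vanishes, the second tends to $\sigma_g^2$, and every cumulant of order $\ge3$ tends to $0$; since the Gaussian law is moment-determinate, $X_{g(\cdot/L)}\to\mathcal N(0,\sigma_g^2)$.

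Finally I would remove the regularity assumption and identify the obstacle. Given a real $g\in H^{1/2}$, choose $g_n\in\mathcal S(\R)$ with $g_n\to g$ in $H^{1/2}$; by the uniform bound above,
$$\sup_L\big\|X_{g(\cdot/L)}-X_{g_n(\cdot/L)}\big\|_{L^2}^2=\sup_L\Var\big(X_{(g-g_n)(\cdot/L)}\big)\le\sigma_{g-g_n}^2\xrightarrow[n\to\infty]{}0,$$
and likewise $\big\|\int g\,d\mathscr B-\int g_n\,d\mathscr B\big\|_{L^2}\to0$, so a routine three-$\varepsilon$ argument upgrades the convergence of $X_{g_n(\cdot/L)}$ to that of $X_{g(\cdot/L)}$; together with the first step this gives the theorem. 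The hard part is the Szeg\H o step: unlike in Soshnikov's classical CLT the variance here stays \emph{bounded}, so normality cannot be read off from a diverging variance, and the commutator $[M_{g(\cdot/L)},K]$ — whose Hilbert--Schmidt norm squared is $2\Var(X_{g(\cdot/L)})$ — does \emph{not} become small in any Schatten norm as $L\to\infty$; the vanishing of the higher cumulants is a genuine cancellation visible only through the precise Fredholm-determinant (strong Szeg\H o) structure of the sine kernel.
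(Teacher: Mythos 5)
Your proposal is correct, but at its core it takes a genuinely different route from the paper. Both arguments share the outer structure: reduction by Cram\'er--Wold/linearity to one-dimensional marginals for real $g\in H^{1/2}$, identification of the limit covariance with that of the blue noise, and a final three-$\varepsilon$ approximation from a dense class of smooth functions using a bound on $\Var\bigl(X_{(g-g_n)(\cdot/L)}\bigr)$ that is uniform in $L$. Where you differ is the Gaussianity step for smooth $g$: you invoke the exact Fredholm-determinant formula $\E\bigl[e^{tX_{g(\cdot/L)}}\bigr]=e^{-tL\int g}\det\bigl(I+(e^{tg(\cdot/L)}-1)K\bigr)$ and the continuous strong Szeg\H{o} (Akhiezer--Kac) asymptotics for the resulting truncated Wiener--Hopf determinant, so that the $O(L)$ term cancels the centering and the surviving constant is quadratic in $t$, identified with $\tfrac12\sigma_g^2t^2$ via your variance computation. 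The paper instead proves Proposition \ref{proposition:sosh_adapted} by transferring Soshnikov's combinatorial cumulant estimates for the CUE to the sine process (using the a.s.\ coupling $y^{(n)}_k\to y_k$ and uniform integrability), and then reads off from the rescaling $\widehat{f(\cdot/L)}(k)=L\hat f(Lk)$ that the second cumulant converges to $\sigma_f^2$ while all cumulants of order $\ge 3$ vanish; the same estimates also furnish the uniform $H^{1/2}$ control (Corollary \ref{corollary:X_f_is_continuous}) used in the approximation step, whereas you obtain the cleaner exact bound $\sup_L\Var\bigl(X_{g(\cdot/L)}\bigr)\le\sigma_g^2$ directly from the Fourier formula for the variance in the proof of Lemma \ref{lemma:preparatory}, together with dominated convergence. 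What each approach buys: the paper's cumulant route is self-contained modulo Soshnikov's lemma, stays within the coupling framework, and produces quantitative bounds on all cumulants reused elsewhere in the paper; your route exploits the determinantal/projection structure of the sine kernel more directly and is arguably shorter, but it outsources the crucial cancellation of higher-order terms to the Akhiezer--Kac theorem, so you must verify its hypotheses (trace-class or regularized-determinant issues for $(e^{tg(\cdot/L)}-1)K$, and the admissibility of the symbol $\log(1+h)=tg$), and your identification of the quadratic coefficient by matching against the limiting variance, rather than by computing the Szeg\H{o} constant, should be stated as such; with those points made precise the argument is sound. Your closing observation is also apt: since $\Var$ stays bounded, normality cannot come from a Lindeberg-type mechanism, and both proofs ultimately rest on the same strong-Szeg\H{o}-type cancellation, reached combinatorially in the paper and operator-theoretically in your sketch.
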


In Subsection \ref{subsection:blue_banana}, we analyse the asymptotic 
behavior of the Stieltjes transform of the sine kernel process. To that endeavor, we apply
the result to the complex-valued functions $f_z\left( t \right) = \frac{1}{z-t}$. 

\subsection{The sine kernel from afar}

We will need an intermediate proposition:
\begin{proposition}[Adapted from Soshnikov \cite{bib:soshnikov00}]
\label{proposition:sosh_adapted}
If $f$ is a smooth, real-valued function with compact support and if 
the $p$-th cumulant of $X_f$ is denoted $C_{p}(f)$, then we 
have:
$$ C_{1}(f) = 0$$
$$ \left| C_{2}(f) - \frac{1}{2 \pi}  \int \left| \hat{f}(k) \right|^2 |k| dk \right| \ll \int |k| \left|\hat{f}(k)\right|^2 \mathds{1}_{ |k| \geq \pi } dk$$
$$ \forall p \geq 3, \left| C_{p}(f) \right| \ll_p \int_{ k_1  + \dots +  k_p  = 0 }
                                                                              \mathds{1}_{ |k_1| + \dots + |k_p| > 2\pi} 
                                                                              |k_1| \left|
                                                                              \hat{f}\left( k_1 \right) \dots
                                                                              \hat{f}\left( k_p \right)
                                                                              \right| dk$$
where in the previous equation, $dk$ stands for the Lebesgue measure on the hyperplane $\left\{ k_1 + \dots + k_p = 0 \right\}$.
\end{proposition}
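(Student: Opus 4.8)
The plan is to run Soshnikov's cumulant computation \cite{bib:soshnikov00} for the sine process, keeping careful track of exactly which frequencies survive. I would begin from the determinantal cumulant formula: since $f$ is bounded with compact support, the standard Fredholm-determinant identity for determinantal processes gives $\E[\exp(\lambda\sum_k f(y_k))]=\det(\mathrm{Id}+M_{e^{\lambda f}-1}K)$, where $K$ is the sine-kernel projection and $M_g$ denotes multiplication by $g$. For $\lambda$ near $0$ this is nonvanishing, so $\log\E[\exp(\lambda\sum_k f(y_k))]=\sum_{m\ge1}\frac{(-1)^{m-1}}{m}\operatorname{Tr}[(M_{e^{\lambda f}-1}K)^m]$, and only the terms $m\le p$ contribute to the coefficient of $\lambda^p$; hence $C_p(f)=p!\,[\lambda^p]\log\E[\exp(\lambda\sum_k f(y_k))]$ is a finite signed sum of traces, and $C_1(f)=0$ since $X_f$ is centred. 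The cases $p=1,2$ I would settle by hand: $C_2(f)=\int_\R f^2-\int_{\R^2}f(x)f(y)S(x-y)^2\,dx\,dy$, which after a Fourier transform (using $\widehat{S^2}(k)\propto(2\pi-|k|)_+$) equals $\frac1{2\pi}\int_\R|\widehat f(k)|^2\min(|k|,2\pi)\,dk$; subtracting $\frac1{2\pi}\int_\R|\widehat f(k)|^2|k|\,dk$ leaves $-\frac1{2\pi}\int_{|k|>2\pi}(|k|-2\pi)|\widehat f(k)|^2\,dk$, whose modulus is $\le\frac1{2\pi}\int_{|k|\ge\pi}|k|\,|\widehat f(k)|^2\,dk$.

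For $p\ge3$ I would pass to the frequency side. Using
\[
\operatorname{Tr}[M_{g_1}K\cdots M_{g_m}K]=(2\pi)^{-m/2}\!\!\int_{v_1+\cdots+v_m=0}\!\!\prod_{i}\widehat{g_i}(v_i)\,V_m(v)\,dv,\qquad V_m(v):=\operatorname{Leb}\!\Big(\bigcap_{j=0}^{m-1}\big([-\pi,\pi]+\tau_j\big)\Big),
\]
with $\tau_0:=0$, $\tau_j:=v_1+\cdots+v_j$, so that $V_m(v)=(2\pi-\operatorname{diam}\{\tau_0,\dots,\tau_{m-1}\})_+$, then substituting $g_i=f^{j_i}$ and expanding each $\widehat{f^{j_i}}$ as a convolution power of $\widehat f$, the cumulant acquires the form
\[
C_p(f)=\int_{k_1+\cdots+k_p=0}\widehat f(k_1)\cdots\widehat f(k_p)\,G_p(k)\,dk,
\]
where $G_p(k)=p!\,(2\pi)^{-p/2}\sum_{m=1}^p\frac{(-1)^{m-1}}{m}\sum_{j_1+\cdots+j_m=p}\frac1{j_1!\cdots j_m!}\,V_m\big(\sum_{a\in B_1}k_a,\dots,\sum_{a\in B_m}k_a\big)$, the block $B_i$ consisting of $j_i$ of the $p$ variables feeding the $i$-th factor. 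Since $\widehat f$ is Schwartz all the sums and integrals converge absolutely, and as $\prod_a\widehat f(k_a)$ is symmetric I may replace $G_p$ by its symmetrisation $\overline G_p$ over the $p!$ orderings of $k_1,\dots,k_p$.

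I then need two properties of $\overline G_p$. \textbf{(i) Lipschitz bound:} each $V_m$ is $\le2\pi$ and $1$-Lipschitz in every $\tau_j$, hence (since $\tau_j$ depends linearly on the $k_a$) each summand is $O_p(1)$-Lipschitz in each $k_a$, so $\overline G_p$ is bounded and $O_p(1)$-Lipschitz on the hyperplane. \textbf{(ii) Vanishing on low frequencies:} on $\{\sum_a|k_a|\le2\pi\}$ every $\operatorname{diam}\{\tau_0,\dots,\tau_{m-1}\}\le\sum_i|v_i|\le\sum_a|k_a|\le2\pi$, so there $V_m=2\pi-\operatorname{diam}\{\tau_j\}$; the constant part $2\pi$ assembles into $2\pi\,p!(2\pi)^{-p/2}$ times the $p$-th Taylor coefficient of $\log(1+(e^x-1))=x$, which is $0$ for $p\ge2$, while the $-\operatorname{diam}$ part is a composition-indexed alternating sum of piecewise-affine functions of the prefix sums of $k_1,\dots,k_p$ whose \emph{symmetrisation} vanishes identically when $p\ge3$ (and reproduces $\tfrac1{2\pi}|k|$ when $p=2$). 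This last assertion is exactly Soshnikov's combinatorial identity, adapted to the present normalisation, and it is the crux of the argument. Granting (i) and (ii), for $k$ with $\sum|k_a|>2\pi$ one has $|\overline G_p(k)|=|\overline G_p(k)-\overline G_p(k_0)|\le O_p(1)\operatorname{dist}\big(k,\{\textstyle\sum|k_a|\le2\pi\}\big)\le O_p\big((\textstyle\sum_a|k_a|-2\pi)_+\big)\le O_p\big(\textstyle\sum_a|k_a|\big)$, so
\[
|C_p(f)|\le\int_{\sum k=0}\mathds{1}_{\sum|k_a|>2\pi}\,|\overline G_p(k)|\prod_a|\widehat f(k_a)|\,dk\ \ll_p\ \int_{\sum k=0}\mathds{1}_{\sum|k_a|>2\pi}\Big(\sum_a|k_a|\Big)\prod_a|\widehat f(k_a)|\,dk,
\]
and by the symmetry of $\mathds{1}_{\sum|k_a|>2\pi}\prod_a|\widehat f(k_a)|$ on the hyperplane the last integral equals $p\int_{\sum k=0}\mathds{1}_{\sum|k_a|>2\pi}|k_1|\prod_a|\widehat f(k_a)|\,dk$, which is the claimed estimate.

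The only genuinely hard step is (ii): the combinatorial identity certifying that the symmetrised alternating sum of diameters of prefix sums collapses to zero for $p\ge3$ (equivalently, that $C_p(f)=0$ exactly whenever $\widehat f$ is supported at sufficiently low frequency). This is where one must reuse and re-normalise Soshnikov's identity from \cite{bib:soshnikov00} (the same mechanism as in his treatment of local linear statistics on the classical compact groups). Everything else — the Fredholm expansion, the Fourier rewriting of the traces, the elementary Lipschitz estimate on $V_m$, and the final symmetrisation that turns $\sum_a|k_a|$ into $|k_1|$ — is routine bookkeeping.
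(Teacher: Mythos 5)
Your proof takes a genuinely different route from the paper's. The paper proves the estimates \emph{in the discrete setting first}: it represents $X_f$ as the a.s.\ limit of the CUE linear statistics $X_{n,f} = \sum_{k=1}^n \psi_n(\theta_k^{(n)})$, where $\psi_n$ is the $2\pi$-periodization of $f(n\,\cdot/2\pi)$, invokes Soshnikov's combinatorial lemma directly for $U(n)$ (where it is actually stated in \cite{bib:soshnikov00}), deduces uniform boundedness and hence uniform integrability of the cumulants, and passes to the limit $n\to\infty$ through converging Riemann sums. You instead work directly with the sine-kernel process on $\R$ via the Fredholm expansion $\log\det(\mathrm{Id}+M_{e^{\lambda f}-1}K)=\sum_m \frac{(-1)^{m-1}}{m}\operatorname{Tr}[(M_{e^{\lambda f}-1}K)^m]$, pass to the Fourier side using the volume function $V_m(v)=(2\pi-\operatorname{diam}\{\tau_j\})_+$, and reduce the bound to two properties of the symmetrized kernel $\overline G_p$: a Lipschitz bound (i), which is elementary, and the vanishing on $\{\sum|k_a|\le 2\pi\}$ (ii).

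The gap is precisely (ii). You write that the symmetrized alternating sum of diameters of prefix sums ``collapses to zero for $p\ge 3$'' and label this as ``exactly Soshnikov's combinatorial identity, adapted to the present normalisation,'' but you do not carry out the adaptation. Soshnikov's main combinatorial lemma in \cite{bib:soshnikov00} is formulated for Fourier coefficients on the circle $\Z/n\Z$, whereas your $\overline G_p$ lives on the continuous hyperplane $\{k_1+\cdots+k_p=0\}\subset\R^p$; the transfer is plausible but is itself a continuous re-derivation of the identity, not ``routine bookkeeping.'' In effect you have identified the hard step but declared it done by reference to a result proved in a different (discrete) setting. The paper's discrete-then-limit strategy exists exactly to sidestep this: it uses Soshnikov's lemma only in the regime where it is actually proved, and the passage to the sine kernel is handled by the soft argument that bounded cumulants of all orders give uniform integrability and hence $L^p$ convergence of $X_{n,f}$ to $X_f$. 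If you want to keep the continuous Fredholm route you must either prove (ii) from scratch on the hyperplane (by reworking the symmetrization over compositions) or cite a source where the continuous sine-kernel version of the identity is explicitly established; as written, the crux of the argument is asserted rather than proved.

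Aside from this, your $p=1,2$ computations and the final symmetrization that converts $\sum_a|k_a|$ into $p\,|k_1|$ are correct, and the Lipschitz bound (i) on $V_m$ is sound.
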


\begin{proof}
The first equality is immediate. Now, since $y_k^{(n)} = \frac{n}{2 \pi} \theta_{k}^{(n)}$ converges almost
surely to $y_k$, $X_f$ is the almost sure limit of:
\begin{align*}
 X_{n,f} := & \sum_{k \in \Z} f\left( \frac{n}{2 \pi} \theta_{k}^{(n)} \right) - \int f \\
          = & \sum_{k=1}^n\left( -\frac{1}{n}\int f + \sum_{l \in \Z} f\left( \frac{n}{2 \pi} \theta_{k}^{(n)} + n l\right) \right) \\
          = & \sum_{k=1}^n \psi_n\left( \theta_{k}^{(n)} \right),
\end{align*}
where $\psi_n$ is the sequence of $2\pi$-periodic functions with zero mean:
$$ \psi_n\left( \theta \right) = -\frac{1}{n}\int f + \sum_{l \in \Z} f\left( \frac{n}{2 \pi} \theta + n l\right) $$
If $\hat{f}$ is the Fourier transform of $f$, the Fourier coefficients 
$$\left(c_k (\psi_n) := \frac{1}{2 \pi} \int_0^{2 \pi} \psi_n(\theta) e^{-ik \theta} d\theta  ; k \in \Z \right)$$
of $\psi_n$ are given by:
$$ c_0(\psi_n) = 0,$$
$$ \forall k \in \Z^*, c_k(\psi_n) = \frac{\sqrt{2 \pi}}{n} \hat{f}\left( \frac{2 \pi k}{n} \right).$$
If $C_{p,n}\left( f \right)$ is the $p$-th cumulant of $X_{n,f}$, thanks to the main combinatorial lemma and Lemma 1 in \cite{bib:soshnikov00}, we have:
$$ C_{1,n}\left( f \right) = 0$$
$$ \left| C_{2, n}\left( f \right) - \frac{2 \pi}{n} \sum_{k \in \Z} \frac{|k|}{n} \hat{f}\left( \frac{2 \pi k}{n} \right)
                                                                               \hat{f}\left(-\frac{2 \pi k}{n} \right)  \right| 
                               \ll  \frac{1}{n} \sum_{ |k| > \half n} \frac{|k|}{n} \hat{f}\left( \frac{2 \pi k}{n} \right)
                                                                                    \hat{f}\left(-\frac{2 \pi k}{n} \right)$$
$$ \forall p \geq 3, \left| C_{p,n}\left( f \right) \right| \ll_p 
               \frac{1}{n^{p-1}} \sum_{ \substack{ k_1  + \dots +  k_p  = 0 \\
                                                  |k_1| + \dots + |k_p| > n}} \frac{|k_1|}{n} \left|
                                                                              \hat{f}\left( \frac{2 \pi k_1}{n} \right) \dots
                                                                              \hat{f}\left( \frac{2 \pi k_p}{n} \right)
                                                                              \right| $$
As $\hat{f}$ decays at infinity faster than any power, we recognize three converging Riemann sums. The first one is:
$$ \frac{2 \pi}{n} \sum_{k \in \Z} \frac{|k|}{n} \hat{f}\left( \frac{2 \pi k}{n} \right)
\hat{f}\left(-\frac{2 \pi k}{n} \right)
   \stackrel{n \rightarrow \infty}{\longrightarrow}
   2 \pi \int |k| \left| \hat{f}\left( 2 \pi k \right) \right|^2 dk
   =  \frac{1}{2 \pi} \int |k| \left| \hat{f}\left(  k \right) \right|^2 dk.
   $$
The others appear as error terms and are Riemann sums converging to integrals on the hyperplane $\left\{ k_1 + \dots + k_p = 0 \right\} \subset \R^{p}$.
\begin{align*}
\forall p \geq 2, \   & \frac{1}{n^{p-1}} \sum_{ \substack{ k_1  + \dots +  k_p  = 0 \\
                                                           |k_1| + \dots + |k_p| > n}} \frac{|k_1|}{n} \left|
                                                                              \hat{f}\left( \frac{2 \pi k_1}{n} \right) \dots
                                                                              \hat{f}\left( \frac{2 \pi k_p}{n} \right)
                                                                              \right|\\
\stackrel{n \rightarrow \infty}{\longrightarrow} & \int_{ k_1  + \dots +  k_p  = 0 }
                                                                              \mathds{1}_{ |k_1| + \dots + |k_p| > 1} 
                                                                              |k_1| \left|
                                                                              \hat{f}\left( 2 \pi k_1 \right) \dots
                                                                              \hat{f}\left( 2 \pi k_p \right)                                                                              \right| dk.
\end{align*}

Therefore, for every $p \geq 1$, the $p$-th cumulant of $X_{n,f}$ is bounded independently of $n$ and the 
sequence $|X_{n,f}|^p$ is uniformly integrable. Thus, the convergence of $X_{n,f}$ to $X_f$ is not only almost 
sure but also in every $L^p\left( \Omega \right)$, $\Omega$ being the underlying probability space.

Now since
$$ \forall p \geq 1, C_{p}(f) \stackrel{n \rightarrow \infty}{\longrightarrow} C_{p}(f),$$
we have
$$ \left| C_{2}(f) - \frac{1}{2 \pi}
\int \left| \hat{f}( k) \right|^2 |k| dk \right| \ll \int |k| \left|f(2\pi k)\right|^2 \mathds{1}_{ |k| \geq \half } dk$$
$$ \forall p \geq 3, \left| C_{p}(f) \right| \ll_p \int_{ k_1  + \dots +  k_p  = 0 }
                                                                              \mathds{1}_{ |k_1| + \dots + |k_p| > 1} 
                                                                              |k_1| \left|
                                                                              \hat{f}\left( 2 \pi k_1 \right) \dots
                                                                              \hat{f}\left( 2 \pi k_p \right)                                                                              \right| dk$$
After an obvious change of variables, we recover the claimed estimates.
\end{proof}

\begin{corollary}
\label{corollary:X_f_is_continuous}
The map 
$$ f \mapsto X_f$$
 from $L^1 (\R, \C) \cap H^{1/2}$ to $L^2\left( \Omega \right)$ 
admits a linear extension to $H^{1/2}$, which satisfies the following property of continuity: 
$$ \E\left( \left| X_f \right|^2 \right)^{\frac{1}{2}} \ll  \| f \| _{H^{\half}},$$
uniformly, for all $f \in H^{1/2}$. This extension is unique up to almost sure equality. 
\end{corollary}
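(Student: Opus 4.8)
The plan is to obtain the extension via the bounded-linear-transformation principle, the required boundedness being essentially the content of Proposition~\ref{proposition:sosh_adapted}. The convenient dense subspace to work with is $\mathcal{C}^\infty_c(\R,\C)$, which is contained in $L^1(\R,\C)\cap H^{1/2}$ and dense in $H^{1/2}$.

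First I would establish the estimate $\E(|X_f|^2)^{1/2}\ll \|f\|_{H^{\half}}$ for $f\in\mathcal{C}^\infty_c$. If $f$ is real-valued, then $X_f$ is real and, since $C_1(f)=0$, centered, so $\E(|X_f|^2)=\Var(X_f)=C_2(f)$; Proposition~\ref{proposition:sosh_adapted} then gives
\[
\E(|X_f|^2)\;\leq\;\frac{1}{2\pi}\int_{\R}|\hat f(k)|^2|k|\,dk+\Oc\!\left(\int_{\R}|k|\,|\hat f(k)|^2\mathds{1}_{|k|\geq\pi}\,dk\right)\;\ll\;\int_{\R}|\hat f(k)|^2|k|\,dk\;\leq\;\|f\|_{H^{\half}}^2,
\]
using $|k|\leq(1+|k|^2)^{\half}$. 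For complex-valued $f\in\mathcal{C}^\infty_c$ write $f=\Re f+i\,\Im f$; since $f\mapsto X_f$ is linear and $\|\Re f\|_{H^{\half}},\|\Im f\|_{H^{\half}}\leq\|f\|_{H^{\half}}$, the same bound follows with an extra absolute constant. As $L^2(\Omega)$ is complete, the linear map $f\mapsto X_f$, bounded on the dense subspace $\mathcal{C}^\infty_c\subset H^{1/2}$, extends uniquely to a bounded linear map $H^{1/2}\to L^2(\Omega)$ satisfying the stated inequality, and uniqueness up to a.s. equality is immediate from density.

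Finally I would check that this extension agrees with the original definition $X_f=\sum_{k\in\Z}f(y_k)-\int f$ on the overlap $L^1(\R,\C)\cap H^{1/2}$. Given such an $f$, a standard truncation-and-mollification argument produces $f_m\in\mathcal{C}^\infty_c$ with $f_m\to f$ simultaneously in $L^1$ and in $H^{1/2}$. By construction of the extension $X_{f_m}\to X_f^{\mathrm{ext}}$ in $L^2(\Omega)$; on the other hand, since the $1$-point correlation function of $(y_k)_{k\in\Z}$ is identically $1$, one has $\E\big(\sum_k|f_m(y_k)-f(y_k)|\big)=\int_\R|f_m-f|\to 0$ and $\int f_m\to\int f$, so $X_{f_m}\to X_f$ in $L^1(\Omega)$ as well. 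The $L^1$ and $L^2$ limits coincide a.s., whence $X_f^{\mathrm{ext}}=X_f$. The only genuinely delicate point is arranging $f_m\to f$ in both $L^1$ and $H^{1/2}$ at once---needed precisely so that the two definitions of $X_f$ match---together with the fact that Proposition~\ref{proposition:sosh_adapted} is available only for smooth compactly supported test functions, which is why the density argument must be run from $\mathcal{C}^\infty_c$ rather than directly from $L^1\cap H^{1/2}$; once this is in place the remaining steps are routine.
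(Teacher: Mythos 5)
Your proof is correct and follows essentially the same strategy as the paper: derive the $H^{1/2}$ bound on $\mathcal{C}^\infty_c$ from the second-cumulant estimate in Proposition~\ref{proposition:sosh_adapted} (with the real/imaginary-part reduction to handle complex $f$), extend by density and completeness of $L^2(\Omega)$, and then use the one-point correlation bound $\E\bigl(\sum_k|g(y_k)|\bigr)=\int|g|$ to reconcile the extension with the original definition on $L^1\cap H^{1/2}$. The only (inconsequential) variation is in the last step: the paper shows the difference map $f\mapsto X_f-Y_f$ is continuous on $L^1\cap H^{1/2}$ with respect to $\|\cdot\|_{L^1}+\|\cdot\|_{H^{1/2}}$ and vanishes on the dense set $\mathcal{C}^\infty_c$, whereas you approximate $f$ simultaneously in $L^1$ and $H^{1/2}$ by mollification and invoke uniqueness of the $L^1$/$L^2$ limits; these are two phrasings of the same density argument and both require the same simultaneous approximation.
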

\begin{proof}
The estimate on the second cumulant, given by Proposition \ref{proposition:sosh_adapted}, implies
$$
   \E\left( \left| X_f \right|^2 \right)^{\frac{1}{2}}
   \ll \| f \| _{H^{\half}}
$$
for every smooth, real-valued function $f$ with compact support. By linearity, this esimate remains true without 
the assumption that $f$ is real-valued. We deduce the existence of a family $(Y_f)_{f \in H^{1/2}}$ of 
random variables such that $Y_f = X_f$ a.s. if $f$ is smooth with compact support, and 
$$  \E\left( \left| Y_f \right|^2 \right)^{\frac{1}{2}}
   \ll \| f \| _{H^{\half}} $$
This family is unique up to almost sure equality. Then, we are done if we show that $X_f = Y_f$ almost surely 
as soon as $f \in L^1 \cap H^{1/2}$. Now, 
the map $f \mapsto X_f - Y_f$ from $f \in L^1 \cap H^{1/2}$ to $L^2 (\Omega)$ is a.s. equal to 
zero on $\mathcal{C}^{\infty}_c (\R, \C)$. 
Moreover, we have seen above, by using the two first correlation functions of the sine kernel process, that 
$$\E\left( \left| X_f \right|^2 \right)^{\frac{1}{2}} \leq ||f||_{L^1} + ||f||_{L^2}$$
which implies: 
$$ \E\left( \left| X_f - Y_f \right|^2 \right)^{\frac{1}{2}} \ll ||f||_{L^1} + ||f||_{L^2} + 
||f||_{H^{1/2}} \ll  ||f||_{L^1}  + ||f||_{H^{1/2}}.$$
Hence, the map $f \mapsto X_f - Y_f$ from $f \in L^1 \cap H^{1/2}$ is continuous, and since it vanishes 
on  $\mathcal{C}^{\infty}_c$, which is dense in $L^1 \cap H^{1/2}$, it vanishes everywhere.

\end{proof}

\begin{proof}[Proof of Theorem \ref{thm:blue_cv}]
It is sufficient to prove convergence in law of the one-dimensional marginals, for real-valued functions $f$. 
Indeed, if we have this convergence, if $f_1, \dots, f_m$ are real-valued functions in $H^{1/2}$, and 
if $\lambda_1, \dots, \lambda_m \in \R$, then we have the 
convergence in law
$$  X_{ f\left( \frac{\cdot}{L} \right) } 
= \sum_{j=1}^m   \lambda_j X_{ f_j\left( \frac{\cdot}{L} \right) } 
\stackrel{L \rightarrow \infty}{\longrightarrow} \mathcal{B}(f)  = \sum_{j=1}^m \lambda_j \mathcal{B}(f_j),$$
for
$$ f = \lambda_1 f_1 + \lambda_2 f_2 + \dots + \lambda_m f_m.$$
Applying the bounded, continuous function $x \mapsto e^{ix}$ gives the convergence of the Fourier transform 
of 
$(X_{ f_j\left( \frac{\cdot}{L} \right) })_{1 \leq j \leq m}$ towards 
the Fourier transform of $(\mathcal{B}(f_j))_{1 \leq j \leq m}$, and then the convergence of the finite-dimensional 
marginals claimed in Theorem \ref{thm:blue_cv}, for real-valued functions. 
The case of complex-valued functions is then deduced by linearity. 

If remains to prove that for all $f \in H^{1/2}$, real-valued, 
$$X_{ f\left( \frac{\cdot}{L} \right) } \stackrel{L \rightarrow \infty}{\longrightarrow} \mathcal{B}(f).$$
Let us first assume that $f$ is smooth function with compact support. If $C_{p}^{(L)}(f)$ is the
$p$-th cumulant of $X_{ f\left( \frac{\cdot}{L} \right) }$, then by rescaling the space variable:
$$ \forall k \in \R, \widehat{f\left( \frac{\cdot}{L} \right)}(k) = L \hat{f}(Lk)$$
and
$$ C_{1}^{(L)}(f) = 0$$
$$ \left| C_{2}^{(L)}(f) - \frac{1}{2\pi} \int \left| \hat{f}(k) \right|^2 |k| dk \right| \ll \int |k| \left|\hat{f}(k)\right|^2 \mathds{1}_{ \left\{ |k| \geq L \pi \right\} } dk$$
$$ \forall p \geq 3, \left| C_{p}^{(L)}(f) \right| \ll_p \int_{ k_1  + \dots +  k_p  = 0 }
                                                                              \mathds{1}_{\left\{ |k_1| + \dots + |k_p| > 2\pi L \right\} } 
                                                                              |k_1| \left|
                                                                              \hat{f}\left( k_1 \right) \dots
                                                                              \hat{f}\left( k_p \right)
                                                                              \right| dk$$
Therefore, as $L \rightarrow \infty$, $X_{ f\left( \frac{\cdot}{L} \right) }$ converges in law to a centered 
Gaussian random variable with variance $\frac{1}{2 \pi} \int |k| \left|f(k)\right|^2 dk$, i.e. to $\mathcal{B}(f)$. 

Now, if $f$ is only supposed to be in $H^{\half}$, let us 
consider a sequence of smooth compactly supported functions $\left( f_n \right)_{n \in \N}$ such that:
$$ \| f - f_n\| _{H^{\half}} \stackrel{n \rightarrow \infty}{\longrightarrow} 0 $$
We will be done after proving that for any $t$ in a compact set:
$$ \E\left( e^{i t X_{ f\left( \frac{\cdot}{L} \right) }} \right) \stackrel{n \rightarrow \infty}{\longrightarrow} \exp\left( - \frac{t^2}{4 \pi} \int |k| |\hat{f}(k)|^2 dk \right)$$
We have because of the triangular inequality, for fixed $n$:
\begin{align*}
     & \left| \E\left( e^{i t X_{ f\left( \frac{\cdot}{L} \right) }} \right) - \exp\left( - \frac{t^2}{4 \pi } \int |k| |\hat{f}(k)|^2 dk \right) \right|\\
\leq & \left| \E\left( e^{i t X_{ f\left( \frac{\cdot}{L} \right) }} \right) - \E\left( e^{i t X_{ f_n\left( \frac{\cdot}{L} \right) }} \right) \right| \\
     & + \left| \E\left( e^{i t X_{ f_n\left( \frac{\cdot}{L} \right) }} \right) - \exp\left( - \frac{t^2}{4 \pi } \int |k| |\hat{f_n}(k)|^2 dk \right) \right| \\
     & + \left| \exp\left( - \frac{t^2}{4 \pi } \int |k| |\hat{f}(k)|^2 dk \right) - \exp\left( - \frac{t^2}{4 \pi } \int |k| |\hat{f_n}(k)|^2 dk \right) \right|
\end{align*}
The third term is a $\Oc\left( t^2 \| f - f_n\| _{H^{\half}}^2 \right)$. The second disappears when we take the $\limsup_{L \rightarrow \infty}$. As for the first term, we have for any $\varepsilon>0$:
\begin{align*}
     & \left| \E\left( e^{i t X_{ f\left( \frac{\cdot}{L} \right) }} \right) - \E\left( e^{i t X_{ f_n\left( \frac{\cdot}{L} \right) }} \right) \right| \\
\leq & \E\left( \left| e^{i t X_{ f\left( \frac{\cdot}{L} \right) } - i t X_{ f_n\left( \frac{\cdot}{L} \right) }} - 1 \right| \right) \\
\leq & 2 \P\left( \left| X_{ f\left( \frac{\cdot}{L} \right) } - X_{ f_n\left( \frac{\cdot}{L} \right) } \right| \geq \varepsilon \right) + \varepsilon |t|\\
\leq & 2 \frac{ \E\left( \left| X_{ f\left( \frac{\cdot}{L} \right) } - X_{ f_n\left( \frac{\cdot}{L} \right) } \right|^2 \right) }{\varepsilon^2} + \varepsilon |t|
\end{align*}
By linearity and the second cumulant estimate:
$$ \E\left( \left| X_{ f\left( \frac{\cdot}{L} \right) } - X_{ f_n\left( \frac{\cdot}{L} \right) } \right|^2 \right)^\half
=  \E\left( \left| X_{ (f-f_n)\left( \frac{\cdot}{L} \right) } \right|^2 \right)^\half \ll \| f - f_n\| _{H^{\half}}$$
Hence for any fixed $n$ and $\varepsilon>0$:
\begin{align*}
     & \limsup_{L \rightarrow \infty} \left| \E\left( e^{i t X_{ f\left( \frac{\cdot}{L} \right) }} \right) - \exp\left( - \frac{t^2}{2} \int |k| |\hat{f}(k)|^2 dk \right) \right|\\
\ll  & \| f - f_n\| _{H^{\half}}^2 \left( \frac{1}{\varepsilon^2} + t^2 \right) + \varepsilon |t|
\end{align*}
Taking $n \rightarrow \infty$, then $\varepsilon \rightarrow 0$ concludes the proof.
\end{proof}

\subsection{Application to the Stieltjes transform of the sine kernel}
\label{subsection:blue_banana}

  For $z \in \C \backslash \R$, $f_{z} : t \mapsto 
  1/(z-t)$ is in $H^{1/2}$. Indeed, one can check (by using the inverse Fourier transform for example) that 
  $$\hat{f_z} (k) = -i \sqrt{2 \pi} \sgn{\Im (z)}
  e^{-izk} \mathds{1}_{k \Im (z) < 0},$$
  and then $\hat{f_z}$ decays exponentially at infinity.  
  Moreover, $X_{f_z}$ can be related to the logarithmic derivative of $\xi_{\infty}$: 
  \begin{proposition}
   For all $z \notin \R$, we have almost surely, 
   $$X_{f_z} = \frac{\xi_{\infty}'(z)}{\xi_{\infty}(z)} - 2 i \pi \mathds{1}_{\Im z < 0} 
    = i \pi \sgn \Im z + \frac{1}{z - y_0} + \sum_{k = 1}^{\infty}  \left(\frac{1}{z-y_k}
    + \frac{1}{z - y_{-k}} \right).$$
  \end{proposition}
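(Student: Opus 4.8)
The plan is to approximate $f_z$ by compactly supported smooth functions of the form $f_z$ times a smooth cutoff, to compute $X$ of each approximant by the elementary formula $X_f = \sum_k f(y_k) - \int f$ (valid on $L^1 \cap H^{1/2}$), and then to pass to the limit using on one side the $H^{1/2}$-continuity of $f \mapsto X_f$ from Corollary \ref{corollary:X_f_is_continuous}, and on the other side the infinite-sum representation of $\xi'_\infty/\xi_\infty$ already established in Lemma \ref{lemma33} and Proposition \ref{proposition:twowaystosum}. Recall that $f_z \in H^{1/2}$ (its Fourier transform, given before the statement, decays exponentially), so $X_{f_z}$ is well-defined as an element of $L^2(\Omega)$.

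Concretely, fix an even $\chi \in \mathcal{C}^\infty_c(\R)$ with $\chi \equiv 1$ on $[-1,1]$ and $\operatorname{supp}\chi \subset [-2,2]$, and set $\chi_A := \chi(\cdot/A)$, $g_A := f_z\chi_A$. Since $z \notin \R$, $f_z$ is bounded and real-analytic on $\R$, so $g_A \in \mathcal{C}^\infty_c \subset L^1 \cap H^{1/2}$ and
$$X_{g_A} = \sum_{k \in \Z} \frac{\chi_A(y_k)}{z - y_k} - \int_\R \frac{\chi_A(t)}{z-t}\,dt.$$
First I would check $g_A \to f_z$ in $H^{1/2}$: picking $g \in \mathcal{C}^\infty_c$ with $\|f_z - g\|_{H^{1/2}}$ small and using that multiplication by $1-\chi_A$ is bounded on $H^{1/2}$ uniformly in $A$ (as $\|1-\chi_A\|_\infty \le 1$ and $\|(1-\chi_A)'\|_\infty = O(1/A)$), while $g(1-\chi_A) \equiv 0$ for $A$ large, one gets $\limsup_A \|f_z(1-\chi_A)\|_{H^{1/2}} \ll \|f_z - g\|_{H^{1/2}}$, which is arbitrarily small. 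Hence $X_{g_A} \to X_{f_z}$ in $L^2(\Omega)$ by Corollary \ref{corollary:X_f_is_continuous}.

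Next I would identify the limit of the explicit right-hand side. For the integral, pairing $t$ with $-t$ and using that $\chi_A$ is even gives $\int_\R \frac{\chi_A(t)}{z-t}\,dt = \int_0^\infty \chi_A(t)\frac{2z}{z^2-t^2}\,dt$, whose integrand is dominated by an integrable function (since $z^2 - t^2$ stays away from $0$ for $t \in \R$ when $z \notin \R$), so by dominated convergence it tends to $\int_0^\infty \frac{2z\,dt}{z^2-t^2} = -i\pi\,\sgn\Im z$. For the sum, write $\sum_k \frac{\chi_A(y_k)}{z-y_k} = \sum_{|y_k|\le A}\frac{1}{z-y_k} + R_A$ with $R_A := \sum_{A < |y_k| \le 2A}\frac{\chi_A(y_k)}{z-y_k}$; by Proposition \ref{proposition:twowaystosum} the first term tends a.s. to $\frac{\xi'_\infty}{\xi_\infty}(z) - i\pi$. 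It remains to show $R_A \to 0$ a.s., and this is the only delicate point: the collar $A < |y_k| \le 2A$ holds $\sim A$ points each contributing $O(1/A)$, so absolute bounds are useless and one must exploit the conditional cancellation between indices $k$ and $-k$. Writing $\frac{1}{z-y_k} = -\frac{1}{y_k} + \frac{z}{y_k(z-y_k)}$, the second term contributes $O_z(1/A)$ in absolute value; for $\sum_{A<|y_k|\le 2A}\frac{\chi_A(y_k)}{y_k}$ I would group $y_k$ with $y_{-k}$ and use the a.s. bound $y_k + y_{-k} = O(\log(2+|k|))$ coming from Lemma \ref{lemma:lemma_mm_sosh} together with $\|\chi'\|_\infty < \infty$, which makes each paired term $O(\log(2+|k|)/A^2)$ and hence the whole collar sum $O(\log A / A) \to 0$ a.s.

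Combining the two computations, $X_{g_A} \to \big(\frac{\xi'_\infty}{\xi_\infty}(z) - i\pi\big) - \big(-i\pi\,\sgn\Im z\big) = \frac{\xi'_\infty}{\xi_\infty}(z) - 2i\pi\,\mathds{1}_{\Im z < 0}$ almost surely; passing to a subsequence along which also $X_{g_A} \to X_{f_z}$ a.s., the two limits coincide a.s., which gives the first claimed identity. The second equality is then immediate from the series representation $\frac{\xi'_\infty}{\xi_\infty}(z) = i\pi + \frac{1}{z-y_0} + \sum_{k\ge 1}\big(\frac{1}{z-y_k}+\frac{1}{z-y_{-k}}\big)$ of Lemma \ref{lemma33}, using $i\pi - 2i\pi\,\mathds{1}_{\Im z < 0} = i\pi\,\sgn\Im z$. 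The main obstacle, as stressed above, is precisely the vanishing of the collar remainder $R_A$, i.e. handling the non-absolute convergence of the defining series of $\xi'_\infty/\xi_\infty$; the fine eigenvalue estimates of Section \ref{section:estimatesykn} and the tail bounds of Proposition \ref{proposition:uniform_tail_control} are exactly what make this step go through.
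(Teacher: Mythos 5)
Your proof is correct, but it takes a genuinely different route from the paper's at the decisive step. Both proofs start identically: approximate $f_z$ by $f_z$ times a smooth even cutoff $\chi(\cdot/A)$, observe that the approximants converge to $f_z$ in $H^{1/2}$, and invoke Corollary \ref{corollary:X_f_is_continuous} to get $X_{g_A}\to X_{f_z}$ in $L^2(\Omega)$. (Your way of proving the $H^{1/2}$ convergence --- a multiplier-norm bound for $1-\chi_A$ plus density of $\mathcal C^\infty_c$ --- differs from the paper's direct $H^1$ estimate on $f^{(A)}_z - f_z$, but both are fine.) Where you diverge is in identifying the limit of the explicit expression $\sum_k \chi_A(y_k)/(z-y_k)-\int \chi_A/(z-\cdot)$. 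The paper stays entirely in $L^2$: it writes $\chi_A(y_k)=\int_1^2(-\chi'(u))\,\mathds{1}_{|y_k|\le Au}\,du$, so that the cut-off expression becomes a $(-\chi')$-average of sharply truncated sums minus principal-value integrals, and then applies the uniform $L^p$ tail estimate of Proposition \ref{proposition:convergenceLp}. This bypasses completely any discussion of the ``collar'' $A<|y_k|\le 2A$ and any a.s. argument. You instead compute the limit almost surely: Proposition \ref{proposition:twowaystosum} for the sharply truncated sum, dominated convergence for the integral, and a pairing of $y_k$ with $y_{-k}$ (using $y_k + y_{-k} = O(\log(2+|k|))$ from Lemma \ref{lemma:lemma_mm_sosh} together with the evenness and $O(1/A)$-Lipschitz bound on $\chi_A$) to kill the collar, with an $O(\log A)$ set of unpaired indices contributing $O(\log A/A)$; then a subsequence argument matches the $L^2$ limit and the a.s. limit. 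What your route buys is elementariness --- it never needs the heavy Proposition \ref{proposition:uniform_tail_control}/\ref{proposition:convergenceLp} machinery at this step, only the pointwise eigenvalue estimates --- at the cost of the more delicate hands-on cancellation in the collar (and it is the collar argument, not the tail bounds you cite at the end, that is doing the work for you). What the paper's layer-cake trick buys is that the non-absolute convergence is handled once and for all by the $L^p$ estimates already proved, so the proof of this proposition becomes purely formal.
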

  \begin{proof}
   Let $\varphi$ be a smooth even function from $\R$ to $[0,1]$, nonincreasing on $\R_+$,
   equal to $1$ on $[-1,1]$ and to 
   $0$ on $\R \backslash [-2,2]$.
   If for $A > 0$, $f^{(A)}_z(t) = f_z(t) \varphi(t/A)$, we have 
   $$|f^{(A)}_z(t) - f_z(t)| \leq |f_z(t)| \mathds{1}_{|t| \geq A}$$
   and 
   $$|(f^{(A)}_z)'(t) - f'_z(t)| = \left| f'_z(t) \varphi(t/A) + \frac{1}{A} \varphi'(t/A) f_z(t)  - f'_z(t) \right| 
   \ll |f'_z(t)| \mathds{1}_{|t| \geq A} + \frac{|f_z(t)|}{A}.$$
   For $z$ fixed, $|f_z(t)|$ is dominated by $1/(1+|t|)$, 
   $|f'_z(t)|$ is dominated by $1/(1+|t|)^2$, and then 
   $$ |f^{(A)}_z(t) - f_z(t)|^2 + |(f^{(A)}_z)'(t) - f'_z(t)|^2 \ll 
   \frac{\mathds{1}_{|t| \geq A}}{(1  + |t|)^2} + \frac{1}{A^2 (1+ |t|)^2}.$$
   We deduce that $f^{(A)}_z$ converges to $f_z$ in $H^1$, and a fortiori in $H^{1/2}$. 
   Hence, in $L^2 (\Omega)$, 
   \begin{align*}
X_{f_z} = \underset{A \rightarrow \infty}{\lim} X_{f_{z}^{(A)}} 
   & = \underset{A \rightarrow \infty}{\lim} \left(
   \sum_{k \in \mathbb{Z}} \frac{\varphi(y_k/A)}{z - y_k} - \int_{\R} 
   \frac{\varphi(y/A)}{z-y} dy \right)
  \\ &  =  \underset{A \rightarrow \infty}{\lim} \int_1^2 (-\varphi'(u))
    \left( \sum_{k \in \mathds{Z}}  \frac{\mathds{1}_{|y_k| \leq Au}}{z-y_k} - 
    \int_{-Au}^{Au} \frac{dy}{z-y} \right) du 
  \end{align*}
   From Proposition \ref{proposition:convergenceLp}, one easily deduces that 
   $$ \sum_{k \in \mathbb{Z}} \frac{\mathds{1}_{|y_k| \leq B}}{z-y_k} - 
    \int_{-B}^{B} \frac{dy}{z-y}
    \underset{B \rightarrow \infty}{\longrightarrow}
    \frac{\xi_{\infty}'(z)}{\xi_{\infty}(z)} - 2 i \pi \mathds{1}_{\Im z < 0}$$
    in $L^p(\Omega)$ for all $p \geq 1$, and in particular in $L^2(\Omega)$. 
    Now, since $-\varphi'$ is nonnegative in $[1,2]$ and has integral $1$, one has 
    $$ \left| \left| \int_1^2 (-\varphi'(u))
    \left( \sum_{k \in \mathds{Z}}  \frac{\mathds{1}_{|y_k| \leq Au}}{z-y_k} - 
    \int_{-Au}^{Au} \frac{dy}{z-y} \right) du -  \frac{\xi_{\infty}'(z)}{\xi_{\infty}(z)}
    + 2 i \pi \mathds{1}_{\Im z < 0} \right| \right|_{L^2(\Omega)} $$
    $$ \leq \int_1^2 (-\varphi'(u)) du 
    \left| \left|
    \sum_{k \in \mathds{Z}}  \frac{\mathds{1}_{|y_k| \leq Au}}{z-y_k} - 
    \int_{-Au}^{Au} \frac{dy}{z-y}  -  \frac{\xi_{\infty}'(z)}{\xi_{\infty}(z)}
    + 2 i \pi \mathds{1}_{\Im z < 0} \right| \right|_{L^2(\Omega)}$$
    $$  \leq \sup_{B \in [A, 2A]}  \left| \left|
     \sum_{k \in \mathds{Z}}  \frac{\mathds{1}_{|y_k| \leq B}}{z-y_k} - 
    \int_{-B}^{B} \frac{dy}{z-y}  -  \frac{\xi_{\infty}'(z)}{\xi_{\infty}(z)}
    + 2 i \pi \mathds{1}_{\Im z < 0} \right| \right|_{L^2(\Omega)},$$
    which tends to zero when $A$ goes to infinity. 
    Hence, in $L^2(\Omega)$, 
    $$ X_{f_z} =   \underset{A \rightarrow \infty}{\lim} \int_1^2 (-\varphi'(u))
    \left( \sum_{k \in \mathds{Z}}  \frac{\mathds{1}_{|y_k| \leq Au}}{z-y_k} - 
    \int_{-Au}^{Au} \frac{dy}{z-y} \right) du
    =  \frac{\xi_{\infty}'(z)}{\xi_{\infty}(z)} - 2 i \pi \mathds{1}_{\Im z < 0}.$$    
  \end{proof}
A consequence of the previous proposition is the following result: 
\begin{proposition}
 For $z \in \C \backslash \R$, 
 let $$F(z) := X_{f_z} = \frac{\xi_{\infty}'(z)}{\xi_{\infty}(z)} - 2 i \pi \mathds{1}_{\Im z < 0}.$$
 Then, one has the convergence in law: 
 $$(L F(Lz))_{z \in \C \backslash \R} 
 \underset{L \rightarrow \infty}{\longrightarrow} (G(z))_{z \in \C \backslash \R},$$
 where $G(z) = \mathcal{B}(f_z)$ for all $z \in  \C \backslash \R$. 
 The centered gaussian process $(G(z))_{z \in \C \backslash \R}$ has the covariance structure given, 
 for all $z_1, z_2 \notin \R$, by
 $$\E [G(z_1) G(z_2)]  = - \frac{\mathds{1}_{\Im(z_1) 
 \Im(z_2) < 0}}{(z_2 - z_1)^2},$$
 $$\E [G(z_1) \overline{G(z_2)}]  = - \frac{\mathds{1}_{\Im(z_1) 
 \Im(z_2) > 0}}{(\overline{z_2} - z_1)^2},$$
 and in particular
 $$\E [|G(z_1)|^2] 
  = \frac{1}{4 \Im^2 (z_1)}.$$ 
\end{proposition}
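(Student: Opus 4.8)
The plan is to deduce the convergence from Theorem~\ref{thm:blue_cv} via a simple scaling identity, and then to extract the covariance of the limit directly from the explicit Fourier transform of $f_z$ recalled at the start of this subsection.

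First I would record the scaling identity
$$L\,F(Lz) = X_{f_z(\cdot/L)}, \qquad z \notin \R,\ L>0.$$
Indeed, by the previous proposition $F(Lz) = X_{f_{Lz}}$ (using $L>0$, so that $\mathds{1}_{\Im(Lz)<0} = \mathds{1}_{\Im z<0}$), and since $f_{Lz}(t) = \frac{1}{Lz-t} = \frac1L f_z(t/L)$, the linearity of the map $f\mapsto X_f$ on $H^{1/2}$ (Corollary~\ref{corollary:X_f_is_continuous}) gives $L\,X_{f_{Lz}} = X_{L f_{Lz}} = X_{f_z(\cdot/L)}$; here $f_z \in H^{1/2}$, hence so is $f_z(\cdot/L)$, since $\widehat{f_z(\cdot/L)}(k) = L\widehat{f_z}(Lk)$ still decays exponentially. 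Consequently, for any fixed $z_1, \dots, z_m \in \C\backslash\R$, the vector $(L F(Lz_1), \dots, L F(Lz_m))$ coincides with $(X_{f_{z_1}(\cdot/L)}, \dots, X_{f_{z_m}(\cdot/L)})$, and Theorem~\ref{thm:blue_cv} gives its convergence in law to $(\mathcal{B}(f_{z_1}), \dots, \mathcal{B}(f_{z_m})) = (G(z_1), \dots, G(z_m))$. This is exactly the asserted convergence of finite-dimensional marginals of $(LF(Lz))_{z}$ towards $(G(z))_z$.

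It then remains to compute the covariance of $G$. Using the covariance structure of the blue noise together with $\widehat{f_z}(k) = -i\sqrt{2\pi}\,\sgn(\Im z)\,e^{-izk}\mathds{1}_{k\Im(z)<0}$, I would write
$$\E[G(z_1)G(z_2)] = \frac{1}{2\pi}\int_\R |k|\,\widehat{f_{z_1}}(k)\,\widehat{f_{z_2}}(-k)\,dk = -\sgn(\Im z_1)\sgn(\Im z_2)\int_\R |k|\,e^{i(z_2-z_1)k}\,\mathds{1}_{\{k\Im z_1<0,\ k\Im z_2>0\}}\,dk.$$
The indicator forces $\Im z_1$ and $\Im z_2$ to have opposite signs (otherwise the integrand vanishes identically), so that $\sgn(\Im z_1)\sgn(\Im z_2) = -1$; in each of the two surviving cases one integrates $|k|\,e^{i(z_2-z_1)k}$ over the half-line on which this function is exponentially decaying, and the elementary identity $\int_0^\infty k\,e^{-ak}\,dk = a^{-2}$ (valid for $\Re a>0$) yields $\E[G(z_1)G(z_2)] = -(z_2-z_1)^{-2}\,\mathds{1}_{\Im(z_1)\Im(z_2)<0}$. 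The computation of $\E[G(z_1)\overline{G(z_2)}]$ is identical, now with $\overline{\widehat{f_{z_2}}(k)} = i\sqrt{2\pi}\,\sgn(\Im z_2)\,e^{i\bar z_2 k}\mathds{1}_{k\Im(z_2)<0}$; this time the product of indicators forces $\Im z_1$ and $\Im z_2$ to have the \emph{same} sign, and the same residue computation gives $-(\bar z_2-z_1)^{-2}\,\mathds{1}_{\Im(z_1)\Im(z_2)>0}$. Specialising to $z_1=z_2$ and using $(\bar z_1 - z_1)^2 = -4\,\Im^2(z_1)$ produces $\E[|G(z_1)|^2] = 1/(4\,\Im^2(z_1))$.

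The only genuinely delicate step is the sign/half-line bookkeeping in the covariance integral: one must track which sign of $\Im z_i$ selects which half-line of integration and the resulting value of $\sgn(\Im z_1)\sgn(\Im z_2)$; everything else reduces to Theorem~\ref{thm:blue_cv} and the linearity of $f\mapsto X_f$. As a consistency check one should also remark that these covariances match the second moments of $\xi_\infty'/\xi_\infty$ obtained in Section~\ref{subsection:log_der}, once one accounts for the subtracted constant $2i\pi\mathds{1}_{\Im z<0}$ in the definition of $F$.
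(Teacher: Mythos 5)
Your proof is correct and follows essentially the same route as the paper's: the scaling identity $L\,F(Lz) = X_{f_z(\cdot/L)}$, reduction to Theorem~\ref{thm:blue_cv} for finite-dimensional convergence, and the covariance computation via the explicit Fourier transform of $f_z$ with the half-line integral $\int_0^\infty k e^{-ak}\,dk = a^{-2}$. The only cosmetic difference is that the paper derives $\E[G(z_1)\overline{G(z_2)}]$ from $\E[G(z_1)G(z_2)]$ using the symmetry $\mathscr{B}(f_{\overline{z}}) = \overline{\mathscr{B}(f_z)}$ rather than redoing the integral, but this is equivalent to your direct calculation.
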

\begin{proof}
We have, for $L > 0$, 
$$f_z(t/L) = \frac{1}{z - (t/L)} = \frac{L}{Lz - t} = L f_{Lz}(t),$$
and then 
$$X_{f_z(\cdot/L)} = L X_{f_{Lz}} = L F(Lz).$$
The convergence in law given in this proposition is then a consequence of Theorem \ref{thm:blue_cv}.
It remains to compute the covariance structure. 
  For $z_1, z_2 \in \C \backslash \R$,
  \begin{align*} & \E [\mathscr{B}(f_{z_1})\mathscr{B}(f_{z_2})] 
  \\  & = \frac{1}{2 \pi} \int_{\R}
   |k| (-i \sqrt{2 \pi} \sgn \Im (z_1) 
   e^{-iz_1k} \mathds{1}_{k \Im(z_1) < 0} )
   (-i \sqrt{2 \pi} \sgn \Im (z_2) 
   e^{iz_2k} \mathds{1}_{-k \Im(z_2) < 0} ) dk.
  \end{align*}
If $\Im (z_1)$ and $\Im(z_2)$ have the same 
sign, the product of the indicator functions 
vanishes for all $k \in \R$, 
so 
$$\E [\mathscr{B}(f_{z_1})\mathscr{B}(f_{z_2})] = 0.$$
If $\Im(z_1)$ and $\Im(z_2)$ have not the same 
sign, we get 
$$\E [\mathscr{B}(f_{z_1})\mathscr{B}(f_{z_2})] = \int_{\R}
|k| e^{i k (z_2 - z_1)} \mathds{1}_{k \Im(z_2) > 0}.$$
By doing the change of variable 
$k' = k \sgn \Im (z_2)$, we get 
$$\E [\mathscr{B}(f_{z_1})\mathscr{B}(f_{z_2})] = \int_{0}^{\infty} 
k e^{i k (z_2 - z_1) \sgn \Im (z_2)} dk $$
Now,  for all $y  >0$, 
$$\int_{0}^{\infty} k e^{-yk} dk 
= \int_{0}^{\infty} (u/y) e^{-u} d(u/y) 
 = 1/y^2,$$
 and by analytic continuation, this formula is true for all $y$ with strictly positive real part. Applying 
 this to $y = -i (z_2 - z_1) 
 \sgn \Im (z_2)$, we have
 $$\E [\mathscr{B}(f_{z_1})\mathscr{B}(f_{z_2})] = - 1/(z_2 - z_1)^2$$
 for $\Im (z_1) \Im (z_2) < 0$, and then in any 
 case, 
 $$\E [\mathscr{B}(f_{z_1})\mathscr{B}(f_{z_2})]  = - \frac{\mathds{1}_{\Im(z_1) 
 \Im(z_2) < 0}}{(z_2 - z_1)^2}.$$
Since the blue noise here is real-valued for real functions, 
$\mathscr{B} (f_{\overline{z_2}})
 = \overline{\mathscr{B} (f_{z_2})}$, and then 
 $$\E [\mathscr{B}(f_{z_1})\overline{\mathscr{B}(f_{z_2})}]  = - \frac{\mathds{1}_{\Im(z_1) 
 \Im(z_2) > 0}}{(\overline{z_2} - z_1)^2}.$$
 \end{proof}
\begin{rmk}
The covariance structure of $F$ has been computed above in this paper. We have 
  $$\E [F(z_1) F(z_2)]
  = - \frac{1 - e^{2 i \pi (z_1 - z_2) 
  \sgn \Im (z_1 - z_2) }}{ (z_1 -z_2)^2}
  \mathds{1}_{\Im (z_1) \Im (z_2) < 0},$$

  and then 

$$\E [(L F(Lz_1))(L F(Lz_2))] =
  \underset{L \rightarrow \infty}
  {\longrightarrow} - \frac{\mathds{1}_{\Im(z_1) 
 \Im(z_2) < 0}}{(z_2 - z_1)^2} 
 = \E [G(z_1) G(z_2)].$$
 Similarly, 
 $$ \E [(L F(Lz_1))(\overline{L F(Lz_2)})]  \underset{L \rightarrow \infty}
  {\longrightarrow} \E [G(z_1) \overline{G(z_2)}].$$
  This convergence is naturally expected once the previous proposition is proven. 
\end{rmk} 
The stochastic process $z \mapsto X_{f_z}$ admits the version 
$$z \mapsto F(z) = \frac{\xi_{\infty}'(z)}{\xi_{\infty}(z)} - 2 i \pi \mathds{1}_{\Im z < 0},$$
which is holomorphic on $\C \backslash \R$. One can ask if the situation is similar for $G$. The answer is positive: 
\begin{proposition}
The random function $G$ admits a version which is holomorphic on $\C \backslash \R$. Moreover, 
$z \mapsto L F (Lz)$ converges in law to an holomorphic version of $G$ when $L$ goes to infinity, in the sense of the uniform convergence on compact sets of $\C \backslash \R$. 
\end{proposition}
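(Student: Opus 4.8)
The plan is to obtain both assertions simultaneously, via a tightness argument for the family of random holomorphic functions $F_L(z):=LF(Lz)$, $L>0$, in the Polish space $\mathcal{O}(\mathbb{C}\setminus\mathbb{R})$ of holomorphic functions on $\mathbb{C}\setminus\mathbb{R}$ (with the topology of uniform convergence on compact sets), together with the convergence of finite-dimensional marginals already furnished by Theorem~\ref{thm:blue_cv}. First I would note that, since $\xi_{\infty}$ is entire with only real zeros, $F(w)=\xi_{\infty}'(w)/\xi_{\infty}(w)-2i\pi\mathds{1}_{\Im w<0}$ is a.s.\ holomorphic on $\mathbb{C}\setminus\mathbb{R}$, so each $F_L$ is a genuine random element of $\mathcal{O}(\mathbb{C}\setminus\mathbb{R})$; moreover, by the scaling identity $X_{f_z(\cdot/L)}=LF(Lz)$ established just above, we have $F_L(z)=X_{f_z(\cdot/L)}$.

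The first step is tightness. The only quantitative input is a uniform-in-$L$ second-moment bound: from the formulas for $M_1$ and for $\mathbb{E}\bigl[|\xi_{\infty}'/\xi_{\infty}(z)|^2\bigr]$ obtained in Section~\ref{subsection:log_der}, one gets $\mathbb{E}\bigl[|F(w)|^2\bigr]=(1-e^{-4\pi|\Im w|})/(4\Im^2 w)$, hence $\mathbb{E}\bigl[|F_L(z)|^2\bigr]=(1-e^{-4\pi L|\Im z|})/(4\Im^2 z)\le 1/(4\Im^2 z)$, which is independent of $L$ and locally bounded on $\mathbb{C}\setminus\mathbb{R}$. Since $|g|$ is subharmonic for $g$ holomorphic, for $\overline{D(z_0,2r)}\subset\mathbb{C}\setminus\mathbb{R}$ we have $\sup_{\overline{D(z_0,r)}}|g|\le\frac{1}{\pi r^2}\int_{D(z_0,2r)}|g|\,dA\le\frac{2}{\sqrt\pi\,r}\bigl(\int_{D(z_0,2r)}|g|^2\,dA\bigr)^{1/2}$; applying this to $g=F_L$, taking expectations, and using Cauchy--Schwarz and Tonelli, $\mathbb{E}\bigl[\sup_{\overline{D(z_0,r)}}|F_L|\bigr]\le\frac{2}{\sqrt\pi\,r}\bigl(\int_{D(z_0,2r)}\mathbb{E}|F_L(z)|^2\,dA(z)\bigr)^{1/2}$, finite and uniform in $L$. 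Covering an arbitrary compact $K\subset\mathbb{C}\setminus\mathbb{R}$ by finitely many such discs gives $M_K:=\sup_{L>0}\mathbb{E}[\sup_K|F_L|]<\infty$. Fixing a compact exhaustion $(K_j)_{j\ge1}$ of $\mathbb{C}\setminus\mathbb{R}$ and using Markov's inequality, for every $\varepsilon>0$ one chooses radii $R_j$ with $\sup_L\mathbb{P}(\sup_{K_j}|F_L|>R_j)<\varepsilon 2^{-j}$; by Montel's theorem, applied on each of the two connected components of $\mathbb{C}\setminus\mathbb{R}$, the set $\{g\in\mathcal{O}(\mathbb{C}\setminus\mathbb{R}):\sup_{K_j}|g|\le R_j\ \text{for all }j\}$ is compact, and $F_L$ lies outside it with probability $<\varepsilon$ uniformly in $L$. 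Hence $(F_L)_{L>0}$ is tight in $\mathcal{O}(\mathbb{C}\setminus\mathbb{R})$.

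The second step is to identify the limit. By Prokhorov's theorem, along any sequence $L_m\to\infty$ there is a subsequence along which $F_{L_m}$ converges in law in $\mathcal{O}(\mathbb{C}\setminus\mathbb{R})$ to some random holomorphic function $\widetilde G$. Since evaluation at finitely many points is continuous on $\mathcal{O}(\mathbb{C}\setminus\mathbb{R})$, the finite-dimensional marginals of $\widetilde G$ are the limits of those of $F_{L_m}$; applying Theorem~\ref{thm:blue_cv} to the test functions $f_{z_1},\dots,f_{z_p}\in H^{1/2}$ (using $F_L(z)=X_{f_z(\cdot/L)}$, and noting that the proof of that theorem extends to complex-valued test functions by linearity), these limits are exactly the finite-dimensional marginals of $G$. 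Thus $\widetilde G$ is a version of $G$ that is holomorphic on $\mathbb{C}\setminus\mathbb{R}$, which already yields the first assertion. Finally, the Borel $\sigma$-field of $\mathcal{O}(\mathbb{C}\setminus\mathbb{R})$ is generated by the evaluation maps, so the law of a random holomorphic function is determined by its finite-dimensional marginals; hence every subsequential limit of $(F_L)$ has this common law, and therefore $F_L$ itself converges in law in $\mathcal{O}(\mathbb{C}\setminus\mathbb{R})$ to a holomorphic version of $G$, which is the second assertion.

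The hard part is essentially just locating the one nontrivial estimate, namely the $L$-uniform bound $\mathbb{E}|F_L(z)|^2\le 1/(4\Im^2 z)$; but this follows at once from the second-moment computation for $\xi_{\infty}'/\xi_{\infty}$ already carried out, so the remainder stays at the level of soft functional-analytic facts (sub-mean-value inequality, Montel, Prokhorov). The only points needing a little care — and both already flagged in the discussion above — are that $\mathbb{C}\setminus\mathbb{R}$ is disconnected, so Montel must be invoked component by component, and that the finite-dimensional marginal convergence must be read in the complex-valued sense.
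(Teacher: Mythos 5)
Your proof is correct, and it takes a genuinely different route than the paper. The paper proceeds in two separate steps: it first builds a H\"older-continuous version of $G$ directly via Kolmogorov's continuity criterion (using the increment bound $\E|G(z_1)-G(z_2)|^2 \ll_K |z_1-z_2|^2$) and then shows this version is a.s.\ holomorphic by a Morera argument — computing $\E\bigl|\int_\Gamma G\bigr|^2 = -\int_\Gamma\int_{\overline\Gamma}(z_2-z_1)^{-2}\,dz_1\,dz_2 = 0$ for closed contours $\Gamma$; it then separately establishes tightness of $(LF(L\cdot))_L$ in the space of continuous functions by a second application of Kolmogorov, using the uniform-in-$L$ increment estimate $\sup_L \E|LF(Lz_1)-LF(Lz_2)|^2 \leq \tilde c_K |z_1-z_2|^2$ (which requires differentiating the kernel $A_L(u,v)$ twice and bounding it). Your argument instead works inside $\mathcal{O}(\C\backslash\R)$ from the start: since $F$, and hence each $F_L=LF(L\cdot)$, is already a.s.\ holomorphic, tightness in $\mathcal{O}(\C\backslash\R)$ follows from the single pointwise bound $\E|F_L(z)|^2 \le 1/(4\Im^2 z)$ via the sub-mean-value inequality and Montel's theorem, and Prokhorov then produces a holomorphic subsequential limit whose finite-dimensional marginals match those of $G$ by Theorem~\ref{thm:blue_cv} — which proves both assertions simultaneously. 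Your approach is arguably more economical: it avoids both the Morera computation and the increment estimate on $A_L''$, needing only the $L$-uniform second moment (and one soft fact, that the law of a random element of $\mathcal{O}(\C\backslash\R)$ is determined by its finite-dimensional marginals, which holds because the Borel $\sigma$-field is generated by evaluations). What the paper's route buys in exchange is a concrete construction of $G$ with an explicit modulus of continuity, without having to pass through Prokhorov or the topology of $\mathcal{O}$.
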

\begin{proof}
We first compute the $L^2$ norm
of $G(z_1) - G(z_2)$ when $z_1, z_2 \notin 
\R$: 
$$\E[|G(z_1) - G(z_2)|^2]  = \E[|G(z_1)|^2] + \E[|G(z_2)|^2]
- \E[ G(z_1) \overline{G(z_2)}] 
- \E[ G(z_2) \overline{G(z_1)}] 
$$ $$ = - \frac{1}{(z_1 - \overline{z_1})^2}
- \frac{1}{(z_2 - \overline{z_2})^2}
+ \mathds{1}_{\Im(z_1) \Im(z_2) > 0}
\left( \frac{1}{(z_1 - \overline{z_2})^2}
+ \frac{1}{(z_2 - \overline{z_1})^2} \right). $$
Let us now assume that $z_1$ and $z_2$ 
are in a given compact set $K$ of 
$\C \backslash \R$. Let 
us denote: 
$$c_K := \inf \{ |\Im(z)|, z \in K\} > 0.$$
If $z_1, z_2 \in K$ have imaginary parts 
of different signs,
necessarily $|z_1 - z_2| \geq 2 c_K$
and from the computations  above, 
$$ \E[|G(z_1) - G(z_2)|^2] 
= \frac{1}{4 \Im ^2(z_1)} 
+ \frac{1}{4 \Im ^2(z_2)} 
\leq \frac{1}{2 c_K^2}.$$
One deduces 
$$\E[|G(z_1) - G(z_2)|^2] \leq \frac{1}{8 c_K^4}
|z_1 - z_2|^2.$$
If $z_1, z_2 \in K$ have imaginary parts 
with the same sign, 
$$
\E[|G(z_1) - G(z_2)|^2] = 
A(z_1, \overline{z_1}) + A(z_2, \overline{z_2}) - A(z_1, \overline{z_2})
- A(z_2, \overline{z_1})),$$
where 
$$A(u,v) := - \frac{1}{(u-v)^2}.$$
The function $A$ of two variables is holomorphic in the open set of $(a,b)
\in \C^2$ such that $\Im(a) \Im(z_1) > 0$ and 
$\Im(b) \Im (z_1) < 0$. 
Since the set $[z_1, z_2] \times [\overline{z_1}, \overline{z_2}]$ is 
included in this domain (recall that 
$\Im(z_1)$ and $\Im(z_2)$ have the same sign), we have
$$ A(z_1, \overline{z_1}) + A(z_2, \overline{z_2}) - A(z_1, \overline{z_2})
- A(z_2, \overline{z_1}))
= \int_{z_1}^{z_2} \int_{\overline{z_1}}^{\overline{z_2}} A''_{1,2} (u,v) du 
dv,$$
where $A''_{1,2}$ is the second derivative of $A$ with respect to the two variables.
Hence, 
$$\E[|G(z_1) - G(z_2)|^2]
= 6 \int_{z_1}^{z_2}
 \int_{\overline{z_1}}^{\overline{z_2}}
 \frac{du dv}{(u-v)^4}.$$
 Now, for $u \in [z_1,z_2]$, $v \in [
 \overline{z_1}, \overline{z_2}]$, we
 have $|\Im (u) - \Im (v)| \geq 2 
 c_K$, since $z_1, z_2 \in K$. Hence, 
 $|u-v|^4 \geq 16 c_K^4$, and 
 $$\E[|G(z_1) - G(z_2)|^2]
 \leq \frac{3}{8 c_K^4} 
 \int_{z_1}^{z_2}
 \int_{\overline{z_1}}^{\overline{z_2}}
 |du| |dv|.$$
 Hence, similarly as in the case 
 $\Im(z_1) \Im(z_2) < 0$, we have
 $$\E[|G(z_1) - G(z_2)|^2] 
 \leq \frac{3}{8 c_K^4} |z_1 - z_2|^2.$$
 
By Kolmogorov's criterion, $G$ admits a continuous version on $\C \backslash \R$. 
We now assume that $G$ itself is continuous.

Let $\Gamma : [0,1] \mapsto \C$ be a closed, piecewise smooth 
contour in $\C \backslash \R$. Since $G$ is continuous, the integral of $G$ along $\Gamma$ is well-defined, and one has
$$ \left| \int_{\Gamma} G(z) dz \right|^2
= \int_{0}^1 \int_0^1 G(\Gamma(t))
\overline{G (\Gamma(u))}  \Gamma'(t) 
\overline{\Gamma'(u)} dt du.$$
If we denote $\overline{\Gamma}$ the contour given by $\overline{\Gamma}(t)
= \overline{\Gamma(t)}$, we can write 
$$ \left| \int_{\Gamma} G(z) dz \right|^2
= \int_{0}^1 \int_0^1
G(\Gamma(t)) \widetilde{G}(\overline{\Gamma}(u)) \Gamma'(t) \overline{\Gamma}'(u) dt du,$$
where $\widetilde{G}$ is the function from 
$\C \backslash \R$, given by 
$$\widetilde{G} (z) = \overline{G(\overline{z})}.$$
Hence, 
$$\left| \int_{\Gamma} G(z) dz \right|^2
= \int_{\Gamma} \int_{\overline{\Gamma}}
G(z_1) \widetilde{G}(z_2) dz_1 dz_2.$$
Now, for $z_1 \in \Gamma$, $z_2 \in 
\overline{\Gamma}$
$$\mathbb{E} [ |G(z_1)| |\widetilde{G}(z_2)| ]
 \leq \left(\mathbb{E} [|G(z_1)|^2]\right)^{1/2}\left(\mathbb{E} [|\overline{G(z_2)}|^2]\right)^{1/2}
= \frac{1}{4 |\Im(z_1)| |\Im(z_2)|},$$
which implies 
$$\int_{\Gamma} \int_{\overline{\Gamma}}
\mathbb{E} [|G(z_1) \widetilde{G}(z_2)|] |dz_1| \, | dz_2| \leq 
\frac{(\ell(\Gamma))^2}{4 c_{\Gamma}^2} <
\infty,$$
 where $\ell(\Gamma)$ is the length of 
 $\Gamma$ and $c_{\Gamma}$ the infimum of $|\Im(z)|$ for $z \in \Gamma$.
 This bound allows to write
 $$\mathbb{E} \left[\left| \int_{\Gamma} G(z) dz \right|^2 \right]
= \int_{\Gamma} \int_{\overline{\Gamma}}
\mathbb{E} [G(z_1) \widetilde{G}(z_2)] dz_1 dz_2.$$
Now, for $z_1 \in \Gamma$ and 
$z_2 \in \overline{\Gamma}$, $\Im (z_1)$ and $\Im (\overline{z_2})$ have the same sign, 
which implies 
$$  \mathbb{E} [G(z_1) \widetilde{G}(z_2)]
=  \mathbb{E} [G(z_1) \overline{G(\overline{z_2})}] 
= - \frac{1}{(z_2 - z_1)^2},$$
and then 
$$\mathbb{E} \left[\left| \int_{\Gamma} G(z) dz \right|^2 \right] =  - 
\int_{\Gamma} \int_{\overline{\Gamma}}
\frac{dz_1 dz_2}{(z_2  - z_1)^2},$$
which is equal to zero, since the function
$(z_1, z_2) \mapsto 1/(z_2 - z_1)^2$
is holomorphic and the contours $\Gamma$ 
and $\overline{\Gamma}$ are closed. 
Hence, for all closed, piecewise smooth contours $\Gamma$ on $\C \backslash
\R$, one has almost surely 
$$\int_{\Gamma} G(z) dz = 0.$$
One deduces that almost surely, this equality holds simultaneously for all polygonal closed contours whose vertices have rational real and imaginary parts. 
Then, by continity of $G$, one can remove the condition of rationality, and deduces that almost surely, $G$ is
holomorphic on 
$\C \backslash \R$.

We know $z \mapsto L F (Lz)$ converges in law to $G$ in the sense of the finite-dimensional marginals: it remains to prove 
that this convergence occurs in the space of continuous functions, i.e. that the family 
of laws of $(L F(Lz))_{z \in \C}$ is tight in this space. 
For a compact set $K$ of $\C \backslash \R$, and for $z_1, z_2 \in K$, one has 
$$\E [| L F(Lz_1) -  L F(Lz_2)|^2] 
= \frac{1 - e^{-4 L \pi |\Im(z_1)|}}{4 \Im^2(z_1)} + 
 \frac{1 - e^{-4 L \pi |\Im(z_2)|}}{4 \Im^2(z_2)}
$$
if $\Im (z_1) \Im(z_2) < 0$, and 
\begin{align*}
\E [| L F(Lz_1) -  L F(Lz_2)|^2] 
& = A_L(z_1, \overline{z_1}) +
A_L(z_2, \overline{z_2})  - 
A_L(z_1, \overline{z_2})
- A_L(z_2, \overline{z_1})
\\ & = \int_{z_1}^{z_2} \int_{\overline{z_1}}^
{\overline{z_2}} (A''_L)_{1,2} (u,v)
du dv
\end{align*}
if $\Im (z_1) \Im(z_2) > 0$, for 
$$A_L(u,v) = - \frac{
1 - e^{2 i \pi L(u-v) \operatorname{sgn} 
\Im (u-v)}}{ (u-v)^2}.$$
In the first case, we get
$$\E [| L F(Lz_1) -  L F(Lz_2)|^2]
\leq \frac{|z_1 - z_2|^2}{8 c_K^4}$$ and 
in the second case, 
$$\E [| L F(Lz_1) -  L F(Lz_2)|^2]
\leq |z_2-z_1|^2 \sup_{|\Im(u)|, |\Im(v)|
> c_K} |(A''_L)_{1,2}(u,v)|.$$
Note that $A_L$ is holomorphic in
$\{(u,v) \in \C^2, \Im(u) \Im(v) < 0\}$, since $\sgn \Im( u-v)$ is locally constant on this set. 
Now, 
$$(A_{L}')_{1} (u,v) 
= \frac{2 (1 - e^{2 i \pi L(u-v) 
\sgn \Im(u-v)})}{(u-v)^3}
+ \frac{2 i \pi L \sgn \Im(u-v)
e^{2 i \pi L(u-v) 
\sgn \Im(u-v)}}{(u-v)^2},$$
\begin{align*}(A_{L}'')_{1,2} (u,v) 
&= \frac{6 (1 - e^{2 i \pi L(u-v) 
\sgn \Im(u-v)})}{(u-v)^4}
+  \frac{8 i \pi L \sgn \Im(u-v) e^{2 i \pi L(u-v) 
\sgn \Im(u-v)}}{(u-v)^3}
\\ & + \frac{4 \pi^2 L^2 e^{2 i \pi L(u-v) 
\sgn \Im(u-v)}}{(u-v)^2},
\end{align*}
\begin{align*}|(A_{L}'')_{1,2} (u,v)|
& \leq \frac{6 (1 + e^{- 2 \pi L |\Im (u-v)|)}}{|u-v|^4}
+  \frac{8 \pi L  e^{- 2 \pi L |\Im (u-v)|}}
{|u-v|^3} +  \frac{4 \pi^2 L^2 
e^{- 2 \pi L |\Im (u-v)|}}{|u-v|^2}.
\\ & \leq \frac{12}{|\Im(u-v)|^4}
+ \frac{8 \pi L e^{- 2 \pi L |\Im (u-v)|}
}{|\Im(u-v)|^3}
+ \frac{4 \pi^2 L^2 
e^{- 2 \pi L |\Im (u-v)|}}{|\Im(u-v)|^2}
\\ & 
\leq \frac{4 \pi^2 }{|\Im(u-v)|^4}
\left(1 + (L |\Im(u-v)| + L^2 (\Im(u-v))^2)
e^{- 2 \pi L |\Im (u-v)|} \right)
\\ & 
\leq \frac{\pi^2}{4 c^4_K}
\left( 1 + \sup_{x \geq 0} (x+ x^2)e^{-2
\pi x} \right). 
\end{align*}
Hence, 
$$\sup_{L > 0} \E [| L F(Lz_1) -  L F(Lz_2)|^2]
\leq \tilde{c}_K |z_2-z_1|^2,$$
where $\tilde{c}_K > 0$ depends only on $K$.  By Kolmogorov's criterion, the 
laws of $(LF(Lz))_{z \in \C \backslash \R}$ form a tight family for the uniform convergence on compact sets 
of $\C \backslash \R$. 

\end{proof}

\pagebreak

\bibliographystyle{halpha}
\bibliography{Bib_CueCharPolyCV}

\end{document}